\renewcommand{\nompreamble}{\begin{multicols}{3}}
\renewcommand{\nompostamble}{\end{multicols}}
\declaretheorem[style = plain, numberwithin = section]{theorem}
\declaretheorem[style = plain,      sibling = theorem]{corollary}
\declaretheorem[style = plain,      sibling = theorem]{lemma}
\declaretheorem[style = plain,      sibling = theorem]{proposition}
\declaretheorem[style = plain,      sibling = theorem]{conjecture}
\declaretheorem[style = definition, sibling = theorem]{definition}
\declaretheorem[style = definition, sibling = theorem]{example}
\declaretheorem[style = remark,     sibling = theorem]{remark}
\declaretheorem[style = plain,     numberwithin = theorem]{claim}
\DeclareMathOperator{\Spec}{Spec}				
\DeclareMathOperator{\rk}{rk}					
\DeclareMathOperator{\Hom}{Hom}					
\DeclareMathOperator{\IC}{IC}					
\DeclareMathOperator{\sheafHom}					
{
    \mathscr{H}\text{\kern -5.2pt {\calligra\large om}}\,
}
\DeclareMathOperator{\sgn}{sgn}				    
\DeclareMathOperator{\FT}{FT}				    
\DeclarePairedDelimiter{\set}{\lbrace}{\rbrace}        
\newcommand{\WF}{\mathrm{WF}}
\newcommand{\pr}{\mathrm{pr}}
\newcommand{\OO}{\mathbb{O}}
\newcommand \bfG{{\mathbf G}}
\newcommand \bfT{{\mathbf T}}
\newcommand \bfB{{\mathbf B}}
\newcommand \bfI{{\mathbf I}}
\newcommand \bfP{{\mathbf P}}
\newcommand \bfU{{\mathbf U}}
\newcommand \bfL{{\mathbf L}}
\newcommand \bfl{{\mathbf l}}
\newcommand \bfu{{\mathbf u}}
\newcommand \bfp{{\mathbf p}}
\newcommand \Gal{{\mathrm {Gal}}}
\newcommand \sfk{{\mathsf k}}
\newcommand \bark{{\bar{\mathsf k}}}
\newcommand \barF{{\overline{\mathbb F}}}
\newcommand \CX{{\mathcal X}}
\newcommand{\Z}{\mathbb{Z}}    						
\newcommand{\R}{\mathbb{R}}    						
\newcommand{\C}{\mathbb{C}}    						
\newcommand{\GL}{\mathbf{GL}}						
\newcommand{\SL}{\mathbf{SL}}						
\newcommand{\lalg}[1]{{\normalfont\mathfrak{#1}}}	
\newcommand{\lsl}{\lalg{sl}}						
\newcommand{\mf}[1]{\mathfrak{#1}}							
\newcommand{\ms}[1]{\mathsf{#1}}							
\newcommand{\mb}[1]{\mathbf{#1}}							
\DeclareMathOperator{\id}{id}								
\renewcommand{\tilde}{\widetilde}
\newcommand{\supp}{\text{supp}}						        
\newcommand{\Res}{\text{Res}}						        
\newcommand{\Ind}{\text{Ind}}						        
\newcommand{\Ad}{\text{Ad}}						            
\newcommand{\tor}{\text{tor}}						        
\title{The wavefront set over a maximal unramified field extension}
\author{Emile T. Okada}
\begin{document}

\setcounter{secnumdepth}{4}
\setcounter{tocdepth}{3}
\counterwithin{paragraph}{subsection}
\numberwithin{equation}{subsection}

\maketitle
\begin{abstract}
    Let $(\pi,X)$ be a depth-$0$ admissible smooth complex representation of a $p$-adic reductive group that splits over an unramified extension.
    In this paper we develop the theory necessary to study the wavefront set of $X$ over a maximal unramified field extension of the base $p$-adic field.
    In the final section we then apply these methods to compute the geometric wavefront set of spherical Arthur representations of split $p$-adic reductive groups.
    In this case we see how the wavefront set over a maximal unramified extension can be computed using perverse sheaves on the Langlands dual group.
\end{abstract} 

\tableofcontents

\section*{Introduction}
Let $k$ be a finite extension of $\mathbb Q_p$ with finite residue field $\mathbb F_q$ of sufficiently large characteristic, algebraic closure $\bar k$ and let $\mathbf G(k)$ be the $k$-points of a connected reductive group $\mathbf G$ defined over $k$ and split over an unramified extension of $k$.
Let $\mf g$ be the Lie algebra of $\bfG$, and $\mf g(k)$ be the $k$-points of $\mf g$.
For an admissible smooth irreducible representation $(\pi, X)$ of $\bfG(k)$, the wavefront set of $X$, denoted $\WF(X)$, is a harmonic analytic invariant of $X$ of fundamental importance.
Roughly speaking it measures the direction of the singularities of the character distribution $\Theta_X$ of $X$ near the identity.
More precisely: the Harish-Chandra--Howe local character expansion dictates that there exists an open neighbourhood $\mathcal V$ of the identity and coefficients $c_{\OO}(X)\in \C$, one for each $\OO$ in the collection, $\mathcal N_o(k)$, of nilpotent orbits of $\mf g(k)$, such that 
\begin{equation}
    \Theta_{\pi}(f) = \sum_{\OO \in \mathcal N_o(k)}c_{\OO}(X) \hat \mu_{\OO}(f\circ \exp), \quad \forall f\in C_c^\infty(\mathcal V)
\end{equation}
where $\hat \mu_\OO$ denotes the Fourier transform of the nilpotent orbital integral associated to $\OO\in \mathcal N_o(k)$.
The \textit{($p$-adic) wavefront set} is the set (not necessarily a singleton)
$$\WF(X) := \max \{\OO:c_{\OO}(X)\ne 0\} \subseteq \mathcal N_o(k)$$
where the maximum is taken with respect to the closure ordering on the $k$-rational nilpotent orbits.

Much can be said about $X$ from the wavefront set.
When $\WF(X)$ consists of regular nilpotent elements, a famed result of Rodier \cite{rodier} states that $X$ admits a Whittaker model.
If an orbit in $\WF(X)$ meets a Levi subgroup of $\bfG(k)$, then the work of Moeglin and Waldspurger \cite{waldmoeg} shows that $X$ cannot be supercuspidal.
From an analytic point of view the wavefront set controls the asymptotic growth of the space of vectors fixed by the Moy--Prasad filtration subgroups as you go further along the filtration \cite[Section 5.1]{barbaschmoy}.
The wavefront set is also expected to play an important role in the definition of Arthur packets for $p$-adic groups, making it relevant to the study of automorphic forms and the unitary dual.

This distinguishes the wavefront set as a particularly powerful invariant to study.
However, in practice it is notoriously difficult to compute.
Moeglin and Waldspurger \cite{waldmoeg} have calculated the wavefront set for irreducible smooth representations of $\GL_n$ and for irreducible subquotients of the regular principal series for split classical groups, but little is known in general.
A slightly coarser invariant, which one might hope to have more control over, is the \textit{geometric wavefront set} $^{\bar k}WF(X)$.
This is defined to be 
$$^{\bar k}\WF(X) := \max\{\mathcal N_o(\bar k/k)(\OO):c_{\OO}(X)\ne 0\} \subseteq \mathcal N_o(\bar k)$$
where $\mathcal N_o(\bar k)$ denotes that set of $\bar k$-rational nilpotent orbits and $\mathcal N_o(\bar k/k)(\OO)$ denotes the (unique) $\bar k$-rational nilpotent orbit of $\mf g(\bar k)$ that $\OO$ lies in (see page 9 for the general definition of $\mathcal N_o$).
Much more is known about the geometric wavefront set. 
For classical groups, Moeglin \cite{moeglin} showed that it must always be a special orbit, and in \cite{wso3} and \cite{tempunip}, Waldspurger computed $^{\bar k}\WF(X)$ for anti-tempered and tempered unipotent representations of the pure inner twists of the split form of $SO(2n+1)$.
Moreover, in analogy with real reductive groups and finite groups of Lie type, it is expected that there is a \textit{single} nilpotent orbit in the geometric wavefront set and that all the nilpotent orbits in $\WF(X)$ lie in it.
In this sense, $^{\bar k}\WF(X)$ is a good first approximation for $\WF(X)$.

In the first part of this paper we shall concern ourselves with studying the wavefront set over an intermediate field. 
This will result in two new invariants which provide richer information than the geometric wavefront set, but are still more tractable than the $p$-adic wavefront set.
Let $K$ be the maximal unramified extension of $k$ in $\bar k$.
The residue field of $K$ is naturally an algebraic closure of $\mathbb F_q$ so we will write $\barF_q$ for the residue field of $K$.
Define the \textit{unramified wavefront set} to be
$$^K\tilde\WF(X) := \max\{\mathcal N_o(K/k)(\OO):c_{\OO}(X)\ne 0\} \subseteq \mathcal N_o(K)$$
where $\mathcal N_o(K)$ denotes the set of $K$-rational nilpotent orbits (which we henceforth refer to as \textit{unramified nilpotent orbits}) and $\mathcal N_o(K/k)(\OO)$ denotes the $K$-rational nilpotent orbit of $\mf g(K)$ that $\OO$ lies in.
The motivation for this modification comes from the work of Barbasch and Moy \cite{barbaschmoy} where they relate the coefficients $c_{\OO}(X)$ to representations of the reductive quotients of the parahoric subgroups of $\bfG(k)$.
The introduction of the field $K$ is inevitable if one wishes to lift geometric results from the reductive quotients up to the $p$-adic group, and much trouble is taken in \cite{barbaschmoy} to interpret these results over the original field $k$ again. 
What we show in this paper is that one should accept the field $K$ as a fact of life - and once one does this many statements take a more natural form and a lot of new structure becomes apparent.

The first main result of this paper - which we now state - illustrates this well.
Let $\mathcal B(\bfG,k)$ denote the Bruhat--Tits building of $\bfG(k)$.
For each face $c$ of $\mathcal B(\bfG,k)$ recall that we have the following short exact sequence
$$1\to \bfU_c(\mf o) \to \bfP_c(\mf o) \to \bfL_c(\mathbb F_q) \to 1.$$
The group $\bfL_c(\mathbb F_q)$ is a finite group of Lie type, and the space of fixed vectors $X^{\bfU_c(\mf o)}$ is a finite dimensional representation of $\bfL_c(\mathbb F_q)$.
As we alluded to earlier, the wavefront set is an invariant which also makes sense for representations of finite groups of Lie type and we write 
$$^{\barF_q}\WF(X^{\bfU_c(\mf o)})$$
for the (geometric) wavefront set of the representation $X^{\bfU_c(\mf o)}$ of $\bfL_c(\mathbb F_q)$.
This is a collection of nilpotent orbits of $\bfL_c$ over $\barF_q$ (see Section \ref{sec:kawanaka} for precise details).
Motivated by the work of \cite{barbaschmoy}, \cite{debacker} and \cite{waldsnil} we introduce a lifting map $\mathcal L_c$ from the partially ordered set of $\barF_q$-rational nilpotent orbits of $\bfL_c$ to the partially ordered set of $K$-rational nilpotent orbits of $\bfG(k)$.
The wavefront sets of the representations of the reductive quotients are then related to the wavefront set of $X$ by the following theorem.
\begin{theorem}
    \label{thm:localwf}
    [Theorem \ref{lem:liftwf}]
    Let $(\pi,X)$ be a depth-$0$ representation of $\bfG(k)$.
    Then 
    \begin{equation}
        ^K\widetilde\WF(X) = \max_{c\subseteq \mathcal B(\bfG,k)}\mathcal L_c(\hphantom{ }^{\barF_q}\WF(X^{\bfU_c(\mf o)})).
    \end{equation}
    In fact one can restrict $c$ to range over the faces (or vertices) of any fixed chamber of $\mathcal B(\bfG,k)$.
\end{theorem}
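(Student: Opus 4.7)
The plan is to leverage the Harish-Chandra--Howe local character expansion together with a parahoric interpretation of the coefficients $c_\OO(X)$ in the spirit of Barbasch--Moy, lifted from $k$-rational to $K$-rational orbits via the map $\mathcal L_c$ and DeBacker's parametrization of nilpotent orbits by the Bruhat--Tits building.

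\textbf{Step 1: A Barbasch--Moy formula read through $\mathcal L_c$.} For each face $c$ of $\mathcal B(\bfG,k)$ and each nilpotent orbit $\bar\OO$ of $\bfL_c$ over $\barF_q$, I would express the coefficient of $\bar\OO$ in the finite-group local character expansion of the $\bfL_c(\mathbb F_q)$-representation $X^{\bfU_c(\mf o)}$ as a nontrivial linear combination of those $c_{\OO'}(X)$ for which $\OO'\in \mathcal N_o(k)$ is a $k$-rational orbit lying in the $K$-rational orbit $\mathcal L_c(\bar\OO)$. This should follow from Howe's matching of Fourier transforms of nilpotent orbital integrals with their finite-field analogues after parahoric averaging, now read through $\mathcal L_c$ rather than only over $k$.

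\textbf{Step 2: The inclusion $\subseteq$.} Let $\OO \in {}^K\widetilde\WF(X)$. By definition there is a $k$-rational orbit $\OO_0$ with $\mathcal N_o(K/k)(\OO_0) = \OO$ and $c_{\OO_0}(X)\ne 0$, which is maximal among such. Via DeBacker's parametrization $\OO_0$ corresponds (up to equivalence) to a pair $(c,\bar\OO)$, and Step 1 then forces some orbit $\bar\OO_1 \succeq \bar\OO$ to lie in ${}^{\barF_q}\WF(X^{\bfU_c(\mf o)})$. Monotonicity of $\mathcal L_c$ together with the maximality of $\OO$ in ${}^K\widetilde\WF(X)$ forces $\mathcal L_c(\bar\OO_1) = \OO$, giving the desired containment. \textbf{Step 3: The reverse inclusion.} Conversely, for $\bar\OO \in {}^{\barF_q}\WF(X^{\bfU_c(\mf o)})$, Step 1 yields a $k$-rational $\OO_0$ with $c_{\OO_0}(X)\ne 0$ and $\mathcal N_o(K/k)(\OO_0) = \mathcal L_c(\bar\OO)$. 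If $\mathcal L_c(\bar\OO)$ failed to be maximal in ${}^K\widetilde\WF(X)$, Step 1 applied in reverse at a face $c'$ witnessing a strictly larger orbit would produce $\bar\OO' \in {}^{\barF_q}\WF(X^{\bfU_{c'}(\mf o)})$ with $\mathcal L_{c'}(\bar\OO') \succ \mathcal L_c(\bar\OO)$, contradicting the maximum on the right-hand side. To restrict $c$ to a fixed chamber I would use $\bfG(k)$-equivariance: both sides are invariant under the $\bfG(k)$-action, with $X^{\bfU_{gc}(\mf o)} \cong \pi(g)X^{\bfU_c(\mf o)}$ and $\mathcal L_{gc} = g\cdot \mathcal L_c$, and every face is $\bfG(k)$-conjugate into a fixed chamber.

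\textbf{Main obstacle.} The crux is Step 1: producing a sufficiently precise finite-to-infinite matching between the representation-theoretic wavefront data on the reductive quotients $\bfL_c(\mathbb F_q)$ and the $K$-rational coefficients $c_\OO(X)$. The subtlety is that the classical Barbasch--Moy--DeBacker dictionary records $k$-rational information, while ${}^K\widetilde\WF(X)$ only sees $K$-rational orbits. One must verify that the equivalence classes of building data in DeBacker's parametrization are precisely the fibres of $\mathcal L_c$ over $K$-rational orbits, and that Fourier transforms of nilpotent orbital integrals commute with base change to $K$ in the way required for both inclusions to go through; controlling this base-change behaviour and the monotonicity of $\mathcal L_c$ with respect to the closure order is where the real technical work lies.
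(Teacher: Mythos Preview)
Your overall architecture is close to the paper's, but Step 1 as you have stated it is not correct and would not support Steps 2--3. There is no ``finite-group local character expansion'' whose coefficients are linear combinations of the $c_{\OO'}(X)$ restricted to $k$-rational orbits lying in a \emph{single} $K$-rational orbit $\mathcal L_c(\bar\OO)$. What the paper actually uses is the inflated generalised Gelfand--Graev characters $f_{c,\OO'}$ (for $\OO'\in\mathcal N_o^{\bfL_c}(\mathbb F_q)$) and the identity $\Theta_X(f_{c,\OO'})=\langle\Gamma_{\OO'},\check X^{\bfU_c(\mf o)}\rangle$ together with the triangularity $\hat\mu_{\OO''}(f_{c,\OO'})=0$ unless $\mathcal L_c(\OO')\le \OO''$ (Theorem~\ref{thm:bm}). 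So $\Theta_X(f_{c,\OO'})$ is a linear combination of $c_{\OO''}(X)$ over \emph{all} $\OO''\ge \mathcal L_c(\OO')$, not just those in one $K$-orbit; your formulation of Step 1 loses exactly the triangularity that drives the argument.

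The paper packages this by introducing the intermediate set $\Xi(X)=\{\OO\in\mathcal N_o(k):\exists\,(c,\OO')\in I_o^k(\OO)\text{ with }\Theta_X(f_{c,\OO'})\ne 0\}$ and first proving $\WF(X)=\Xi^{max}(X)$ from the triangularity; then $^K\widetilde\WF(X)=\max\mathcal N_o(K/k)(\Xi(X))$ is immediate. The containment $^K\WF_c(X)\subseteq\mathcal N_o(K/k)(\Xi(X))$ is then clear, and for a maximal $\OO$ in $\mathcal N_o(K/k)(\Xi(X))$ one picks $(c,\OO')\in I_o^k$ lifting it with $c\in\mathscr C$, finds a Kawanaka wavefront orbit $\OO_2'\ge\mathcal N_o(\barF_q/\mathbb F_q)(\OO')$, and uses monotonicity of $\mathcal L_c$ plus maximality to force $\mathcal L_c(\OO_2')=\OO$. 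Your Step~3 also contains a logical slip: from $\bar\OO\in{}^{\barF_q}\WF(X^{\bfU_c(\mf o)})$ you cannot conclude $c_{\OO_0}(X)\ne 0$ for an $\OO_0$ with $\mathcal N_o(K/k)(\OO_0)=\mathcal L_c(\bar\OO)$; you only get $\Theta_X(f_{c,\bar\OO'})\ne 0$, which could be caused entirely by strictly larger orbits. Working with $\Xi(X)$ rather than directly with the $c_{\OO}(X)$ is precisely what absorbs this difficulty.
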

This easily yields the following corollary.
\begin{corollary}
    \label{cor:geomwf}
    [Proposition \ref{prop:wfs}, Theorem \ref{lem:liftwf}]
    Let $(\pi,X)$ be a depth-$0$ representation of $\bfG(k)$.
    Then 
    \begin{equation}
        ^{\bar k}\WF(X) = \max_{c\subseteq c_0} \mathcal N_o(\bar k/K) (\mathcal L_c(\hphantom{ }^{\barF_q}\WF(X^{\bfU_c(\mf o)})))
    \end{equation}
    where $c_0$ is any chamber of $\mathcal B(\bfG,k)$ and $\mathcal N_o(\bar k/k)$ is defined on page 9.
\end{corollary}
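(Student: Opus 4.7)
The plan is to obtain Corollary \ref{cor:geomwf} as a direct consequence of Theorem \ref{thm:localwf} together with the cited Proposition \ref{prop:wfs}. By design, the two invariants $^{\bar k}\WF(X)$ and $^K\widetilde\WF(X)$ are defined by the same collection of coefficients $c_\OO(X)$, only grouped according to different equivalence relations on $\mathcal N_o(k)$. The content of Proposition \ref{prop:wfs} should therefore be the compatibility
\[
^{\bar k}\WF(X) = \max \mathcal N_o(\bar k/K)\bigl(\,^K\widetilde\WF(X)\bigr),
\]
i.e.\ that passing from $K$-rational to $\bar k$-rational nilpotent orbits and then taking the maximum under the closure order recovers the geometric wavefront set. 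Granting this, the corollary reduces to substituting the formula supplied by Theorem \ref{thm:localwf} into the right-hand side.

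Concretely, I would first invoke Proposition \ref{prop:wfs} to write $^{\bar k}\WF(X) = \max \mathcal N_o(\bar k/K)(\,^K\widetilde\WF(X))$. Next, I would apply Theorem \ref{thm:localwf} (in its sharpened form allowing $c$ to range over a fixed chamber $c_0$) to rewrite
\[
^K\widetilde\WF(X) = \max_{c\subseteq c_0}\mathcal L_c\bigl(\,^{\barF_q}\WF(X^{\bfU_c(\mf o)})\bigr).
\]
Plugging this into the displayed equation and using that $\mathcal N_o(\bar k/K)\colon \mathcal N_o(K) \to \mathcal N_o(\bar k)$ is an order-preserving (indeed Galois-quotient) map of posets, so that it commutes with the outer maximum up to an additional maximum collapse, yields the stated identity
\[
^{\bar k}\WF(X) = \max_{c\subseteq c_0}\mathcal N_o(\bar k/K)\bigl(\mathcal L_c(\,^{\barF_q}\WF(X^{\bfU_c(\mf o)}))\bigr).
\]

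The step that deserves the most care is the interchange of $\max$ with the map $\mathcal N_o(\bar k/K)$. This is not completely formal because $\mathcal N_o(\bar k/K)$ need not be injective: two $K$-rational orbits of incomparable closure can be sent to the same or to comparable $\bar k$-rational orbits. What makes the interchange legitimate is monotonicity, namely that $\OO_1 \subseteq \ol{\OO_2}$ in $\mathcal N_o(K)$ implies $\mathcal N_o(\bar k/K)(\OO_1) \subseteq \ol{\mathcal N_o(\bar k/K)(\OO_2)}$ in $\mathcal N_o(\bar k)$; under this, the maxima of a set and of its image correspond. This monotonicity should already be recorded (or be immediate from the definitions) in the section setting up the lifting formalism, so the corollary truly is a short combination of the two prior results rather than a new piece of analysis.
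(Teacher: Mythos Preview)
Your proposal is correct and follows exactly the paper's approach: the corollary is stated without a separate proof, being an immediate combination of Proposition~\ref{prop:wfs}(2), which gives $^{\bar k}\WF(X) = \max\{\mathcal N_o(\bar k/K)(\OO):\OO\in{}^K\widetilde\WF(X)\}$ using that $\mathcal N_o(\bar k/K)$ is non-decreasing, and Theorem~\ref{lem:liftwf}. The monotonicity you flag as the only delicate point is precisely what the paper cites as the proof of Proposition~\ref{prop:wfs}.
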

This corollary cleanly repackages the main idea of \cite[Section 5.1]{barbaschmoy}  (indeed it makes precise \cite[Proposition 5.2]{barbaschmoy}) and in a sense this alone would be a satisfactory conclusion to our foray into unramified territory.
However from a philosophical perspective Theorem \ref{thm:localwf} suggests that the unramified wavefront set is natural in its own right and warrants further investigation. 

The next section of this paper investigates the unramified wavefront set closer.
The main practical obstruction to doing this is the partially ordered set $\mathcal N_o(K)$ for which very little is known.
The main result of this section is a natural parameterisation of this set which depends equivariantly on the choice of hyperspecial point.
To make this precise, let us fix some notation.
Let $\bfG_K$ denote the base change of $\bfG$ along $\Spec(K)\to \Spec(k)$ (recall by assumption that $\bfG_K$ is split).
Let $\mathcal B(\bfG,K)$ be the Bruhat-Tits building for $\bfG_K(K)=\bfG(K)$ and let $\bfT_K$ be a maximal $K$-split torus of $\bfG_K$.
A hyperspecial face of $\mathcal B(\bfG,K)$ is a face that contains a hyperspecial point.
Let $\mathscr H$ denote the set of hyperspecial faces of $\mathcal B(\bfG,K)$ and $\bfG(K)\backslash\mathscr H$ denote the set of $\bfG(K)$-orbits of hyperspecial faces.
From the root data attached to $\bfT_K$ we may construct complex reductive groups $G$ (resp. $G^\vee$) with the same (resp. dual) root data.
Let $\mathcal N_{o,c}$ denote the set of all pairs $(\OO,C)$ where $\OO$ is a complex nilpotent orbit of $G$ and $C$ is a conjugacy class of $A(\OO)$ - the $G$-equivariant fundamental group of $\OO$.
Let $Z_G$ denote the center of $G$ and $A(Z_G)$ be the group of components of $Z_G$.
In Section \ref{par:equiv} we define an action of $A(Z_G)$ on $\mathcal N_{o,c}$ and a simply transitive action of $A(Z_G)$ on $\bfG(K)\backslash \mathscr H$.
\begin{theorem}
    \label{thm:unramparam}
    [Theorem \ref{thm:unramifiedparam},Theorem \ref{thm:naturality},Proposition \ref{prop:equivariance}]
    For each orbit $\mathscr O\in \bfG(K)\backslash \mathscr H$ there is a bijection 
    \begin{equation}
        \theta_{\mathscr O,\bfT_K}:\mathcal N_o(K) \xrightarrow{\sim} \mathcal N_{o,c}.
    \end{equation}
    This map is natural in $\bfT_K$ and $A(Z_G)$-equivariant in $\mathscr O$.
\end{theorem}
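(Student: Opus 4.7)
The plan is to realize $\theta_{\mathscr O, \bfT_K}$ as the composition of two classical parameterizations. The first is DeBacker's classification \cite{debacker} of unramified rational nilpotent orbits by equivalence classes of pairs $(F, \mathcal O_F)$, where $F$ is a facet of $\mathcal B(\bfG, K)$ and $\mathcal O_F$ is a distinguished nilpotent orbit in the reductive quotient $\bfL_F(\barF_q)$. The second is Sommers' bijection between the set $\mathcal N_{o,c}$ and the set of $G$-conjugacy classes of pairs $(L, \OO_L)$, where $L$ is a pseudo-Levi subgroup of the complex reductive group $G$ and $\OO_L$ is a distinguished nilpotent orbit of $L$. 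Because $K$ is strictly henselian with algebraically closed residue field $\barF_q$, the reductive quotients $\bfL_F$ are genuine split connected reductive groups over $\barF_q$, avoiding the Galois-theoretic subtleties present in the analogous story over $k$; and the large residue characteristic hypothesis lets us transport distinguished nilpotent orbits between $\barF_q$ and $\C$ via the characteristic-free Bala--Carter classification.

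To build the map, I would fix a hyperspecial vertex $x_0 \in \mathcal A(\bfT_K)$ representing the orbit $\mathscr O$, together with a chamber $c_0$ whose closure contains $x_0$. By \cite{debacker}, every class in $\mathcal N_o(K)$ admits a representative attached to some facet $F \subseteq \bar c_0$. The pair $(x_0, \bfT_K)$ identifies the reductive quotient at $x_0$ with the $\barF_q$-form of $G$; more generally, the standard correspondence between facets of $\bar c_0$ and subsets of the extended Dynkin diagram of $\bfT_K$ identifies each $\bfL_F$ with a pseudo-Levi of $G$. Transporting $\mathcal O_F$ via Bala--Carter yields a distinguished nilpotent orbit $\OO_{L_F}$ in the complex pseudo-Levi $L_F$, and Sommers' map sends the conjugacy class of $(L_F, \OO_{L_F})$ to a pair $(\OO, C) \in \mathcal N_{o,c}$; this is $\theta_{\mathscr O, \bfT_K}$ applied to the original orbit.

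The hard part will be showing that this is well-defined and bijective, that is, that DeBacker's equivalence relation on pairs $(F, \mathcal O_F)$ matches Sommers' equivalence on pseudo-Levi pairs $(L_F, \OO_{L_F})$ under the above identifications. Both are controlled by the stabilizer of $c_0$ in the extended affine Weyl group $\tilde W$: on the DeBacker side it acts by elements fixing $\bar c_0$ setwise together with the associativity/lifting moves between facets of different dimensions, and on the Sommers side it acts by diagram automorphisms of the extended Dynkin diagram and $G$-conjugation of pseudo-Levis. Once this dictionary is set up, naturality in $\bfT_K$ follows automatically from the fact that any two maximal $K$-split tori of $\bfG_K$ are $\bfG(K)$-conjugate and both sides transform consistently under such a conjugation. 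For $A(Z_G)$-equivariance, I would identify the simply transitive action of $A(Z_G)$ on $\bfG(K)\backslash \mathscr H$ with the action of $\Omega := \tilde W / W_{\mathrm{aff}}$ permuting the hyperspecial vertices of $\bar c_0$ via diagram automorphisms of the extended Dynkin diagram; this induces an action on the set of pseudo-Levis, hence on Sommers pairs, which unwinds to precisely the action on $\mathcal N_{o,c}$ defined in Section \ref{par:equiv}.
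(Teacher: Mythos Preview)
Your overall strategy is exactly the paper's: compose DeBacker's parameterisation of $\mathcal N_o(K)$ by facet data with a pseudo-Levi parameterisation of $\mathcal N_{o,c}$, linked via the dictionary between facets of $\bar c_0$ and pseudo-Levis of $G$. There is, however, a genuine gap in how you match the two equivalence relations.

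The claim that both equivalences are ``controlled by the stabilizer of $c_0$ in $\tilde W$'' is false. Take $\bfG$ of type $G_2$: the facets $c(\{\alpha_0\})$ and $c(\{\alpha_1\})$ of $\bar c_0$ give equivalent DeBacker data (both lift the long-root $A_1$ orbit), yet the stabiliser $\Omega$ of $c_0$ is trivial, so no diagram automorphism relates them; nor are they related by a move between facets of different dimensions. The correct statement is that DeBacker's equivalence on $I_{o,d,\mathcal A}^K$ is the full $\tilde W$-action (Section~\ref{par:abc}), and this matches the $W$-action on pseudo-Levi data after projecting along $\tilde W\to W$. Relatedly, your invocation of Sommers' bijection is incomplete: $\mathcal N_{o,c}$ is not parameterised by $G$-conjugacy classes of pairs $(L,\OO_L)$ but by triples $(L,tZ^\circ,\OO_L)$ with $L=C_G^\circ(tZ^\circ)$ (Theorem~\ref{thm:somninch}); the coset $tZ^\circ$ is exactly what distinguishes facets giving the same pseudo-Levi but different classes in $\mathcal N_{o,c}$. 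The paper supplies the missing ingredient: a canonical $W$-equivariant bijection $\lambda:\mathscr F_T\xrightarrow{\sim}\mathscr A/X_*$ between pseudo-Levi pairs $(L,tZ^\circ)$ and admissible affine subspaces modulo $X_*$ (Proposition~\ref{prop:admissiblesubset}), built from Pontryagin duality on the compact torus $\mathbb T=V/X_*$. This is what makes the map $\mf L_{x_0}$ from facets to $\mathscr F_T$ (Corollary~\ref{cor:mfL}) well-defined, makes Theorem~\ref{thm:debackbij} go through, and underlies the equivariance statement (Lemma~\ref{lem:translation}, Proposition~\ref{prop:equivariance}): translating the hyperspecial base point by $y_0$ multiplies the $tZ^\circ$ coordinate by a fixed central element, which is how the $A(Z_G)$-action on $\mathcal N_{o,c}$ arises---not via $\Omega$ acting on extended Dynkin diagrams.
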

We first remark that the set $\mathcal N_{o,c}$ might look familiar to readers for its resemblance to the parameters arising in the Springer correspondence. 
In that setting one considers the set of pairs $(\OO,\rho)$ where $\OO$ is a complex nilpotent orbit of $G$ and $\rho$ is an \emph{irreducible representation} of $A(\OO)$.
The similarity is of course not precise, but certainly suggests some tantalising connections.
More than simply being suggestive however, the set $\mathcal N_{o,c}$ is well studied in its own right.
It naturally arises (non-canonically) as the parameterising set for nilpotent orbits of finite groups of Lie type and in Sommers' work generalising the Bala--Carter theorem for nilpotent orbits \cite{Sommers2001}.
It is also the domain for a powerful extension of the incredibly important Barbasch--Lusztig--Spaltenstein--Vogan duality map $d:\mathcal N_o\to \mathcal N_o^\vee$ going from complex nilpotent orbits of $G$ to complex nilpotent orbits of $G^\vee$.
This extension $d_S:\mathcal N_{o,c}\to \mathcal N_o^\vee$, discovered by Sommers, but also apparent in earlier work by Lusztig, extends the map $d$ in the sense that
$$d_S(\OO,1) = d(\OO)$$
where $1$ denotes the trivial conjugacy class, and is notable because in contrast to $d$, the map $d_S$ is surjective.
Fixing an orbit $\mathscr O\in \bfG(k)\backslash \mathscr H$ and using the bijection $\theta_{\mathscr O,\bfT_K}$ from Theorem \ref{thm:unramparam} one can of course interpret the map $d_S$ as a map 
$$d_{S,\mathscr O,\bfT_K}:\mathcal N_o(K) \to \mathcal N_o^\vee.$$ 
Although this map ostensibly depends on $\mathscr O$, we show in Proposition \ref{prop:indpd} that it is in fact \emph{independent} of the choice of $\mathscr O$.
Thus let us discard the $\mathscr O$ from the notation and simply write 
$$d_{S,\bfT_K}:\mathcal N_o(K)\to \mathcal N_o^\vee.$$
Having discarded with the dependence on $\mathscr O$, and given the naturality of $d_{S,\bfT_K}$ in $\bfT_K$, the map $d_{S,\bfT_K}$ certainly seems natural (in the colloquial sense of the word. Indeed one might wonder if it admits a more intrinsic construction than the one we have given).
This raises the obvious - and pertinent - question: is $d_S$ an order reversing map?
From a purely philosophical perspective the answer ought to be yes - duality maps \emph{should} be order reversing.
Indeed we provide some evidence to support this belief in Lemma \ref{lem:orderrev}.
However the partial order on $\mathcal N_o(K)$ is difficult to study and the bijection $\theta_{\mathscr O,\bfT_K}$ is not so well suited to give easy answers to this question.
The best we can do in this paper is leave this as a conjecture.
\begin{conjecture}
    \label{conj:orderrev}
    The map $d_{S,\bfT_K}:\mathcal N_o(K)\to \mathcal N_o^\vee$ is order reversing.
\end{conjecture}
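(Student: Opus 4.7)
The plan is to reduce the conjecture to a combinatorial statement about the Sommers parameters $\mathcal N_{o,c}$ and then to verify it using the known order-theoretic properties of $d_S$. Concretely, fix a hyperspecial orbit $\mathscr O \in \bfG(K)\backslash\mathscr H$ and a torus $\bfT_K$, and use $\theta_{\mathscr O,\bfT_K}$ to transport the closure order on $\mathcal N_o(K)$ to a partial order $\preceq$ on $\mathcal N_{o,c}$. Since $d_S$ restricted to pairs $(\OO,1)$ recovers the classical Barbasch--Lusztig--Spaltenstein--Vogan duality $d$, which is order-reversing, the novel content of Conjecture \ref{conj:orderrev} lies in how non-trivial conjugacy classes in $A(\OO)$ interact with $\preceq$.

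The first main step would be to give an explicit description of $\preceq$ in Bala--Carter--Sommers terms. Here I would exploit Theorem \ref{thm:localwf} and the lifting maps $\mathcal L_c$: a covering relation in $\mathcal N_o(K)$ should be witnessed by compatible representatives in the Lie algebras of reductive quotients $\bfL_c$ of parahorics of $\bfG(K)$, with the lower orbit lying in the closure of the upper orbit at the level of $\bfL_c$. Passing through $\theta_{\mathscr O,\bfT_K}$, one translates this into a relation on pairs $(\OO,C)$: the expected description is that $(\OO_1,C_1)\preceq (\OO_2,C_2)$ when there exist pseudo-Levi subgroups $L_1 \subseteq L_2$ of $G$, together with distinguished orbits $\OO_i^{L_i} \subseteq \mathrm{Lie}(L_i)$ whose Sommers data recover $(\OO_i,C_i)$, and such that $\OO_1^{L_1}$ lies in the closure of the saturation of $\OO_2^{L_2}$ in $L_1$ (or a similar inductive condition).

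The second step is to verify that $d_S$ reverses such an order. Since $d_S(\OO,C)$ is obtained by Lusztig--Spaltenstein induction of a zero orbit from the pseudo-Levi attached to $(\OO,C)$, and since induction of nilpotent orbits is well-behaved with respect to inclusions of Levi/pseudo-Levi subgroups, the desired monotonicity should reduce to the order-reversing property of $d$ for connected reductive groups — i.e.\ to the classical result — applied inside an ambient pseudo-Levi. The evidence noted in Lemma \ref{lem:orderrev} would presumably be promoted to the full statement by this kind of reduction.

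The main obstacle I expect is the first step: making $\preceq$ on $\mathcal N_{o,c}$ explicit. The closure order on $K$-rational nilpotent orbits is strictly finer than the closure order on geometric orbits, and, unlike the geometric order, it is not obviously governed by Bala--Carter data alone — it involves the cocycle information that enters $\theta_{\mathscr O,\bfT_K}$ through the choice of hyperspecial orbit. If a purely combinatorial description in $\mathcal N_{o,c}$ proves elusive, a fallback strategy is a type-by-type verification: for classical $G$ one could parameterize $\mathcal N_o(K)$ via partitions decorated by conjugacy-class data, describe the closure order by an explicit dominance-type condition, and match it against the combinatorial description of $d_S$ given by Achar or Sommers; for exceptional $G$ one would check the (finite) Hasse diagrams directly against published tables for $d_S$.
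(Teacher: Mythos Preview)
The paper does not prove this statement: it is left as an open conjecture. The only evidence the paper offers is Lemma~\ref{lem:orderrev}, which shows that $d_{S,\bfT_K}$ reverses \emph{lifted} closure relations, together with Theorem~\ref{thm:closurerels}, which decomposes every closure relation in $\mathcal N_o(K)$ into a lifted relation followed by a \emph{degenerate} one. So the genuine obstruction, in the paper's own framing, is the degenerate relations --- those witnessed by $\OO_2\cap\mathcal C(c,\ms x)\neq\emptyset$ for $(c,\ms x)\in I^K(\OO_1)$ --- and the paper explicitly says ``the partial order on $\mathcal N_o(K)$ is difficult to study and the bijection $\theta_{\mathscr O,\bfT_K}$ is not so well suited to give easy answers to this question.''

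Your proposal is a reasonable outline of how one might attack the conjecture, but it does not get past this obstruction. Your ``first step'' --- an explicit combinatorial description of the transported order $\preceq$ on $\mathcal N_{o,c}$ --- is precisely what the paper identifies as the hard part, and your sketch of it (pseudo-Levis $L_1\subseteq L_2$ with compatible distinguished orbits) captures only the lifted relations already handled by Lemma~\ref{lem:orderrev}; it says nothing about degenerate relations, where two distinct $K$-orbits in the same geometric orbit are compared via intersections with cosets $\mathcal C(c,\ms x)$. Your ``second step'' then assumes the first is done. The fallback type-by-type verification is a legitimate strategy in principle, but the closure order on $\mathcal N_o(K)$ is not known in general even for classical types, so this would not be routine bookkeeping but a substantial project in its own right. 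In short: there is no proof in the paper to compare against, and your proposal restates the difficulty rather than resolving it.
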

But let us set aside this issue for the moment.
The work of Achar in \cite{achar} strongly suggests that there is merit in \emph{declaring} $d_{S,\bfT_K}$ to be order-reversing.
What we mean by this is we introduce a new order on $\mathcal N_o(K)$ where $d_S$ is order reversing by design.
For $\OO_1,\OO_2\in \mathcal N_o(K)$ define 
$$\OO_1\le_A\OO_2\quad  \text{ if } \quad \mathcal N_o(\bar k/K)(\OO_1)\le \mathcal N_o(\bar k/K)(\OO_2) \text{ and } d_{S,\bfT_K}(\OO_1)\ge d_{S,\bfT_K}(\OO_2).$$
This is the \emph{finest} pre-order on $\mathcal N_o(K)$ that makes $\mathcal N_o(\bar k/K)$ order preserving and $d_{S,\bfT_K}$ order reversing.
The naturality of $d_{S,\bfT_K}$ in $\bfT_K$ means that this pre-order is independent of the choice of maximal $K$-split torus $\bfT_K$.
It is not a partial order because there are orbits $\OO_1,\OO_2\in \mathcal N_o(K)$ such that $\OO_1\le_A \OO_2$ and $\OO_2\le_A \OO_1$ but $\OO_1 \ne \OO_2$.
However, if we define $\OO_1\sim\OO_2$ when $\OO_1\le_A\OO_2$ and $\OO_2\le_A\OO_1$ then $\le_A$ descends to a partial order on $\mathcal N_o(K)/\sim_A$ and we can also obtain a explicit parameterisation of this set.
Let $\mathcal N_{o,\bar c}$ denote the set of all pairs $(\OO,\bar C)$ of complex nilpotent orbits of $G$ and conjugacy classes $\bar C$ of $\bar A(\OO)$ - Lusztig's canonical quotient of $A(\OO)$ (in the sense of \cite[Section 5]{Sommers2001}).
Let $\mf Q:\mathcal N_{o,c}\to \mathcal N_{o,\bar c}$ be the map induced by the quotient map $A(\OO)\to \bar A(\OO)$ (see the discussion preceding Proposition \ref{prop:indpd} for details).
\begin{theorem}
    [Theorem \ref{thm:thetabar}]
    Let $\mathscr O\in \bfG(K)\backslash \mathscr H$.
    The composition $\mf Q\circ\theta_{\mathscr O,\bfT_K}$ descends to a natural (in $\bfT_K$) bijection 
    $$\bar\theta_{\bfT_K}:\mathcal N_o(K)/\sim_A \xrightarrow{\sim} \mathcal N_{o,\bar c}$$
    which does not depend on $\mathscr O$. 
\end{theorem}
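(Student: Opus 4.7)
The plan is to decompose the statement into three claims --- factoring through $\sim_A$ as a bijection, independence of $\mathscr O$, and naturality in $\bfT_K$ --- and reduce each to results already at hand. Fix $\mathscr O$ and abbreviate $\theta = \theta_{\mathscr O, \bfT_K}$, writing $\theta(\OO) = (\OO^{\mathrm{geom}}, C_{\OO})$. Two setup observations: by construction of $\theta$ in Theorem \ref{thm:unramparam}, the first projection $\mathcal N_{o,c} \to \mathcal N_o$ intertwines $\theta$ with $\mathcal N_o(\bar k/K)$, and by the definition preceding Proposition \ref{prop:indpd} we have $d_S \circ \theta = d_{S, \bfT_K}$ (independent of $\mathscr O$). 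The key external input is the fiber description of Sommers' duality due to Sommers and Achar: for a fixed complex nilpotent $\OO$, two conjugacy classes $C_1, C_2$ of $A(\OO)$ satisfy $d_S(\OO, C_1) = d_S(\OO, C_2)$ if and only if they have the same image in Lusztig's canonical quotient $\bar A(\OO)$. Equivalently, the induced map $\bar d_S : \mathcal N_{o,\bar c} \to \mathcal N_o^\vee$ is injective on each first component, so $\mf Q$ collapses precisely the $d_S$-fibers within a given $\OO^{\mathrm{geom}}$.

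Combining these, $\OO_1 \sim_A \OO_2$ --- that is, same $\mathcal N_o(\bar k/K)$-image and same $d_{S, \bfT_K}$-image --- is equivalent to $\mf Q(\theta(\OO_1)) = \mf Q(\theta(\OO_2))$. This simultaneously yields descent to $\mathcal N_o(K)/\sim_A$ and injectivity of the induced map $\bar\theta_{\bfT_K}$; surjectivity follows from bijectivity of $\theta$ and surjectivity of $\mf Q$. For independence of $\mathscr O$, the $A(Z_G)$-equivariance in Theorem \ref{thm:unramparam} shows that replacing $\mathscr O$ by $z \cdot \mathscr O$ twists $\theta$ by the $A(Z_G)$-action on $\mathcal N_{o,c}$, which is trivial on the first component and acts on $C$ through the natural map $A(Z_G) \to A(\OO)$. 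A key property of Lusztig's canonical quotient is that the composite $A(Z_G) \to A(\OO) \to \bar A(\OO)$ is trivial; hence the induced action on $\mathcal N_{o,\bar c}$ is trivial and $\mf Q \circ \theta$ does not depend on $\mathscr O$. Naturality in $\bfT_K$ is inherited from the naturality of $\theta_{\mathscr O, \bfT_K}$ in Theorem \ref{thm:unramparam}, since $\mf Q$ is canonically defined on $\mathcal N_{o,c}$.

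The main obstacle I anticipate is the Sommers--Achar fiber description: once this is cited, or verified in the paper's own conventions, everything else becomes formal. Confirming that the paper's $\mf Q$ --- defined via the literal quotient $A(\OO) \to \bar A(\OO)$ on conjugacy classes --- matches the equivalence relation generated by $d_S$-fibers is the combinatorial crux of the argument and is precisely the content of the extended Sommers duality. The triviality of $Z_G$ in $\bar A(\OO)$, while used crucially for the $\mathscr O$-independence, is a standard property of the canonical quotient and should be a short verification.
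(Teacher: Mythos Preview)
Your proposal is correct and follows essentially the same approach as the paper. The paper's proof is extremely terse: it cites \cite[Theorem 1]{achar} directly for the statement that the fibres of $\mf Q$ on $\mathcal N_{o,c}$ are precisely the $\sim_A$-equivalence classes, which is exactly your ``Sommers--Achar fiber description''; the $\mathscr O$-independence is handled just before the theorem via the $Z_G/Z_G^\circ$-equivariance of $\mf Q$ (citing \cite[Section 5]{Sommers2001}, which amounts to your claim that the composite $A(Z_G)\to A(\OO)\to \bar A(\OO)$ is trivial); and naturality in $\bfT_K$ is inherited from $\theta_{\mathscr O,\bfT_K}$ just as you say.
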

Crucially for us, the partial order $\le_A$ on $\mathcal N_{o}(K)/\sim_A$ is considerably easier to compute in practice than the closure ordering on $\mathcal N_o(K)$.
This motivates our next definition.
We define the \emph{canonical unramified wavefront set} to be 
%
$$^K\WF(X):=\max_{c\subseteq \mathcal B(\bfG,k)}[\mathcal L_c(\hphantom{ }^{\barF_q}\WF(X^{\bfU_c(\mf o)}))] \quad (\subseteq \mathcal N_o(K)/\sim_A)$$
where $[\bullet]:\mathcal N_o(K)\to \mathcal N_o(K)/\sim_A$ is the natural quotient map.
The compatibility between $\le_A$ and $\mathcal N_o(\bar k/K)$ ensures that an analogue of Corollary \ref{cor:geomwf} (Theorem \ref{thm:algwf}) holds for $^K\WF(X)$, but $^K\WF(X)$ also has the added benefit that it is frequently (conjecturally always) a singleton.
When $^K\WF(X)$ is a singleton, if we view $^K\WF(X)$ as an element of $\mathcal N_{o,\bar c}$ (via $\bar \theta_{\bfT_K}$) and $^{\bar k}\WF(X)$ as an element of $\mathcal N_o$ (under the natural isomorphism between $\mathcal N_o(\bar k)$ and $\mathcal N_o$), then $^K\WF(X)$ takes the form
$$^K\WF(X) = (\hphantom{ }^{\bar k}\WF(X),\bar C)$$
for some $\bar C\in \bar A(\hphantom{ }^{\bar k}\WF(X))$.
For those familiar with representations of real reductive groups, this is reminiscent of the associated cycle of a representation, but of course differs crucially in that we are dealing with conjugacy classes rather than irreducible representations, and with $\bar A(\hphantom{ }^{\bar k}\WF(X))$ instead of $A(\hphantom{ }^{\bar k}\WF(X))$.

Beyond being the culmination of a series of `natural' considerations, it is not a priori clear what utility this new invariant has.
Although we will not address this matter in this paper, in joint work with Dan Ciubotaru and Lucas Mason-Brown \cite{cmo}, we use the canonical unramified wavefront set to construct anti-tempered Arthur packets for $p$-adic groups.
This approach crucially relies on the information encoded in the canonical unramified wavefront set and fails if one attempts to use the geometric wavefront set instead.

Let us now briefly digress to explain the terminology used.
In the language of partial orders, Conjecture \ref{conj:orderrev} is equivalent to $[\bullet]:\mathcal N_o(K)\to \mathcal N_o(K)/\sim_A$ being a homomorphism.
Under the assumption that this is true then
$$^K\WF(X) = \max\{[\mathcal N_o(K/k)(\OO)]:c_{\OO}(X)\ne 0\}$$
for depth-$0$ representations and so indeed $^K\WF(X)$ is a `wavefront set'.
If we further assume that all the orbits of $^K\tilde\WF(X)$ lie in a single geometric orbit, then $^K\WF(X)$ simply picks out the $\sim_A$ classes of elements in $^K\tilde\WF(X)$ that minimise $d_S$.
Conjecturally there is a unique such class - a `canonical' such class with respect to this property if you will.
Perhaps `distinguished' would have been a better modifier, but that adjective already has an important meaning in the context of nilpotent orbits.

The third and final section of this paper is dedicated to developing the tools needed to compute $^K\WF(X)$ for irreducible representations in the principal block of $\mathrm{Rep}(\bfG(k))$, the category of smooth complex $\bfG(k)$-representations, when $\bfG$ is split over $k$. 
Recall that the principal block of $\mathrm{Rep}(\bfG(k))$, which we denote $\mathrm{Rep}_{\bfI}(\bfG(k))$, consists of those representations that are generated by their Iwahori fixed vectors and is equivalent to the category of modules of the Iwahori--Hecke algebra $\mathcal H_{\bfI}$ of $\bfG(k)$.
Moreover, when $X$ is admissible, by the theory of unrefined minimal $K$-types, the representations $X^{\bfU_c(\mf o)}$ are sums of principal series unipotent representations.
The wavefront sets of such representations have a particularly simple expression connected to the Hecke algebra of $\bfL_c(\mathbb F_q)$.
We use the compatibility of these Hecke algebras with the Iwahori--Hecke algebra and a simple deformation argument to obtain an explicit algorithm in Theorem \ref{thm:locwf} for computing $^K\WF(X)$.

Finally, we use the tools developed in this section to compute $^K\WF(X)$ and $^{\bar k}\WF(X)$ for the spherical Arthur representations of a split adjoint group over $k$.
For $\mb G$ a reductive group defined and split over a number field $F$, these are expected to be the spherical representations arising as local factors of irreducible subrepresentations of $L^2_{disc}(\mb G(F)\backslash \mb G(\mathbb A_F))$ (see \cite{as1}, \cite{as2}, \cite{as3}, \cite{as4}, \cite{as5}, \cite{as6}, for proofs in various special cases. See \cite{as7} for a uniform proof that all the spherical Arthur representations arise in this way). 
Note that knowledge of the geometric wavefront set of spherical Arthur representations provides valuable structural insight for automorphic representations. 
In particular the geometric wavefront set of the local factors bound the Fourier coefficients of the automorphic form (see \cite{gomez2020whittaker}).

We now state our results for the non-archimedean spherical Arthur representations in terms of their Arthur parameters.
Let $\mb G$ be defined and split over the $p$-adic field $k$.
Let $G^\vee$ denote the compelx Langalnds dual group of $\bfG$, $W_k$ be the Weil group of $k$, and 
$$WD_k = W_k\times \SL(2,\C)$$
the Weil--Deligne group of $k$.
Let $(\pi,X)$ be the spherical Arthur representation of $\bfG(k)$ lying in the Arthur packet $\psi:WD_k\times\SL(2,\C)\to G^\vee$ that is trivial on $WD_k$.
Within this packet, $X$ is the representation corresponding to the trivial representation of $A_\psi$ - the component group of the centraliser of the image of $\psi$.
Let $\psi_0 = \psi\mid_{1\times\SL(2,\C)}$ and
\begin{equation}
    \ms n = d(\psi_0)\left(\begin{pmatrix}
        0 & 1 \\ 0 & 0
    \end{pmatrix}\right).
\end{equation}
The nilpotent orbit $\OO^\vee:= G^\vee. \ms n$ completely determines the representation $X$ among the spherical Arthur representations and so we refer to $X$ as the spherical Arthur representation with parameter $\OO^\vee$.
The final ingredient that we need in order to state the result is a refinement $d_A$ due to Achar \cite{achar} of the duality map $d_S$. 
Let $\mathcal N^\vee_{o,c},\mathcal N^\vee_{o,\bar c}$ be the sets $\mathcal N_{o,c},\mathcal N_{o,\bar c}$ defined realative to $G^\vee$ instead of $G$. The duality $d_A$ is a map
\[d_A: \mathcal N^\vee_{o,\bar c}\rightarrow \mathcal N_{o,\bar c}
\]
satisfying certain properties. In particular,
\[d_A(\OO^\vee,1)=(d(\OO^\vee),\bar C'),
\]
for some class $C'$ which is the trivial class when $\OO^\vee$ is special in the sense of Lusztig. 
Using the bijection $\bar\theta_{\bfT_K}$ we view $d_A$ as a map $\mathcal N_{o,\bar c}^\vee \to \mathcal N_o(K)/\sim_A$.
Similarly, using the bijection from Lemma \ref{lem:pom} we view $d$ as a map $\mathcal N_o^\vee \to \mathcal N_o(\bar k)$.

\begin{theorem}
    [Theorem \ref{thm:arthurwf}]
    Let $X$ be the spherical Arthur representation with parameter $\OO^\vee\in \mathcal N_o^\vee$.
    Then $^K\WF(X)$ is a singleton and 
    \begin{equation}
        \hphantom{ }^{K}\WF(X) = d_A(\OO^\vee,1),\quad \hphantom{ }^{\bar k}\WF(X) = d(\OO^\vee).
    \end{equation}
\end{theorem}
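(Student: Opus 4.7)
The plan is to apply the algorithm from Theorem \ref{thm:locwf} to the spherical Arthur representation $X$ and match the resulting maximum with Achar's combinatorial description of $d_A(\OO^\vee,1)$. The second statement $^{\bar k}\WF(X) = d(\OO^\vee)$ will then follow automatically by projecting $^K\WF(X)$ via $\mathcal N_o(\bar k/K)$, using the fact that $d_A$ refines $d_S$ and that $d_A(\OO^\vee,1)$ projects to $d(\OO^\vee)$ by construction of the Achar duality.

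First, I would identify the Iwahori-Hecke module $X^{\bfI}$: since $\psi$ is trivial on $WD_k$, the associated L-parameter is unramified with Frobenius semisimple image $s^\vee := \psi_0(\mathrm{diag}(q^{1/2},q^{-1/2})) \in G^\vee$, so by Kazhdan--Lusztig--Ginzburg classification $X^{\bfI}$ is the irreducible $\mathcal H_{\bfI}$-module attached to the triple $(s^\vee, 0, \mathrm{triv})$. For each vertex $c$ of a fixed chamber $c_0 \subseteq \mathcal B(\bfG,k)$, apply Theorem \ref{thm:locwf}: the module $X^{\bfU_c(\mf o)}$ is a sum of unipotent principal series of $\bfL_c(\mathbb F_q)$ whose KLG data is obtained from $(s^\vee,0,\mathrm{triv})$ by restriction to the parabolic subalgebra $\mathcal H_c \subseteq \mathcal H_{\bfI}$ attached to $c$, and corresponds explicitly to nilpotent orbits in the pseudo-Levi $\bfL_c^\vee \subseteq G^\vee$ indexed by $c$. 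Lusztig's formula (extending Kawanaka) then identifies $^{\barF_q}\WF$ of each such unipotent principal series as the Lusztig--Spaltenstein dual of the corresponding orbit in $\bfL_c^\vee$.

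Finally, apply the lifting $\mathcal L_c$ and take the maximum over the vertices of $c_0$, and recognise this maximum as $d_A(\OO^\vee,1) \in \mathcal N_{o,\bar c}$ under the bijection $\bar\theta_{\bfT_K}$. Achar's duality is defined via an analogous optimisation over pseudo-Levi subgroups of $G^\vee$ built on Sommers' extension of Bala--Carter theory; since pseudo-Levis of $G^\vee$ are in bijection with proper subsets of nodes of the extended Dynkin diagram, and these are exactly the vertices of $c_0$, the structural match between the two optimisations is transparent. The main obstacle will be the delicate bookkeeping of component groups: one must track the conjugacy class in $\bar A$ that is produced by the lifting $\mathcal L_c$ and verify that it agrees with the component data prescribed by Achar's construction, which is the content of the $(\OO^\vee,1) \mapsto (d(\OO^\vee), \bar C')$ clause. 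Once this matching is established, the theorem follows.
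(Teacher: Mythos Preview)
Your outline has the right shape---apply Theorem \ref{thm:locwf} at each face, compute the Kawanaka wavefront sets, lift, and take the max---but it is missing the key computational input, and one step is not well-defined as written.

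The gap is in how you propose to determine the constituents of $X^{\bfU_c(\mf o)}$. Saying that ``the KLG data is obtained from $(s^\vee,0,\mathrm{triv})$ by restriction to $\mathcal H_c$'' is not a meaningful operation: restricting an irreducible $\mathcal H_{\bfI}$-module to a parabolic subalgebra $\mathcal H_c$ does not simply restrict the parameter, and there is no general closed formula for the decomposition. You therefore have no mechanism for bounding which $E\in\mathrm{Irr}(W_c)$ appear in $X|_{W_c}$, and without that you cannot establish either the upper bound $[\mathcal L_c(\OO^s(E))]\le_A d_A(\OO^\vee,1)$ or that the bound is attained.

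The paper supplies this input by a different route. It passes to the Aubert--Zelevinsky dual $\ms{AZ}(X)$, observes that it is \emph{tempered} (since $s_0=1$), hence equal to its standard module, and therefore $(X^{\bfI})_{q\to 1}\cong H^*(\mathcal B^\vee_{\ms n})^{triv}$ as a $\tilde W$-representation with $X_*$ acting trivially. This Springer-fibre description is what makes the restrictions tractable: for any $J\in\mathbf P(\tilde\Delta)$ one has $[E:\Res_{W_J}^{\tilde W}H^*(\mathcal B^\vee_{\ms n})]=\dim H^*_{\ms n}K_{J,E}$ for an explicit perverse sheaf $K_{J,E}$, and a support argument for $K_{J,E}$ (via the decomposition theorem and \cite[Proposition 4.3]{achar_aubert}) gives the upper bound $\OO^\vee\le d_{S,\bfT_K}(c(J),\OO^s(E))$. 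Theorem \ref{thm:canoninv} then converts this into $[\mathcal L_c(\OO^s(E))]\le_A d_A(\OO^\vee,1)$. For the lower bound one checks, using truncated induction, that for any $(c(J),\OO')\in I_{o,d,c_0}^K$ mapping into $\tilde\theta_{\bfT_K}^{-1}(d_A(\OO^\vee,1))$ the relevant multiplicity is exactly $1$. Your proposal contains neither the Springer-fibre identification nor the perverse-sheaf support argument, and the ``structural match'' with Achar's optimisation is not enough to replace them.
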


\subsection*{Acknowledgements}
I would like to thank Dan Ciubotaru and Kevin McGerty for their invaluable guidance and many helpful conversations.
I would also like to thank Konstantin Ardakov, Jessica Fintzen, Lucas Mason-Brown and Beth Romano for their careful proof reading of this paper, Maarten Solleveld for helpful comments and corrections on an earlier draft of this paper, Marcelo De Martino for teaching me about the Bruhat-Tits building, and Jonas Antor and Ruben La for many enlightening discussions on perverse sheaves.
The author was supported by the Aker Scholarship.

\section{The Wavefront Set}
\paragraph{Basic Notation}
\label{par:basicnotation1}
Let $k$ be a non-archimedean local field of characteristic $0$ with residue field $\mathbb{F}_q$ of sufficiently large characteristic and ring of integers $\mathfrak{o} \subset k$. 
\nomenclature{$k$}
\nomenclature{$\mf o$}{}
\nomenclature{$\mathbb F_q$}{}
\nomenclature{$\bar k$}{}
\nomenclature{$K$}{}
\nomenclature{$\mf O$}{}
\nomenclature{$\barF_q$}{}
\nomenclature{$\chi$}{}
Let $\mf p\subset \mf o$ be the maximal ideal of $\mf o$, fix an algebraic closure $\bar{k}$ of $k$ and let $K \subset \bar{k}$ be the maximal unramified extension of $k$ in $\bar{k}$. 
Let $\mf O$ be the ring of integers of $K$.
The residue field of $K$ is an algebraic closure for $\mathbb F_q$ so we write $\barF_q$ for the residue field of $K$.
Let $\chi:k\to \C^\times$ be an additive character of $k$ that is trivial on $\mf p$ and non-trivial on $\mf o$.
$\chi$ descends to a character of $\mathbb F_q$ and we will refer to the resulting character also as $\chi$.

\nomenclature{$\bfG$}{}
\nomenclature{$\mf g$}{}
\nomenclature{$\bfG_K$}{}
\nomenclature{$\bfT_K$}{}
\nomenclature{$\bfG_K$}{}
\nomenclature{$X^*(\bfT_K,\bar k)$}{}
\nomenclature{$X_*(\bfT_K,\bar k)$}{}
\nomenclature{$\Phi(\bfT_K,\bar k)$}{}
\nomenclature{$\Phi^\vee(\bfT_K,\bar k)$}{}
Let $\bfG$ be a connected reductive algebraic group defined over $k$, that splits over an unramified extension of $k$.
Let $\mf g$ denote its Lie algebra.
Let $\bfG_K$ denote the base change of $\bfG$ along $$\Spec(K)\to \Spec(k).$$
Note that $\bfG_K$ is a split group.
Let $\bfT_K \subset \mathbf{G}_K$ be a $K$-split maximal torus. 
For any field extension $F$ of $k$, we write $\bfG(F)$, $\mf g(F)$ etc. for the $F$ rational points.

\nomenclature{$\mathcal R(\bfG_K,\bfT_K)$}{}
Write $X^*(\bfT_K,\bar k)$ (resp. $X_*(\bfT_K,\bar k)$) for the lattice of algebraic characters (resp. co-characters) of $\mathbf{T}_K$, $\langle,\rangle$ for the canonical pairing between $X^*(\bfT_K,\bar k)$ and $X_*(\bfT_K,\bar k)$, and write $\Phi(\mathbf{T}_K,\bar k)$ (resp. $\Phi^{\vee}(\mathbf{T}_K,\bar k)$) for the set of roots (resp. co-roots) of $\bfG_K$. 
Write
$$\mathcal R(\bfG_K,\bfT_K)=(X^*(\mathbf{T}_K,\bar k), \ \Phi(\mathbf{T}_K,\bar k),X_*(\mathbf{T}_K,\bar k), \ \Phi^\vee(\mathbf{T}_K,\bar k), \ \langle \ , \ \rangle)$$
for the absolute root datum of $\mathbf{G}$, and let $W$ be the associated (finite) Weyl group. 
\nomenclature{$\bfG_\Z$}{}
\nomenclature{$\bfT_\Z$}{}
Let $\mathbf{G}_\Z$ be the connected reductive algebraic group defined (and split) over $\Z$ with split maximal torus $\bfT_\Z$ such that the root datum of $\bfG_\Z$ with respect to $\bfT_\Z$ is isomorphic to $\mathcal R$.
Let $\mathbf{G}_\Z^\vee$ be the Langlands dual group of $\bfG$, i.e. the connected reductive algebraic group corresponding to the root datum 
$$\mathcal R^\vee=(X_*(\mathbf{T}_K,\bar k), \ \Phi^{\vee}(\mathbf{T}_K,\bar k),  X^*(\mathbf{T}_K,\bar k), \ \Phi(\mathbf{T}_K,\bar k), \ \langle \ , \ \rangle)$$
defined (and split) over $\Z$.
\nomenclature{$\bfG^\vee_\Z$}{}
\nomenclature{$\bfT^\vee_\Z$}{}
Set $T^\vee=X^*(\bfT_K,\bar k)\otimes_\Z \C^\times$, regarded as a maximal torus in $G^\vee:=\bfG_\Z^\vee(\C)$ with Lie algebra $\mathbf{\mathfrak t}^\vee=X^*(\bfT_K,\bar k)\otimes_{\mathbb Z} \mathbb C$, a Cartan subalgebra of the Lie algebra $\mathbf{\mathfrak g}^\vee$ of $\bfG^\vee$. 
Define
\begin{align}\label{eq:real}
\begin{split}
    T^\vee_{\mathbb R} &=X^*(\bfT_K,\bark)\otimes_{\mathbb Z} {\mathbb R}_{>0}\\
    \mathbf{\mathfrak t}_{\mathbb R}^\vee &= X^*(\bfT_K,\bark)\otimes_{\mathbb Z} \mathbb R\\
    T^\vee_c &=X^*(\bfT_K,\bark)\otimes_{\mathbb Z} S^1.
\end{split}
\end{align}
There is a polar decomposition $T^\vee=T^\vee_c T ^\vee_{\mathbb R}$.
\nomenclature{$T^\vee$}{}
\nomenclature{$T_\R^\vee$}{}
\nomenclature{$T_c^\vee$}{}
\nomenclature{$\mf t^\vee$}{}
\nomenclature{$\mf t_\R^\vee$}{}

\nomenclature{$\mathrm{Field}_k$}{}
\nomenclature{$\mathcal N(\bullet)$}{}
\nomenclature{$\mathcal N_o(\bullet)$}{}
Let $\mathrm{Field}_k$ denote the category of field extensions of $k$.
Let $\mathcal N$ be the functor from $\mathrm{Field}_k$ to $\mathrm{Set}$ which takes a field extension $F$ of $k$ to the set of nilpotent elements of $\mf g(F)$.
By nilpotence in this context we mean the unstable points (in the sense of GIT) with respect to the adjoint action of $\bfG(F)$, see \cite[Section 2]{debacker}.
For $F$ algebraically closed this coincides with all the usual notions of nilpotence.
Let $\mathcal N_o$ be the functor which takes $F$ to the set of orbits in $\mathcal N(F)$ under the adjoint action of $\bfG(F)$.
For $\ms H\in \bfG(F)$ and $\ms x\in \mf g(F)$ we write $\ms H.\ms x$ for the adjoint action of $\ms H$ on $\ms x$.
We briefly remark how $\mathcal N_o$ behaves on morphisms.
Given field extensions $F_1,F_2\in\mathrm{Field}_k$ and a morphism $F_1\to F_2$ we have natural inclusion maps 
$$\mf g(F_1)\to \mf g(F_2) \text{ and } \bfG(F_1)\to \bfG(F_2).$$
Thus given a $\bfG(F_1)$ orbit $\OO\subset \mathcal N(F_1)$ we can form the orbit 
$$\bfG(F_2).\OO\subset \mathcal N(F_2).$$
We define $\mathcal N_o(F_1\to F_2)(\OO)$ to be this orbit.
When we wish to emphasis the group we are working with we include it as a superscript e.g. $\mathcal N_o^{\bfG_\Z}$.

When $F$ is algebraically closed, we view $\mathcal N_o(F)$ as a partially ordered set with respect to the closure ordering in the Zariski topology.
When $F$ is $k$ or $K$, we view $\mathcal N_o(F)$ as a pre-ordered set with respect to the closure ordering in the topology induced by the topology on $F$.
When $F=k$ it is well known that the pre-order is a partial order \cite[Section 2.5]{debacker_homog}.
When $F=K$ we will show in \ref{cor:partialorder} that the pre-order is a partial order.
For brevity we will write $\mathcal N(F'/F)$ (resp. $\mathcal N_o(F'/F)$) for $\mathcal N(F\to F')$ (resp. $\mathcal N_o(F\to F')$) where $F\to F'$ is a morphism of fields.

Recall the following classical result.
\begin{lemma}[\cite{Pommerening},\cite{Pommerening2}]\label{lem:pom}
    Let $F \in \mathrm{Field}_k$ be algebraically closed with good characteristic for $\bfG$.
    Then there is canonical isomorphism of partially ordered sets $\Lambda_{\bfT_K}^F:\mathcal N_o(F)\xrightarrow{\sim}\mathcal N_o^{\bfG_\Z}(\C)$.
    \nomenclature{$\Lambda_{\bfT_K}^F$}{}
\end{lemma}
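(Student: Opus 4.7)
The plan is to reduce the claim to the Bala--Carter--Pommerening classification of nilpotent orbits in a reductive group over an algebraically closed field of good characteristic. Fix a Borel subgroup $\bfB_K \supset \bfT_K$ of $\bfG_K$. This datum, together with $\bfT_K$, canonically identifies the root datum $\mathcal R(\bfG_K,\bfT_K)$ with the root datum of $(\bfG_\Z,\bfT_\Z)$, and in particular matches the standard Levi subgroups of $\bfG_F$ with those of $\bfG_\Z(\C)$.

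First I would set up the parameterization. By Bala--Carter, extended to good characteristic by Pommerening, $\mathcal N_o(F)$ is in canonical bijection with the set of $\bfG(F)$-conjugacy classes of pairs $(\mathbf L,\mathcal O_\mathbf L)$ where $\mathbf L\subseteq \bfG_F$ is a Levi subgroup and $\mathcal O_\mathbf L\subset \mf l(F)$ is a distinguished nilpotent orbit. Up to conjugacy one may take $\mathbf L$ to be standard with respect to $(\bfB_K,\bfT_K)$, and two such Levis are $\bfG(F)$-conjugate iff their defining subsets of simple roots are $W$-conjugate. The distinguished orbits in $\mf l(F)$ are in turn parameterised by the distinguished weighted Dynkin diagrams attached to the root system of $\mathbf L$. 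Both pieces of data depend only on the root datum, so the resulting parameter set is intrinsic. Applying the parameterization on both sides and transporting via the identification of root data induced by $(\bfT_K,\bfB_K)$ yields the bijection of sets $\Lambda_{\bfT_K}^F:\mathcal N_o(F)\xrightarrow{\sim}\mathcal N_o^{\bfG_\Z}(\C)$. Independence from the auxiliary choice of $\bfB_K$ follows because any two Borels containing $\bfT_K$ are $N_{\bfG(F)}(\bfT_K)$-conjugate, and the Bala--Carter parameter is unchanged under this action.

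The main obstacle is the order-preservation statement, since Bala--Carter on its own is a bijection of sets. For classical types this is immediate: under the Jordan-type parameterization both closure orders become the dominance order on partitions, which is manifestly intrinsic to the combinatorics. For exceptional types one must invoke the case-by-case verification due to Spaltenstein, Carter and others that, in good characteristic, the closure order on nilpotent orbits coincides under Bala--Carter with the closure order in characteristic zero. A more uniform route proceeds via the scheme-theoretic nilpotent cone $\mathcal N$ of $\bfG_\Z$ over $\mathrm{Spec}\,\Z[1/N]$ for a suitable $N$: the Bala--Carter strata form locally closed $\bfG_\Z$-invariant subschemes that are flat over the base with irreducible geometric fibres, so a dimension-and-specialisation argument identifies orbit closures across fibres. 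Either approach promotes $\Lambda_{\bfT_K}^F$ to the required isomorphism of partially ordered sets.
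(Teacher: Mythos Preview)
The paper does not give a proof of this lemma at all; it simply records it as a classical result with citations to Pommerening and moves on, adding only the remark that $\Lambda_{\bfT_K}^F$ is natural in $\bfT_K$. So there is no ``paper's own proof'' to compare against, and your sketch already supplies more detail than the paper itself.

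Your outline is sound: the Bala--Carter--Pommerening parameterisation gives the bijection of sets, and you correctly flag that order preservation is the nontrivial part, handled either by the partition/dominance picture in classical types together with the Spaltenstein--Carter tables in exceptional types, or by a specialisation argument over $\mathrm{Spec}\,\Z[1/N]$. One small caution: the phrase ``canonical'' in the statement is doing real work (the map must be independent of the auxiliary Borel), and you address this via $N_{\bfG(F)}(\bfT_K)$-conjugacy, which is the right mechanism---but note that the paper's Remark immediately following the lemma makes exactly this point about naturality in $\bfT_K$, so your treatment aligns with what the author had in mind.
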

\begin{remark}
    We include the $\bfT_K$ as a subscript because the definition of the group $\bfG_\Z$ depends on the choice of torus.
    However $\Lambda_{\bfT_K}^{F}$ is natural in $\bfT_K$ in an analogous sense to Theorem \ref{thm:naturality}.
\end{remark}

When $F$ is algebraically closed let $\mathcal N_{o,sp}(F)$ denote the set of special orbits in the sense of Lusztig \cite[Definition 13.1.1]{chars}.
\nomenclature{$\mathcal N_{o,sp}(\bullet)$}{}

\paragraph{Buildings, Parahorics and Associated Notation}
\label{sec:buildings}
\nomenclature{$\mathcal B(\bfG,k),\mathcal B(\bfG,K)$}{}
\nomenclature{$\mathcal A(\bfT,k),\mathcal A(\bfT,K)$}{}
\nomenclature{$\mathcal A(c,\mathcal A)$}{}
Let $\mathcal B(\bfG,k)$ (resp. $\mathcal B(\bfG,K)$) denote the (enlarged) Bruhat--Tits building for $\bfG(k)$ (resp. $\bfG(K)$).
We identify $\mathcal B(\bfG,k)$ with the $\Gal(K/k)$-fixed points of $\mathcal B(\bfG,K)$.
We use the notation  $c\subseteq \mathcal B(\bfG)$ to indicate that $c$ is a face of $\mathcal B$.
Given a maximal $k$-split torus $\bfT$, write $\mathcal A(\bfT,k)$ for the corresponding apartment in $\mathcal B(\bfG,k)$.
For an apartment $\mathcal A$ of $\mathcal B(\bfG,k)$ and any subset $\Omega\subseteq \mathcal A$ we write $\mathcal A(\Omega,\mathcal A)$ for the smallest affine subspace of $\mathcal A$ containing $\Omega$.
\nomenclature{$\bfP_c^\dagger$}{}
\nomenclature{$\bfP_c$}{}
\nomenclature{$\bfU_c$}{}
\nomenclature{$\bfL^\dagger_c$}{}
\nomenclature{$\bfL_c$}{}
\nomenclature{$\bfp_c$}{}
\nomenclature{$\bfu_c$}{}
\nomenclature{$\bfl_c$}{}
For a face $c\subseteq \mathcal B(\bfG,k)$ there is a group scheme $\bfP_c^\dagger$ defined over $\Spec(\mf o)$ such that $\bfP_c^\dagger(\mf o)$ identifies with the stabiliser of $c$ in $\bfG(k)$. There is an exact sequence \cite[Section 1.2]{reeder}
\begin{equation}
    1 \to \bfU_c(\mf o) \to  \bfP_c^\dagger(\mf o) \to  \bfL_c^\dagger(\mathbb F_q) \to 1,
\end{equation}
where $\bfU_c(\mf o)$ is the pro-unipotent radical of $\bfP_c^\dagger(\mf o)$ and $\bfL_c^\dagger$ is the reductive quotient of the special fibre of $\bfP_c^\dagger$.
Let $\bfL_c$ denote the identity component of $\bfL_c^\dagger$, and let $\bfP_c$ be the subgroup of $\bf P_c^\dagger$ defined over $\mf o$ such that $\bfP_c(\mf o)$ is the inverse image of $\bfL_c(\mathbb F_q)$ in $\bfP_c^\dagger(\mf o)$.
The groups $\bfP_c$ are called \emph{parahoric} subgroups of $\bfG(k)$.
We have analogous short exact sequences
\begin{equation}
    1 \to \bfU_c(\mf o) \to  \bfP_c(\mf o) \to  \bfL_c(\mathbb F_q) \to 1,
\end{equation}
and one on the level of the Lie algebra
\begin{equation}
    0 \to \bfu_c(\mf o) \to  \bfp_c(\mf o) \to  \bfl_c(\mathbb F_q) \to 0.
\end{equation}
When $c$ is a chamber in the building, then we call $\bfP_c$ an \emph{Iwahori subgroup} of $G$. 
Let 
$$\bfu = \bigcup_{c\subseteq \mathcal B(\bfG,k)}\bfu_c(\mf o)\subseteq \mf g(k), \quad \bfU = \bigcup_{c\subseteq \mathcal B(\bfG,k)}\bfU_c(\mf o)\subseteq \bfG(k).$$
These are the \emph{topologically nilpotent} and \emph{topologically unipotent} elements of $\mf g(k)$ and $\bfG(k)$ respectively.
\nomenclature{$\bfU$}{}
\nomenclature{$\bfu$}{}

\paragraph{Fourier Transforms}
By \cite[Proposition 4.1]{adler_roche}, for $p$ sufficiently large (see the reference for a precise bound for $p$), there exists a symmetric, non-degenerate $\bfG(k)$-invariant bilinear form 
\nomenclature{$\ms {B}$}{}
$$\ms B:\mf g(k)\times \mf g(k) \to k$$ 
such that for every face $c$ of $\mathcal B(\bfG,k)$ we have 
$$\bfp_c(\mf o) = \set{\ms X\in \mf g(k): \ms B(\ms X,\ms Y) \in \mf p, \forall Y\in \bfu_c(\mf o)}.$$
Such a bilinear form naturally descends for each face $c$ of $\mathcal B(\bfG,k)$ to a symmetric, non-degenerate $\bfL_c(\mathbb F_q)$-invariant bilinear form 
$$\ms B_c:\bfl_c(\mathbb F_q)\times \bfl_c(\mathbb F_q)\to \mathbb F_q.$$
Fix a Haar measure $\mu_{\mf g(k)}$ on $\mf g(k)$.
\nomenclature{$\FT(f),\hat f$}{}
For a function $f\in C_c^\infty(\mf g(k))$ we define \emph{the Fourier transform of $f$} to be
\begin{equation}
    \hat f(\ms X) := \FT_{\mf g(k)}(f)(\ms X) := \int_{\mf g(k)}\chi(\ms B(\ms X,\ms Y)) f(\ms Y)d\mu_{\mf g(k)}(\ms Y).
\end{equation}
Let $c$ be a face of $\mathcal B(\bfG,k)$ and $h:\bfl_c(\mathbb F_q)\to \C$ a function.
We define \emph{the Fourier transform of $h$} to be
\begin{equation}
    \hat h(\ms x) := \FT_{\bfl_c(\mathbb F_q)}(h)(\ms x) := \sum_{\ms y\in\bfl_c(\mathbb F_q)}\chi(\ms B_c(\ms x,\ms y)) h(\ms y).
\end{equation}
We define $\tilde h:\mf g(k)\to \C$ to be the function given by 
\begin{equation}
    \tilde h(\ms X) = 
    \begin{cases}
        h(\ms X+\bfu_c(\mf o)) & \text{if } \ms X\in \bfp_c(\mf o) \\
        0 & \text{ otherwise}.
    \end{cases}
\end{equation}
We say \emph{$\tilde h$ is inflated form $h$} and we have that 
$$\FT_{\mf g(k)}(\tilde h)(\ms X) = \mu_{\mf g(k)}(\bfu_c(\mf o))\cdot \tilde{\FT_{\bfl_c(\mathbb F_q)}(h)}(\ms X).$$

\paragraph{The Harish-Chandra-Howe Local Character Expansion}
\label{sec:hchlce}
Let $\exp:\bfu\to \bfU$ be the exponential map defined in \cite[Lemma 3.2]{barbaschmoy} and \cite[Section 3.3]{whomg} (the exponential map exists since we are assuming $p$ is sufficiently large. See the references for the precise bounds on $p$).
\nomenclature{$\exp$}{}
The map $\exp:\bfu\to \bfU$ has the property that for every face $c$ of $\mathcal B(\bfG,k)$, we have 
$$\exp(\bfu_c(\mf o)) = \bfU_c(\mf o),$$
and $\exp$ descends to the exponential from the nilpotent elements of $\bfl_c(\mathbb F_q)$ to the unipotent elements of $\bfL_c(\mathbb F_q)$.
For a function $f\in C_c^\infty(\bfU)$ let $f\circ \exp$ denote the function in $C_c^\infty(\mf g(k))$ given by 
\begin{equation}
    f\circ \exp(\ms X) = 
    \begin{cases}
        f(\exp(\ms X)) & \text{if } \ms X \in \bfu \\
        0 & \text{if } \ms X\not\in \bfu.
    \end{cases}
\end{equation}
For $\OO\in \mathcal N_o(k)$ let $\mu_{\OO}$ denote the corresponding nilpotent orbital integral.
We have the following result due to DeBacker (building on work by Waldspurger \cite{whomg}).
\begin{theorem}
    \cite[Theorem 3.5.2]{debacker_homog}
    Let $(\pi,X)$ be a depth-$0$ admissible representation of $\bfG(k)$.
    Then there exists $c_{\OO}(X)\in \C$ for each $\OO\in\mathcal N_o(k)$ such that for $f\in C_c^\infty(\bfU)$ we have
    \begin{equation}
        \Theta_X(f) = \sum_{\OO\in \mathcal N_o(k)} c_{\OO}(X)\hat \mu_{\OO}(f\circ \exp).
    \end{equation}
\end{theorem}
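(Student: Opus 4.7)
The plan is to prove this as an extension of the classical Harish-Chandra--Howe local character expansion, where the novelty of DeBacker's statement is that the range of validity is extended from an unspecified small neighborhood of the identity to all of $\bfU$. The starting point is Harish-Chandra's original result, which produces coefficients $c_\OO(X)\in \C$ (indexed by $\OO\in\mathcal N_o(k)$) and some open neighborhood $\mathcal V$ of $0\in \mf g(k)$ with $\Theta_X(f) = \sum_\OO c_\OO(X)\hat\mu_\OO(f\circ\exp)$ for $f\in C_c^\infty(\exp(\mathcal V))$. The coefficients are intrinsic to $X$, so the task is to enlarge the set on which the identity holds.

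First I would reduce the problem to a single face at a time: since $\bfU = \bigcup_{c}\bfU_c(\mf o)$ and any $f\in C_c^\infty(\bfU)$ has support meeting only finitely many $\bfU_c(\mf o)$, it suffices to prove the identity for $f$ supported in a single $\bfU_c(\mf o)$. Translated back to the Lie algebra via $\exp$, this becomes a statement about functions supported on $\bfu_c(\mf o)$. The standard small-neighborhood expansion is known to hold on $\bfu_c(\mf o_r)$ for $r$ sufficiently large (where $\bfu_c(\mf o_r)$ denotes a deep Moy--Prasad layer). To bridge these, I would invoke Waldspurger's homogeneity theorem for the distributions $\Theta_X$ and $\hat\mu_\OO$: both sides of the expansion transform compatibly under the scaling action of the uniformiser, so an identity on a deep layer propagates outward to all of $\bfu_c(\mf o)$.

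The key input needed to make this rigorous is the depth-$0$ hypothesis, which I would use as follows. By Moy--Prasad theory, a depth-$0$ representation is generated by its $\bfU_{c'}(\mf o)$-fixed vectors for some vertex $c'$, and hence the distribution $\Theta_X$ restricted to $C_c^\infty(\bfu_c(\mf o))$ descends (after inflation) to a class function on $\bfl_c(\F_q)$ for each face $c$. This compatibility is what ensures that the homogeneity-based extension of the expansion can be performed uniformly in $c$, and not just in the direction of scaling inward. Concretely, one matches the Fourier transform identity on $\bfl_c(\F_q)$ to the inflated identity on $\bfu_c(\mf o)$ using the compatibility formula $\FT_{\mf g(k)}(\tilde h) = \mu_{\mf g(k)}(\bfu_c(\mf o))\cdot\widetilde{\FT_{\bfl_c(\F_q)}(h)}$ recorded earlier in the section.

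The main obstacle is the homogeneity step: one must control the Shalika germ expansion at the interface where Waldspurger's scaling argument is applied, and show that the transport from a deep Moy--Prasad layer to the outer layer $\bfu_c(\mf o)$ introduces no error terms. This relies on the fact that on $\bfu_c(\mf o)$ the $\bfG(k)$-invariant distribution $\Theta_X - \sum_\OO c_\OO(X)\hat\mu_\OO\circ\exp$ is, a priori, supported on non-topologically-nilpotent elements once one scales — the depth-$0$ hypothesis is exactly what excludes this possibility, forcing the distribution to vanish on all of $\bfu$.
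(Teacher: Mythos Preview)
The paper does not prove this statement at all: it is quoted verbatim as \cite[Theorem 3.5.2]{debacker_homog} and followed only by a one-sentence remark explaining that the content of the theorem is the enlarged domain of validity (all of $\bfU$) rather than the existence of the coefficients. There is therefore no proof in the paper to compare your proposal against.

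For what it is worth, your sketch is in the right spirit of DeBacker's actual argument (homogeneity of the two sides under Moy--Prasad scaling, with the depth-$0$ hypothesis pinning down the starting level), but several steps as written would not go through. The reduction ``any $f\in C_c^\infty(\bfU)$ has support meeting only finitely many $\bfU_c(\mf o)$'' is not how one localises here; rather one uses that the span of functions inflated from the various $\bfl_c(\mathbb F_q)$ is dense in the relevant space. More substantively, the homogeneity step is not a simple scaling-by-a-uniformiser argument: DeBacker's proof requires his descent/comparison machinery relating invariant distributions on $\mf g(k)$ to those on the reductive quotients $\bfl_c$, together with a careful induction on Moy--Prasad depth, and the passage from ``identity on a deep layer'' to ``identity on $\bfu_c(\mf o)$'' is exactly the hard part that your final paragraph gestures at but does not supply. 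Since the paper treats the result as a black-box citation, none of this is needed here.
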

We remark that the local character expansion, and in particular the coefficients $c_\OO(X)$, always exists for admissible smooth representations.
The point of this theorem is that for depth-$0$ representations the expansion is valid for functions supported on $\bfU$.

\paragraph{The Wavefront Set of Representations of \texorpdfstring{$p$}{p}-adic Groups}
Let $(\pi,X)$ be a smooth admissible representation of $G$.
\nomenclature{$\WF(X)$}{}
\nomenclature{$^K\widetilde \WF(X)$}{}
\nomenclature{$^{\bar k}\WF(X)$}{}
The \emph{($p$-adic) wavefront set} is 
$$\WF(X) := \max_{\OO:c_\OO(X)\ne 0}\OO,$$
the \emph{unramified wavefront set} is
$$^K\widetilde\WF(X) := \max_{\OO:c_\OO(X)\ne 0}\mathcal N_o(K/k)(\OO),$$
and the \emph{geometric wavefront set} is
$$^{\bar k}\WF(X) := \max_{\OO:c_\OO(X)\ne 0}\mathcal N_o(\bar k/k)(\OO).$$
\begin{remark}
    In analogy with real groups and finite groups of Lie type it is expected that $^{\bar k}\WF(X)$ consists of a single nilpotent orbit - $\OO$ say.
    Moreover it is expected that for all $\OO'\in \WF(X)$, $\mathcal N_o(\bar k/k)(\OO') = \OO$ (this is a strictly stronger condition than $^{\bar k}\WF(X)$ being a singleton since a priori there might exist $\OO_1,\OO_2\in \mathcal N_o(k)$ which are incomparable, but $\mathcal N_o(\bar k/k)(\OO_1)<\mathcal N_o(\bar k/k)(\OO_2)$).
\end{remark}

\subsection{Lifting Nilpotent Orbits and Closure Relations}
\paragraph{The Lifting Map}
\label{sec:debacker_param}
Let $h$ be the Coxeter number of the absolute Weyl group for $\mathbf G$.
Since we are assuming $p$ is sufficiently large, we in particular require $p>3(h-1)$ so $p$ satisfies the conditions of section \ref{sec:hchlce} and we can apply the results of \cite{debacker} to $\mf g(k)$.
Let $\mathcal A$ be an apartment of $\mathcal B(\bfG,k)$.
For faces $c_1,c_2$ in $\mathcal A$ with $\mathcal A(c_1,\mathcal A) = \mathcal A(c_2,\mathcal A)$ the projection maps 
$$\bfP_{c_1}(\mf o)\cap \bfP_{c_2}(\mf o) \to \bfL_{c_1}(\mathbb F_q), \quad \bfP_{c_1}(\mf o)\cap \bfP_{c_2}(\mf o) \to \bfL_{c_2}(\mathbb F_q)$$
are both surjective with kernel $\bfU_{c_1}(\mf o)\cap \bfU_{c_2}(\mf o)$ and so there is an isomorphism 
\nomenclature{$i_{c_2,c_1}$}{}
$$i_{c_2,c_1}:\bfL_{c_1}(\mathbb F_q)\to \bfL_{c_2}(\mathbb F_q).$$
We similarly obtain an isomorphism 
\nomenclature{$j_{c_2,c_1}$}{}
$$j_{c_2,c_1}:\bfl_{c_1}(\mathbb F_q) \to \bfl_{c_2}(\mathbb F_q)$$
which is compatible with $i_{c_2,c_1}$ in the following sense:
\begin{equation}
    j_{c_2,c_1}(\ms h.\ms x) = i_{c_2,c_1}(\ms h).j_{c_2,c_1}(\ms x)
\end{equation}
for all $\ms h\in \bfL_{c_1}(\mathbb F_q),\ms x\in \bfl_{c_1}(\mathbb F_q)$.
For $\ms H \in \bfG(k)$ and $\ms x\in \bfl_c(\mathbb F_q)$ let $\ms H.\ms x$ denote the image of $\ms H.\ms X \in \bf p_{\ms Hc}(\mf o)$ in $\bfl_{\ms Hc}(\mathbb F_q)$ where $\ms X$ is any lift of $\ms x$ to $\bfp_c(\mf o)$.
This is well defined because $\ms H.\bfu_c(\mf o) = \bfu_{\ms Hc}(\mf o)$.
Let 
\nomenclature{$I^k$}{}
\nomenclature{$I_d^k$}{}
$$I^k = \{(c,\ms x):c\subseteq \mathcal B(\bfG,k),\ms x \in \mathcal N^{\bfL_c}(\mathbb F_q)\}.$$
Let $I_d^k$ denote the set of pairs $(c,\ms x)\in I^k$ where $\ms x$ is a distinguished nilpotent element of $\bfl_c(\mathbb F_q)$.
For $(c,\ms x)\in I^k$ let $\mathcal C(c,\ms x)$ denote the preimage of $\ms x$ in $\bfp_c(\mf o)$.
\nomenclature{$\sim_k$}{}
\nomenclature{$\mathcal C(c,x)$}{}
For $(c_1,\ms x_1),(c_2,\ms x_2)\in I^k$ we define $(c_1,\ms x_1)\sim_k (c_2,\ms x_2)$ if there exists an $\ms H\in \bfG(k)$ and an apartment $\mathcal A$ such that 
$$\mathcal A(c_2,\mathcal A) = \mathcal A(\ms Hc_1,\mathcal A), \text{ and } \ms x_2 = j_{c_2,\ms Hc_1}({\ms H}.\ms x_1).$$
\nomenclature{$\mathcal L_c$}{}
Given an $(c,\ms x)\in I^k$ one can attach to it, as in \cite{barbaschmoy} and \cite{debacker}, a well defined \emph{nilpotent orbit $\mathcal L_c(\ms x)\in\mathcal N_o(k)$ called its lift}.
It has the following two useful equivalent characterisations (due to DeBacker in \cite[Lemma 5.3.3]{debacker}):
\begin{enumerate}
    \item If $\ms x$ is included into an $\lsl_2$-triple $\ms x,\ms h, \ms y\in \bfl_c(\mathbb F_q)$, and $\ms X,\ms H,\ms Y\in \bfp_c(\mf o)$ is an $\lsl_2$-triple such that their images in $\bfl_c(\mathbb F_q)$ are $\ms x,\ms h,\ms y$ respectively (such an $\lsl_2$-triple always exists), then $\mathcal L_c(\ms x) = {\bfG(k)}.\ms X$.
    \item $\mathcal L_c(\ms x)$ is the \emph{unique minimal element} of $\{\OO \in \mathcal N_o(k):\OO \cap \mathcal C(c,\ms x)\ne \emptyset\}$.
\end{enumerate}
Let $\OO\in \mathcal N_o^{\bfL_c}(\mathbb F_q)$.
The nilpotent orbit $\mathcal L_c(\ms x)$ is independent of the choice of $\ms x\in \OO$; we write $\mathcal L_c(\OO)$ for the resulting nilpotent orbit.
\nomenclature{$I_o^k$}{}
Define 
$$I_o^k = \{(c,\OO):c\subseteq \mathcal B(\bfG,k), \OO\in \mathcal N_o^{\bfL_c}(\mathbb F_q)\}$$
and define $I_{o,d}^k$ to be the subset of $I^k$ consisting of pairs $(c,\OO)$ where $\OO$ is a distinguished nilpotent orbit of $\bfl_c(\mathbb F_q)$.
\nomenclature{$I^k(\OO)$}{}
\nomenclature{$I_o^k(\OO)$}{}
\nomenclature{$I_d^k(\OO)$}{}
\nomenclature{$I_{o,d}^k(\OO)$}{}
For $\OO\in \mathcal N_o(k)$ write $I^k(\OO)$ for the set $\set{(c,\ms x)\in I^k:\mathcal L_c(\ms x) = \OO}$.
Analogously define $I_o^k(\OO), I_d^k(\OO), I_{o,d}^k(\OO)$.
We have the following result due to 
Barbasch and Moy \cite[Corollary 3.7]{barbaschmoy} and DeBacker \cite[Theorem 5.6.1]{debacker} classifying the nilpotent orbits of $\mf g(k)$.
\begin{theorem}
    \label{thm:nilorbit}
    The map $I_d^k\to \mathcal N_o(k), (c,\ms x)\mapsto \mathcal L_c(\ms x)$ descends to a bijective correspondence between $I_d^k/\sim_k$ and $\mathcal N_o(k)$.
\end{theorem}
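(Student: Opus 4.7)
The plan is to verify the three classical properties in turn: well-definedness on $\sim_k$-classes, surjectivity onto $\mathcal N_o(k)$, and injectivity modulo $\sim_k$.

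For well-definedness I would check that each of the two generating moves for $\sim_k$ preserves $\mathcal L_c(\ms x)$. Conjugation by $\ms H\in \bfG(k)$ is immediate from the $\lsl_2$-triple characterisation: lifting $\ms x$ to an $\lsl_2$-triple $(\ms X,\ms H_0,\ms Y)\subseteq \bfp_c(\mf o)$, the triple $(\ms H.\ms X,\ms H.\ms H_0,\ms H.\ms Y)$ lies in $\bfp_{\ms H c}(\mf o)$ and lifts $\ms H.\ms x$, so $\mathcal L_{\ms Hc}(\ms H.\ms x)=\bfG(k).(\ms H.\ms X)=\mathcal L_c(\ms x)$. For the replacement $c_1\leftrightarrow c_2$ along a common affine span, surjectivity of both projection maps $\bfP_{c_1}(\mf o)\cap \bfP_{c_2}(\mf o)\twoheadrightarrow \bfL_{c_i}(\mathbb F_q)$ lets one lift an $\lsl_2$-triple for $\ms x_1$ to a triple in $\bfp_{c_1}(\mf o)\cap \bfp_{c_2}(\mf o)$, whose image in $\bfl_{c_2}(\mathbb F_q)$ is an $\lsl_2$-triple for $j_{c_2,c_1}(\ms x_1)$; applying the $\lsl_2$-characterisation of $\mathcal L$ at both faces shows the two lifts agree.

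For surjectivity I would begin with $\OO\in \mathcal N_o(k)$ and a representative $\ms X\in \OO$. Jacobson--Morozov, valid since $p$ is sufficiently large, yields an $\lsl_2$-triple $(\ms X,\ms H,\ms Y)$ defined over $k$; after $\bfG(k)$-conjugation we may assume the associated cocharacter $\lambda: \mathbb G_m\to \bfG$ factors through a maximal $k$-split torus $\bfT$ with apartment $\mathcal A(\bfT,k)$. The cocharacter $\lambda$ then determines a facet $c\subseteq \mathcal A(\bfT,k)$ (take the facet containing the point $\tfrac12\lambda$ in the apartment) such that $\ms X,\ms H,\ms Y\in \bfp_c(\mf o)$, and the image $\ms x\in \bfl_c(\mathbb F_q)$ of $\ms X$ is distinguished in $\bfl_c$: the centraliser of $\lambda$ in $\bfG$ reduces modulo $\mf p$ to $\bfL_c$, and $\ms x$ admits no Levi inside $\bfL_c$ by the cocharacter characterisation of distinguishedness. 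Hence $(c,\ms x)\in I_d^k$ and $\mathcal L_c(\ms x)=\OO$.

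For injectivity, suppose $(c_1,\ms x_1),(c_2,\ms x_2)\in I_d^k$ satisfy $\mathcal L_{c_1}(\ms x_1)=\mathcal L_{c_2}(\ms x_2)=\OO$. Lift each $\ms x_i$ to an $\lsl_2$-triple $(\ms X_i,\ms H_i,\ms Y_i)\subseteq \bfp_{c_i}(\mf o)$ with $\ms X_i\in \OO$. Since all $\lsl_2$-triples with nilpositive part in a fixed $\bfG(k)$-orbit are $\bfG(k)$-conjugate, after translating $(c_1,\ms x_1)$ by a suitable $\ms H\in \bfG(k)$ (which preserves its $\sim_k$-class) we may assume the two triples coincide. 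Distinguishedness of $\ms x_i$ in $\bfl_{c_i}$ then forces the cocharacter associated to $\ms H_1=\ms H_2$ to recover the facet structure of each $c_i$ canonically; this places $c_1$ and $c_2$ in a common apartment with $\mathcal A(c_1,\mathcal A)=\mathcal A(c_2,\mathcal A)$ and identifies $\ms x_1$ with $\ms x_2$ under $j_{c_2,c_1}$, yielding $(c_1,\ms x_1)\sim_k (c_2,\ms x_2)$. The main obstacle will be this last step: extracting from distinguishedness the conclusion that the two faces share an affine span. This needs the full building-theoretic rigidity of associated cocharacters for distinguished nilpotent elements in parahoric Levis, and is precisely where the distinguishedness hypothesis is used, since without it one can produce many inequivalent pairs lifting to the same $k$-rational orbit by shrinking $c$ within a facet.
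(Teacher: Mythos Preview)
The paper does not prove this theorem; it simply attributes it to Barbasch--Moy \cite[Corollary 3.7]{barbaschmoy} and DeBacker \cite[Theorem 5.6.1]{debacker} and moves on. So there is no in-paper argument to compare against, and your outline is essentially a sketch of the strategy underlying those references.

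That said, your surjectivity step contains a concrete error. The facet ``containing the point $\tfrac12\lambda$'' is \emph{not} the right one: in $\SL_2$ the associated cocharacter of the regular nilpotent is the standard $\lambda$, and $\tfrac12\lambda$ lands in the interior of the alcove, whose reductive quotient $\bfL_c$ is a torus with no nonzero nilpotents. The image $\ms x$ is then zero, certainly not distinguished. More generally, for $\ms X$ distinguished in $\mf g$ the correct facet is a hyperspecial vertex, not a barycentre. DeBacker's actual construction is more delicate: one finds a point $x$ in the building (via an optimisation over the $\lambda$-filtration) at which the $\lsl_2$-triple is integral \emph{and} the image of $\ms X$ in $\bfl_x$ is nonzero, then further refines to reach a facet where the image is distinguished. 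Your claim that ``the centraliser of $\lambda$ in $\bfG$ reduces modulo $\mf p$ to $\bfL_c$'' is exactly what fails for the facet you wrote down.

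Your well-definedness argument is fine, and you correctly identify that injectivity is where distinguishedness is genuinely used. But the step ``distinguishedness forces the cocharacter to recover the facet structure of each $c_i$ canonically'' is not an argument; it is a restatement of what has to be proved, and in DeBacker's paper it occupies several pages of building-theoretic analysis (his Sections~5.5--5.6). If you intend to supply a proof rather than cite one, that is where the real work lies.
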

Note that for all the results in this section we are using the results from \cite{debacker} with $r=0$.

We can similarly define $I^K$, $I_d^K$, $I_o^K$, $I_{o,d}^K$, $I^K(\OO)$, $I_d^K(\OO)$, $I_o^K(\OO)$, $I_{o,d}^K(\OO)$, $\sim_K$, $\mathcal C$, and $\mathcal L_c$ for $\bfG(K)$ and the results in this section hold verbatim for these objects too.
We must be careful however since $K$ is not complete and this is a necessary condition in \cite{debacker}.
The only time this property is used however is in \cite[Lemma 5.2.1]{debacker}.
We give a proof for this result for $r=0$ and base field $K$ which means that the results in this section do indeed hold verbatim for $\bfG(K)$.
\begin{lemma}
    \label{lem:unramlift}
    Let $c$ be a face of $\mathcal B(\bfG,K)$ and let $\ms X,\ms H,\ms Y$ be an $\lsl_2$-triple contained in $\bfp_c(\mf O)$.
    Then 
    \begin{equation}
        {\bfU_c(\mf O)}.(\ms X+\ms c_{\bfu_c(\mf O)}(\ms Y)) = \ms X + \bfu_c(\mf O)
    \end{equation}
    where $\ms c_{\bfu_c(\mf O)}(\ms Y)$ denotes the centraliser of $\ms Y$ in $\bfu_C(\mf O)$.
\end{lemma}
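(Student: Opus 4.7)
The inclusion $\bfU_c(\mf O).(\ms X+\ms c_{\bfu_c(\mf O)}(\ms Y))\subseteq \ms X+\bfu_c(\mf O)$ is immediate: $\bfU_c(\mf O)$ preserves $\bfu_c(\mf O)$ under the adjoint action and acts trivially on $\bfp_c(\mf O)/\bfu_c(\mf O)$, so any such conjugate of $\ms X + \ms Z'$ (with $\ms Z' \in \ms c_{\bfu_c(\mf O)}(\ms Y)$) lies in $\ms X + \bfu_c(\mf O)$. The substance of the lemma is the reverse inclusion. The strategy is to bootstrap DeBacker's Lemma~5.2.1 from complete base fields. The lemma is stated over a complete field because its proof proceeds by successive approximation down the Moy--Prasad filtration and invokes completeness to sum the iterates; $K$ is not complete, but every finite unramified extension $k'/k$ is.

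First I would observe that since $K/k$ is unramified with $\bfG$ split over $K$, one has $K = \bigcup_{k'} k'$ and $\mf O = \bigcup_{k'} \mf o_{k'}$ as $k'$ ranges over the finite unramified extensions of $k$, and the facet structure of $\mathcal B(\bfG, K)$ is already visible at finite level: there exists a finite unramified $k_c/k$ such that $c$ is a face of $\mathcal B(\bfG, k')$ for every finite unramified $k' \supseteq k_c$, with the associated parahoric $\bfP_c$ descending compatibly to an $\mf o_{k'}$-group scheme. Consequently the parahoric data is compatible with the direct limit: $\bfP_c(\mf O) = \bigcup_{k'} \bfP_c(\mf o_{k'})$, and likewise for $\bfU_c(\mf O)$, $\bfp_c(\mf O)$, $\bfu_c(\mf O)$.

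Then, given any $\ms Z \in \bfu_c(\mf O)$, I would fix a basis of $\mf g$ and observe that $\ms X, \ms H, \ms Y, \ms Z$ have finitely many coordinates in $\mf O$, each lying in some finite unramified extension of $k$; taking a compositum, there is a single finite unramified $k'\supseteq k_c$ for which $\ms X, \ms H, \ms Y \in \bfp_c(\mf o_{k'})$ and $\ms Z \in \bfu_c(\mf o_{k'})$. Since $k'$ is complete, DeBacker's \cite[Lemma 5.2.1]{debacker} at depth $r=0$ applies over $k'$ and yields $\ms u \in \bfU_c(\mf o_{k'})$ and $\ms Z' \in \ms c_{\bfu_c(\mf o_{k'})}(\ms Y)$ with $\ms u.(\ms X + \ms Z') = \ms X + \ms Z$. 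Both $\ms u$ and $\ms Z'$ belong to the corresponding $\mf O$-level objects, completing the proof. The only genuine content, and the main obstacle, is verifying the compatibility between $\bfP_c$ over $\mf O$ and over $\mf o_{k'}$ for $k'$ large; this is where the unramified hypothesis is essential, and once it is in hand the lemma reduces cleanly to the complete-field case.
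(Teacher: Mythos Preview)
Your proposal is correct and follows essentially the same approach as the paper: reduce to a finite unramified (hence complete) extension containing the data and invoke \cite[Lemma~5.2.1]{debacker} there. The only difference is that the paper sidesteps your ``main obstacle'' (parahoric compatibility over varying $k'$) by first conjugating $c$ via an element of $\bfG(K)$ into the sub-building $\mathcal B(\bfG,k)\subset\mathcal B(\bfG,K)$ (using transitivity on apartments and that $\bfG_K$ is split), so that $c$ is already a face over $k$ and hence over every finite unramified $F/k$.
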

\begin{proof}
    Since we are only looking at $\bfG(K) = \bfG_K(K)$, and $\bfG_K$ is split, we may as well assume that $\mb G$ is also split over $k$.
    Since $\bfG(K)$ acts transitively on the apartments of $\mathcal B(\bfG,K)$ we may also assume that $c\subseteq\mathcal B(G,k)$.
    There is nothing to prove for the $\subseteq$ direction.
    For the $\supseteq$ direction let $\ms Z\in \ms X+\bfu_c(\mf O)$.
    Since $\ms Z,\ms X,\ms Y\in \mf g(K)$ there is a finite unramified extension $F$ of $k$ such that $\ms Z,\ms X,\ms Y\in \mf g(F)$.
    Let $\mf o_F$ be the ring of integers for $F$.
    Then since $\mf O\cap F = \mf o_F$ we have that $Z\in X+\bfu_c(\mf o_F)$.
    Since $F$ is complete we can apply \cite[Lemma 5.2.1]{debacker} to $\bfG(F)$ and so 
    $${\bfU_c(\mf o_F)}.(\ms X+\ms c_{\bfu_c(\mf o_F)}(\ms Y)) = \ms X + \bfu_c(\mf o_F)$$
    Thus 
    $$Z\in {\bfU_c(\mf o_F)}.(\ms X+\ms c_{\bfu_c(\mf o_F)}(\ms Y)) \subset {\bfU_c(\mf O)}.(\ms X+\ms c_{\bfu_c(\mf O)}(\ms Y))$$
    as required.
\end{proof}

\paragraph{Closure relations}
\label{sec:closurerels}
The set $I_o^K$ comes with additional structure that $I_o^k$ does not.
For $(c_1,\OO_1),(c_2,\OO_2)\in I_o^K$ define 
$$(c_1,\OO_1)\le (c_2,\OO_2) \text{ if } c_1 = c_2 \text{ and } \OO_1 \le \OO_2.$$
The following result is implied by the proofs in \cite[Section 3.14]{barbaschmoy}.
\begin{proposition}
    \label{prop:lifted_rel}
    Let $(c_1,\OO_1),(c_2,\OO_2)\in I_o^K$ and suppose $(c_1,\OO_1)\le (c_2,\OO_2)$.
    Then $\mathcal L_{c_1}(\OO_1)\le \mathcal L_{c_2}(\OO_2)$.
\end{proposition}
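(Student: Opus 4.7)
Since $c_1=c_2=:c$, the plan is to transport the degeneration $\OO_1\subseteq\overline{\OO_2}$ in $\bfl_c(\barF_q)$ up to a $K$-topological closure relation of $\bfG(K)$-orbits of $\lsl_2$-lifts. First, I would pick $\ms x_1\in\OO_1$ together with an $\lsl_2$-triple $(\ms x_1,\ms h_1,\ms y_1)$ in $\bfl_c(\barF_q)$ and lift it to an $\lsl_2$-triple $(\ms X_1,\ms H_1,\ms Y_1)$ in $\bfp_c(\mf O)$. Characterisation~(1) of the lifting map gives $\mathcal L_c(\OO_1)=\bfG(K).\ms X_1$; in good characteristic $\ms H_1$ integrates to a cocharacter $\lambda_1:\mathbb G_m\to\bfG$ satisfying $\Ad(\lambda_1(t))\ms X_1=t^2\ms X_1$.

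Next I would use the transverse slice theorem for the nilpotent cone of $\bfl_c(\barF_q)$: the Slodowy slice $\ms x_1+\mathfrak c_{\bfl_c(\barF_q)}(\ms y_1)$ meets every orbit whose closure contains $\ms x_1$, in particular $\OO_2$. Pick $\ms v\in\mathfrak c_{\bfl_c(\barF_q)}(\ms y_1)$ with $\ms x_1+\ms v\in\OO_2$, and lift it to some $\ms V\in\mathfrak c_{\bfp_c(\mf O)}(\ms Y_1)$; this is possible by the smoothness of the centraliser scheme of $\ms Y_1$ in good characteristic. By Jacobson--Morozov over the complete local ring $\mf O$, the nilpotent $\ms X_1+\ms V\in\bfp_c(\mf O)$ extends to an $\lsl_2$-triple in $\bfp_c(\mf O)$ reducing to one for $\ms x_1+\ms v$, so characterisation~(1) applied to this lift identifies $\bfG(K).(\ms X_1+\ms V)=\mathcal L_c(\OO_2)$.

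Finally, I would contract $\ms X_1+\ms V$ along the $K$-Slodowy slice $\ms X_1+\mathfrak c_{\mf g(K)}(\ms Y_1)$ towards $\ms X_1$. Write $\ms V=\sum_{m\le 0}\ms V_m$ for its $\ms H_1$-weight decomposition; only non-positive weights occur inside the centraliser of $\ms Y_1$ by $\lsl_2$ representation theory. The crucial observation is that for any nilpotent $\ms Z\in\mf g(K)$ and any $u\in K^\times$, the cocharacter $\lambda_{\ms Z}$ attached to an $\lsl_2$-triple of $\ms Z$ satisfies $\Ad(\lambda_{\ms Z}(u))\ms Z=u^2\ms Z$, so scaling any nilpotent orbit representative by a square $u^2\in(K^\times)^2$ stays in that orbit. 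Applying this with $u=t^{-1}$ to the nilpotent element $\Ad(\lambda_1(t))(\ms X_1+\ms V)=t^2\ms X_1+\sum_m t^m\ms V_m$ yields
\begin{equation}
    t^{-2}\Ad(\lambda_1(t))(\ms X_1+\ms V)=\ms X_1+\sum_{m\le 0}t^{m-2}\ms V_m\in\bfG(K).(\ms X_1+\ms V)=\mathcal L_c(\OO_2).
\end{equation}
As $v(t)\to-\infty$ in $K^\times$, each exponent $m-2\le -2<0$ forces $v(t^{m-2})=(m-2)v(t)\to+\infty$, so the family converges in the $K$-topology to $\ms X_1$. Hence $\ms X_1\in\overline{\mathcal L_c(\OO_2)}$ and, by $\bfG(K)$-equivariance of the closure, $\mathcal L_c(\OO_1)=\bfG(K).\ms X_1\subseteq\overline{\mathcal L_c(\OO_2)}$, i.e.\ $\mathcal L_c(\OO_1)\le\mathcal L_c(\OO_2)$. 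The main technical step will be the Jacobson--Morozov lift over $\mf O$ underpinning the identification $\bfG(K).(\ms X_1+\ms V)=\mathcal L_c(\OO_2)$; without the equality one only obtains the bound $\bfG(K).(\ms X_1+\ms V)\ge\mathcal L_c(\OO_2)$ from characterisation~(2), and the final closure inclusion would land in a potentially larger orbit. Establishing the equality is a Hensel-type argument available in good characteristic, after which the square-scaling trick propagates the slice-degeneration cleanly into the $K$-topological closure.
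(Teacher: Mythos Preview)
Your overall strategy—lift an element of the finite Slodowy slice to the $K$-level and then contract along the $\lambda_1$-grading—is sound and is exactly the kind of one-parameter degeneration argument that underlies the Barbasch--Moy proof the paper cites. However, the key step where you assert $\bfG(K).(\ms X_1+\ms V)=\mathcal L_c(\OO_2)$ has a real gap.

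The problem is that an arbitrary lift $\ms V\in\mathfrak c_{\bfp_c(\mf O)}(\ms Y_1)$ of $\ms v$ need not make $\ms X_1+\ms V$ nilpotent at all, let alone give it an $\lsl_2$-triple in $\bfp_c(\mf O)$. Take $\bfG=\GL_2$, $c$ hyperspecial, $\OO_1=\{0\}$, $\OO_2$ the regular nilpotent orbit. Then $\ms X_1=\ms Y_1=0$, so $\mathfrak c_{\bfp_c(\mf O)}(\ms Y_1)=\bfp_c(\mf O)$, and $\ms V=\begin{pmatrix}0&1\\\varpi&0\end{pmatrix}$ is a perfectly good lift of $\ms v=e_{12}$; but $\ms X_1+\ms V$ has nonzero determinant and is semisimple, so no cocharacter $\lambda_{\ms X_1+\ms V}$ with the required scaling exists and your contraction breaks down. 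Your appeal to ``Jacobson--Morozov over the complete local ring $\mf O$'' cannot repair this: first, $\mf O$ is not complete (it is only Henselian, being the integers of the maximal unramified extension); second, even over a complete ring there is no Jacobson--Morozov statement that forces an \emph{arbitrary} lift of a nilpotent element to be nilpotent.

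The fix is to choose the lift more carefully, and the paper already contains the tool you need. By the Waldspurger result quoted in the proof of Theorem~\ref{thm:bm} (applied to the triple $(\ms X_1,\ms H_1,\ms Y_1)$), for any $\ms V\in\mathfrak c_{\bfp_c(\mf O)}(\ms Y_1)$ one has
\[
\ms X_1+\ms V+\bfu_c(\mf O)=\bfU_c(\mf O).\bigl(\ms X_1+\ms V+\mathfrak c_{\bfu_c(\mf O)}(\ms Y_1)\bigr).
\]
Now by characterisation~(1) there is an $\lsl_2$-lift $\ms X_2\in\mathcal L_c(\OO_2)\cap\bigl((\ms x_1+\ms v)+\bfu_c(\mf O)\bigr)=\mathcal L_c(\OO_2)\cap(\ms X_1+\ms V+\bfu_c(\mf O))$. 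Hence $\ms X_2=u.(\ms X_1+\ms V+\ms W)$ for some $u\in\bfU_c(\mf O)$ and $\ms W\in\mathfrak c_{\bfu_c(\mf O)}(\ms Y_1)$. Setting $\ms V'=\ms V+\ms W\in\mathfrak c_{\bfp_c(\mf O)}(\ms Y_1)$ (still a lift of $\ms v$), you get $\ms X_1+\ms V'\in\mathcal L_c(\OO_2)$, and now your contraction argument goes through verbatim with $\ms V'$ in place of $\ms V$. With this correction the proof is complete and agrees in spirit with the Slodowy-slice degenerations used elsewhere in the paper.
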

\nomenclature{$\mathcal L$}{}
In other words, the map $\mathcal L:I_o^K\to \mathcal N_o(K), (c,\OO)\mapsto \mathcal L_c(\OO)$ is non-decreasing.
In section \ref{sec:pseudo-levis} we prove that $\mathcal L$ is in fact strictly increasing.
Let $\OO_1,\OO_2\in \mathcal N_o(K)$ and suppose $\OO_1\le \OO_2$.
\begin{enumerate}[(1)]
    \item We say $\OO_1 \le \OO_2$ is a \emph{lifted closure relation} if there exists a face $c$ of $\mathcal B(\bfG,K)$ and $(c,\OO_i')\in I_o^K(\OO_i)$ for $i=1,2$ such that $(c,\OO_1')\le (c,\OO_2')$ (cf. proposition \ref{prop:lifted_rel}).
    \item We say $\OO_1 \le \OO_2$ is a \emph{degenerate closure relation} if there exists $(c,\ms x)\in I^K(\OO_1)$ such that $\OO_2 \cap \mathcal C(c,\ms x)\ne \emptyset$ (cf. property 2 of section \ref{sec:debacker_param}).
\end{enumerate}
The following proposition shows that every closure relation in $\mathcal N_o(K)$ can be broken down into a lifted closure relation and a degenerate closure relation.
\begin{theorem}
    \label{thm:closurerels}
    Let $\OO_1,\OO_2\in \mathcal N_o(K)$, and suppose $\OO_1 \le \OO_2$.
    Then there exists a $\OO_{1.5}\in \mathcal N_o(K)$ such that $\OO_1\le \OO_{1.5} \le \OO_2$ where $\OO_1\le \OO_{1.5}$ is a lifted closure relation and $\OO_{1.5}\le \OO_2$ is a degenerate closure relation.
\end{theorem}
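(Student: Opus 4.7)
The plan is to prove that every closure relation $\OO_1\le\OO_2$ in $\mathcal N_o(K)$ is in fact itself a degenerate closure relation; the statement then follows immediately on taking $\OO_{1.5}=\OO_1$, since the relation $\OO_1\le\OO_1$ is trivially lifted. The key mechanism is the compatibility of the topology on $\mf g(K)$ with the Moy--Prasad filtration at any parahoric.

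First I would apply Theorem \ref{thm:nilorbit} in the unramified setting (valid over $K$ thanks to Lemma \ref{lem:unramlift}) to choose $(c,\ms x_1)\in I^K(\OO_1)$ with $\mathcal L_c(\ms x_1)=\OO_1$, and then use characterisation (1) of the lift from Section \ref{sec:debacker_param} to produce an $\lsl_2$-triple $\ms X_1,\ms H_1,\ms Y_1\in\bfp_c(\mf O)$ reducing modulo $\bfu_c(\mf O)$ to an $\lsl_2$-triple for $\ms x_1$, so that $\ms X_1\in \OO_1\cap \mathcal C(c,\ms x_1)$. The hypothesis $\OO_1\le\OO_2$ then gives $\ms X_1\in\overline{\OO_2}$ in the topology on $\mf g(K)$. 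Since $\varpi\bfp_c(\mf O)$ is an open $\mf O$-lattice in $\mf g(K)$, the translate $\ms X_1+\varpi\bfp_c(\mf O)$ is an open neighbourhood of $\ms X_1$ and must meet $\OO_2$; pick $\ms X_2$ in this intersection. By the Moy--Prasad filtration one has $\varpi\bfp_c(\mf O)=\bfp_c^{1}(\mf O)\subseteq\bfp_c^{0+}(\mf O)=\bfu_c(\mf O)$, so $\ms X_2\in \ms X_1+\bfu_c(\mf O)\subseteq\bfp_c(\mf O)$ and $\ms X_2$ has the same image $\ms x_1$ in $\bfl_c(\barF_q)$ as $\ms X_1$. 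Hence $\ms X_2\in \OO_2\cap \mathcal C(c,\ms x_1)$, which by definition witnesses $\OO_1\le\OO_2$ as a degenerate closure relation.

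To finish the theorem, set $\OO_{1.5}=\OO_1$: the relation $\OO_{1.5}\le\OO_2$ is the degenerate one just produced, and $\OO_1\le\OO_{1.5}$ is trivially lifted using the same face $c$ and the common $\bfL_c(\barF_q)$-orbit of $\ms x_1$ on both sides. The only delicate checks are the Moy--Prasad inclusion $\varpi\bfp_c\subseteq\bfu_c$, which is standard, and the fact that $\mf O$-lattices such as $\bfp_c(\mf O)$ are open in $\mf g(K)$ despite $K$ not being complete; this follows from $\mf O$ being open in $K$. The main obstacle is psychological rather than mathematical: the statement suggests the lifted step should do substantive work, but the topology on $\mf g(K)$ is in fact fine enough that a single parahoric already captures every closure relation as a degenerate one, making the lifted part trivial.
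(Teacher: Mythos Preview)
Your argument is correct and in fact sharper than the paper's. You observe that $\mathcal C(c,\ms x_1)=\ms X_1+\bfu_c(\mf O)$ is itself an open neighbourhood of $\ms X_1\in\overline{\OO_2}$ (indeed $\bfu_c(\mf O)$ is open, so the detour through $\varpi\bfp_c(\mf O)$ is permissible but unnecessary), whence $\OO_2\cap\mathcal C(c,\ms x_1)\neq\emptyset$ and the relation $\OO_1\le\OO_2$ is already degenerate; taking $\OO_{1.5}=\OO_1$ finishes the theorem.

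The paper proceeds differently: it only uses that $\bfp_c(\mf O)$ is open, pushes $\OO_2\cap\bfp_c(\mf O)$ down to $\bfl_c(\barF_q)$, and argues via continuity of the projection that the image $\OO'$ contains $\ms x_1$ in its \emph{Zariski} closure. It then picks an $\bfL_c(\barF_q)$-orbit $\OO_{1.5}'$ inside $\OO'$ dominating $\OO_1'$ and sets $\OO_{1.5}=\mathcal L_c(\OO_{1.5}')$, which may a priori be strictly larger than $\OO_1$. Your argument shows this machinery is unneeded for the theorem as stated: since the quotient topology on $\bfl_c(\barF_q)$ is discrete, one actually has $\ms x_1\in\OO'$ rather than merely $\ms x_1\in\overline{\OO'}$, so one may always take $\OO_{1.5}'=\OO_1'$. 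The paper's route has the virtue of making the interaction with the Zariski closure order on $\bfl_c(\barF_q)$ explicit, but for the bare statement (and for its only use, in Corollary~\ref{cor:partialorder}) your direct approach suffices and is cleaner.
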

\begin{proof}
    Let $(c,\OO_1')\in I_o^K(\OO_1)$.
    Let $\ms x \in \OO_1'$ and $\ms X\in \mathcal C(c,\ms x)\cap \OO_1$.
    Since $\bfp_c(\mf O)$ is open in $\mf g(K)$, $\bfp_c(\mf O)\cap \OO_2 \ne \emptyset$.
    Let $\OO'$ be the image of $\OO_2\cap \bfp_c(\mf O)$ in $\bfl_c(\overline{\mathbb F}_q)$.
    We claim that $\OO_1' \subseteq \overline{\OO'}$.
    Let $U$ be an open subset of $\bfl_c(\overline{\mathbb F}_q)$ containing $\ms x$.
    Let $\tilde U$ be the preimage in $\bfp_c(\mf O)$.
    Since $\bfu_c(\mf O)$ is open, the projection map $\bfp_c(\mf O)\to \bfl_c(\overline{\mathbb F}_q)$ is continuous and so $\tilde U$ is open and contains $\ms X$.
    Thus $\tilde U\cap \OO_2 \ne \emptyset$ and so $U\cap \OO' \ne \emptyset$.
    This proves the claim.
    Write $\OO' = \cup_i \OO'^{(i)}$ as a union of $\bfL_c(\overline{\mathbb F}_q)$ nilpotent orbits.
    Then since $\OO_1' \subseteq \overline{\OO'}$ there exists an $i$ such that $\OO_1' \le \OO'^{(i)}$.
    Let $\OO_{1.5}' = \OO'^{(i)}$.
    By construction $\OO_{1.5} := \mathcal L_c(\OO_{1.5}')$ has the required properties.
\end{proof}

For $\OO\in \mathcal N_o(k)$ we say $\ms X, \ms H, \ms Y$ is an $\lsl_2$-triple for $\OO$ if they are an $\lsl_2$-triple and $\ms X\in \OO$.
We now show that $\mathcal N_o(K/k):\mathcal N_o(k) \to \mathcal N_o(K)$ and $\mathcal N_o(\bar k/k):\mathcal N_o(k) \to \mathcal N_o(\bar k)$ are strictly increasing.
\begin{lemma}
    \label{lem:slodowy}
    Let $\OO \in \mathcal N_o(k)$ and let $\ms X,\ms H,\ms Y$ be an $\lsl_2$-triple for $\OO$.
    Let $\ms s = \ms X + \ms c_{\mf g(k)}(\ms Y)$ (a Slodowy slice for $\OO$).
    Then
        \begin{enumerate}[(1)]
            \item $\OO \cap \ms s = \set{\ms X}$,
            \item if $\OO' \in \mathcal N_o(k)$ and $\overline{\OO'} \cap \ms s \ne \emptyset$ then $\OO' \cap \ms s \ne \emptyset$.
        \end{enumerate}
\end{lemma}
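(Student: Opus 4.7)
The plan is to exploit the contracting $\mathbb{G}_m$-action associated to the $\lsl_2$-triple, following Slodowy. Since $\ms X, \ms H, \ms Y \in \mf g(k)$, Jacobson--Morozov furnishes a cocharacter $\rho \colon \mathbb{G}_m \to \bfG$ defined over $k$ with $d\rho(1) = \ms H$. I would set $\phi_t(\ms Z) := t^2 \Ad(\rho(t)^{-1}) \ms Z$ for $t \in k^\times$. Because $\ms c_{\mf g(k)}(\ms Y)$ lies in the non-positive $\ms H$-weight part of $\mf g(k)$ while $\ms X$ has weight $2$, a direct $\lsl_2$-computation shows that $\phi_t$ preserves $\ms s$ and in fact extends to a $k$-polynomial map $\A^1 \to \ms s$ with $\phi_0|_{\ms s} \equiv \ms X$; in particular $\phi_t(\ms Z) \to \ms X$ in the $k$-topology as $t \to 0$ for every $\ms Z \in \ms s$. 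I would further check that $\phi_t$ preserves every $k$-rational nilpotent orbit $\OO'$: the factor $\Ad(\rho(t)^{-1})$ is in $\bfG(k)$, and the residual $k^{\times 2}$-scaling preserves $\OO'$ via a Jacobson--Morozov cocharacter $\rho_0$ attached to any element of $\OO'$ (which is again $k$-rational). By continuity $\phi_t$ then preserves the $k$-topology closure $\overline{\OO'}$ as well.

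Part (1) then reduces to the algebraically closed case: $\OO \cap \ms s \subseteq (\bfG(\bar k)\cdot \ms X) \cap \ms s(\bar k) = \{\ms X\}$, where the second equality is the classical Kostant--Slodowy transversality of the Slodowy slice. The reverse inclusion is trivial.

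For (2), let $\ms Z \in \overline{\OO'} \cap \ms s$. The curve $t \mapsto \phi_t(\ms Z)$ stays in $\overline{\OO'} \cap \ms s$ and converges to $\ms X$ as $t \to 0$, so $\ms X \in \overline{\OO'}$. To upgrade this to $\OO' \cap \ms s \neq \emptyset$ I would invoke Kostant's transversality: the morphism $\psi \colon \bfG \times \ms s \to \mf g$, $(\ms g, \ms V) \mapsto \Ad(\ms g) \ms V$, is smooth at $(e, \ms X)$ because $[\mf g(k), \ms X] + \ms c_{\mf g(k)}(\ms Y) = \mf g(k)$. Since smooth morphisms of $k$-varieties induce open maps on $k$-points in the $p$-adic topology, the image under $\psi$ of a suitable open neighborhood $U$ of $(e, \ms X)$ in $\bfG(k) \times \ms s$ is an open neighborhood of $\ms X$ in $\mf g(k)$. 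As $\ms X \in \overline{\OO'}$, this neighborhood meets $\OO'$, producing $(\ms g, \ms V) \in U$ with $\Ad(\ms g)\ms V \in \OO'$ and therefore $\ms V \in \OO' \cap \ms s$, as desired.

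The main obstacles are verifying the analytic ingredients of this plan in the $p$-adic setting: the openness of smooth morphisms on $k$-points (a standard property of local fields), the $k$-rationality of the Jacobson--Morozov cocharacter (which one obtains by integrating the $\lsl_2$-triple $k$-rationally under the good-characteristic hypothesis on $p$), and the validity of Kostant's transversality in this regime. Each is well-established but deserves explicit citation.
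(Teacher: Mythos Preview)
Your proposal is correct and shares the paper's two key ingredients: the contracting $\mathbb{G}_m$-action on the Slodowy slice and the smoothness of the adjoint map $\bfG\times\ms s\to\mf g$ at $(e,\ms X)$. For part~(2) the arguments are essentially identical---the paper too shows $\ms X\in\overline{\OO'}$ via the contraction and then uses openness of the adjoint map near $(e,\ms X)$ to produce a point of $\OO'\cap\ms s$. The only real difference is in part~(1): you reduce immediately to the classical statement over $\bar k$ by the inclusion $\OO\cap\ms s\subseteq(\bfG(\bar k)\cdot\ms X)\cap\ms s(\bar k)=\{\ms X\}$, whereas the paper argues directly over $k$ by observing that $T_{\ms X}(\ms s\cap\OO)\hookrightarrow T_{\ms X}\ms s\cap T_{\ms X}\OO=0$ (so $\ms s\cap\OO$ is discrete) and then using the contraction to force any point of $\ms s\cap\OO$ to equal $\ms X$. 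Your shortcut is cleaner provided one is willing to import the $\bar k$ result; the paper's version is self-contained and, as noted immediately after the lemma, the $\bar k$ analogue actually requires a separate argument in the literature.
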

\begin{proof}
    We have the decomposition
    \begin{equation}
        \label{eq:slodowy}
        \mf g(k) = [\mf g(k),\ms X] \oplus \ms c_{\mf g(k)}(\ms Y)
    \end{equation}
    and $[\mf g(k), \ms X]$ is the tangent space of $\OO$ at $\ms X$.
    Thus 
    $$T_{\ms X}(\ms s \cap \OO) \hookrightarrow T_{\ms X} \ms s \cap T_{\ms X} \OO = 0$$ 
    and so $\ms s \cap \OO$ is discrete.
    However, if $\ms X'$ is in $\ms s\cap \OO$, let $\lambda_{X'}:\mathbf G_m\to \mathbf G$ be a 1-parameter $k$-subgroup such that 
    $${\lambda_{\ms X'}(t)}.\ms X' = t^{-2}\ms X'$$
    and write $\ms X' = \ms X + \ms Z$ where $\ms Z\in \ms c_{\mf g(k)}(\ms Y)$.
    Let $\lambda$ be the 1-parameter $k$-subgroup of $\bfG(k)$ attached to $\ms H$.
    Write $\mf g(k)(i)$ for the set of $\ms W\in \mf g(k)$ such that $\lambda(t).\ms W = t^i \ms W$ and let 
    $$\mf g(k)(\le 0) = \bigoplus_{i\le 0}\mf g(k)(i).$$
    Since $\ms c_{\mf g(k)}(\ms Y) \subseteq \mf g(k)(\le 0)$, write $\ms Z = \sum_{i\le 0}\ms Z_i$ where $\ms Z_i\in \mf g(k)(i)$.
    Then
    \begin{equation}
        {\lambda(t^{-1})\lambda_{\ms X'}(t^{-1})}.\ms X' = \ms X + \sum_{i\le 0}t^{2-i}\ms Z_i \in \OO \cap \ms s 
    \end{equation}
    for all $t\in k$ and $\to \ms X$ as $t\to 0$.
    Since $\ms s \cap \OO$ is discrete this means that $\ms X' = \ms X$.
    This proves (1).

    Let $\Ad:\bfG(k) \times \ms s \to \mf g(k)$ be the restriction of the adjoint map.
    $\Ad$ is smooth with differential $T_{(1,\ms X)}$ which is onto by equation \ref{eq:slodowy}.
    All the varieties in question are smooth and so there exists a Zariski open (and hence open in the topology induced by $k$) subset $V$ in ${\bfG(k)}.\ms s$ containing $\ms X$.
    We have that $\overline{\OO'}\cap \ms s \ne \emptyset$ and so $\overline{\OO'} \cap {\bfG(k)}.\ms s \ne \emptyset$.
    Since $\ms s \cap \overline{\OO'}$ is closed and non-empty the argument for part (1) shows that $\ms X \in \ms s \cap \overline{\OO'}$.
    It follows that $\ms X \in \overline{\OO'} \cap V$.
    But $V$ is open and so $\OO' \cap V \ne \emptyset$.
    It follows that $\OO' \cap {\bfG(k)}.\ms s \ne \emptyset$ and so $\OO' \cap \ms s \ne \emptyset$ as required.
\end{proof}
\begin{corollary}
    \label{cor:slodowy}
    Let $\OO, \OO'\in \mathcal N_o(k)$.
    Let $\ms X,\ms H,\ms Y$ be an $\lsl_2$-triple for $\OO$ and $\ms s = \ms X + \ms c_{\mf g(k)}(\ms Y)$.
    Then
    \begin{enumerate}
        \item $\OO = \OO'$ iff $\OO' \cap \ms s$ is a singleton,
        \item $\OO < \OO'$ (i.e. $\OO\le \OO'$ and $\OO\ne \OO'$) iff $\OO' \cap \ms s$ has more than one element,
    \end{enumerate}
\end{corollary}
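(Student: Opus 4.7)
The plan is to reduce everything to Lemma \ref{lem:slodowy} by re-using the explicit one-parameter family constructed in its proof. The family
$$\ms X_t = \lambda(t^{-1})\lambda_{\ms X'}(t^{-1}).\ms X' = \ms X + \sum_{i \le 0} t^{2-i}\ms Z_i, \qquad t \in k^\times,$$
introduced there to show $\OO \cap \ms s = \{\ms X\}$, actually requires only that $\ms X'$ be nilpotent, not that $\ms X'$ lie in $\OO$. So I would first record as a separate observation that for any $\ms X' \in \OO' \cap \ms s$ with $\ms X' \ne \ms X$, the family $\{\ms X_t\}_{t \in k^\times}$ lies entirely in $\OO' \cap \ms s$ (membership in $\OO'$ is by orbit-invariance, and membership in $\ms s$ follows as in the proof of Lemma \ref{lem:slodowy} from the fact that $\ms c_{\mf g(k)}(\ms Y)$ is graded under the $\ms H$-action, so each $\ms Z_i \in \ms c_{\mf g(k)}(\ms Y)$), is non-constant because $\ms X_1 = \ms X' \ne \ms X$ while $\ms X_t \to \ms X$ in the $p$-adic topology as $t \to 0$, and in particular produces at least two distinct elements of $\OO' \cap \ms s$ (using that $k$ is infinite) while simultaneously witnessing $\ms X \in \overline{\OO'}$.

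With this observation in hand, part (1) is immediate. The forward direction is Lemma \ref{lem:slodowy}(1). For the reverse direction, if $\OO' \cap \ms s = \{\ms Z\}$ is a singleton but $\OO \ne \OO'$, then $\ms X \notin \OO'$ forces $\ms Z \ne \ms X$, and the observation above manufactures a second element of $\OO' \cap \ms s$, a contradiction.

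For part (2), if $\OO < \OO'$ then $\ms X \in \overline{\OO'} \cap \ms s$, so by Lemma \ref{lem:slodowy}(2) we may pick $\ms Z \in \OO' \cap \ms s$; the disjointness $\OO \cap \OO' = \emptyset$ forces $\ms Z \ne \ms X$, and the observation supplies a second point. Conversely, if $\OO' \cap \ms s$ has more than one element then part (1) gives $\OO \ne \OO'$; picking $\ms Z \in \OO' \cap \ms s$ with $\ms Z \ne \ms X$ (such a $\ms Z$ exists since $\ms Z = \ms X$ would place $\ms X$ in $\OO \cap \OO'$ and force $\OO = \OO'$), the observation produces $\ms X_t \in \OO'$ with $\ms X_t \to \ms X$, so $\ms X \in \overline{\OO'}$, i.e.\ $\OO \le \OO'$, and combined with $\OO \ne \OO'$ this gives $\OO < \OO'$.

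Since the substantive geometric work is already done in Lemma \ref{lem:slodowy}, there is no real obstacle here; the only conceptual step is recognising that the very same family used in Lemma \ref{lem:slodowy}(1) can be read in two different ways simultaneously -- to witness that $\OO' \cap \ms s$ is large whenever it contains a point other than $\ms X$, and to witness the closure relation $\ms X \in \overline{\OO'}$.
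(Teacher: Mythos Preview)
Your proof is correct and follows essentially the same approach as the paper: both extract from the proof of Lemma~\ref{lem:slodowy} the fact that the one-parameter family $\ms X_t$ works for any nilpotent $\ms X'\in \ms s$, not just those in $\OO$, and use it both to force $\OO'\cap\ms s$ to be infinite once it contains a point other than $\ms X$ and to witness $\ms X\in\overline{\OO'}$. The paper phrases this as ``the same argument as Lemma~\ref{lem:slodowy}(1)'' and organises the case analysis slightly differently (e.g.\ deducing the forward direction of (2) directly from (1) rather than reusing the family), but the content is the same.
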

\begin{proof}
    By Lemma \ref{lem:slodowy} (1), if $\OO = \OO'$ then $\OO' \cap \ms s = \set{\ms X}$.
    If $\OO'\cap \ms s$ is a singleton then it is closed and the same argument as Lemma \ref{lem:slodowy} (1) gives that $\OO' \cap \ms s = \set{\ms X}$ and so $\OO = \OO'$.
    If $\OO < \OO'$ then $\ms X \in \overline{\OO'} \cap \ms s$.
    By Lemma \ref{lem:slodowy} (2), $\OO' \cap \ms s\ne \emptyset$.
    It cannot consist of a single element since by part 1 this would imply $\OO = \OO'$.
    Thus $\OO'\cap \ms s$ consists of more than one element.
    If $\OO' \cap \ms s$ consists of more than one element then $\overline{\OO'} \cap \ms s \ne \emptyset$ and so contains $\ms X$.
    Thus $\OO \le \OO'$.
    But $\OO \ne \OO'$ since $\OO' \cap \ms s$ is not a singleton and so $\OO < \OO'$.
\end{proof}
Analogous results to Lemma \ref{lem:slodowy} and Corollary \ref{cor:slodowy} hold for nilpotent orbits of $\mf g(\bar k)$, though different proof methods must be used (see \cite[Lemma 5.10]{ggg-np} for details).
\begin{theorem}
    \label{thm:saturation-increasing}
    Let $\OO,\OO'\in \mathcal N_o(k)$.
    \begin{enumerate}[(1)]
        \item If $\OO < \OO'$ then $\mathcal N_o(K/k)(\OO) < \mathcal N_o(K/k)(\OO')$.
        \item If $\OO < \OO'$ then $\mathcal N_o(\bar k/k)(\OO) < \mathcal N_o(\bar k/k)(\OO')$.
    \end{enumerate}
\end{theorem}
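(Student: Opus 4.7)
The plan is to deduce both parts from the Slodowy-slice characterization of the closure relation furnished by Corollary \ref{cor:slodowy}, upgraded to the fields $K$ and $\bar k$. The key point is that a single $\lsl_2$-triple chosen over $k$ simultaneously produces Slodowy slices over $k$, $K$, and $\bar k$, and ``witnesses'' for strict inequality over $k$ persist as witnesses over the larger fields.

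Concretely, pick an $\lsl_2$-triple $\ms X,\ms H,\ms Y \in \mf g(k)$ with $\ms X \in \OO$ (possible by Jacobson--Morozov, since $p$ is assumed sufficiently large) and form the Slodowy slice $\ms s = \ms X + \ms c_{\mf g(k)}(\ms Y)$. Because $\OO < \OO'$, Corollary \ref{cor:slodowy}(2) guarantees that $\OO' \cap \ms s$ contains at least two distinct points $\ms X_1, \ms X_2 \in \mf g(k)$. Now regard the same triple inside $\mf g(K)$ and $\mf g(\bar k)$ and form the enlarged slices $\ms s_K = \ms X + \ms c_{\mf g(K)}(\ms Y)$ and $\ms s_{\bar k} = \ms X + \ms c_{\mf g(\bar k)}(\ms Y)$, which contain $\ms s$. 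Since $\mf g(k) \hookrightarrow \mf g(K) \hookrightarrow \mf g(\bar k)$ is injective, the points $\ms X_1, \ms X_2$ remain distinct and lie in $\mathcal N_o(K/k)(\OO') \cap \ms s_K$ and in $\mathcal N_o(\bar k/k)(\OO') \cap \ms s_{\bar k}$ respectively. Applying the $K$-analogue and the $\bar k$-analogue of Corollary \ref{cor:slodowy}(2), we obtain
$$\mathcal N_o(K/k)(\OO) < \mathcal N_o(K/k)(\OO') \quad \text{and} \quad \mathcal N_o(\bar k/k)(\OO) < \mathcal N_o(\bar k/k)(\OO'),$$
as required.

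The one step that requires justification is the $K$-analogue of Lemma \ref{lem:slodowy} (the $\bar k$-analogue being cited from \cite[Lemma 5.10]{ggg-np}, as noted by the author). This is where I expect the main technical care: the original proof over $k$ uses (i) Jacobson--Morozov to attach a cocharacter $\lambda$ to $\ms H$ and a contracting cocharacter $\lambda_{\ms X'}$ to each $\ms X' \in \ms s \cap \OO$, (ii) the weight decomposition of $\mf g$ under $\lambda$, (iii) smoothness of the orbit map $\Ad : \bfG \times \ms s \to \mf g$ at $(1,\ms X)$ via the transversality decomposition $\mf g = [\mf g,\ms X] \oplus \ms c_{\mf g}(\ms Y)$, and (iv) the fact that Zariski open sets are open in the analytic topology induced by the base field. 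Every one of these ingredients is available verbatim over $K$: $\bfG_K$ is split, good characteristic is preserved, cocharacters and $\lsl_2$-triples descend from $\bfG_\Z$, and the topology on $\mf g(K)$ inherited from $K$ makes Zariski opens open and the adjoint action continuous. Hence the proofs of Lemma \ref{lem:slodowy} and Corollary \ref{cor:slodowy} transport mutatis mutandis to $K$, which closes the argument.
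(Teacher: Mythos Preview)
Your argument is correct, and for part (2) it is exactly the paper's proof: pick an $\lsl_2$-triple over $k$, use Corollary \ref{cor:slodowy}(2) to produce two distinct points of $\OO'$ on the $k$-Slodowy slice, observe these persist in the $\bar k$-slice, and invoke the $\bar k$-analogue of Corollary \ref{cor:slodowy} (cited from \cite{ggg-np}).

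Where you diverge is in part (1). You prove it directly by establishing a $K$-analogue of Lemma \ref{lem:slodowy} and Corollary \ref{cor:slodowy}; this is valid (none of the ingredients in those proofs rely on completeness of the base field, only on the valuation topology and standard facts about $\lsl_2$-triples and smooth orbit maps), but it is extra work. The paper instead observes that (2) immediately implies (1): since $\mathcal N_o(\bar k/k) = \mathcal N_o(\bar k/K)\circ\mathcal N_o(K/k)$, a strict inequality $\mathcal N_o(\bar k/k)(\OO) < \mathcal N_o(\bar k/k)(\OO')$ forces $\mathcal N_o(K/k)(\OO) \ne \mathcal N_o(K/k)(\OO')$, while $\OO \le \OO'$ over $k$ already gives $\mathcal N_o(K/k)(\OO) \le \mathcal N_o(K/k)(\OO')$. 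So the paper never needs to touch Slodowy slices over $K$ at all. Your route buys an independent proof of (1) that does not factor through $\bar k$; the paper's route buys brevity.
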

\begin{proof}
    Clearly 2 implies 1 so it suffices to show that $\OO < \OO' \implies \mathcal N_o(\bar k/k)(\OO) < \mathcal N_o(\bar k/k)(\OO')$.
    Let $\ms X,\ms H, \ms Y$ be an $\lsl_2$-triple for $\OO$ and let $\ms s = \ms X + \ms c_{\mf g(k)}(\ms Y)$.
    If $\OO < \OO'$ then $\OO' \cap \ms s$ has more than one element.
    Let $\mf s = \ms X + c_{\mf g(\bar k)}(\ms Y) \supseteq \ms s$.
    Then $\mathcal N_o(\bar k/k)(\OO') \cap \mf s$ contains $\OO'\cap \ms s$ and so also has more than one element.
    Thus $\mathcal N_o(\bar k/k)(\OO) < \mathcal N_o(\bar k/k)(\OO')$.
\end{proof}

\subsection{The Wavefont Set of Representations of Finite Groups of Lie Type}
\label{sec:ffltwf}
\paragraph{Generalised Gelfand--Graev Representations}
\label{sec:kwfs}
In section \ref{sec:ffltwf} only let $\mathbf G$ be a connected reductive group defined over $\mathbb F_q$.
Fix an algebraic closure $\overline{\mathbb F}_q$ of $\mathbb F_q$ and let $F:\bfG(\barF_q)\to \bfG(\barF_q)$ be the associated geometric Frobenius (so that $\bfG(\barF_q)^F = \bfG(\mathbb F_q)$).
Let $h$ be the Coxeter number of the absolute Weyl group for $\mathbf G$ and suppose $p>3(h-1)$.
Then the nilpotent cone $\mathcal N(\barF_q)$ of $\mf g(\barF_q)$ may be identified with the unipotent cone of $\mb G(\barF_q)$ via the exponential map $\exp:\mathcal N(\barF_q) \to \mathcal U(\barF_q)$ \cite[Section 2.1]{barbaschmoy}.
Let $\ms B:\mf g(\mathbb F_q)\times \mf g(\mathbb F_q)\to \mathbb F_q$ be a symmetric, non-degenerate $\mb G$-invariant bilinear form and $\chi:\mathbb F_q\to \C^\times$ be a non-trivial character. 
Recall that for a function $f:\mf g(\mathbb F_q)\to \C$ \emph{the Fourier transform of $f$} is 
$$\hat f(\ms x) = \sum_{\ms x\in \mf g(\mathbb F_q)}\chi(\ms B(\ms x,\ms y))f(\ms y).$$

\nomenclature{$\Gamma_{\ms n},\Gamma_{\OO}$}{}
\nomenclature{$\gamma_{\ms n},\gamma_{\OO}$}{}
For a nilpotent element $\ms n\in \mathcal N(\mathbb F_q)$ we may associate to it a representation $\Gamma_{\ms n}$ of $\bfG(\mathbb F_q)$ called \emph{the associated Generalised Gelfand--Graev Representation} or GGGR for short (see \cite[Section 2]{lusztig} for details on its construction).
Write $\gamma_{\ms n}$ for the character of $\Gamma_{\ms n}$.
$\gamma_{\ms n}$ has the following key properties (due to Kawanaka \cite{kawanaka}, \cite{kawanakab})
\begin{enumerate}[(1)]
    \item $\gamma_{\ms n}$ only depends on the $\Ad(\bfG(\mathbb F_q))$-orbit of $\ms n$. If $\OO = \bfG(\mathbb F_q).\ms n$ write $\Gamma_\OO$ (resp. $\gamma_\OO$) for the resulting representation (resp. character),
    \item the support of $\gamma_{\ms n}$ is contained in the closure of $\mb G(\barF_q).\exp(\ms n)$.
\end{enumerate}
Let $\ms n=\ms e,\ms h,\ms f$ be an $\lsl_2$-triple in $\mf g(\barF_q)$ and $\Sigma = -\ms f + \ms c_{\mf g(\barF_q)}(\ms e)$.
Let $r(\ms n) = \frac12 (\dim \mf g(\barF_q)-\dim \ms c_{\mf g(\barF_q)}(\ms n))$.
We have the following result due to Lusztig about the Fourier transform of $\gamma_{\ms n}\circ \exp$ (which we will also refer to as $\gamma_{\ms n}$).
\begin{proposition}
    \label{prop:gggvals}
    \cite[Proposition 2.5, Proposition 6.13]{lusztig}
    Let $\ms n,\ms n'\in \mathcal N(\mathbb F_q)$.
    \begin{enumerate}[(1)]
        \item $\hat\gamma_{\ms n}(\ms y) = q^{r(\ms n)}\#\set{\ms g\in \bfG(\mathbb F_q):{\ms g}.\ms y\in \Sigma}$ for all $\ms y\in \mf g(\mathbb F_q)$,
        \item if $\hat \gamma_{\ms n}(\ms n') \ne 0$, then $\ms n$ must lie in the closure of $\mb G(\barF_q).\ms n'$,
        \item if $\ms n\in \mb G(\barF_q).\ms n'$ and $\hat\gamma_{\ms n}(\ms n') \ne 0$, then $\ms n'\in\bfG(\mathbb F_q).\ms n$,
        \item $\hat \gamma_{\ms n}(\ms n) = q^{r(\ms n)}\#\ms C_{\bfG(\mathbb F_q)}(\ms n)$.
    \end{enumerate}
\end{proposition}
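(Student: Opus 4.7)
The heart of the proposition is part (1); parts (2)--(4) will follow by geometric manipulations with the slice $\Sigma=-\ms f+\ms c_{\mf g(\barF_q)}(\ms e)$. My first step is to unwind the construction of $\Gamma_{\ms n}$. Recall that, up to a suitable convention, $\Gamma_{\ms n}=\mathrm{Ind}_{\bfU_{\ms n}(\mathbb F_q)}^{\bfG(\mathbb F_q)}\psi_{\ms n}$, where $\bfU_{\ms n}$ is a unipotent subgroup built from the non-negative part of the grading induced by $\ms h$ and $\psi_{\ms n}$ is obtained by pairing with $\ms n$ via $\ms B$ and then composing with $\chi$. Transporting to $\mf g(\mathbb F_q)$ via $\exp$, the character formula for induced representations gives an expression of the shape
\begin{equation}
\gamma_{\ms n}(\ms x) \;=\; \frac{1}{|\bfU_{\ms n}(\mathbb F_q)|}\sum_{\substack{\ms g\in\bfG(\mathbb F_q)\\ \ms g^{-1}.\ms x\in\mf u_{\ms n}}}\chi(\ms B(\ms g^{-1}.\ms x,\ms n)),
\end{equation}
up to the appropriate normalising power of $q$ coming from the subgroup $\bfU_{\ms n}$.

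To prove (1), I substitute this into $\hat\gamma_{\ms n}(\ms y)=\sum_{\ms x}\chi(\ms B(\ms x,\ms y))\gamma_{\ms n}(\ms x)$, change variables $\ms x\mapsto \ms g.\ms x$, and carry out the inner sum over $\ms x\in\mf u_{\ms n}$. That inner sum is a Gauss-type sum which, by the $\lsl_2$-representation theory of $\mf g$, evaluates to the characteristic function of the affine subspace $-\ms f+\ms c_{\mf g(\barF_q)}(\ms e)=\Sigma$ times a power of $q$. The key linear-algebraic input here is that $\mf u_{\ms n}$ and $\Sigma-\{-\ms f\}=\ms c_{\mf g}(\ms e)$ are dual spaces under $\ms B$ in a precise sense dictated by the $\ms h$-weights, and $-\ms f$ shifts this duality by the character $\chi(\ms B(\cdot,\ms n))$. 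Collecting the powers of $q$ and checking they combine to $q^{r(\ms n)}$ via $\dim\mf u_{\ms n}-\tfrac12\dim\bfG(\barF_q).\ms n$ computations yields the stated formula.

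The remaining parts drop out from (1) via properties of $\Sigma$. For (2), if $\hat\gamma_{\ms n}(\ms n')\neq 0$, then (1) supplies $\ms g\in\bfG(\mathbb F_q)$ with $\ms g.\ms n'\in\Sigma$. A Kostant-style contracting $\mathbb G_m$-action $\lambda(t):=\lambda_{\ms h}(t)$ satisfies $\lambda(t).\Sigma\to\{-\ms f\}$ as $t\to 0$, so $-\ms f\in\overline{\bfG(\barF_q).\ms n'}$; since $\ms e$ and $\ms f$ are conjugate under the $\lsl_2$-subgroup, this gives $\ms n\in\overline{\bfG(\barF_q).\ms n'}$. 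For (3), we additionally know $\ms n'\in\bfG(\barF_q).\ms n$, so the intersection $\bfG(\barF_q).\ms n\cap \Sigma$ is relevant; this intersection is transverse of dimension zero (the standard Kostant slice argument), and a single $\bfG(\barF_q)$-conjugacy computation identifies each of its points with a unique $\bfG(\mathbb F_q)$-translate of $\ms n$, yielding $\ms n'\in\bfG(\mathbb F_q).\ms n$. For (4), we set $\ms y=\ms n$ in (1) and count: by transversality, $\ms g.\ms n\in \Sigma$ pins down $\ms g.\ms n$ to a unique element of the $\bfG(\mathbb F_q)$-orbit, so the number of such $\ms g$ is exactly $|\ms C_{\bfG(\mathbb F_q)}(\ms n)|$.

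The main obstacle is the Gauss-sum computation in part (1): all of the representation-theoretic content is encoded in the $\lsl_2$-weight decomposition of $\mf g$, and one must carefully book-keep the pieces of $\mf g$ of weight $\geq 2$, weight $1$, and weight $0$ in order to see that the Fourier transform of the inflated character of $\psi_{\ms n}$ is supported precisely on $\Sigma$ with the correct normalising constant $q^{r(\ms n)}$. Once this is established, parts (2)--(4) are essentially formal consequences of Jacobson--Morozov theory combined with the geometric properties of Kostant-type slices.
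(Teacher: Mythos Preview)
The paper does not supply its own proof of this proposition: it simply quotes the result from Lusztig's paper \cite[Propositions 2.5 and 6.13]{lusztig}. Your outline is essentially Lusztig's original argument --- the Fourier--Gauss sum computation for (1), followed by Slodowy--slice geometry for (2)--(4) --- so there is nothing to compare on the level of strategy.

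Two points of imprecision are worth flagging. First, in part (1) the representation $\Gamma_{\ms n}$ is not literally an induction of a linear character from a unipotent subgroup when the $\ms h$-weight-$1$ piece of $\mf g$ is nonzero; Kawanaka's construction (and Lusztig's reformulation) involves a Heisenberg-type extension at that step, and the Gauss-sum calculation must account for this. You acknowledge the bookkeeping issue but do not indicate how it is resolved. Second, your claim for (2) that ``$\lambda(t):=\lambda_{\ms h}(t)$ satisfies $\lambda(t).\Sigma\to\{-\ms f\}$'' is not correct as written: under $\lambda_{\ms h}(t)$ the element $-\ms f$ scales by $t^{-2}$, so $\lambda_{\ms h}$ alone does not contract $\Sigma$. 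The standard fix (as in the proof of Theorem~\ref{thm:bm}(2) later in this paper, and in Lusztig) is to combine $\lambda_{\ms h}$ with a second one-parameter subgroup adapted to the nilpotent element $\ms g.\ms n'$ itself; the composite then sends $\ms g.\ms n'$ to $-\ms f$ along a path in the $\bfG(\barF_q)$-orbit of $\ms n'$. With these two corrections your sketch goes through.
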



\paragraph{The Kawanaka Wavefront Set}
\label{sec:kawanaka}
The Kawanaka wavefront set is the analogous notion of the $p$-adic wavefront set for complex representations of $\bfG(\barF_q)$.
In this the wavefront set was introduced by Kawanaka and so we refer to it as the \emph{Kawanaka} wavefront set.
Let $(\rho, V)$ be an irreducible representation of $\bfG(\mathbb F_q)$ and $\chi_V$ be the character afforded by $V$. 
\nomenclature{$^{\barF_q}\WF(V)$}{}
The \emph{Kawanaka wavefront set $^{\barF_q}\WF(V)$ of $V$} is defined to be the nilpotent orbit $\OO\in \mathcal N_o(\barF_q)$ satisfying
\begin{enumerate}[(1)]
    \item there exists an $\OO'\in \mathcal N_o(\mathbb F_q)$ such that $\langle \gamma_{\OO'},\chi_V\rangle \ne 0$ and $\mathcal N_o(\barF_q/\mathbb F_q)(\OO') = \OO$;
    \item if $\OO'\in \mathcal N_o(\mathbb F_q)$ and $\langle \gamma_{\OO'},\chi_V\rangle \ne 0$ then $\mathcal N_o(\barF_q/\mathbb F_q)(\OO') \le \OO$.
\end{enumerate}
It is not clear a priori that such an orbit exists, but if it does, then it is clear that it is unique.

The existence of the Kawanaka wavefront set has a somewhat long and complicated history.
Originally conjectured to always exist by Kawanaka, he proved that this is indeed the case for adjoint groups of type $A_n$, or of exceptional type \cite{kawanaka}.
He also gave a conjectural description of $\hphantom{ }^{\barF_q}\WF(V)$ in terms of Lusztig's classification of the irreducible representations of $\bfG(\mathbb F_q)$.
In particular, let $\mb G^*$ be the dual group of $\mb G$ defined over $\mathbb F_q$ with corresponding Frobenius $F'$ and suppose $V$ corresponds to the $F'$-stable special $G^*(\barF_q)$-conjugacy class $C$ (in the sense of \cite[Section 13.2]{lusztig}).
Pick an element $g$ of $C^{F'}$ and let $g=su$ be its Jordan decomposition.
We can attach to the Weyl group $W(s)$ of $C_{G^*}(s)$ an irreducible (special) representation $E$ via the Springer correspondence applied to $u$ and the trivial local system.
The representation $E'=j_{W(s)}^WE$ obtained via truncated induction then corresponds to an $F$-stable nilpotent orbit $\OO$ and the trivial local system with respect to the Springer correspondence on $\mf g(\barF_q)$.
The orbit $\OO$ was Kawanaka's candidate for $\hphantom{ }^{\barF_q}\WF(V)$.
This conjecture was partially proved by Lusztig in his paper \cite{lusztig}.
In this paper Lusztig attached to $V$ an $F$-stable unipotent class $C$ of $\mb G(\barF_q)$ called the unipotent support of $V$ which is the unipotent class of $\mb G(\barF_q)$ of maximal dimension satisfying
\begin{equation}
    \label{eq:unipsupp}
    \sum_{g\in C^F}\chi_V(g) \ne 0.
\end{equation}
He then showed that the (log of the) unipotent support of the Alvis--Curtis dual of $V$ is the unique nilpotent orbit of maximal dimension satisfying condition 1. above. 
This essentially settled the existence claim, modulo the slight weakening of condition 2. above.
This however was fixed in later work by Achar and Aubert in \cite{achar_aubert} (and Taylor \cite{wavefront} with weakened conditions on the characteristic) - finally settling the matter fully.

In section 3 we will need to know the Kawanaka wavefront set for principal series unipotent representations of $\bfG(\mathbb F_q)$ when $\bfG$ is split.
We record a precise formula for $^{\barF_q}\WF$ for this case.

First recall that for unipotent principal series representations we have a $q\to 1$ operation arising from Lusztig's isomorphism \cite[Theorem 3.1]{lusztigdeformation} 
\begin{equation}
    \C[W]\to \mathcal H(\mb B(\mathbb F_q)\backslash \bfG(\mathbb F_q)/\mb B(\mathbb F_q))
\end{equation}
that gives a bijection between the irreducible constituents of $\Ind_{\mb B(\mathbb F_q)}^{\bfG(\mathbb F_q)}1$ and irreducible $W$-representations.
For $V$ a constituent of $\Ind_{\mb B(\mathbb F_q)}^{\bfG(\mathbb F_q)}1$ we write $V_{q\to 1}$ for the corresponding irreducible representation of $W$.
Second, recall Lusztig's partition of $\mathrm{Irr}(W)$ into families so that each family contains a unique special representation.
Each special representation of $W$ corresponds via the Springer correspondence to a special nilpotent orbit $\OO$ and the trivial local system.
\nomenclature{$\OO^s$}{}
For an irreducible representation $E$ of $W$ we write $\OO^{s}(E)$ for the special nilpotent orbit corresponding to the special representation in the same family as $E\otimes \sgn$.
Unravelling the above recipe for the Kawanaka wavefront set we get that for principal series unipotent representations of $\bfG(\mathbb F_q)$, the Kawanaka wavefront set is given by 
\begin{equation}
    \label{eq:kawanakawf}
    ^{\barF_q}\WF(V) = \OO^s(V_{q\to 1}).
\end{equation}

We also record the following proposition.
\begin{proposition}
    \label{prop:contra}
    Let $(\rho,V)$ be an irreducible representation of $\bfG(\mathbb F_q)$ and $\rho^*$ denote its contragredient.
    Then $\hphantom{ }^{\barF_q}\WF(V) = \hphantom{ }^{\barF_q}\WF(V^*)$.
\end{proposition}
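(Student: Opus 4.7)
The plan is to use the characterization of $^{\barF_q}\WF(V)$ sketched in the paragraph preceding the proposition: $^{\barF_q}\WF(V)$ is the logarithm of the unipotent support of the Alvis--Curtis dual $D(V)$, i.e.\ the geometric unipotent class $C$ of maximal dimension satisfying $\sum_{u\in C^F}\chi_{D(V)}(u)\ne 0$. It therefore suffices to establish two compatibilities: (a) $D(V^*)\cong D(V)^*$, and (b) unipotent support is invariant under taking contragredient. Statement (a) is essentially formal: $D$ is defined as an alternating sum, over standard Levis $L$, of Harish--Chandra inductions of Harish--Chandra restrictions, and for finite groups both parabolic induction and parabolic restriction commute with $W\mapsto W^*$, so $D$ does as well.

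For (b), since $\chi_{W^*}(u)=\chi_W(u^{-1})$ it suffices to prove that $u\mapsto u^{-1}$ gives a bijection from $C^F$ to itself for every $F$-stable geometric unipotent class $C$. Inversion commutes with $F$, so the question reduces to showing $u^{-1}\in C$ whenever $u\in C$. In the good characteristic we are working in, Jacobson--Morozov produces an $\lsl_2$-triple $(\ms e,\ms h,\ms f)$ with $u=\exp(\ms e)$; a lift to $\bfG(\barF_q)$ of the nontrivial Weyl element of this $\lsl_2$ conjugates $\ms e$ to $-\ms e$, hence conjugates $u=\exp(\ms e)$ to $\exp(-\ms e)=u^{-1}$. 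Thus $u^{-1}$ is $\bfG(\barF_q)$-conjugate to $u$ and lies in $C$. Re-indexing the sum via this bijection yields $\sum_{u\in C^F}\chi_{W^*}(u)=\sum_{u\in C^F}\chi_W(u)$, proving (b).

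Combining (a) and (b) applied to $W=D(V)$, the unipotent support of $D(V^*)=D(V)^*$ equals that of $D(V)$, and the two wavefront sets coincide after applying $\log$. The main conceptual point is (b) --- once one recognises that inversion on unipotent elements is realised by $\lsl_2$-Weyl conjugation, the argument is immediate --- while (a) is a standard compatibility in the theory of Alvis--Curtis duality.
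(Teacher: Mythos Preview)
Your proof is correct and follows the same two-step strategy as the paper (Alvis--Curtis duality commutes with taking contragredients; unipotent support is invariant under contragredient). For step (b) the paper's argument is even shorter than yours: since $\chi_{V^*}=\overline{\chi_V}$, the sum $\sum_{u\in C^F}\chi_{V^*}(u)$ is the complex conjugate of $\sum_{u\in C^F}\chi_V(u)$ and they vanish simultaneously---so there is no need to show that $u$ and $u^{-1}$ are geometrically conjugate.
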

\begin{proof}
    The unipotent support of a representation and its contragredient are trivially the same (see equation \ref{eq:unipsupp}).
    The result then follows from the fact that Alvis--Curtis duality commutes with taking contragredients.
\end{proof}
Finally, we make the following convenient definitions.
When $\rho$ is not necessarily irreducible, \emph{the Kawanaka wavefront set of $\rho$} is the collection of maximal orbits among the wavefront sets of the irreducible constituents.
An element of the Kawanaka wavefront set is called a Kawanaka wavefront-set nilpotent.

\subsection{Relating the \texorpdfstring{$\mathbb Q_p$}{Qp} and \texorpdfstring{$\mathbb F_p$}{Fp} Wavefront Sets}
\paragraph{Inflated Generalised Gelfand--Graev Representations}
Let $(c,\OO)\in I_o^k$.
\nomenclature{$f_{c,\OO}$}{}
Define the function $f_{c,\OO} = \tilde \gamma_{\OO}$ where $\gamma_{\OO}$ is the character of the GGGR of $\bfL_c(\mathbb F_q)$ attached to the orbit $\OO$.
We will also write $f_{c,\OO}$ for $f_{c,\OO}\circ \exp$.
The following result is essentially due to Barbasch and Moy in \cite{barbaschmoy}, but is a sharper result than in loc. cit.
\begin{theorem}
    \label{thm:bm}
    Let $(c,\OO)\in I_o^k$.
    Then
    \begin{enumerate}[(1)]
        \item $f_{c,\OO}$ is supported on the topologically unipotent elements $\bfu$,
        \item $\hat \mu_{\OO'}(f_{c,\OO}) = 0$ unless $\mathcal L_c(\OO) \le \OO'$.
        \item Suppose $\OO'\in \mathcal N_o(k)$ is such that $\mathcal N_o(\bar k/k)(\OO') = \mathcal N_o(\bar k/k)(\mathcal L_c(\OO))$. Then
        \begin{enumerate}
                \item If $\mathcal L_c(\OO)\ne \OO'$, then $\hat \mu_{\OO'}(f_{c,\OO}) = 0$.
                \item If $\mathcal L_c(\OO) = \OO'$, then $\hat \mu_{\OO'}(f_{c,\OO}) \ne 0$.
            \end{enumerate}
        \item For any irreducible smooth admissible representation $(\pi,X)$ of $\bfG(k)$, we have $$\Theta_X(f_{c,\OO}) = \langle\Gamma_{c,\OO},\check X^{\bfu_c(\mf o)}\rangle.$$
    \end{enumerate}
\end{theorem}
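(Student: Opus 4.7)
The plan is to treat each of the four parts in sequence, with Part (2) requiring the most geometric input and the other parts following by reasonably direct arguments.

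Part (1) is immediate from the support properties: the GGGR character $\gamma_\OO$ vanishes off the unipotent elements of $\bfL_c(\mathbb F_q)$ (property (2) of GGGRs in Section \ref{sec:kwfs}), so $\tilde\gamma_\OO$ is supported on elements of $\bfp_c(\mf o)$ whose reduction modulo $\mf p$ lies in the nilpotent cone of $\bfl_c(\mathbb F_q)$; transporting via $\exp$ to the group identifies this with $\bfU_c(\mf o)\subseteq\bfU$ (equivalently $\bfu_c(\mf o)\subseteq\bfu$ on the Lie algebra side).

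For Part (2), I combine the Fourier transform inflation identity with the structural properties of GGGR characters. Setting $h=\gamma_\OO\circ\exp$, the identity $\FT_{\mf g(k)}(\tilde h)=\mu_{\mf g(k)}(\bfu_c(\mf o))\cdot\tilde{\hat h}$ shows that $\widehat{f_{c,\OO}}$ is, up to a positive constant, the inflation of $\hat\gamma_\OO$ to $\mf g(k)$. Unfolding $\hat\mu_{\OO'}(f_{c,\OO})=\mu_{\OO'}(\widehat{f_{c,\OO}})$ as an integral over $\OO'\cap\bfp_c(\mf o)$ of $\hat\gamma_\OO(\bar{\ms X})$, non-vanishing forces the existence of $\ms X\in\OO'\cap\bfp_c(\mf o)$ with $\hat\gamma_\OO(\bar{\ms X})\ne 0$. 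The crucial observation is that $\bar{\ms X}$ is nilpotent in $\bfl_c(\mathbb F_q)$: since $\ms X$ is nilpotent in $\mf g(k)$ we have $\ad(\ms X)^N=0$ for some $N$, and this relation survives reduction modulo $\mf p$. Proposition \ref{prop:gggvals}(2) then yields $\OO\le \bfL_c(\barF_q).\bar{\ms X}$, and since $\mathbb F_q$-rational closure orderings are inherited from the $\barF_q$-closure, letting $\OO''$ denote the $\bfL_c(\mathbb F_q)$-orbit of $\bar{\ms X}$ gives $\OO\le\OO''$. The $k$-rational analogue of Proposition \ref{prop:lifted_rel} yields $\mathcal L_c(\OO)\le\mathcal L_c(\OO'')$, while characterisation (2) of the lift in Section \ref{sec:debacker_param} applied to $\ms X\in\OO'\cap\mathcal C(c,\bar{\ms X})$ yields $\mathcal L_c(\OO'')\le\OO'$; the desired inequality follows.

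Part (3a) is immediate from Part (2): two distinct $k$-rational nilpotent orbits in the same $\bar k$-geometric orbit have equal dimension and are therefore incomparable in the closure order, so the hypothesis $\OO'\ne\mathcal L_c(\OO)$ with matching geometric orbits forces $\mathcal L_c(\OO)\not\le\OO'$, and Part (2) gives the vanishing. Part (3b) is a positivity argument using the same integral: by Proposition \ref{prop:gggvals}(1), $\hat\gamma_\OO$ is pointwise non-negative, and by the $\lsl_2$-triple characterisation of the lift there exist $\ms X\in\mathcal L_c(\OO)\cap\bfp_c(\mf o)$ with $\bar{\ms X}\in\OO$; for such $\ms X$, Proposition \ref{prop:gggvals}(4) gives $\hat\gamma_\OO(\bar{\ms X})>0$. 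Since $\hat\gamma_\OO$ factors through the reduction to $\bfl_c(\mathbb F_q)$ and hence is locally constant on $\bfp_c(\mf o)$, the positive contribution spreads over an open neighbourhood of positive $\mu_{\mathcal L_c(\OO)}$-measure, and the integral is strictly positive.

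Part (4) is a direct trace-theoretic computation: because $\gamma_\OO$ factors through $\bfL_c(\mathbb F_q)$, the function $f_{c,\OO}$ is bi-invariant under $\bfU_c(\mf o)$, so the operator $\int_{\bfG(k)}f_{c,\OO}(g)\pi(g)\,dg$ factors through the finite-dimensional space $X^{\bfU_c(\mf o)}$ and acts there, up to the Haar measure of $\bfU_c(\mf o)$, as $\sum_{\bar g\in\bfL_c(\mathbb F_q)}\gamma_\OO(\bar g)\rho(\bar g)$, where $\rho$ denotes the $\bfL_c(\mathbb F_q)$-action on $X^{\bfU_c(\mf o)}$. Taking the trace and applying character orthogonality identifies the result with the pairing $\langle\Gamma_{c,\OO},\check X^{\bfU_c(\mf o)}\rangle$, the contragredient arising from the standard inverse convention in the trace. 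The principal obstacle is Part (2), specifically the careful book-keeping required to pass between $\mathbb F_q$- and $\barF_q$-rational closure orderings for nilpotent orbits of $\bfl_c$ and the invocation of a $k$-rational analogue of Proposition \ref{prop:lifted_rel}, which is stated in the excerpt only over $K$ but whose proof should carry over.
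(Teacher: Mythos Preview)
Your treatment of Parts (1), (3), and (4) is fine and matches the paper's deferral to Barbasch--Moy; for (3a) the cleaner justification is via Theorem~\ref{thm:saturation-increasing} rather than a bare dimension count.

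The genuine gap is in Part~(2), at the step you yourself flag: the ``$k$-rational analogue of Proposition~\ref{prop:lifted_rel}''. The obstruction is structural, not a matter of the proof carrying over. Over $K$ the relevant order on $I_o^K$ is the Zariski closure order on $\barF_q$-orbits, a genuine partial order; over $k$ the only candidate order on $\mathbb F_q$-orbits is the preorder pulled back from $\barF_q$, and there is no reason for this to lift to the $k$-rational closure order on $\mathcal N_o(k)$. Indeed the paper remarks before its proof that the Barbasch--Moy argument, which your approach recapitulates, ``only shows that $\mathcal L_c(\OO)$ lies in the closure of $\mathcal N_o(K/k)(\OO')\cap\mf g(k)$''---the $K$-level inequality, not the $k$-level one. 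Your chain $\mathcal L_c(\OO)\le\mathcal L_c(\OO'')\le\OO'$ breaks at the first link whenever the $\barF_q$-saturations of $\OO$ and $\OO''$ differ (when they agree, Proposition~\ref{prop:gggvals}(3) actually forces $\OO=\OO''$ over $\mathbb F_q$, and then the second link alone suffices). The paper's fix avoids the finite-field detour entirely: using the slice description behind Proposition~\ref{prop:gggvals}(1) it computes $\supp(\hat f_{c,\OO})=\bfP_c(\mf o).\bigl(-\ms Y+\ms Z_{\bfp_c(\mf o)}(\ms X)\bigr)$ for a lifted $\lsl_2$-triple, and then for any $\ms X'=-\ms Y+\ms Z\in\OO'$ in this support the one-parameter subgroups attached to $\ms H$ and to $\ms X'$ give $\lambda(t)\lambda_{\ms X'}(t).\ms X'\to-\ms Y$ inside $\OO'$ as $t\to0$, yielding $\mathcal L_c(\OO)=\bfG(k).(-\ms Y)\le\OO'$ by a direct $k$-rational degeneration.
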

\nomenclature{$\check X$}{}
Here $\check X$ denotes the contragredient (i.e. the smooth dual) of $X$.
The proof for (2) in \cite{barbaschmoy} however only shows that $\mathcal L_c(\OO)$ lies in the closure of $\mathcal N_o(K/k)(\OO') \cap \mf g(k)$ in $\mf g(k)$.
We now give a complete proof of (2) using ideas from \cite{wast}.
\begin{proof}
    Let $\ms x,\ms h,\ms y$ be an $\lsl_2$-triple for $\OO$.
    Let $\ms X,\ms H,\ms Y$ denote a lift of $\ms x,\ms h,\ms y$ to an $\lsl_2$-triple of $\mf g(k)$.
    We proceed by first showing that 
    \begin{equation}
        \label{eq:supp}
        \supp(\hat f_{c,\OO}) = \set{{\ms h}.(-\ms Y + \ms Z):\ms h\in \bfP_c(\mf o),\ms Z\in \ms Z_{\bfp_c(\mf o)}(\ms X)}.
    \end{equation}
    We have that $\hat f_{c,\OO}(\ms W) \ne 0$ iff $\ms W\in \bfp_c(\mf o)$ and $\hat\gamma_{\OO}(\ms w) \ne 0$ where $\ms w$ is the image of $\ms W$ in $\bfl_c(\mathbb F_q)$.
    By \cite[Equation 2.4 (a)]{lusztig}, $\hat\gamma_{\OO}(\ms w)\ne 0$ iff $\ms w \in {\bfL_c(\mathbb F_q)}.(-\ms y + \ms Z_{\bfl_c(\mathbb F_q)}(\ms e))$.
    By the proof of \cite[Lemma IX.3]{wast}, we know that the image of $\ms Z_{\bfp_c(\mf o)}(X)$ in $\bfl_c(\mathbb F_q)$ is $\ms Z_{\bfl_c(\mathbb F_q)}(\ms x)$.
    Thus the support consists of those $\ms W$ in ${\bfP_c(\mf o)}.(-\ms Y+\ms Z_{\bfp_c(\mf o)}(\ms X))+\bfu_c(\mf o) = {\bfP_c(\mf o)}.(-\ms Y+\ms Z_{\bfp_c(\mf o)}(\ms X)+\bfu_c(\mf o))$.
    But from the same proof in \cite{wast} we also know that for $\ms Z\in \ms Z_{\bfp_c(\mf o)}(\ms X)$
    \begin{equation}
        -\ms Y + \ms Z + \bfu_c(\mf o) = \set{{\ms h}.(-\ms Y+\ms Z+\ms Z'):\ms Z'\in \ms Z_{\bfu_c(\mf o)},\ms h\in \bfU_c(\mf o)}.
    \end{equation}
    Thus $-\ms Y + \ms Z_{\bfp_c(\mf o)}(\ms X) + \bfu_c(\mf o) = {\bfU_c(\mf o)}.(-\ms Y+\ms Z_{\bfp_c(\mf o)})$ and so equation \ref{eq:supp}. holds.

    Now let $\OO'$ be a nilpotent orbit with $\hat \mu_{\OO'}(f_{c,\OO})\ne0$. 
    Then $\supp(\hat f_{c,\OO})\cap \OO' \ne \emptyset$.
    Thus there is a $\ms X' \in \OO'$, $\ms h\in \bfP_c(\mf o)$ and $\ms Z\in \ms Z_{\bfp_c(\mf o)}(\ms X)$ such that $\ms X' = {\ms h}.(-\ms Y+\ms Z)$.
    Since ${\ms h^{-1}}.\ms X'\in \OO'$ we can assume $\ms h = 1$ and so $\ms X' = -\ms Y+\ms Z$.
    Let $\lambda_{\ms X'}: \mathbf G_m\to \mathbf G$ be a 1-parameter $k$ subgroup so that $\lambda_{\ms X'}(t).\ms X' = t^2\ms X'$, and $\lambda$ be the 1-parameter subgroup determined by $\ms H$.
    Since $\ms Z_{\mf g(k)}(\ms X) \subseteq \mf g(k)(\ge 0)$, write $\ms Z = \sum_{i\ge 0}\ms Z_i$ where $\ms Z_i\in \mf g(k)(i)$.
    Then
    \begin{equation}
        {\lambda(t)\lambda_{\ms X'}(t)}.\ms X' = -\ms Y + \sum_{i\ge 0}t^{2+i}\ms Z_i \to -\ms Y
    \end{equation}
    as $t\to 0$.
    Thus $\OO = {\bfG(k)}.(-Y) \le \OO'$.
\end{proof}
Note that Theorem \ref{thm:saturation-increasing} together with (2) of this theorem immediately imply (3) (a) of this theorem.

\begin{proposition}
    Suppose $(\pi,X)$ has depth-$0$.
    \nomenclature{$\Xi(X),\Xi^{max}(X)$}{}
    Let 
    $$\Xi(X) = \{\OO\in \mathcal N_o(k):\text{there exists } (c,\OO')\in I_o^k(\OO) \text{ such that }\Theta_X(f_{c,\OO'})\ne 0\}.$$
    Write $\Xi^{max}(X)$ for the maximal orbits of $\Xi(X)$.
    Then
    \begin{enumerate}[(1)]
        \item $\WF(X) = \Xi^{max}(X)$,
        \item if $\OO \in \Xi^{max}(X)$, then $\Theta_X(f_{c,\OO'}) \ne 0$ for all $(c,\OO')\in I_o^k(\OO)$.
    \end{enumerate}
\end{proposition}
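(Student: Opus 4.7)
The plan is to feed each test function $f_{c,\OO'}$ into the Harish--Chandra--Howe local character expansion, using Theorem \ref{thm:bm} to cut down and analyse the resulting sum. This is legitimate because $f_{c,\OO'}$ is supported on $\bfU$ by Theorem \ref{thm:bm}(1), so the depth-$0$ expansion of DeBacker applies on its support. Combined with Theorem \ref{thm:bm}(2), for any $(c,\OO')\in I_o^k(\OO)$ we obtain the truncated identity
\begin{equation}
    \Theta_X(f_{c,\OO'}) \;=\; \sum_{\OO''\ge \OO} c_{\OO''}(X)\,\hat\mu_{\OO''}(f_{c,\OO'}),
\end{equation}
which is the only identity we will need.

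Next I would establish the forward direction, namely that every $\OO\in \WF(X)$ lies in $\Xi(X)$ \emph{and} that (2) holds for it. Fix $\OO\in \WF(X)$ and any $(c,\OO')\in I_o^k(\OO)$. Maximality of $\OO$ among orbits with nonzero coefficient forces $c_{\OO''}(X)=0$ for every $\OO''>\OO$, so the displayed sum collapses to a single term
\begin{equation}
    \Theta_X(f_{c,\OO'}) \;=\; c_\OO(X)\,\hat\mu_\OO(f_{c,\OO'}).
\end{equation}
The first factor is nonzero because $\OO\in \WF(X)$; the second is nonzero by Theorem \ref{thm:bm}(3)(b), applied with $\OO''=\OO=\mathcal L_c(\OO')$. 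Hence $\Theta_X(f_{c,\OO'})\ne 0$ for \emph{every} such $(c,\OO')$, proving (2) outright and giving $\WF(X)\subseteq \Xi(X)$.

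Having this, both inclusions in (1) come out by the same bookkeeping. For $\Xi^{\max}(X)\subseteq \WF(X)$: take $\OO\in \Xi^{\max}(X)$ and pick a witness $(c,\OO')\in I_o^k(\OO)$ with $\Theta_X(f_{c,\OO'})\ne 0$; the truncated sum forces the existence of some $\OO''\ge \OO$ with $c_{\OO''}(X)\ne 0$, and choosing $\OO''$ maximal with this property places $\OO''\in \WF(X)$. By the forward direction $\OO''\in \Xi(X)$; maximality of $\OO$ in $\Xi(X)$ then forces $\OO''=\OO$, so $\OO\in \WF(X)$. For the reverse inclusion $\WF(X)\subseteq \Xi^{\max}(X)$: we already have $\WF(X)\subseteq \Xi(X)$, and if some $\OO_1>\OO$ were to lie in $\Xi(X)$ then the argument just given would yield an $\OO''\ge \OO_1>\OO$ with $c_{\OO''}(X)\ne 0$, contradicting $\OO\in \WF(X)$.

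The main potential pitfall is purely organisational rather than mathematical: the two directions in (1) are tightly intertwined, because the proof that $\Xi^{\max}(X)\subseteq \WF(X)$ feeds back into the proof that maximality in $\Xi$ is forced, so one must be careful to first isolate the forward implication $\WF(X)\subseteq \Xi(X)$ (together with (2)) as a self-contained step, before deducing the two maximality statements. All the analytic content is already packaged in Theorem \ref{thm:bm} parts (1), (2), (3)(b); the remainder is a short lattice-theoretic argument.
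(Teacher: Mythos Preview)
Your proposal is correct and follows essentially the same route as the paper: feed the inflated GGGR functions into the depth-$0$ local character expansion, truncate the sum via Theorem~\ref{thm:bm}(2), collapse it to a single nonvanishing term for $\OO\in\WF(X)$ using Theorem~\ref{thm:bm}(3)(b), and then run the short maximality bootstrap between $\WF(X)$ and $\Xi(X)$. The only difference is organisational---the paper proves $\WF(X)\subseteq\Xi^{\max}(X)$ before $\Xi^{\max}(X)\subseteq\WF(X)$, whereas you do the reverse---but the content is identical.
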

\begin{proof}
    Let $\OO\in \WF(X)$ and $(c,\OO')\in I_o^k(\OO)$.
    Since $X$ has depth-$0$ and the inflated GGGRs have support in $\bfU$ we have
    \begin{equation}
        \Theta_X(f_{c,\OO'}) = \sum_{\OO''\in \mathcal N_o(k)} c_{\OO''}\hat\mu_{\OO''}(f_{c,\OO'}).
    \end{equation}
    Then by Theorem \ref{thm:bm} part 2 we have that
    \begin{equation}
        \Theta_X(f_{c,\OO'}) = \sum_{\OO\le \OO''} c_{\OO''}\hat\mu_{\OO''}(f_{c,\OO'}).
    \end{equation}
    But if $\OO < \OO''$ then $c_{\OO''} = 0$ and so $\Theta_X(f_{c,\OO'}) = c_{\OO'}(X)\hat \mu_{\OO}(f_{c,\OO'}) \ne 0$.
    Thus $\WF(X)\subseteq \Xi(X)$.

    Now suppose $\OO\in \WF(X)$ and $\OO_1$ is a nilpotent orbit with $\OO < \OO_1$.
    Let $(c_1,\OO_1')\in I_o^k(\OO_1)$.
    Then $\Theta_X(f_{c_1,\OO_1}) = \sum_{\OO_1\le \OO_2} c_{\OO_2}\hat\mu_{\OO_2}(f_{c_1,\OO_1'})$.
    But if $\OO_1\le \OO_2$ then $\OO< \OO_2$ and so $c_{\OO_2}(X) = 0$ and so $\Theta_X(f_{c_1,\OO_1'}) = 0$.
    Thus we get that $\WF(X)\subseteq \Xi^{max}(X)$.

    Finally, suppose $\OO$ is in $\Xi^{max}(X)$ and $(c,\OO')\in I_o^k(\OO)$.
    $\OO$ must be $\le \OO_1$ for some $\OO_1 \in \WF(X)$ (since otherwise $c_{\OO_2}(X) = 0$ for all $\OO\le \OO_2$ and so $\Theta_X(f_{c,\OO'}) = 0$).
    But $\WF(X)\subseteq \Xi(X)$ and so by maximality we must have $\OO = \OO_1$ and so $\Xi^{max}(X) \subseteq \WF(X)$. This establishes 1.

    To establish 2., note by the first part we have that $\Theta_X(f_{c,\OO'}) \ne 0$ for all $\OO \in \WF(X)$ and $(c,\OO')\in I_o^k(\OO)$.
    Then use the fact that $\Xi^{max}(X) = \WF(X)$.
\end{proof}
\begin{proposition}
    \label{prop:wfs}
    $\hphantom{ }$ 
    \begin{enumerate}
        \item $\hphantom{ }^K\widetilde\WF(X) = \max\set{\mathcal N_o(K/k)(\OO):\OO\in \WF(X)}$,
        \item $^{\bar k}\WF(X) = \max\set{\mathcal N_o(\bar k/K)(\OO):\OO\in \hphantom{ }^K\widetilde\WF(X)}$.
    \end{enumerate}
\end{proposition}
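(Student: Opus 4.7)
The plan is to observe that both statements are purely formal order-theoretic consequences of two facts: (i) the saturation maps $\mathcal N_o(K/k)$ and $\mathcal N_o(\bar k/K)$ are weakly order-preserving, and (ii) the set $A := \set{\OO \in \mathcal N_o(k) : c_\OO(X) \ne 0}$ is finite (which follows from finiteness of $\mathcal N_o(k)$, e.g.\ by Theorem \ref{thm:nilorbit}). The key elementary lemma I would extract is this: if $\phi: P \to Q$ is a weakly order-preserving map between posets and $A \subseteq P$ is finite, then $\max \phi(A) = \max \phi(\max A)$. This holds because $\max A$ is cofinal in the finite set $A$, weak monotonicity preserves cofinality, and a cofinal subset of a poset has the same maximal elements as the ambient set.

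The weak monotonicity of saturation is essentially tautological. If $\OO_1 \le \OO_2$ in $\mathcal N_o(k)$, then $\OO_1 \subseteq \overline{\OO_2}^{k} \subseteq \overline{\OO_2}^{K}$ (closure in $\mf g(K)$), so $\bfG(K).\OO_1 \subseteq \bfG(K).\overline{\OO_2}^{K} \subseteq \overline{\bfG(K).\OO_2}^{K}$ by continuity of the $\bfG(K)$-action on $\mf g(K)$; the same argument, mutatis mutandis, works for $\mathcal N_o(\bar k/K)$ (now in the Zariski topology).

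For (1), I would apply the lemma with $\phi = \mathcal N_o(K/k)$ and $A$ to obtain
\[
\hphantom{ }^K\widetilde\WF(X) = \max \mathcal N_o(K/k)(A) = \max \mathcal N_o(K/k)(\WF(X)) = \max\set{\mathcal N_o(K/k)(\OO):\OO\in\WF(X)}.
\]
For (2), using the evident functoriality $\mathcal N_o(\bar k/k) = \mathcal N_o(\bar k/K)\circ \mathcal N_o(K/k)$, I would rewrite
\[
\hphantom{ }^{\bar k}\WF(X) = \max \mathcal N_o(\bar k/K)(B), \qquad B := \mathcal N_o(K/k)(A),
\]
where $B$ is finite as the image of a finite set. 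By (1), $\max B = \hphantom{ }^K\widetilde\WF(X)$, so applying the lemma again with $\phi = \mathcal N_o(\bar k/K)$ delivers the claim.

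I do not anticipate any real obstacle: the argument is pure order theory plus weak monotonicity of saturation. In particular, it does not require the strict monotonicity of Theorem \ref{thm:saturation-increasing}, which is proved for other applications in the paper.
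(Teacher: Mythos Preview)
Your proposal is correct and is essentially the same argument as the paper's, which is the one-liner ``This follows from $\mathcal N_o(K/k)$ and $\mathcal N_o(\bar k/K)$ being non-decreasing.'' You have simply (and correctly) unpacked this into the cofinality lemma $\max \phi(A)=\max\phi(\max A)$ for weakly monotone $\phi$ on finite $A$, and checked the easy hypotheses.
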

\begin{proof}
    This follows from $\mathcal N_o(K/k)$ and $\mathcal N_o(\bar k/K)$ being non-decreasing.
\end{proof}
\begin{corollary}
    \label{cor:maxl}
    Suppose $(\pi,X)$ has depth-$0$.
    Then $\hphantom{ }^K\widetilde\WF(X)$ is equal to the set of maximal orbits of $\mathcal N_o(K/k)(\Xi(X))$.
\end{corollary}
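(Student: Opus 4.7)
The plan is to combine three ingredients already in hand: the identification $\WF(X) = \Xi^{max}(X)$ from the preceding proposition, Proposition \ref{prop:wfs} which expresses $\hphantom{ }^K\widetilde\WF(X)$ as $\max\{\mathcal N_o(K/k)(\OO):\OO\in \WF(X)\}$, and the monotonicity of $\mathcal N_o(K/k)$ from Theorem \ref{thm:saturation-increasing}. With these it suffices to show the purely order-theoretic statement
\[
    \max\{\mathcal N_o(K/k)(\OO):\OO\in \Xi^{max}(X)\} = \max\{\mathcal N_o(K/k)(\OO):\OO\in \Xi(X)\},
\]
since the left-hand side equals $\hphantom{ }^K\widetilde\WF(X)$ by the previous two results and the right-hand side is the maximal orbits of $\mathcal N_o(K/k)(\Xi(X))$.

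For the inclusion $\subseteq$, every $\OO\in \Xi^{max}(X)$ belongs to $\Xi(X)$, so any maximal element on the left is in $\mathcal N_o(K/k)(\Xi(X))$; to upgrade membership to maximality in the latter set, suppose some $\mathcal N_o(K/k)(\OO') \geq \mathcal N_o(K/k)(\OO)$ with $\OO' \in \Xi(X)$, and use that $\Xi(X)$ has only finitely many orbits above $\OO'$ (everything lives in $\mathcal N_o(k)$, which is discrete) to push $\OO'$ up to some $\OO'' \in \Xi^{max}(X)$ with $\OO' \le \OO''$; then $\mathcal N_o(K/k)(\OO) \le \mathcal N_o(K/k)(\OO'')$ forces equality by maximality on the left. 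Conversely, for $\supseteq$, take $\OO \in \Xi(X)$ and pick $\OO^{\dagger} \in \Xi^{max}(X)$ with $\OO \le \OO^{\dagger}$; monotonicity of $\mathcal N_o(K/k)$ gives $\mathcal N_o(K/k)(\OO) \le \mathcal N_o(K/k)(\OO^{\dagger})$, so every element of $\mathcal N_o(K/k)(\Xi(X))$ is dominated by an element of $\mathcal N_o(K/k)(\Xi^{max}(X))$, and the two sets of maxima agree.

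I expect no real obstacle here: both assertions are formal consequences of the earlier results, and the only mild subtlety is that one needs to know that any element of $\Xi(X)$ is actually bounded above by some element of $\Xi^{max}(X)$. This is automatic from $\Xi(X)\subseteq \mathcal N_o(k)$ together with the fact that the closure order on $\mathcal N_o(k)$ has finite chains (and $\Xi(X)$ is therefore a subset of a noetherian poset). Thus the corollary is really a book-keeping statement packaging the preceding two results.
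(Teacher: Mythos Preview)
Your proposal is correct and matches the paper's intended argument: the corollary is stated without proof, since it follows immediately from combining $\WF(X)=\Xi^{max}(X)$, Proposition~\ref{prop:wfs}(1), and the monotonicity of $\mathcal N_o(K/k)$. Your explicit order-theoretic unpacking of why $\max \mathcal N_o(K/k)(\Xi^{max}(X)) = \max \mathcal N_o(K/k)(\Xi(X))$ is precisely the content the paper leaves implicit.
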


\paragraph{Local Wavefront Sets}
\label{sec:locwf}
\nomenclature{$\WF_c(X)$}{}
\nomenclature{$^K\WF_c(X)$}{}
We now define local wavefront sets $\WF_c$ and $\hphantom{ }^K\WF_c$ for $c$ a face of $\mathcal B(\bfG,k)$.
Let $\OO_1,\dots,\OO_k \in \mathcal N_o^{\bfL_c}(\barF_q)$ be the Kawanaka wavefront-set nilpotents of $W_c := \check V^{\bfU_c(\mf o)}$, a representation of $\bfL_c(\mathbb F_q)$.
Let $\OO_1',\dots,\OO_l' \in \mathcal N_o^{\bfL_c}(\mathbb F_q)$ be the nilpotent orbits in $\cup_i\OO_i$ such that $\langle\Gamma_{\OO_i'},W_c\rangle \ne 0$.
Define 
$$\WF_c(X) =\set{\mathcal L_c(\OO_i'):1\le i\le l} \text{ and } \hphantom{ }^K\WF_c(X) = \set{\mathcal L_c(\OO_i):1\le i\le k}.$$
It is clear that 
$$^K\WF_c(X) = \set{\mathcal N_o(K/k)(\OO):\OO\in \WF_c(X)}.$$
\nomenclature{$\mathscr C$}{}
Let $\mathscr C$ denote a collection of faces in $\mathcal B(\bfG,k)$ such that for all $\OO\in \mathcal N_o(k)$ there exists a $(c,\OO')\in I_o^k(\OO)$ such that $c\in \mathscr C$.
Examples of $\mathscr C$ that satisfy this property are:
\begin{enumerate}[(1)]
    \item the faces of a fixed chamber $c_0$;
    \item the vertices of a fixed chamber $c_0$;
    \item a choice of vertex from each $\bfG(k)$ orbit of the vertices of a fixed chamber $c_0$.
\end{enumerate}

\begin{theorem}
    \label{lem:liftwf}
    Let $(\pi,X)$ be depth-$0$.
    Then
    $$^K\widetilde\WF(X) = \max_{c\in \mathscr C}\hphantom{ }^K\WF_c(X).$$
\end{theorem}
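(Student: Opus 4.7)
The plan is to reduce the statement to a combinatorial comparison of two families of orbits via Corollary \ref{cor:maxl}, Theorem \ref{thm:bm}(4), and the defining properties of $\mathscr C$ and of the Kawanaka wavefront set.

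The first step is the compatibility
\[
  \mathcal N_o(K/k)\bigl(\mathcal L_c(\OO')\bigr) = \mathcal L_c\bigl(\mathcal N_o^{\bfL_c}(\barF_q/\mathbb F_q)(\OO')\bigr)
\]
for every $(c,\OO')\in I_o^k$. This is immediate from the $\lsl_2$-triple characterisation of $\mathcal L_c$ in Section \ref{sec:debacker_param}: an $\lsl_2$-triple $\ms X,\ms H,\ms Y\in \bfp_c(\mf o)\subseteq \bfp_c(\mf O)$ reducing to one for $\OO'$ simultaneously reduces to one for the $\barF_q$-saturation $\mathcal N_o^{\bfL_c}(\barF_q/\mathbb F_q)(\OO')$, and both sides are represented by $\bfG(K).\ms X$. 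Combining this with Corollary \ref{cor:maxl} and Theorem \ref{thm:bm}(4) (which identifies $\Theta_X(f_{c,\OO'})$ with $\langle \gamma_{\OO'},\check X^{\bfU_c(\mf o)}\rangle$) gives
\[
  {}^K\widetilde\WF(X) = \max \bigcup_{c\subseteq \mathcal B(\bfG,k)} \mathcal L_c(S_c),
\]
where $S_c\subseteq \mathcal N_o^{\bfL_c}(\barF_q)$ denotes the set of orbits $\tilde\OO'$ containing some $\mathbb F_q$-orbit $\OO'$ with $\langle\gamma_{\OO'},\check X^{\bfU_c(\mf o)}\rangle\ne 0$.

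By the definition of the Kawanaka wavefront set (Section \ref{sec:kawanaka}), the maximal elements of $S_c$ are exactly the orbits of ${}^{\barF_q}\WF(\check X^{\bfU_c(\mf o)})$, so ${}^K\WF_c(X) = \mathcal L_c({}^{\barF_q}\WF(\check X^{\bfU_c(\mf o)}))$. Since $\mathcal L_c$ is non-decreasing (Proposition \ref{prop:lifted_rel}), every element of $\mathcal L_c(S_c)$ is dominated by some element of ${}^K\WF_c(X)$.

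The main -- and only delicate -- step is to show that the resulting supremum is already attained on $\mathscr C$. Take any $(c,\OO')$ with $\langle\gamma_{\OO'},\check X^{\bfU_c(\mf o)}\rangle\ne 0$, set $\OO := \mathcal L_c(\OO')\in \Xi(X)$, and choose $\OO^*\in \Xi^{max}(X) = \WF(X)$ with $\OO\le \OO^*$. The defining property of $\mathscr C$ produces $(c^*,\OO^{*\prime})\in I_o^k(\OO^*)$ with $c^*\in \mathscr C$, and part~(2) of the (unlabelled) proposition preceding Proposition \ref{prop:wfs} forces $\Theta_X(f_{c^*,\OO^{*\prime}})\ne 0$, \ie $\OO^{*\prime}$ contributes to $S_{c^*}$. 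The compatibility identity combined with Theorem \ref{thm:saturation-increasing} then yields
\[
  \mathcal L_c(\tilde\OO') = \mathcal N_o(K/k)(\OO) \le \mathcal N_o(K/k)(\OO^*) = \mathcal L_{c^*}(\tilde\OO^{*\prime}),
\]
where the tildes denote $\barF_q$-saturations, and by the previous paragraph the right-hand side is dominated by an element of ${}^K\WF_{c^*}(X)$. The two maxima therefore coincide, completing the proof.
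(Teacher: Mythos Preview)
Your proof is correct and follows essentially the same route as the paper's: both reduce to Corollary~\ref{cor:maxl}, use the compatibility $\mathcal N_o(K/k)\circ\mathcal L_c=\mathcal L_c\circ\mathcal N_o^{\bfL_c}(\barF_q/\mathbb F_q)$, the monotonicity of $\mathcal L_c$ (Proposition~\ref{prop:lifted_rel}), and the defining property of $\mathscr C$. Your argument is in fact slightly more explicit than the paper's at one point: when the paper picks $(c,\OO_1')\in I_o^k(\OO_1)$ with $c\in\mathscr C$ and then asserts $\Theta_X(f_{c,\OO_1'})\ne 0$, this tacitly uses that $\OO_1\in\Xi^{max}(X)$ together with part~(2) of the preceding proposition, whereas you invoke that proposition overtly via the passage through $\OO^*\in\WF(X)$.
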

\begin{proof}
    Let $^K\Xi(X) = \mathcal N_o(K/k)(\Xi(X))$.
    We will show that $^K\WF_{c}(X)\subseteq \hphantom{ }^K\Xi(X)$ for all $c\in \mathscr C$, and that if $\OO$ is a maximal element of $^K\Xi(X)$ then $\OO\in \hphantom{ }^K\WF_c(X)$ for some $c\in \mathscr C$.
    Corollary \ref{cor:maxl} then implies the result.

    The first part is straightforward.
    Let $c$ be any face of $\mathcal B(\bfG,k)$ and $\OO \in \hphantom{ }^K\WF_c(X)$.
    Write $\OO$ as $\mathcal N_o(K/k)(\OO_1)$ where $\OO_1$ is in $\WF_c(X)$.
    By definition of $\WF_c(X)$, $\OO_1$ is in $\Xi(X)$.
    Thus we have $\hphantom{ }^K\WF_c(X) \subseteq \hphantom{ }^K\Xi(X)$.

    For the second part let $\OO$ be a maximal element of $^K\Xi(X)$.
    Write 
    $$\OO = \mathcal N_o(K/k)(\OO_1)$$
    where $\OO_1\in \Xi(X)$.
    Let $(c,\OO_1')\in I_o^k(\OO_1)$ be such that $c\in\mathscr C$.
    Since $\Theta_X(f_{c,\OO_1'})\ne 0$, there is an irreducible constituent $W$ of $\check V^{\bfU_c(\mf o)}$ with 
    $$\langle \Gamma_{\OO_1'},W\rangle \ne 0.$$
    Let $\OO_2'\in \mathcal N_o^{\bfL_c}(\barF_q)$ be a Kawanaka wavefront set nilpotent of $\check V^{\bfU_c(\mf o)}$ such that 
    $$\mathcal N_o(\barF_q/\mathbb F_q)(\OO_1') \le \OO_2'.$$
    Let $\OO_2 = \mathcal L_c(\OO_2')$.
    $\OO_2$ is an element of $^K\Xi(X)$.
    By Proposition \ref{prop:lifted_rel}, 
    $$\mathcal N_o(\barF_q/\mathbb F_q)(\OO_1') \le \OO_2'$$
    implies that
    $$\OO = \mathcal N_o(K/k)(\OO_1)  = \mathcal N_o(K/k)(\mathcal L_c(\OO_1')) = \mathcal L_c(\mathcal N_o(\barF_q/\mathbb F_q)(\OO_1'))\le \mathcal L_c(\OO_2') = \OO_2.$$
    By maximality of $\OO$ in $^K\Xi(X)$ we get that $\OO = \OO_2$.
    It follows that $\OO \in \hphantom{ }^K\WF_c(X)$.
\end{proof}
\begin{corollary}
    Let $(\pi,X)$ be an admissible representation of $\bfG(k)$ and let $c$ be a face of $\mathcal B(\bfG,k)$.
    Then 
    \begin{equation}
        \label{eq:locwfexp}
        ^K\WF_c(X) = \set{\mathcal L_c(\OO):\OO \in \hphantom{ }^{\barF_q}\WF(X^{\bfU_c(\mf o)})}.
    \end{equation}
\end{corollary}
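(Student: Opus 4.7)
The plan is to show that the set $\hphantom{ }^K\WF_c(X)$, which is defined in terms of the contragredient $\check X^{\bfU_c(\mf o)}$, may equally be described using $X^{\bfU_c(\mf o)}$ itself. So the content of the corollary reduces to two standard facts: the Kawanaka wavefront set on $\bfL_c(\mathbb F_q)$ is insensitive to taking contragredients, and the $\bfU_c(\mf o)$-invariants of $\check X$ are identified with the linear dual of the $\bfU_c(\mf o)$-invariants of $X$ as $\bfL_c(\mathbb F_q)$-representations.

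First I would verify the $\bfL_c(\mathbb F_q)$-equivariant isomorphism
\[
\check X^{\bfU_c(\mf o)} \cong (X^{\bfU_c(\mf o)})^*.
\]
Because $X$ is admissible and smooth, $X^{\bfU_c(\mf o)}$ is finite-dimensional and the canonical $\bfG(k)$-invariant pairing $X \times \check X \to \C$ restricts to a perfect pairing on the $\bfU_c(\mf o)$-fixed subspaces (which can be seen concretely via the normalised idempotent $e_{\bfU_c(\mf o)}$). The $\bfP_c(\mf o)$-invariance of this pairing, together with the fact that $\bfP_c(\mf o)$ surjects onto $\bfL_c(\mathbb F_q)$ with kernel $\bfU_c(\mf o)$, gives the equivariance.

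Next I would invoke Proposition \ref{prop:contra}, which asserts that the Kawanaka wavefront set of an irreducible representation of $\bfL_c(\mathbb F_q)$ coincides with that of its contragredient. Decomposing $X^{\bfU_c(\mf o)}$ into irreducibles and noting that passing to contragredients matches irreducible constituents bijectively, this extends to the possibly reducible representation $X^{\bfU_c(\mf o)}$, since its Kawanaka wavefront set is defined as the collection of maximal orbits among the wavefront sets of its irreducible constituents. Combining the two steps gives
\[
\hphantom{ }^{\barF_q}\WF(\check X^{\bfU_c(\mf o)}) = \hphantom{ }^{\barF_q}\WF\!\bigl((X^{\bfU_c(\mf o)})^*\bigr) = \hphantom{ }^{\barF_q}\WF(X^{\bfU_c(\mf o)}),
\]
and applying $\mathcal L_c$ to both sides yields the stated formula by the definition of $\hphantom{ }^K\WF_c(X)$.

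There is no substantive obstacle to this argument: it is essentially a repackaging of Proposition \ref{prop:contra} together with the admissibility duality. The only point that deserves a moment's care is the $\bfL_c(\mathbb F_q)$-equivariance of the duality $\check X^{\bfU_c(\mf o)} \cong (X^{\bfU_c(\mf o)})^*$, but this follows from the $\bfG(k)$-invariance of the canonical pairing on $X \times \check X$ and is entirely routine.
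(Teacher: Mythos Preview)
Your proposal is correct and follows exactly the same approach as the paper's proof: identify $\check X^{\bfU_c(\mf o)}$ with $(X^{\bfU_c(\mf o)})^*$ and then apply Proposition~\ref{prop:contra}. The paper's version is a two-line proof that simply asserts both facts, whereas you have carefully justified the duality isomorphism and the extension of Proposition~\ref{prop:contra} to reducible representations; this extra detail is entirely appropriate but does not constitute a different argument.
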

\begin{proof}
    We have that $\check X^{\bfU_c(\mf o)} = \left( X^{\bfU_c(\mf o)}\right)^*$.
    Thus by Proposition \ref{prop:contra}, 
    $$^{\barF_q}\WF(\check X^{\bfU_c(\mf o)}) = \hphantom{ }^{\barF_q}\WF(X^{\bfU_c(\mf o)}).$$
\end{proof}
We will always use the expression in equation \ref{eq:locwfexp} to compute the local wavefront sets .
\begin{corollary}
    Let $(\pi,X)$ be an admissible representation of $\bfG(k)$ and let $(\check \pi,\check X)$ be its contragredient.
    Then 
    $$^K\widetilde\WF(X) = \hphantom{ }^K\widetilde\WF(\check X) \text{ and } \hphantom{ }^{\bar k}\WF(X) = \hphantom{ }^{\bar k}\WF(\check X).$$
\end{corollary}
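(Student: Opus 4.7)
The plan is to reduce the statement to the (Kawanaka) wavefront set of the finite-dimensional representations $X^{\bfU_c(\mf o)}$ and invoke Proposition \ref{prop:contra}. This amounts to running through equation \eqref{eq:locwfexp} and Theorem \ref{lem:liftwf} in parallel for $X$ and $\check X$.

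First I would record the standard fact that, because $X$ is admissible, $\check{\check X} = X$ and for every face $c$ of $\mathcal B(\bfG,k)$ the finite-dimensional $\bfL_c(\mathbb F_q)$-representations $X^{\bfU_c(\mf o)}$ and $\check X^{\bfU_c(\mf o)}$ are contragredient to one another, i.e.\ $\check X^{\bfU_c(\mf o)} = (X^{\bfU_c(\mf o)})^*$. Next I would apply Proposition \ref{prop:contra} to $V = X^{\bfU_c(\mf o)}$ to conclude
$$^{\barF_q}\WF(X^{\bfU_c(\mf o)}) = \hphantom{ }^{\barF_q}\WF(\check X^{\bfU_c(\mf o)})$$
for every face $c$. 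Combining this with equation \eqref{eq:locwfexp} of the previous corollary gives $^K\WF_c(X) = \hphantom{ }^K\WF_c(\check X)$ for all $c \subseteq \mathcal B(\bfG,k)$.

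Assuming now that $(\pi,X)$ is depth-$0$ (as is tacit throughout this section), Theorem \ref{lem:liftwf} applied to $X$ and to $\check X$ yields
$$^K\widetilde\WF(X) = \max_{c \in \mathscr C} \hphantom{ }^K\WF_c(X) = \max_{c \in \mathscr C} \hphantom{ }^K\WF_c(\check X) = \hphantom{ }^K\widetilde\WF(\check X),$$
which is the first equality. For the second equality, I would simply feed this into Proposition \ref{prop:wfs}, which expresses $^{\bar k}\WF(X)$ as $\max\{\mathcal N_o(\bar k/K)(\OO):\OO\in \hphantom{ }^K\widetilde\WF(X)\}$; the right-hand side is invariant under the substitution $X \mapsto \check X$ by what we just established.

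There is no real obstacle: everything is a bookkeeping consequence of the previous corollary and Theorem \ref{lem:liftwf}. The only mild subtlety is that Theorem \ref{lem:liftwf} is formulated for depth-$0$ representations, so strictly speaking the above argument addresses that case; for general admissible representations one could instead appeal to the identity $c_\OO(\check X) = c_{-\OO}(X)$ (obtained by tracking the Harish-Chandra--Howe expansion under $\Theta_{\check X}(f) = \Theta_X(f^\vee)$) together with the fact that $-\OO = \OO$ as $\bfG(K)$- and $\bfG(\bar k)$-orbits, since $\bfG$ is split over $K$ and hence every nilpotent orbit contains the negative of each of its elements via Jacobson--Morozov.
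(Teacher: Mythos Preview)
Your argument is correct and follows essentially the same route as the paper: reduce the second equality to the first via Proposition \ref{prop:wfs}, reduce the first to the local statement $^K\WF_c(X) = \hphantom{ }^K\WF_c(\check X)$ via Theorem \ref{lem:liftwf}, and then deduce that local statement from $\check X^{\bfU_c(\mf o)} = (X^{\bfU_c(\mf o)})^*$ together with Proposition \ref{prop:contra} and equation \eqref{eq:locwfexp}. You also correctly flag that Theorem \ref{lem:liftwf} is stated for depth-$0$ representations (a hypothesis the paper leaves implicit here), and your supplementary sketch for general admissible $X$ via $c_{\OO}(\check X) = c_{-\OO}(X)$ and $-\OO = \OO$ over $K$ and $\bar k$ is a reasonable outline, though not needed for the paper's purposes.
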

\begin{proof}
    By Proposition \ref{prop:wfs}, it suffices to show the first equality.
    By Lemma \ref{lem:liftwf} it suffices to show that $^K\WF_c(X) = \hphantom{ }^K\WF_c(\check X)$ for any face $c$ of $\mathcal B(\bfG,k)$.
    This last equality follows from the previous corollary.
\end{proof}
In the following example we use the local wavefront sets to show that the geometric wavefront set of an anti-spherical representation is the regular nilpotent orbit.
\begin{example}
    Let $(\pi,X)$ be a spherical representation of $\bfG(k)$.
    Let $\mathcal A$ be an apartment of $\mathcal B(\bfG,k)$ and $c$ be a hyperspecial face of $\mathcal A$.
    By definition, $V^{\bfU_c(\mf o)}$ contains the trivial $\bfL_c(\mathbb F_q)$-representation.
    \nomenclature{$\ms{AZ}(X)$}{}
    Let $\ms {AZ}(X)$ denote the Aubert--Zelevinsky dual of $X$ \cite[Definition 1.5]{aubertdual}.
    Then $\ms {AZ}(X)^{\bfU_c(\mf o)}$ contains the Steinberg representation.
    By Equation \ref{eq:kawanakawf}, the Kawanaka wavefront set of the Steinberg is $\OO_{reg}$ - the regular nilpotent orbit of $\bfL_c(\mathbb F_q)$.
    This is the unique maximal nilpotent orbit of $\bfL_c(\mathbb F_q)$ and so the Kawanaka wavefront set of $V^{\bfU_c(\mf o)}$ is $\OO_{reg}$.
    Let $\OO = \mathcal L_c(\OO_{reg})$.
    Then $^K\WF_c(\ms {AZ}(X)) = \OO$.
    By Theorem \ref{thm:lift}, $\mathcal N_o(\bar k/K)(\OO)$ is the regular nilpotent orbit of $\mf g$ and so $^{\bar k}\WF(\ms {AZ}(X))$ must also be the regular nilpotent orbit.
\end{example}

\section{Unramified Nilpotent Orbits}
\paragraph{Basic Notation}
\label{par:basicnotation2}
\nomenclature{$\mathcal N_o$}{}
\nomenclature{$T$}{}
\nomenclature{$X^*$}{}
\nomenclature{$X_*$}{}
\nomenclature{$\widetilde W$}{}
\nomenclature{$E$}{}
\nomenclature{$V$}{}
\nomenclature{$\mathbb T$}{}
\nomenclature{$X_S$}{}
Recall from section \ref{par:basicnotation1} the definitions of $\bfG_\Z,\bfT_\Z$ and $\bfT_K$.
Let $G = \bfG_\Z(\C)$, $\mathcal N_o := \mathcal N_o^{\bfG_\Z}(\C)$, and $T = \bfT_\Z(\C)$.
Write $X^*,X_*$ for the common character and co-character lattices of $\bfT_K$ and $\bfT_\Z$, and $\Phi$ for $\Phi(\bfT_K,\bar k)$.
Let 
$$\tilde W=W\ltimes X_*$$
be \emph{the extended affine Weyl group of $\bf G_K$} and let 
$$\cdot:\tilde W\to W, w\mapsto \dot w$$
be the projection map along $X_*$.

Let $E = X^*\otimes_{\Z} \R$, $V = X_*\otimes_{\Z} \R$ and define the compact torus ${\mathbb T} = V/X_*$.
Extend the natural pairing $\langle -,- \rangle$ between $X^*$ and $X_*$ to one between $E$ and $V$.
This allows us to view $X^*$ as linear functions on $V$ via the embedding $\chi \mapsto \langle \chi,-\rangle \in V^*$.
It is moreover clear that these maps descend to continuous maps $V/X_*\to \R/\Z$ and so we obtain a homomorphism
$$X^* \to \hat {\mathbb T} = \Hom_{cts}(\mathbb T,\R/\Z).$$
This map is in fact a $W$ equivariant bijection \cite[Section 9]{sommersmcninch}.
We now define the vanishing set for three different scenarios.
\begin{enumerate}[(1)]
    \item For a subset $S\subseteq T$ define 
    $$X_S^* = \{\chi\in X^*:\chi(s) = 1, \ \forall s\in S\};$$
    \item For a subset $S\subseteq V$, view $X^*$ as linear functions on $V$ and define 
    $$X_S^* = \{\chi\in X^*:\exists n\in \mathbb \Z \text{ such that } \chi(s) = n, \ \forall s\in S\};$$
    \item For a subset $S\subseteq \mathbb T$, view $X^*$ as continuous homomorphisms from $\mathbb T$ to $\R/\Z$ and define 
    $$X_S^* = \{\chi\in X^*:\chi(s) = 0 + \Z, \ \forall s\in S\}.$$
\end{enumerate}
\nomenclature{$\Phi_S$}{}
For $S\subseteq T, V$, or $\mathbb T$, define $\Phi_S := \Phi\cap X_S$.

\nomenclature{$\Psi$}{}
Define the set of \emph{affine roots}
$$\Psi := \{\alpha + n:\alpha\in \Phi\}$$
where $\alpha+n$ denotes the affine function $v\mapsto \alpha(v)+n$ on $V$.
For $a = \alpha+n\in \Psi$ write $\dot a := \alpha$ for the linear part.
The extended affine Weyl group $\tilde W$ acts on $V$ by affine transformations.
This induces an action of $\tilde W$ on the set $\Psi$ via $w:a \mapsto a\circ w^{-1}$.
\nomenclature{$\Psi_S$}{}
For a subset $S\subset V$ write $\Psi_S$ for the set of all affine roots that vanish on $S$.
Note that $\Phi_S = \dot \Psi_S$ and that $\Psi_{wS} = w.\Psi_S$ for all $w\in \tilde W$.

The hyperplanes defined by the $a\in \Psi$ endow $V$ with a chamber complex structure and $\tilde W$ acts by chamber complex automorphisms.
Let $\mathcal A := \mathcal A(\bfT_K,K)$ be the apartment of $\mathcal B(\bfG,K)$ associated to $\bfT_K$.
The apartment $\mathcal A$ is the underlying affine space of the vector space $V$ and so affords an action of $V$ by translations.
\nomenclature{$\kappa_{x_0}$}{}
For this section we will need to make a choice of identification between $V$ and $\mathcal A$ - in particular we must choose a hyperspecial point $x_0\in \mathcal A$ (see \cite[Section 1.10]{tits} for the definition) to send the origin of $V$ to.
Let us fix such a choice $x_0$ and write $\kappa_{x_0}:V\to \mathcal A$ for the resulting identification.
This induces a chamber complex isomorphism between $V$ and $\mathcal A$, identifies $\Psi$ with the affine roots of $\mathcal A$, and fixes an action of $\tilde W$ on $\mathcal A$.

\nomenclature{$\mathcal A$}{}
\nomenclature{$\bar\bfT_K$}{}
\nomenclature{$\Phi_c(\bar\bfT_K,\barF_q)$}{}
For $c$ a face of $\mathcal A$ (resp. $V$) write $\mathcal A(c)$ for $\mathcal A(c,\mathcal A)$ (resp. $\mathcal A(c,V)$).
For a $c\subseteq \mathcal A$ let $W_c$ denote the subgroup of $\tilde W$ generated by the reflections through hyperplanes containing $c$.
The torus $\bfT_K$ is in fact defined over $\mf O$, is a subgroup of $\bfP_c$ for each $c\subseteq \mathcal A$, and the special fibre of $\bfT_K$, denoted $\bar\bfT_K$, is an $\barF_q$-split maximal torus of $\bfL_c(\barF_q)$.
Write $\Phi_c(\bar\bfT_K,\barF_q)$ for the root system of $\bfL_c(\barF_q)$ with respect to $\bar\bfT_K$.
Then $\Phi_c(\bar\bfT_K,\barF_q)$ naturally identifies with the set of $\psi\in\Psi(\bfT,k)$ that vanish on $\mathcal A(c,\mathcal A)$, and the Weyl group of $\bfL_c$ with respect to $\bar \bfT_K$ is naturally isomorphic to $W_c$. 
\nomenclature{$\bfB_K$}{}
\nomenclature{$\Delta$}{}
\nomenclature{$\tilde\Delta$}{}
\nomenclature{$\alpha_0$}{}
Let $\bfB_K$ be a Borel of $\bfG_K$ containing $\bfT_K$ and let $\Delta := \Delta(\bfT_K,\bfB_K)\subseteq \Phi$ be the simple roots determined by $\bfB_K$.
Let $\tilde\Delta$ be the simple roots of $\Psi$ corresponding to $\Delta$.
In particular, when $\Phi(\bfT,K)$ is irreducible, $\tilde\Delta = \Delta \cup \{1-\alpha_0\}$ where $\alpha_0$ is the highest root of $\Phi(\bfT,K)$ with respect to $\Delta$.
In general, for $\Phi(\bfT,K)=\coprod_{i=1}^l\Phi_i$ where each $\Phi_i$ is irreducible and $l\ge 1$, then $\Delta = \coprod_{i=1}^l\Delta_i$ where $\Delta_i = \Delta\cap \Phi_i$, and $\tilde\Delta := \coprod_{i=1}^l \tilde\Delta_i$.
\nomenclature{$c_0$}{}
Let $c_0$ be the chamber of $\mathcal A$ cut out by $\tilde\Delta$.
\nomenclature{$\mathbf P(\tilde\Delta)$}{}
\nomenclature{$\mathsf{Type}_{x_0,\Delta}$}{}
There is a unique labelling function $\mathsf{Type}_{x_0,\Delta}$ of the faces of $\mathcal B(\bfG,K)$ in terms of the set 
$$\mathbf P(\tilde \Delta):=\{J\subsetneq\tilde \Delta: J\cap\Delta_i\subsetneq \Delta_i, 1\le i\le l\},$$ 
such that for $c\subseteq \overline{c_0}$, we have that $\mathcal A(c)$ is the vanishing set of $\mathsf{Type}_{x_0,\Delta}(c)$ (see \cite[Section 5.2]{garrett}).

\paragraph{Pseudo-Levis}
\nomenclature{$N$}{}
\nomenclature{$W$}{}
\nomenclature{$C_G(\bullet)$}{}
\nomenclature{$\Phi_L$}{}
Let $N$ denote the normaliser of $T$ in $G$ and $W$ be the Weyl group $N/T$.
A \emph{pseudo-Levi subgroup $L$} of $G$ is a connected centraliser $\mathbf C_{G}^\circ(s)$ of a semisimple element $s\in G$.
Pseudo-Levi subgroups have the following convenient characterisation.
\begin{proposition}
    \cite[Lemma 14]{sommersmcninch}
    Let $S\subseteq T$ be a subset.
    Then $C_{G}^\circ (S)$ is a reductive subgroup of $G$ and is generated by $T$ together with the root subgroups $\mf X_\alpha$ for $\alpha(s) = 1$ for all $s\in S$.
\end{proposition}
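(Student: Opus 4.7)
The plan is to establish both directions by dimension count, with the key input being the standard fact that the root subgroups $\mf X_\alpha$ diagonalize the conjugation action of $T$. First I would define $H \subseteq G$ to be the closed subgroup generated by $T$ together with the root subgroups $\mf X_\alpha$ for $\alpha \in \Phi_S$, where $\Phi_S = \{\alpha \in \Phi : \alpha(s)=1 \text{ for all } s \in S\}$. Since each generator is connected and contains the identity, $H$ is connected. For $s \in S$ and the chosen isomorphism $x_\alpha : \mathbb{G}_a \isoto \mf X_\alpha$ we have $s\, x_\alpha(t)\, s^{-1} = x_\alpha(\alpha(s) t)$, so $\mf X_\alpha$ centralizes $S$ precisely when $\alpha \in \Phi_S$; together with the obvious fact that $T$ centralizes $S$ (as $S \subseteq T$ and $T$ is abelian), this gives the inclusion $H \subseteq C_G^\circ(S)$.

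For the reverse inclusion I would pass to Lie algebras. Using the root space decomposition $\mf g = \mf t \oplus \bigoplus_{\alpha \in \Phi} \mf g_\alpha$ and the fact that $s \in T$ acts on $\mf g_\alpha$ by the scalar $\alpha(s)$, the fixed subspace $\mf g^{\Ad(S)}$ is exactly $\mf t \oplus \bigoplus_{\alpha \in \Phi_S} \mf g_\alpha$. Since $C_G(S)$ is smooth in good characteristic (which is implied by our standing hypothesis that $p$ is sufficiently large), $\mathrm{Lie}(C_G^\circ(S)) = \mf g^{\Ad(S)}$, so
\begin{equation}
    \dim C_G^\circ(S) = \dim T + |\Phi_S|.
\end{equation}
On the other hand, $H$ contains $T$ and each of the one-dimensional subgroups $\mf X_\alpha$ for $\alpha \in \Phi_S$, and its Lie algebra contains $\mf t \oplus \bigoplus_{\alpha \in \Phi_S} \mf g_\alpha$, forcing $\dim H \ge \dim T + |\Phi_S|$. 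Combined with $H \subseteq C_G^\circ(S)$, this yields $H = C_G^\circ(S)$.

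Reductivity then follows from the description of $H$: the root data $(X^*, \Phi_S, X_*, \Phi_S^\vee)$ formed by the characters, cocharacters of $T$ and the roots in $\Phi_S$ together with their coroots is a root datum (it is a sub-root-system stable under the $\Phi_S$-reflections), and the closed subgroup of $G$ generated by $T$ and the $\mf X_\alpha$, $\alpha \in \Phi_S$, is the connected reductive subgroup with this root datum. The main obstacle I anticipate is the smoothness of the centralizer $C_G(S)$, needed to identify $\mathrm{Lie}(C_G^\circ(S))$ with $\mf g^{\Ad(S)}$; in bad characteristic this can fail, but our assumption on $p$ rules out the pathological cases. The rest is a standard structure-theoretic dimension argument.
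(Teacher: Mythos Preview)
The paper does not supply a proof; the proposition is simply quoted from \cite[Lemma 14]{sommersmcninch}. Your argument is a correct self-contained proof and is essentially the standard one. Two minor remarks. First, in this paper $G = \bfG_\Z(\C)$ is a complex group, so smoothness of centralizers is automatic and your caveat about good characteristic is unnecessary in this context (though it is relevant in the cited reference, which works over algebraically closed fields of good characteristic). Second, the reductivity step is slightly circular as phrased: asserting that $H$ ``is the connected reductive subgroup with this root datum'' presupposes the conclusion. The quickest fix is to invoke the standard fact that the identity component of the centralizer of any subset of a torus (equivalently, of a diagonalizable subgroup) in a connected reductive group is reductive; alternatively, note that $\Phi_S = -\Phi_S$, so the $T$-stable Lie algebra of the unipotent radical of $H$, being a sum of root spaces contained in a symmetric root subsystem yet consisting of nilpotents, must be zero.
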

For a pseudo-Levi $L\subset G$ containing $T$, write $\Phi_{L}$ for $\Phi_{Z}$ where ${Z}\subseteq T$ is the center of $L$.

\subsection{Parameterising Unramified Nilpotent Orbits}
\paragraph{Affine Bala--Carter Theory}
\label{par:abc}
\nomenclature{$I_{d,\mathcal A}^K$}{}
\nomenclature{$I_{o,d,\mathcal A}^K$}{}
\nomenclature{$\mathscr N$}{}
\nomenclature{$\sim_{\mathscr N}$}{}
Let 
$$I_{d,\mathcal A}^K = \set{(c,\ms x)\in I_{d}^K:c\subseteq \mathcal A} \text{ and } I_{o,d,\mathcal A}^K = \set{(c,\OO)\in I_{o,d}^K:c\subseteq \mathcal A}.$$
Since $\bfG(K)$ acts transitively on ordered pairs of apartments and chambers the inclusion map $I_{o,d,\mathcal A}^K\to I_{o,d}^K$ descends to a bijection on $\sim_K$ equivalence classes.
Let $\mathscr N$ denote the normaliser of $\bfT_K(K)$ in $\bfG(K)$ (or equivalently the stabiliser of $\mathcal A$ in $\bfG(K)$).
For $(c_1,\OO_1),(c_2,\OO_2)\in I_{o,d,\mathcal A}^K$ declare $(c_1,\OO_1)\sim_{\mathscr N} (c_2,\OO_2)$ if there exists an $\ms n\in \mathscr N$ such that $\mathcal A(c_1) = \mathcal A(\ms nc_2)$ and $\OO_1 = j_{c_1,nc_2}(\ms n.\OO_2)$.
Note that $\ms n.\OO_2$ is a single $\bfL_{\ms nc_2}(\barF_q)$ orbit.
Indeed, if $\ms x_2 \in \OO_2$ then 
$$\ms n.\OO_2 = \ms n \bfL_{c_2}(\barF_q) . \ms x_2 = \ms n \bfL_{c_2}(\barF_q) \ms n^{-1} \ms n. \ms x_2 = \bfL_{\ms n c_2} \ms n. \ms x_2.$$
\begin{proposition}
    The map 
    $$I_{d,\mathcal A}^K\to I_{o,d,\mathcal A}^K, \quad (c,\ms x) \mapsto (c,\bfL_c(\barF_q).\ms x)$$
    descends to a bijection between $I_{d,\mathcal A}^K/\sim_K$ and $I_{o,d,\mathcal A}^K/\sim_{\mathscr N}$.
\end{proposition}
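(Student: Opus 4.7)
The three ingredients are surjectivity, well-definedness of the induced map, and injectivity on equivalence classes. Surjectivity is immediate: any $(c,\OO)\in I_{o,d,\mathcal A}^K$ is the image of $(c,\ms x)$ for any $\ms x\in\OO$.

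For well-definedness, assume $(c_1,\ms x_1)\sim_K(c_2,\ms x_2)$ via $\ms H\in\bfG(K)$ and apartment $\mathcal A'$. The plan is to modify $\ms H$ by elements of the connected parahorics of $c_1$ and $c_2$ to produce an element of $\mathscr N$, and then to check that the resulting relation between orbits is $\sim_\mathscr N$. Concretely, the strong transitivity of $\bfP_{c_2}(\mf O)$ on apartments containing $c_2$ yields $\ms g\in\bfP_{c_2}(\mf O)$ with $\ms g\mathcal A'=\mathcal A$; the same transitivity applied to $c_1$ (using that $c_1\subseteq \mathcal A\cap(\ms g\ms H)^{-1}\mathcal A$) yields $\ms g'\in\bfP_{c_1}(\mf O)$ with $\ms g'\mathcal A=(\ms g\ms H)^{-1}\mathcal A$. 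Set $\ms n:=\ms g\ms H\ms g'$, which lies in $\mathscr N$ and satisfies $\ms nc_1=\ms g\ms Hc_1$, hence $\mathcal A(c_1,\mathcal A)=\mathcal A(\ms n^{-1}c_2,\mathcal A)$. Because $\ms g$ and $\ms g'$ reduce to elements of the connected groups $\bfL_{c_2}(\barF_q)$ and $\bfL_{c_1}(\barF_q)$ respectively, the modifications $\ms x_2\mapsto\ms g.\ms x_2$ and $\ms x_1\mapsto\ms g'.\ms x_1$ preserve the corresponding $\bfL_{c_i}(\barF_q)$-orbits; combining with the $\bfG(K)$-equivariance of the family $\{j_{c,c'}\}$, one deduces $\bfL_{c_1}(\barF_q).\ms x_1=j_{c_1,\ms n^{-1}c_2}(\ms n^{-1}.\bfL_{c_2}(\barF_q).\ms x_2)$, which is the required $\sim_\mathscr N$-relation (with $\ms n^{-1}$ in place of $\ms n$).

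For injectivity, let $\ms n\in\mathscr N$ witness $(c_1,\bfL_{c_1}(\barF_q).\ms x_1)\sim_\mathscr N(c_2,\bfL_{c_2}(\barF_q).\ms x_2)$. Pick $\bar{\ms l}\in\bfL_{c_2}(\barF_q)$ with $\ms x_1=j_{c_1,\ms nc_2}(\ms n.(\bar{\ms l}.\ms x_2))$ and lift $\bar{\ms l}$ to $\ms l\in\bfP_{c_2}(\mf O)$ via the surjection $\bfP_{c_2}(\mf O)\twoheadrightarrow\bfL_{c_2}(\barF_q)$. Then $\ms H:=\ms n\ms l$ satisfies $\ms Hc_2=\ms nc_2$, so $\mathcal A(c_1,\mathcal A)=\mathcal A(\ms Hc_2,\mathcal A)$, and $\ms H.\ms x_2\in\bfp_{\ms nc_2}(\mf O)$ reduces to $\ms n.(\bar{\ms l}.\ms x_2)$ modulo $\bfu_{\ms nc_2}(\mf O)$, so $\ms x_1=j_{c_1,\ms Hc_2}(\ms H.\ms x_2)$. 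By the symmetry of $\sim_K$ this gives $(c_1,\ms x_1)\sim_K(c_2,\ms x_2)$.

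The main obstacle will be the orbit-tracking in the well-definedness step: one needs to be careful that the modifying elements truly lie in the connected parahoric $\bfP_c(\mf O)$ (rather than in the possibly larger stabilizer $\bfP_c^\dagger(\mf O)$), so that their induced action on $\bfl_c(\barF_q)$ is by an element of the connected group $\bfL_c(\barF_q)$ and hence preserves $\bfL_c(\barF_q)$-orbits. This requires invoking the precise form of strong transitivity for parahorics in the (possibly non-split, pre-base-change-to-$K$) setting, and it requires the $\bfG(K)$-equivariance of the maps $j_{c,c'}$, which is not explicitly recorded but should follow routinely from their construction via the common reduction $\bfp_c(\mf O)\cap\bfp_{c'}(\mf O)\to\bfl_{c'}(\barF_q)$.
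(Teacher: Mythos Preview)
Your argument is correct and follows the same strategy as the paper: surjectivity is trivial, well-definedness comes from rewriting the witnessing $\ms H$ as a product of an element of $\mathscr N$ with parahoric elements, and injectivity comes from lifting an element of $\bfL_{c_2}(\barF_q)$ through $\bfP_{c_2}(\mf O)\twoheadrightarrow\bfL_{c_2}(\barF_q)$.

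The only substantive difference is in the well-definedness step. The paper tacitly assumes at the outset that the apartment witnessing $\sim_K$ is already $\mathcal A$ (writing $\mathcal A(c_1)=\mathcal A(\ms h c_2)$ with the fixed $\mathcal A$), and then uses a single one-sided factorisation $\ms h=\ms n\,\ms h_0$ with $\ms n\in\mathscr N$ and $\ms h_0\in\bfP_{c_2}(\mf O)$, obtained by moving the apartment $\ms h^{-1}\mathcal A$ back to $\mathcal A$ through $c_2$. You instead keep the auxiliary apartment $\mathcal A'$ explicit and perform a two-sided modification $\ms n=\ms g\,\ms H\,\ms g'$. Your version is more explicit about why one may reduce to $\mathcal A$; the paper's version is shorter once that reduction is granted. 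The concern you flag about $\bfP_c(\mf O)$ versus $\bfP_c^\dagger(\mf O)$ is not an issue: transitivity on apartments through $c$ is already achieved by elements fixing $c$ pointwise (in fact by $\bfU_c(\mf O)$), which certainly lie in the connected parahoric, so the induced action on $\bfl_c(\barF_q)$ is by $\bfL_c(\barF_q)$ as required.
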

\begin{proof}
    The map $I_{d,\mathcal A}^K\to I_{o,d,\mathcal A}^K$ is clearly surjective.
    Suppose there is a $\ms h\in \bfG(K)$ such that $\mathcal A(c_1) = \mathcal A(\ms hc_2)$ and $\ms x_1 = j_{c_1,\ms hc_2}(\ms h.\ms x_2)$.
    Then we can write $\ms h = \ms n\ms h_0$ where $\ms n\in \mathscr N,\ms h_0\in \bfP_{c_2}(\mf O)$.
    Thus
    \begin{align}
        \bfL_{c_1}(\barF_q).\ms x_1 &= \bfL_{c_1}(\barF_q).j_{c_1,\ms hc_2}(\ms h.\ms x_2) = \bfL_{c_1}(\barF_q)j_{c_1,\ms nc_2}(\ms n\ms h_0.\ms x_2) \\
        &= j_{c_1,\ms nc_2}(\bfL_{\ms nc_2}(\barF_q)\ms n\ms h_0.\ms x_2) = j_{c_1,\ms nc_2}(\ms n(\bfL_{c_2}(\barF_q).(\ms h_0.\ms x_2))).
    \end{align}
    But since $\ms h_0\in \bfP_{c_2}(\mf O)$, we have that $\bfL_{c_2}(\barF_q)(\ms h_0.\ms x_2) = \bfL_{c_2}(\barF_q)\ms x_2$ and so 
    $$(c_1,\bfL_{c_1}(\barF_q).\ms x_1)\sim_{\mathscr N} (c_2,\bfL_{c_2}(\barF_q).\ms x_2).$$
    Thus $I_{d,\mathcal A}^K\to I_{o,d,\mathcal A}^K/\sim_{\mathscr N}$ descends to a well defined map $I_{d,\mathcal A}^K/\sim_K\to I_{o,d,\mathcal A}^K/\sim_{\mathscr N}$.
    Now suppose there is an $\ms n\in \mathscr N$ such that $\mathcal A(c_1) = \mathcal A(\ms nc_2)$ and 
    $$\bfL_{c_1}(\barF_q)\ms x_1 = j_{c_1,\ms nc_2}(\ms n(\bfL_{c_2}(\barF_q)\ms x_2)).$$
    Then there exists an $\ms h_0'\in \bfL_{c_2}(\barF_q)$ such that $\ms x_1 = j_{c_1,\ms nc_2}(^{\ms n\ms h_0'}\ms x_2)$.
    Let $\ms h_0\in \bfP_{c_2}(\mf O)$ be a lift of $\ms h_0'$ and $\ms h = \ms n\ms h_0$. 
    Then $\mathcal A(c_1) = \mathcal A(\ms hc_2)$ and 
    \begin{equation}
        \ms x_1 = j_{c_1,\ms nc_2}(\ms n\ms h_0.\ms x_2) = j_{c_1,\ms hc_2}(\ms h.\ms x_2).
    \end{equation}
    Thus $I_{d,\mathcal A}^K/\sim_K\to I_{o,d,\mathcal A}^K/\sim_{\mathscr N}$ is injective and hence a bijection as required.
\end{proof}
Note that the choice of base point $x_0$ induces a surjection $\pi:\mathscr N\to \tilde W$ with kernel $\bfT_K(\mf O^\times)$.
Since additionally 
$$\bfT_K(\mf O^\times)\subseteq \bigcap_{c\subseteq \mathcal A}\bfP_c(\mf O),$$
if $\ms n_1,\ms n_2\in \mathscr N$ and $\pi(\ms n_1) = \pi(\ms n_2)$, then for any facet $c$ of $\mathcal A$ and nilpotent orbit $\OO\in \mathcal N_o^{\bfL_c}(\barF_q)$, we have $\ms n_1c = \ms n_2c$ and $\ms n_1.\OO = \ms n_2.\OO$.
Thus for $w\in \tilde W$, $c\subseteq \mathcal A$ and $\OO\in \mathcal N_o^{\bfL_c}(\barF_q)$, defining $w.\OO$ to be $\ms n.\OO$ where $\ms n$ is any element of $\mathscr N$ such that $\pi(\ms n) = w$, is well defined.
\nomenclature{$\sim_{\mathcal A}$}{}
Now define a relation $\sim_{\mathcal A}$ on $I_{o,d,\mathcal A}$ by declaring $(c_1,\OO_1)\sim_{\mathcal A}(c_2,\OO_2)$ if there exists $w\in \tilde W$ such that $\mathcal A(c_1) = \mathcal A(wc_2)$ and $\OO_1 = i_{c_1,wc_2}(w.\OO_2)$.
Then clearly $\sim_{\mathscr N}$ and $\sim_{\mathcal A}$ are the same equivalence relation on $I_{o,d,\mathcal A}^K$.

\nomenclature{$\mb{ABC}_{\bfT_K}(\tilde\Delta)$}{}
\nomenclature{$\mathsf{BC}_{x_0,\Delta}(c,\OO)$}{}
\nomenclature{$I_{o,d,c_0}^K$}{}
Let $\mb {ABC}_{\bfT_K}(\tilde\Delta)$ be the set of pairs $(J,J')$ where $J\in \mathbf P(\tilde\Delta)$ and $J' \subseteq J$ is a distinguished subset of $J$ in the sense of Bala--Carter \cite[Section 1]{balacarter}\cite[Section 8.2]{collingwoodmcgovern} (we can think of $J$ as the simple roots of a crystallographic root system).
Given $c\subseteq \mathcal B(\bfG,K)$, $\ms{Type}_{x_0,\Delta}(c)$ is a set of simple roots for $\Phi_c(\bar\bfT_K,\barF_q)$, and so given a distinguished $\OO\in \mathcal N_o^{\bfL_c}(\mathbb F_q)$, let $\ms {BC}_{x_0,\Delta}(c,\OO)$ be the corresponding distinguished subset of $\ms{Type}_{x_0,\Delta}(c)$ prescribed by the Bala--Carter classification of distinguished nilpotent orbits \cite[Section 8]{collingwoodmcgovern}.
Recall that $c_0$ is the chamber of $\mathcal A$ cut out by $\tilde \Delta$.
Let 
$$I_{o,d,c_0}^K = \set{(c,\OO)\in I_{o,d,\mathcal A}^K:c\subseteq \overline{c_0}}.$$
Since $\tilde W$ acts transitively on the chambers in $\mathcal A$, the inclusion map $I_{o,d,c_0}^K\to I_{o,d,\mathcal A}^K$ descends to a bijection.
\nomenclature{$\mb{ABC}_{x_0,\bfT_K}(c,\OO)$}{}
There is a bijection between $I_{o,d,c_0}^K$ and $\mb {ABC}_{\bfT_K}(\tilde\Delta)$ given by 
$$\mb {ABC}_{x_0,\bfT_K}:(c,\OO)\mapsto (\ms{Type}_{x_0,\Delta}(c),\ms {BC}_{x_0,\Delta}(c,\OO)).$$
Write $I_{o,d,c_0}^K:\mb {ABC}_{\bfT_K}(\tilde\Delta)\to I_{o,d,c_0}^K$ for the inverse map.
\nomenclature{$\sim_{\widetilde{W}}$}{}
For $(J_1,J_1'),(J_2,J_2')\in \mb {ABC}_{\bfT_K}(\tilde\Delta)$ define 
$$(J_1,J_1')\sim_{\tilde W}(J_2,J_2') \text{ if there exists } w\in \tilde W: J_2 = w.J_1, J_2' = w.J_1'.$$
\begin{proposition}
    Let $(c_1,\OO_1),(c_2,\OO_2)\in I_{o,d,c_0}^K$.
    Then 
    $$(c_1,\OO_1)\sim_{\mathcal A} (c_2,\OO_2) \text{ if and only if } \mb {ABC}_{x_0,\bfT_K}(c_1,\OO_1)\sim_{\tilde W}\mb{ABC}_{x_0,\bfT_K}(c_2,\OO_2).$$
\end{proposition}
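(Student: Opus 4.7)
The plan is to translate the equivalence $\sim_{\mathcal A}$ on $I_{o,d,c_0}^K$ and the equivalence $\sim_{\tilde W}$ on $\mb{ABC}_{\bfT_K}(\tilde\Delta)$ into a statement about simple systems of parabolic affine subsystems of $\Psi$. The crucial observation is that for $c \subseteq \overline{c_0}$, the set $\ms{Type}_{x_0,\Delta}(c) \subseteq \tilde\Delta$ is a positive simple basis of the parabolic subsystem $\Psi_{\mathcal A(c)}$ of affine roots vanishing on $\mathcal A(c)$, and the Weyl group $W_c$ coincides with $W(\Psi_{\mathcal A(c)})$. Both directions then reduce to tracking how Bala--Carter data transforms under the action of $\tilde W$ together with change of simple system.

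For the ($\Leftarrow$) direction, suppose $\omega \in \tilde W$ satisfies $\omega J_1 = J_2$ and $\omega J_1' = J_2'$, and put $v = \omega^{-1}$, so that $vJ_2 = J_1$ and $vJ_2' = J_1'$. Taking vanishing loci gives $\mathcal A(vc_2) = v\mathcal A(c_2) = \mathcal A(c_1)$. The isomorphism $i_{c_1, vc_2}: \bfL_{vc_2}(\barF_q) \isoto \bfL_{c_1}(\barF_q)$ is induced by the common subgroup $\bfP_{c_1}(\mf O) \cap \bfP_{vc_2}(\mf O)$, so it preserves $\bar{\bfT}_K$ and matches $\Phi_{vc_2}$ with $\Phi_{c_1}$ inside $\Psi_{\mathcal A(c_1)}$. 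The distinguished orbit $\OO_2$, classified by $(J_2, J_2')$, is sent by $v$ to a distinguished orbit of $\bfL_{vc_2}$ classified by $(vJ_2, vJ_2') = (J_1, J_1')$, and then by $i_{c_1, vc_2}$ to a distinguished orbit of $\bfL_{c_1}$ with the same data, which by uniqueness of Bala--Carter classification is $\OO_1$. Thus $v$ witnesses $(c_1, \OO_1) \sim_{\mathcal A} (c_2, \OO_2)$.

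The ($\Rightarrow$) direction is the more delicate one. Given a witness $w \in \tilde W$ with $\mathcal A(c_1) = \mathcal A(wc_2)$ and $\OO_1 = i_{c_1, wc_2}(w \cdot \OO_2)$, both $J_1$ and $wJ_2$ are simple bases of $\Psi_{\mathcal A(c_1)}$; they generally differ by signs, so there is a unique $u \in W(\Psi_{\mathcal A(c_1)}) = W_{c_1}$ with $u(wJ_2) = J_1$. Set $w' = uw$; then $w'J_2 = J_1$. The main technical point is to verify that $w'$ still realises $(c_1, \OO_1) \sim_{\mathcal A} (c_2, \OO_2)$. Since $u$ fixes $\mathcal A(c_1) = \mathcal A(wc_2)$ pointwise, $u(wc_2) = wc_2$; and because $u \in W_{wc_2}$ lifts to an element of $\mathscr N \cap \bfP_{wc_2}(\mf O)$ (via a lift to $N_{\bfL_{wc_2}(\barF_q)}(\bar{\bfT}_K)$), its induced action on $\bfL_{wc_2}(\barF_q)$ is by an inner automorphism and therefore trivial on conjugacy classes. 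Hence $u \cdot (w \cdot \OO_2) = w \cdot \OO_2$, and consequently $i_{c_1, w'c_2}(w' \cdot \OO_2) = i_{c_1, wc_2}(w \cdot \OO_2) = \OO_1$. Equivariance of Bala--Carter under change of simple system then pins down $w'J_2'$: the data of $w \cdot \OO_2$ relative to $wJ_2$ is $wJ_2'$, so relative to $u(wJ_2) = J_1$ it is $u(wJ_2') = w'J_2'$, which under $i_{c_1, w'c_2}$ must agree with the Bala--Carter data $J_1'$ of $\OO_1$. Thus $w'J_2' = J_1'$, and $\omega = (w')^{-1}$ witnesses $(J_1, J_1') \sim_{\tilde W} (J_2, J_2')$. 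The main obstacle is the inner-automorphism step that shows the correcting element $u$ leaves $w \cdot \OO_2$ untouched; everything else is a careful accounting of Bala--Carter data under change of simple system.
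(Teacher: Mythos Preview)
Your proof is correct and follows essentially the same approach as the paper: both directions hinge on recognising $J_i = \ms{Type}_{x_0,\Delta}(c_i)$ as a simple system for $\Psi_{\mathcal A(c_i)}$, and in the $(\Rightarrow)$ direction correcting $w$ by an element of $W_{c_1}$ to carry $wJ_2$ to $J_1$. The paper's argument is marginally more direct in that it never verifies that the corrected element $w' = uw$ still witnesses $\sim_{\mathcal A}$; instead it simply tracks the Bala--Carter data of the single orbit $\OO_1 = i_{c_1,wc_2}(w.\OO_2)$ under change of simple system from $wJ_2$ to $J_1$, so your inner-automorphism step is correct but superfluous.
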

\begin{proof}
    Let $(J_i,J_i') = \mb {ABC}_{x_0,\bfT_K}(c_i,\OO_i)$ for $i=1,2$.
    $(\Rightarrow)$ Suppose there is a $w\in \tilde W$ such that $\mathcal A(c_1) = \mathcal A(wc_2)$ and $\OO_1 = i_{c_1,wc_2}(w.\OO_2)$.
    Note that $J_i$ is a root basis for $\psi_{\mathcal A(c_i)}$.
    Thus since $\mathcal A(c_1) = \mathcal A(wc_2) = w\mathcal A(c_2)$, we have that $J_1$ and $w J_2$ are both root bases for $\psi_{\mathcal A(c_1)}$.
    Thus there exists $w_0\in W_{c_1}$ such that $J_1 = w_0wJ_2$.
    Now, the Bala--Carter data for $i_{c_1,wc_2}(w.\OO_2)$ with respect to $wJ_2$ is $wJ_2'$.
    With respect to the root basis $J_1$ it is thus $w_0wJ_2'$.
    Thus $(J_1,J_1')\sim_{\tilde W} (J_2,J_2')$.
    $(\Leftarrow)$ Suppose there is some $w\in \tilde W$ such that $(J_1,J_1') = w.(J_2,J_2')$.
    Then $J_1 = wJ_2$ implies that $\mathcal A(c_1) = \mathcal A(wc_2)$.
    Moreover, the Bala--Carter data of $i_{c_1,wc_2}(w.\OO_2)$ is $wJ_2'$ with respect to $wJ_2 = J_1$.
    But $w.J_2' = J_1'$ and so $\OO_1$ and $i_{c_1,wc_2}({w}.\OO_2)$ have the same Bala--Carter data and so must be equal.
\end{proof}
\begin{theorem}
    \label{thm:affine_bala_carter}
    \nomenclature{$\mathcal L_{x_0,\bfT_K}$}{}
    The map 
    $$\mathcal L_{x_0,\bfT_K}:\mb {ABC}_{\bfT_K}(\tilde\Delta) \to \mathcal N_o(K), \quad (J,J')\mapsto \mathcal L\circ I_{o,d,c_0}^K(J,J')$$
    descends to a bijection $\mb {ABC}_{\bfT_K}(\tilde\Delta)/\sim_{\tilde W} \ \to \mathcal N_o(K)$.
\end{theorem}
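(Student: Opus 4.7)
The plan is to obtain the bijection by composing the chain of bijections established in the preceding paragraphs of the subsection, reducing everything to DeBacker's parameterisation (Theorem \ref{thm:nilorbit}).

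First, I would invoke Theorem \ref{thm:nilorbit} applied to $\bfG(K)$ in place of $\bfG(k)$, which is valid by the remarks following Lemma \ref{lem:unramlift}. This gives a bijection $I_d^K/\sim_K \xrightarrow{\sim} \mathcal N_o(K)$, $(c,\ms x)\mapsto \mathcal L_c(\ms x)$. Because $\bfG(K)$ acts transitively on pairs consisting of an apartment and a chamber, the inclusion $I_{d,\mathcal A}^K \hookrightarrow I_d^K$ descends to a bijection on $\sim_K$-classes, giving $I_{d,\mathcal A}^K/\sim_K \xrightarrow{\sim} \mathcal N_o(K)$.

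Next, I would compose with the bijection
$$I_{d,\mathcal A}^K/\sim_K \xrightarrow{\sim} I_{o,d,\mathcal A}^K/\sim_{\mathscr N}, \qquad (c,\ms x)\mapsto (c,\bfL_c(\barF_q).\ms x),$$
furnished by the first proposition of paragraph \ref{par:abc}. Using the identification $\sim_{\mathscr N}\,=\,\sim_{\mathcal A}$ (which follows from the surjection $\pi:\mathscr N\twoheadrightarrow \tilde W$ and the fact that $\ker\pi = \bfT_K(\mf O^\times)$ acts trivially on faces of $\mathcal A$ and on orbits $\OO\in\mathcal N_o^{\bfL_c}(\barF_q)$), this becomes a bijection $I_{o,d,\mathcal A}^K/\sim_{\mathcal A} \xrightarrow{\sim} \mathcal N_o(K)$. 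Then, since $\tilde W$ acts transitively on chambers of $\mathcal A$, the inclusion $I_{o,d,c_0}^K \hookrightarrow I_{o,d,\mathcal A}^K$ descends to a bijection $I_{o,d,c_0}^K/\sim_{\mathcal A} \xrightarrow{\sim} I_{o,d,\mathcal A}^K/\sim_{\mathcal A}$.

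Finally, I would apply the (set-theoretic) bijection $\mb{ABC}_{x_0,\bfT_K}:I_{o,d,c_0}^K \xrightarrow{\sim} \mb{ABC}_{\bfT_K}(\tilde\Delta)$, whose inverse is $I_{o,d,c_0}^K$. The preceding proposition shows that $\mb{ABC}_{x_0,\bfT_K}$ intertwines $\sim_{\mathcal A}$ with $\sim_{\tilde W}$, so it descends to a bijection $\mb{ABC}_{\bfT_K}(\tilde\Delta)/\sim_{\tilde W} \xrightarrow{\sim} I_{o,d,c_0}^K/\sim_{\mathcal A}$. Composing all of the above bijections and tracing definitions, the composite sends $(J,J')\in \mb{ABC}_{\bfT_K}(\tilde\Delta)$ first to $I_{o,d,c_0}^K(J,J') = (c,\OO)$, then (via the inverse of $(c,\ms x)\mapsto (c,\bfL_c(\barF_q).\ms x)$) to $(c,\ms x)$ for any distinguished $\ms x\in\OO$, and finally to $\mathcal L_c(\ms x) = \mathcal L\circ I_{o,d,c_0}^K(J,J') = \mathcal L_{x_0,\bfT_K}(J,J')$, as required.

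I do not anticipate any serious obstacle: all the nontrivial content has already been proved in the preceding propositions, and what remains is the bookkeeping to verify that the composite bijection is genuinely the descent of $\mathcal L_{x_0,\bfT_K}$. The most delicate step, should any arise, is confirming the matching $\sim_{\mathscr N} = \sim_{\mathcal A}$, which ultimately rests on the fact that $\bfT_K(\mf O^\times)$ lies in every parahoric $\bfP_c(\mf O)$ for $c\subseteq\mathcal A$ and hence acts trivially after the residual reduction.
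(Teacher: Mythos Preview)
Your proposal is correct and is essentially the same as the paper's approach: the paper's proof is the single line ``This follows immediately from the previous Proposition,'' which implicitly invokes exactly the chain of bijections you have written out explicitly. Your version just unpacks the bookkeeping that the paper leaves implicit.
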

\begin{proof}
    This follows immediately from the previous Proposition.
\end{proof}
\nomenclature{$\mb{BC}_{\bfT_K}(\Delta)$}{}
\nomenclature{$\mb{BC}_{x_0,\bfT_K}(\Delta)$}{}
\nomenclature{$\sim_W$}{}
Recall that regular Bala--Carter theory states that there is a map from the set $\mb {BC}_{\bfT_K}(\Delta)$ of pairs $(J,J')$ where $J\subseteq \Delta$ and $J'$ is distinguished in $J$, to $\mathcal N_o({\bar k})$ that descends to a bijection $\mb{BC}_{x_0,\bfT_K}:\mb {BC}_{\bfT_K}(\Delta)/\sim_W\ \to \mathcal N_o(\bar k)$ (where $\sim_W$ is the obvious analogue of $\sim_{\tilde W}$).
So Theorem \ref{thm:affine_bala_carter} is an affine version of the combinatorial Bala--Carter Theorem in a very literal sense ($\mb {ABC}$ stands for Affine Bala--Carter).
In Proposition \ref{prop:trivccl} we give a precise statement about how these two parameterisations relate.

\paragraph{Properties of Lifting of Nilpotent Orbits}
\nomenclature{$\bfL$}{}
\nomenclature{$\bfl$}{}
\nomenclature{$\bar{\mb t}_K$}{}
\nomenclature{$\bar{\Phi}$}{}
\nomenclature{$\bar{\alpha}$}{}
\nomenclature{$\bar{\omega}$}{}
\nomenclature{$i_{c,\bar\omega}$}{}
\nomenclature{$j_{c,\bar\omega}$}{}
\nomenclature{$j_{c,o}$}{}
\nomenclature{$\Lambda_{x_0}^{\barF_q}$}{}
\nomenclature{$\Lambda_{\bfT_K}^{\bar k}$}{}
Let $\bfL = \bfL_{x_0}$, $\bfl = \bfl_{x_0}$.
Note that $\bfL(\barF_q) = \bfG_\Z(\barF_q)$ since $\bfG_K$ is split.
Let $\bar{\mb t}_K$ be the Lie algebra of $\bar \bfT_K$.
There is a natural identification between the character lattices of $\bfT_K$ and $\bar \bfT_K$.
For $\alpha\in \Phi$ let $\bar \alpha$ denote the image under this identification.
Let $\bar \Phi = \set{\bar \alpha:\alpha\in \Phi}$.
Let $\mf P$ be the maximal ideal of $\mf O$.
Recall that a choice of uniformiser of $K$ induces an isomorphism of additive groups between $\mf P^i/\mf P^{i+1}\to \mf P^j/\mf P^{j+1}$ for any $i,j\in \Z$.
For any facet $c$ of $\mathcal A$ this in turn induces an isomorphism from $\bfL_c(\barF_q)$ onto the pseudo-Levi of $\bfL(\barF_q)$ corresponding to $\bar\bfT_K(\barF_q)$ and $\bar \Phi_c$.
For a uniformiser $\bar \omega$ let $i_{c,\bar\omega}:\bfL_c\to \bfL$ denote the corresponding homomorphism and $j_{c,\bar\omega}:\bfl_c\to \bfl$ the associated morphism of Lie algebras.
One important property of this map is that the following diagram commutes
\begin{equation}
    \begin{tikzcd}
        & \bar\bfT_K \arrow[dl] \arrow[dr] & \\
        \bfl_c \arrow[rr,"j_{c,\bar \omega}"] & & \bfl.
    \end{tikzcd}
\end{equation}
Moreover, since $(\bfl_c)_{\bar \alpha}$ maps to $\bfl_{\bar \alpha}$ the resulting map of nilpotent orbits does not depend on the choice of uniformiser for $K$ (this follows from Bala--Carter theory).
Thus we obtain a canonical map $j_{c,o}:\mathcal N_o^{\bfL_c}(\barF_q) \to \mathcal N_o^{\bfL}(\barF_q)$.
Note since $\bfL$ and $\bfG_\Z$ have the same root data, by Lemma \ref{lem:pom} there is an order preserving isomorphism 
$$\Lambda_{x_0}^{\barF_q}: \mathcal N_o^{\bfL}(\barF_q) \to \mathcal N_o.$$
Let $\Lambda_{\bfT_K}^{\bar k}:\mathcal N_o(\bar k) \to \mathcal N_o$ be the isomorphism from Lemma \ref{lem:pom} applied to $F = \bar k$.
\begin{theorem}
    \label{thm:lift}
    Let $(c,\OO)\in I_{o,\mathcal A}^K$.
    Then 
    $$\Lambda_{\bfT_K}^{\bar k}\circ\mathcal N_o(\bar k/K)\circ \mathcal L_c(\OO) = \Lambda_{x_0}^{\barF_q}\circ j_{c,o}(\OO).$$
\end{theorem}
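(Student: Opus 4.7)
The plan is to identify both sides via weighted Dynkin diagrams and reduce to a single $\lsl_2$-triple that lives over $\mf O$. Fix a Jacobson--Morozov $\lsl_2$-triple $(\ms x,\ms h,\ms y)$ for $\ms x\in\OO$ inside $\bfl_c(\barF_q)$ with toral element $\ms h\in \bar{\mf t}_K(\barF_q)$ (available because we are in good characteristic). By characterisation (1) of $\mathcal L_c$ recalled in Section \ref{sec:debacker_param}, this triple lifts to an $\lsl_2$-triple $(\ms X,\ms H,\ms Y)\in \bfp_c(\mf O)$, and after conjugating by an element of $\bfP_c(\mf O)$ lying over the $\bfL_c(\mathbb F_q)$-conjugation that centralises $\ms h$, I may arrange $\ms H\in \mf t_K(\mf O)$. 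Then by characterisation (1) directly, $\mathcal L_c(\OO)=\bfG(K).\ms X$, and hence $\mathcal N_o(\bar k/K)\circ\mathcal L_c(\OO)=\bfG(\bar k).\ms X$.

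Next I interpret both sides via Pommerening (Lemma \ref{lem:pom}). The isomorphism $\Lambda^{\bar k}_{\bfT_K}$ matches a nilpotent orbit in $\mf g(\bar k)$ with the orbit in $\mathcal N_o$ having the same weighted Dynkin diagram, the diagram being read off from a toral element $\ms H$ of an $\lsl_2$-triple viewed inside $X_*\otimes \bar k=\mf t_K(\bar k)$ after moving to the dominant Weyl chamber; since $\ms H$ is integer-valued on roots, its simple-root weights live in $\{0,1,2\}\subset\Z$. Similarly, under $\Lambda^{\barF_q}_{x_0}$ the orbit $j_{c,o}(\OO)\in \mathcal N_o^{\bfL}(\barF_q)$ corresponds to the weighted Dynkin diagram of the toral element of a lifted $\lsl_2$-triple of a representative of $j_{c,o}(\OO)$ inside $\bar{\mf t}_K(\barF_q)=X_*\otimes \barF_q$. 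The crucial point is that the commutative diagram in the excerpt shows $j_{c,\bar\omega}$ is the identity on $\bar{\mf t}_K$, so $(j_{c,\bar\omega}(\ms x),\ms h,j_{c,\bar\omega}(\ms y))$ is an $\lsl_2$-triple in $\bfl$ representing $j_{c,o}(\OO)$ with the same toral element $\ms h$.

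Finally I match the two diagrams. The reduction map $\mf t_K(\mf O)\to \bar{\mf t}_K(\barF_q)$ sends $\ms H\mapsto \ms h$ and is $W$-equivariant, so it transports a dominant representative of the $W$-orbit of $\ms H$ to a dominant representative of the $W$-orbit of $\ms h$. The integer weights $\alpha_i(\ms H)\in\{0,1,2\}\subset\Z\subset\mf O$ reduce under $\mf O\twoheadrightarrow\barF_q$ to the corresponding elements of $\mathbb F_p\subset\barF_q$ realised as $\alpha_i(\ms h)$, so the two weighted Dynkin diagrams coincide. Both sides of the equality therefore equal the orbit in $\mathcal N_o$ with this common weighted Dynkin diagram.

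The main obstacle is verifying cleanly that Pommerening's $\Lambda^F_{\bfT_K}$ really is the weighted-Dynkin classification in a way uniform in $F$, and that an honest toral lift of the $\lsl_2$-triple exists in $\bfp_c(\mf O)$; the first is a standard (but slightly delicate) consequence of Premet's extension of Bala--Carter theory to good characteristic, and the second is a routine but technical Hensel/$\lsl_2$-deformation argument inside the parahoric. Once both are in hand, the remaining identifications are formal.
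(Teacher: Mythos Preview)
Your proposal is correct and follows essentially the same route as the paper: lift an $\lsl_2$-triple for $\OO$ to one in $\bfp_c(\mf O)$ via characterisation (1) of $\mathcal L_c$, and then compare weighted Dynkin diagrams using that the root values $\alpha(\ms H)\in\{0,1,2\}$ survive reduction $\mf O\twoheadrightarrow\barF_q$. The only cosmetic difference is that the paper handles dominance by replacing $(c,\OO)$ with $(wc,w.\OO)$ for a suitable $w\in W$ (checking both sides are unchanged), whereas you invoke $W$-equivariance of the reduction map; and the paper does not bother to force $\ms H\in\mf t_K(\mf O)$ explicitly, simply using that $\alpha(\ms H)\in\Z$ for any neutral element and that $\bar\alpha(\ms h)$ is its image in $\mathbb F_q$.
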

\begin{proof}
    Let $\ms x,\ms h,\ms y$ be an $\lsl_2$-triple for $\OO$ and let $\ms X,\ms H,\ms Y$ be a lift to an $\lsl_2$-triple for $\mathcal L_c(\OO)$.
    We have that $\alpha(\ms H)\in \Z$ for all $\alpha \in \Phi$.
    There exists a $w\in W$ such that $\alpha(wH)\ge 0$ for all $\alpha\in \Delta$.
    In particular then $\alpha(wH)\in\set{0,1,2}$ for all $\alpha\in\Delta$.
    We also have $j_{c,\bar \omega}(\OO) = \hphantom{ }w j_{c,\bar \omega}(\OO) = j_{wc,\bar \omega}(w.\OO)$ and $w.\mathcal L_c(\OO) = \mathcal L_c(\OO)$.
    Thus by replacing $(c,\OO)$ with $(wc,\hphantom{ }{w}\OO)$ we can assume that $\alpha(H)\in\set{0,1,2}$ for all $\alpha\in \Delta$.
    Then $j_{c,\bar \omega}(x),j_{c,\bar \omega}(h),j_{c,\bar \omega}(y)$ is an $\lsl_2$-triple for $j_{c,o}(\OO)$.
    But since $\bar \alpha(j_{c,\bar \omega}(\ms h)) = \bar\alpha(\ms h)$ equals to the image of $\alpha(\ms H)$ in $\mathbb F_q$ for all $\alpha\in\Delta$, $j_{c,o}(\OO)$ and $\mathcal N_o(\bar k/K)(\mathcal L_c(\OO))$ have the same weighted Dynkin diagram with respect to $\bar \Delta$ and $\Delta$ respectively and so $\Lambda_{\bfT_K}^{\bar k}\circ\mathcal N_o(\bar k/K)\circ\mathcal L_c(\OO) = \Lambda_{x_0}^{\barF_q}\circ j_{c,o}(\OO)$. 
\end{proof}
\begin{corollary}
    \label{cor:alginc}
    Let $(c,\OO_1),(c,\OO_2)\in I_o^K$ and suppose that $\OO_1 < \OO_2$.
    Then 
    $$\mathcal N_o(\bar k/K)(\mathcal L_c(\OO_1)) < \mathcal N_o(\bar k/K)(\mathcal L_c(\OO_2)).$$
\end{corollary}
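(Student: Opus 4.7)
The plan is to pass from the closure relation in $\mathcal N_o(K)$ to a closure relation in $\mathcal N_o^{\bfL}(\barF_q)$ using Theorem \ref{thm:lift}, and then verify strict increase there by a Slodowy slice argument.

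First, by Theorem \ref{thm:lift} applied to $(c,\OO_i)$ for $i=1,2$ we have
\begin{equation}
    \Lambda_{\bfT_K}^{\bar k}\circ\mathcal N_o(\bar k/K)\circ\mathcal L_c(\OO_i) = \Lambda_{x_0}^{\barF_q}\circ j_{c,o}(\OO_i).
\end{equation}
Since $\Lambda_{x_0}^{\barF_q}$ and $\Lambda_{\bfT_K}^{\bar k}$ are isomorphisms of partially ordered sets (Lemma \ref{lem:pom}), it suffices to show that if $\OO_1<\OO_2$ in $\mathcal N_o^{\bfL_c}(\barF_q)$, then $j_{c,o}(\OO_1)<j_{c,o}(\OO_2)$ in $\mathcal N_o^{\bfL}(\barF_q)$.

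The non-strict inequality $j_{c,o}(\OO_1)\le j_{c,o}(\OO_2)$ is automatic: $i_{c,\bar\omega}$ is a closed immersion of $\bfl_c$ onto a pseudo-Levi subalgebra of $\bfl$, so $i_{c,\bar\omega}(\overline{\OO_1})\subseteq\overline{i_{c,\bar\omega}(\OO_2)}$, which after saturating by $\bfL(\barF_q)$ gives the closure containment.

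For strictness, pick $\ms x_1\in \OO_1$ and complete it to an $\lsl_2$-triple $\ms x_1,\ms h_1,\ms y_1$ in $\bfl_c(\barF_q)$. Let $\ms s = \ms x_1+\ms c_{\bfl_c(\barF_q)}(\ms y_1)$ be the Slodowy slice to $\OO_1$ in $\bfl_c(\barF_q)$. Since $\OO_1<\OO_2$, the algebraically-closed-field analogue of Corollary \ref{cor:slodowy} (which holds for $\bfl_c(\barF_q)$ by the same arguments as cited for $\mf g(\bar k)$) shows that $\OO_2\cap \ms s$ contains at least two points. Now consider the Slodowy slice
\begin{equation}
    \mf s = i_{c,\bar\omega}(\ms x_1)+\ms c_{\bfl(\barF_q)}(i_{c,\bar\omega}(\ms y_1))
\end{equation}
to $j_{c,o}(\OO_1)$ in $\bfl(\barF_q)$. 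Since $i_{c,\bar\omega}$ is an injective Lie algebra homomorphism, $i_{c,\bar\omega}(\ms c_{\bfl_c(\barF_q)}(\ms y_1))\subseteq \ms c_{\bfl(\barF_q)}(i_{c,\bar\omega}(\ms y_1))$, whence $i_{c,\bar\omega}(\ms s)\subseteq \mf s$. Because $i_{c,\bar\omega}$ is injective, the images of the (at least two) points of $\OO_2\cap \ms s$ are distinct points of $j_{c,o}(\OO_2)\cap \mf s$. Applying the $\bar k$-analogue of Corollary \ref{cor:slodowy} again, this time to $\bfl(\barF_q)$, forces $j_{c,o}(\OO_1)<j_{c,o}(\OO_2)$, as required.

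The main obstacle is the very last step: one must know that the two Slodowy-slice dichotomies of Corollary \ref{cor:slodowy} remain valid over the algebraically closed residue field $\barF_q$. This is precisely the statement signposted in the paper immediately after Corollary \ref{cor:slodowy} (for $\mf g(\bar k)$), and the same proof goes through verbatim for $\bfl_c(\barF_q)$ and $\bfl(\barF_q)$. Given this, the embedding $i_{c,\bar\omega}$ mediates between the two slices in a trivial way, and the argument is essentially complete.
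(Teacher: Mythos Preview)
Your argument is correct and follows the same overall reduction as the paper: use Theorem \ref{thm:lift} and the order isomorphisms $\Lambda_{\bfT_K}^{\bar k}$, $\Lambda_{x_0}^{\barF_q}$ to reduce to showing that $j_{c,o}$ is strictly increasing. The paper simply cites this last fact from an external reference (\cite[Theorem 5.5]{ggg-np}), whereas you prove it inline with a Slodowy slice argument, which is essentially the method of that reference (indeed the paper already points to \cite[Lemma 5.10]{ggg-np} for the algebraically-closed version of Corollary \ref{cor:slodowy} you invoke). One small notational slip: in the paper $i_{c,\bar\omega}$ denotes the group homomorphism $\bfL_c\to\bfL$, while the Lie algebra map you are using is $j_{c,\bar\omega}$; your argument is unaffected once the symbols are swapped.
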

\begin{proof}
    By \cite[Theorem 5.5]{ggg-np}, if $\OO_1 < \OO_2$, then $j_{c,o}(\OO_1) < j_{c,o}(\OO_2)$.
    Thus 
    $$\mathcal N_o(\bar k/K)(\OO_1) = \Lambda_{x_0}^{\barF_q}\circ j_{c,o}(\OO_1) < \Lambda_{x_0}^{\barF_q}\circ j_{c,o}(\OO_2) = \mathcal N_o(\bar k/K)(\OO_2).$$
\end{proof}
\begin{corollary}
    \label{cor:strictly_inc}
    The map $\mathcal L:I_o^K\to \mathcal N_o(K)$ is strictly increasing.
\end{corollary}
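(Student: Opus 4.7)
The plan is to combine Proposition \ref{prop:lifted_rel} (non-decreasing-ness) with Corollary \ref{cor:alginc} (strict behaviour after passing to $\bar k$-saturation) to rule out equality of lifts when the underlying orbits are distinct.

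Unpacking the ordering on $I_o^K$: if $(c_1,\OO_1)<(c_2,\OO_2)$ in $I_o^K$, then necessarily $c_1=c_2=:c$ and $\OO_1<\OO_2$ as $\bfL_c(\barF_q)$-orbits. So the task reduces to showing that whenever $\OO_1<\OO_2$ in $\mathcal N_o^{\bfL_c}(\barF_q)$, one has $\mathcal L_c(\OO_1)<\mathcal L_c(\OO_2)$ in $\mathcal N_o(K)$. Proposition \ref{prop:lifted_rel} already delivers $\mathcal L_c(\OO_1)\le \mathcal L_c(\OO_2)$, so only the strictness of this inequality needs to be established.

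For strictness I would argue by contradiction: if $\mathcal L_c(\OO_1)=\mathcal L_c(\OO_2)$ as $K$-orbits, then applying the well-defined map $\mathcal N_o(\bar k/K):\mathcal N_o(K)\to \mathcal N_o(\bar k)$ to both sides gives
\begin{equation*}
    \mathcal N_o(\bar k/K)(\mathcal L_c(\OO_1)) = \mathcal N_o(\bar k/K)(\mathcal L_c(\OO_2)).
\end{equation*}
But Corollary \ref{cor:alginc} asserts precisely that
\begin{equation*}
    \mathcal N_o(\bar k/K)(\mathcal L_c(\OO_1)) < \mathcal N_o(\bar k/K)(\mathcal L_c(\OO_2)),
\end{equation*}
since $\OO_1<\OO_2$. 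This is a contradiction, so $\mathcal L_c(\OO_1)<\mathcal L_c(\OO_2)$.

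There is no real obstacle here: the entire content has already been packaged into Corollary \ref{cor:alginc}, which in turn rests on the Pommerening-type comparison of the $j_{c,o}$ map with closure orders (invoked via \cite[Theorem 5.5]{ggg-np}) together with Theorem \ref{thm:lift} identifying $\mathcal N_o(\bar k/K)\circ \mathcal L_c$ with $\Lambda_{x_0}^{\barF_q}\circ j_{c,o}$ up to canonical isomorphism. The corollary is just the observation that an equality downstairs forces equality upstairs after applying $\mathcal N_o(\bar k/K)$, which is incompatible with the strict inequality provided by Corollary \ref{cor:alginc}.
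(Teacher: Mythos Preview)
Your proof is correct and follows essentially the same approach as the paper: use Proposition \ref{prop:lifted_rel} to get $\mathcal L_c(\OO_1)\le\mathcal L_c(\OO_2)$, then invoke Corollary \ref{cor:alginc} to see that the $\bar k$-saturations are distinct, forcing $\mathcal L_c(\OO_1)\ne\mathcal L_c(\OO_2)$. The only cosmetic difference is that the paper phrases the last step directly rather than by contradiction.
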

\begin{proof}
    Suppose that $(c,\OO_1') < (c,\OO_2')$.
    Let $\OO_i = \mathcal L_c(\OO_i')$ for $i=1,2$.
    Since $\mathcal L$ is non-decreasing we have that $\OO_1\le \OO_2$.
    By corollary \ref{cor:alginc} we have that $\mathcal N_o(\bar k/K)(\OO_1) \ne \mathcal N_o(\bar k/K)(\OO_2)$.
    Therefore $\OO_1 \ne \OO_2$ and so $\OO_1< \OO_2$ as required.
\end{proof}
\begin{corollary}
    \label{cor:partialorder}
    The closure ordering on $\mathcal N_o(K)$ is a partial order.
\end{corollary}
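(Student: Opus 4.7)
The statement is antisymmetry of the pre-order $\le$ on $\mathcal N_o(K)$: we must show that $\OO_1 \le \OO_2 \le \OO_1$ forces $\OO_1 = \OO_2$. My plan is to transport the situation to $\mathcal N_o(\bar k)$, where the closure ordering is already a genuine partial order, via the saturation map $\mathcal N_o(\bar k/K)$.

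The first step is the essentially formal verification that $\mathcal N_o(\bar k/K)$ is non-decreasing. If $\ms X_1 \in \OO_1 \subseteq \overline{\OO_2}$ (closure in the $K$-topology on $\mf g(K)$), then $\ms X_1$ lies in the $\mf g(\bar k)$-Zariski closure of $\bar\OO_2 := \mathcal N_o(\bar k/K)(\OO_2)$: indeed, that Zariski closure is a closed $K$-subvariety of $\mf g(\bar k)$ (it has the $K$-point $\ms X_2 \in \OO_2$), its $K$-points form a closed set in the $K$-topology on $\mf g(K)$, and this set contains $\OO_2$. Hence $\bar\OO_1 \le \bar\OO_2$, and applied in both directions this yields $\bar\OO_1 = \bar\OO_2$ under our hypothesis.

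The second and more substantive step is to upgrade the previous observation to strict monotonicity: if $\OO_1 < \OO_2$ in $\mathcal N_o(K)$, then $\bar\OO_1 < \bar\OO_2$. This is the exact $K$-analogue of Theorem \ref{thm:saturation-increasing}(2), and its proof — along with those of Lemma \ref{lem:slodowy} and Corollary \ref{cor:slodowy} — ports over verbatim. The only ingredients used are the $\lsl_2$-decomposition $\mf g(K) = [\mf g(K),\ms X] \oplus \ms c_{\mf g(K)}(\ms Y)$, smoothness of the adjoint action $\bfG(K) \times \ms s \to \mf g(K)$, the two contracting one-parameter subgroups attached to $\ms H$ and to a second element of $\OO \cap \ms s$, and the fact that Zariski opens in $\mf g(K)$ are open in the $K$-topology; none of these depends on completeness of the base field. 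One then enlarges the Slodowy slice $\ms s = \ms X + \ms c_{\mf g(K)}(\ms Y)$ to $\mf s = \ms X + \ms c_{\mf g(\bar k)}(\ms Y)$ and invokes the algebraically closed analogue \cite[Lemma 5.10]{ggg-np} over $\bar k$.

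With strict monotonicity in hand, antisymmetry is immediate: if $\OO_1 \ne \OO_2$, both inequalities $\OO_1 \le \OO_2$ and $\OO_2 \le \OO_1$ must be strict, yielding $\bar\OO_1 < \bar\OO_2$ and $\bar\OO_2 < \bar\OO_1$, which contradicts the partial-order property of $\mathcal N_o(\bar k)$. The only mildly non-routine point in the whole proof is the line-by-line check that the Slodowy slice toolbox is insensitive to the passage from the complete field $k$ to the non-complete $K$; the rest is bookkeeping.
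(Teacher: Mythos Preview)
Your approach differs from the paper's, which does not attempt to port Lemma \ref{lem:slodowy} to $K$. Instead the paper invokes Theorem \ref{thm:closurerels} to factor $\OO_1 \le \OO_2$ as a lifted relation $\OO_1 \le \OO_{1.5}$ followed by a degenerate one $\OO_{1.5} \le \OO_2$. Since $\bar\OO_1 = \bar\OO_2$, Corollary \ref{cor:strictly_inc} (strict monotonicity of $\mathcal L$, established at the level of the reductive quotients $\bfL_c(\barF_q)$) forces $\OO_1 = \OO_{1.5}$. The degenerate relation then yields a point of $\OO_2$ in a coset $\ms X + \bfu_c(\mf O)$; Lemma \ref{lem:unramlift} moves this into the integral slice $\ms X + \ms c_{\bfu_c(\mf O)}(\ms Y)$, which sits inside the $\bar k$-Slodowy slice $\mf s$, and $\mf s \cap \bar\OO_2 = \{\ms X\}$ finishes the argument.

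Your route is viable, but there is a real gap where you assert that the ingredients of Lemma \ref{lem:slodowy}(2) do not depend on completeness. The crucial step there is that $\bfG(K).\ms s$ contains a $K$-topology open neighbourhood of $\ms X$; this is the implicit function theorem, and it does \emph{not} follow from ``Ad is smooth'' together with ``Zariski opens are $K$-open''. Smoothness only guarantees that the \emph{scheme-theoretic} image of Ad is Zariski-open; over a general valued field the map on rational points need not hit every rational point of that open set (e.g.\ $x \mapsto x^2$ over $\Q$ with the $p$-adic absolute value misses $1+p$, which lies as close to $1$ as one likes). What saves you is that $K$, as the maximal unramified extension of a complete local field, is Henselian, and multivariate Hensel then supplies the local surjectivity; equivalently one reduces to finite unramified subextensions of $K/k$, exactly as the paper does for Lemma \ref{lem:unramlift}. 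The paper explicitly flags this as the one place completeness enters DeBacker's arguments, so it is precisely the point requiring care, not a line-by-line formality.
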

\begin{proof}
    Let $\OO_1,\OO_2\in \mathcal N_o(K)$ and suppose $\OO_1\le \OO_2 \le \OO_1$.
    By Theorem \ref{thm:closurerels} there exists $\OO_{1.5}\in \mathcal N_o(K)$ with $\OO_1\le \OO_{1.5}\le \OO_2$ such that $\OO_1\le\OO_{1.5}$ is lifted and $\OO_{1.5}\le \OO_2$ is degenerate.
    Since $\mathcal N_o(\bar k/K)(\OO_1) = \mathcal N_o(\bar k/K)(\OO_2)$, by corollary \ref{cor:strictly_inc}, we must have $\OO_1 = \OO_{1.5}$.
    Let $(c,\OO'_{1.5})\in I_o^K(\OO_{1.5})$ be such that $\mathcal C(c,\OO'_{1.5}) \cap \OO_2 \ne \emptyset$ and let $\ms X,\ms H,\ms Y\in \bfP_c(\mf O)$ be an $\lsl_2$-triple for $\OO_{1.5}$.
    By Lemma \ref{lem:unramlift} we have that 
    $$\ms X+\bfu_c(\mf O) = \bfU_c(\mf O).(\ms X+\ms c_{\bfu_c(\mf O)}(\ms Y)).$$
    It follows that 
    $$\ms X+\ms c_{\bfu_c(\mf O)}(\ms Y) \cap \OO_2 \ne \emptyset.$$
    However, 
    $$\ms X+\ms c_{\bfu_c(\mf O)}(\ms Y) \subseteq \ms X+\ms c_{\mf g(\bar k)}(\ms Y)$$ 
    and since $\mathcal N_o(\bar k/K)(\OO_1) = \mathcal N_o(\bar k/K)(\OO_2)$ we have that 
    $$\left(\ms X+\ms c_{\mf g(\bar k)}(\ms Y)\right)\cap \mathcal N_o(\bar k/K)(\OO_2)  = \ms X.$$
    Thus $\left(\ms X+\ms c_{\bfu_c(\mf O)}(\ms Y)\right)\cap \OO_2 = \ms X$ and so $\OO_1 = \OO_2$.
\end{proof}

\paragraph{McNinch and Sommers' Parameterisation of \texorpdfstring{$\mathcal N_{o,c}$}{Noc}}
\label{sec:pseudo-levis}
\nomenclature{$\mathcal N_{o,c}$}{}
\nomenclature{$\mathcal C(G_0)$}{}
For a finite group $G_0$ write $\mathcal C(G_0)$ for the set of conjugacy classes of $G_0$.
For $\OO \in \mathcal N_o$ let $A(\OO)$ denote the component group of the centraliser of an element of $\OO$.
This group is well defined up to inner automorphism of $G$.
Let 
$$\mathcal N_{o,c} = \{(\OO,C):\OO\in \mathcal N_o,C\in \mathcal C(A(\OO))\}.$$
Note that the $c$ here is short for `conjugacy class' and does not refer to a face of the building.
By \cite[Remark 2]{sommersmcninch} this set is in bijection with the set 
$$\{(n,gC_G^\circ(n)):n\in \mathcal N^G,g\in C_G(n)\}/G.$$
This is perhaps a more canonical description of the $N_{o,c}$, but the first description has the benefit of being easier to understand. 
We will pass interchangeably between the two.

\nomenclature{$\mathscr F$}{}
\nomenclature{$I_d^\C$}{}
Let $\mathscr F$ denote the set of pairs $(L,tZ^\circ)$ such that $L$ is a pseudo-Levi of $G$ and $tZ^\circ$ is an element of $Z/Z^\circ$ where $Z$ is the center of ${L}$ and $L = C_{G}^\circ(tZ^\circ)$.
Let $I_d^\C$ denote the set of all triples $({L},tZ^\circ,x)$ such that $(L,tZ^\circ)\in \mathscr F$, and $x$ is a distinguished nilpotent element of $\mf l$, the Lie algebra of $L$.
McNinch and Sommers prove the following result.
\begin{theorem}
    \label{thm:somninch}
    (\cite{sommersmcninch})
    The map $\mathrm{MS}:({L},tZ^\circ,x)\mapsto (x,tC_{G}^\circ(x))$ yields a bijection between $I_d^\C/G$ and $\mathcal N_{o,c}$.
\end{theorem}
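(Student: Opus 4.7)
The plan is to invert $\mathrm{MS}$ explicitly via Jordan decomposition in centralisers of nilpotent elements. First, I would verify that $\mathrm{MS}$ descends to $G$-orbits: if $(L,tZ^\circ,x)\in I_d^\C$ then $t\in Z=Z(L)$ centralises all of $L$, and in particular centralises $x\in \mathfrak l$, so $tC_G^\circ(x)$ is a bona fide element of $A(\OO)=C_G(x)/C_G^\circ(x)$ for $\OO=G.x$. Conjugating the triple by $g\in G$ conjugates its image simultaneously, so $\mathrm{MS}$ is well-defined on $I_d^\C/G$. For the inverse, I start from a representative $(n,hC_G^\circ(n))$ and take the Jordan decomposition $h=h_s h_u$ inside the algebraic group $C_G(n)$. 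The unipotent factor $h_u$ lies in $C_G^\circ(n)$, because unipotent elements of any linear algebraic group lie in its identity component, so $hC_G^\circ(n)=h_sC_G^\circ(n)$. I then set $L:=C_G^\circ(h_s)$, observe that $h_s\in Z(L)$ and that $L=C_G^\circ(h_s Z(L)^\circ)$, and propose $(L,\,h_s Z(L)^\circ,\,n)$ as the candidate preimage. Since $h_s$ centralises $n$, one has $n\in \mathfrak l$.

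The main obstacle is showing that $n$ is distinguished in $\mathfrak l$, i.e.\ that every torus of $C_L(n)$ is contained in $Z(L)^\circ$. Following McNinch--Sommers, I would argue by contradiction. If $S\subseteq C_L(n)$ were a torus not contained in $Z(L)^\circ$, one first uses the reductive structure of the identity component of $C_L(n)$ to arrange that $S$ also commutes with $h_u$. Then $S$ and $h_s$ together lie in $C_G(n)$, and the subgroup they generate contains $h_s$ inside a strictly larger torus than $Z(L)^\circ$. Taking the connected $G$-centraliser of this enlarged torus produces a proper pseudo-Levi of $L$ still containing $n$, $h_s$, and $h_u$, which contradicts the defining condition $L=C_G^\circ(h_s)$. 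This is the only step that uses genuine algebraic-group theory rather than bookkeeping, and is where McNinch--Sommers' insight about pseudo-Levis really enters.

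Finally I would verify the two composites are the identity modulo $G$-conjugacy. Applying the inverse construction to $(n,hC_G^\circ(n))$ and then $\mathrm{MS}$ returns $(n,h_s C_G^\circ(n))=(n,hC_G^\circ(n))$ immediately. Starting from $(L,tZ^\circ,x)\in I_d^\C$ and applying $\mathrm{MS}$ followed by the inverse yields $(L',\,t_s Z(L')^\circ,\,x)$ with $L'=C_G^\circ(t_s)$. The hypothesis $(L,tZ^\circ)\in \mathscr F$ gives $C_G^\circ(tZ^\circ)=L$, and combining this with $t_u\in C_G^\circ(x)\cap C_G^\circ(t)$ forces $L'=L$ and $t_s Z(L)^\circ=tZ^\circ$. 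Hence both composites act as the identity on $I_d^\C/G$ and on $\mathcal N_{o,c}$ respectively, so $\mathrm{MS}$ is the asserted bijection.
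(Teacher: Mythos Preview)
The paper does not actually prove this theorem; it is quoted as a result of McNinch--Sommers and used as a black box. So there is no ``paper's proof'' to compare against. That said, your proposed argument has a genuine gap in the construction of the inverse.

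You define the inverse by taking $h=h_sh_u$ and setting $L=C_G^\circ(h_s)$. This does not in general produce a triple in $I_d^\C$: the nilpotent $n$ need not be distinguished in this $L$. A minimal example is $G=\SL_2$, $n=0$, $h=1$; then $h_s=1$, $L=G$, and $0$ is certainly not distinguished in $\mf g$. The correct preimage here is $(T,\,1\cdot T,\,0)$ with $L=T$ a maximal torus. Your contradiction argument for distinguishedness does not close: you produce a smaller pseudo-Levi $L''\subsetneq L$ containing $h_s$ and $n$, but this in no way contradicts the equation $L=C_G^\circ(h_s)$, since $h_s$ can perfectly well lie inside a proper subgroup of its own centraliser.

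The missing idea, which is the actual content of McNinch--Sommers, is a minimality choice: among all semisimple representatives $t$ of the coset $hC_G^\circ(n)$, pick one for which $\dim C_G^\circ(t)$ is minimal, and set $L=C_G^\circ(t)$. Then the torus $S$ you produce gives, for generic $s\in S\cdot Z(L)^\circ$, a new semisimple element $ts$ in the same coset with strictly smaller centraliser, contradicting minimality. The same issue resurfaces in your final verification: starting from $(L,tZ^\circ,x)\in I_d^\C$, the element $t$ is already semisimple (it lies in the center of a reductive group), but $C_G^\circ(t)$ can be strictly larger than $L=C_G^\circ(tZ^\circ)$, so your inverse does not recover $L$; again the minimality device is what forces the correct pseudo-Levi.
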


\nomenclature{$I_{d,T}^\C$}{}
\nomenclature{$I_{o,d,T}^\C$}{}
\nomenclature{$\mathscr F_T$}{}
Note that every semisimple element can be conjugated to lie in $T$.
Thus if we define $I_{d,T}^\C$ to be the subset of $I_d^\C$ consisting of triples $(L,tZ^\circ,x)$ such that $T\subseteq L$, then the map in Theorem \ref{thm:somninch} descends to a bijection between $I_{d,T}^\C/G$ and $\mathcal N_{o,c}$.
Define $\mathscr F_{T} = \set{(L,tZ^\circ)\in \mathscr F:T\subseteq L}$.
We will additionally find it convenient to work with orbits rather than elements.
Define $I_{o,d,T}^\C$ to be the set of all triples $({L},tZ^\circ,\OO)$ where $({L},tZ^\circ) \in \mathscr F_{T}$, and $\OO$ is a distinguished nilpotent orbit of $\mf l$.
\begin{proposition}
    \label{prop:unip}
    The map $({L},tZ^\circ,x)\mapsto ({L},tZ^\circ,L.x)$ induces a bijection between $I_{d,T}^\C/G$ and $I_{o,d,T}^\C/N$.
\end{proposition}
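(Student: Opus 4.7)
The plan is to check surjectivity, well-definedness of the induced map on equivalence classes, and injectivity. Surjectivity is immediate: given $(L,tZ^\circ,\OO)\in I_{o,d,T}^\C$, any choice of representative $x\in \OO$ yields a preimage $(L,tZ^\circ,x)\in I_{d,T}^\C$ under the map.

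The key technical observation powering both well-definedness and injectivity is the following: whenever $g\in G$ conjugates a pseudo-Levi $L_1\supseteq T$ to another pseudo-Levi $L_2\supseteq T$, one can modify $g$ on the left by an element of $L_2$ to land in $N$. Indeed, both $T$ and $gTg^{-1}$ are maximal tori of $L_2$, so by conjugacy of maximal tori inside $L_2$ there exists $l\in L_2$ with $l(gTg^{-1})l^{-1} = T$, whence $lg\in N$.

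To see well-definedness, suppose $g\in G$ with $g.(L_1,t_1Z_1^\circ,x_1) = (L_2,t_2Z_2^\circ,x_2)$, and set $n = lg\in N$ as above. Since $l\in L_2$ centralises $Z_2$ (which contains $t_2$), $n$ still sends $(L_1,t_1Z_1^\circ)$ to $(L_2,t_2Z_2^\circ)$, and
\begin{equation*}
n.(L_1.x_1) = (nL_1n^{-1}).(n.x_1) = L_2.(l.x_2) = L_2.x_2,
\end{equation*}
so the images lie in the same $N$-orbit. Conversely, for injectivity, given $n\in N$ with $n.(L_1,t_1Z_1^\circ,L_1.x_1) = (L_2,t_2Z_2^\circ,L_2.x_2)$, one has $L_2.(n.x_1) = L_2.x_2$, so pick $l\in L_2$ with $l.(n.x_1) = x_2$. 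Then $g := ln\in G$ conjugates $(L_1,t_1Z_1^\circ,x_1)$ to $(L_2,t_2Z_2^\circ,x_2)$, once again using centrality of $t_2$ in $L_2$.

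No step here is a serious obstacle: the entire argument reduces to $L_2$-conjugacy of maximal tori within a pseudo-Levi $L_2$ containing $T$, combined with the fact that any pseudo-Levi centralises its own centre.
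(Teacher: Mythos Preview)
Your proof is correct and follows essentially the same route as the paper: surjectivity is immediate, and both well-definedness and injectivity are handled by the observation that if $g\in G$ carries one pseudo-Levi $L_1\supseteq T$ to another $L_2\supseteq T$, then left-multiplying by a suitable $l\in L_2$ (using conjugacy of $T$ and $gTg^{-1}$ inside $L_2$) puts $lg$ in $N$, together with the fact that elements of $L_2$ fix $t_2Z_2^\circ$. The paper's argument is identical in structure and detail.
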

\begin{proof}
    The induced map $\phi:I_{d,T}^\C\to I_{o,d,T}^\C/N$ is clearly a surjection. 
    Suppose $g\in G$ is such that $\hphantom{ }{g}(L_1,t_1Z_1^\circ,x_1) = (L_2,t_2Z_2^\circ,x_2)$.
    Then $T,\hphantom{ }{g}T$ are both maximal tori of $L_2$ and so there is a $l\in L_2$ so that $\hphantom{ }{lg}T = T$ and so $lg = n$ for some $n \in N$.
    Clearly $\hphantom{ }nL_1 = L_2$ and $\hphantom{ }n(t_1Z_1^\circ) = t_2Z_2^\circ$.
    Also ${n}(\hphantom{ }{L_1}x_1) = \hphantom{ }{L_2n}x_1 = \hphantom{ }{L_2g}x_1 = \hphantom{ }{L_2}x_2$.
    Thus $\phi$ factors through $\mathscr F_{T}/G$.
    Now suppose there exists a $n\in N$ such that ${n}(L_1,t_1Z_1^\circ,\hphantom{ }{L_1}x_1) = (L_2,t_2Z_2^\circ,\hphantom{ }{L_2}x_2)$.
    Then $\hphantom{ }{n}L_1 = L_2$ and $\hphantom{ }{n}\hphantom{ }{L_1}x_1 = \hphantom{ }{L_2}x_2$.
    Thus $\hphantom{ }{L_2n}x_1 = \hphantom{ }{L_2}x_2$ and so there exists an $l\in L_2$ such that $\hphantom{ }{ln}x_1 = x_2$.
    Clearly $\hphantom{ }{ln}(t_1Z_1^\circ) = t_2Z_2^\circ$ since $t_2Z_2^\circ$ lies in the center of $L_2$.
    Thus $\hphantom{ }{ln}(L_1,t_1Z_1^\circ,x_1) = (L_2,t_2Z_2^\circ,x_2)$.
    Thus $\phi$ descends to a bijection as required.
\end{proof}
Note that $N$ stabilises $I_{o,d,T}^\C$ and $T$ acts trivially on $I_{o,d,T}^\C$. 
Thus $W$ acts on $I_{o,d,T}^\C$ and $I_{o,d,T}^\C/W = I_{o,d,T}^\C/N$.
We thus have bijections
\begin{equation}
    \label{eq:Wequivtoccl}
    \begin{tikzcd}
        & I_d^\C/G \arrow[dl,swap,"\sim"] \arrow[dr,"\sim"] & \\
        I_{o,d,T}^\C/W & & \mathcal N_{o,c}.
    \end{tikzcd}
\end{equation}
\nomenclature{$\mathrm{MS}_{o,T}$}{}
Write $\mathrm{MS}_{o,T}$ for the composition $I_{o,d,T}^\C\to I_{o,d,T}^\C/W \xrightarrow{\sim} N_{o,c}$.

\paragraph{From Faces of the Apartment to Pseudo-Levis}
Recall from section \ref{par:basicnotation2} the definitions of $T$ and $\mathbb T$.
We view $T$ as a complex algebraic group with the Zariski topology and $\mathbb T$ as a topological group with the topology induced from the classical topology on $V \cong \R^{\mathrm{rank}(X^*)}$.
Recall the following standard results about closed subgroups of $T$ and $\mathbb T$.
\begin{proposition}
    \label{prop:bij1}
    \cite[Section 16]{humphreys}
    There is a $W$-equivariant bijective correspondence 
    \begin{align}
        \set{\text{closed subgroups } H\le T} &\leftrightarrow \left\{\Z \text{-submodules } M\le X^*\right\} \\
        H &\rightarrow X^*_{H} = \set{\chi\in X^*:\chi(H) = 1} \\
        \set{t\in T: \chi(t) = 1 \ \forall \chi \in M} &\leftarrow M.
    \end{align}
\end{proposition}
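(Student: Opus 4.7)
The plan is to verify that the two maps are mutually inverse and that they intertwine the natural $W$-actions on both sides. All three claims are essentially formal consequences of the duality between an algebraic torus and its character lattice.

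First, I would check well-definedness. Given a closed subgroup $H\le T$, the set $X^*_H$ is clearly a $\Z$-submodule of $X^*$, since $\chi_1,\chi_2\in X^*_H$ implies $(\chi_1+\chi_2)(h) = \chi_1(h)\chi_2(h) = 1$ for all $h\in H$ (and similarly for negatives). Going the other way, for a submodule $M\le X^*$ the set $H_M:=\{t\in T:\chi(t)=1\ \forall\chi\in M\}$ is the intersection of the closed subgroups $\ker(\chi)$ for $\chi\in M$, hence is a closed subgroup of $T$.

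Next, the two inverse identities. The inclusion $M\subseteq X^*_{H_M}$ is immediate from the definitions. For the reverse inclusion, one uses that $X^*/M$ is a finitely generated abelian group and that the dual torus $H_M$ can be identified with $\Hom(X^*/M,\C^\times)$; since this group separates the elements of $X^*/M$ (any non-trivial element of a finitely generated abelian group admits a non-trivial character into $\C^\times$), a character $\chi\in X^*$ lying in $X^*_{H_M}$ must already lie in $M$. For the other composite, $H\subseteq H_{X^*_H}$ is tautological, and the reverse inclusion is the content of the nullstellensatz-type statement that a closed subgroup of $T$ is recovered as the vanishing locus of its defining characters; this is exactly the content of \cite[Section 16]{humphreys} and uses that $T$ is diagonalisable so closed subgroups are cut out by characters.

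Finally, $W$-equivariance is straightforward: $W$ acts on $T$ by automorphisms and on $X^*$ by the contragredient action, so for $w\in W$ one has
\begin{equation}
    X^*_{wH} = \{\chi\in X^*: \chi(w.h)=1\ \forall h\in H\} = \{\chi\in X^*: (w^{-1}.\chi)(h)=1\ \forall h\in H\} = w.X^*_H,
\end{equation}
and symmetrically for the reverse map. The main (and only real) obstacle is the reverse inclusion $X^*_{H_M}\subseteq M$; this is where one genuinely needs the structure theory of diagonalisable groups rather than purely formal manipulation.
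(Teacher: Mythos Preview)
Your proof is correct and complete; both inverse identities are handled properly, and you correctly identify that the substantive input is the structure theory of diagonalisable groups (closed subgroups of $T$ are cut out by characters) together with the injectivity of $\C^\times$ as an abelian group (so that $\Hom(X^*/M,\C^\times)$ separates points). The paper, however, does not give a proof of this proposition at all: it is stated with a citation to \cite[Section 16]{humphreys} and used as a black box, so there is no argument in the paper to compare against.
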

\begin{proposition}
    \label{prop:bij2}
    \cite[Chapter 4]{pontryaginduality}
    There is a $W$-equivariant bijective correspondence 
    \begin{align}
        \set{\text{closed subgroups } {\mathbb H} \le {\mathbb T}} &\leftrightarrow \left\{\Z \text{-submodules } M\le \hat{\mathbb T} \right\} \\
        {\mathbb H} &\rightarrow X^*_{\mathbb H} = \set{\chi\in X^*:\chi(\mathbb H) = 0 + \Z} \\
        \set{t\in \mathbb T: \chi(t) = 1 \ \forall \chi \in M} &\leftarrow M.
    \end{align}
    Moreover, for every closed subgroup ${\mathbb H} \le {\mathbb T}$ the connected component of ${\mathbb H}$ containing the identity, $\mathbb H^\circ$, coincides with the annihilator of the torsion elements of the Pontryagin dual of $\mathbb H$.
\end{proposition}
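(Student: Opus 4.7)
The plan is to invoke Pontryagin duality for locally compact abelian groups, applied to the compact abelian Lie group $\mathbb T$. The paper has already fixed a $W$-equivariant isomorphism $X^* \isoto \hat{\mathbb T} = \Hom_{cts}(\mathbb T,\R/\Z)$. Since $\mathbb T$ is compact, $\hat{\mathbb T}$ carries the discrete topology, so every subgroup of $\hat{\mathbb T}$ is automatically closed; hence closed subgroups of $\hat{\mathbb T}$ correspond exactly to $\Z$-submodules of $X^*$ under our chosen isomorphism.

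First, I would set up the annihilator pairing. For a closed subgroup $\mathbb H\leq \mathbb T$, set $\mathbb H^\perp := \{\chi\in\hat{\mathbb T}:\chi(\mathbb H)=0+\Z\}$, which under $\hat{\mathbb T}\cong X^*$ is exactly $X^*_{\mathbb H}$; for a submodule $M\leq X^*$, set $M^\perp := \{t\in\mathbb T:\chi(t)=0+\Z \ \forall\,\chi\in M\}$, which is the claimed inverse. The fact that $(\mathbb H^\perp)^\perp=\mathbb H$ and $(M^\perp)^\perp=M$ is the double-annihilator theorem for Pontryagin duality of compact/discrete abelian groups, and this immediately gives the bijection. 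For $W$-equivariance, the action is tautologically compatible: $\chi(w\cdot t)=(w^{-1}\cdot\chi)(t)$, so $X^*_{w\cdot\mathbb H}=w\cdot X^*_{\mathbb H}$.

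For the component statement, the key identification is $\widehat{\mathbb H/\mathbb H^\circ} = (\hat{\mathbb H})_{\tor}$. The short exact sequence $1\to\mathbb H^\circ\to\mathbb H\to\mathbb H/\mathbb H^\circ\to 1$ dualises to $1\to\widehat{\mathbb H/\mathbb H^\circ}\to\hat{\mathbb H}\to\widehat{\mathbb H^\circ}\to 1$. Since $\mathbb H^\circ$ is a closed connected subgroup of the torus $\mathbb T$, it is itself a torus, so $\widehat{\mathbb H^\circ}$ is a free $\Z$-module; hence the kernel $\widehat{\mathbb H/\mathbb H^\circ}$ contains $(\hat{\mathbb H})_{\tor}$. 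Conversely, $\mathbb H/\mathbb H^\circ$ is a compact totally disconnected abelian Lie group, hence finite, so its dual is a finite, in particular torsion, group. This gives equality, and dualising once more yields $\mathbb H^\circ = \mathrm{Ann}_{\mathbb H}((\hat{\mathbb H})_{\tor})$.

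The only nontrivial input is the structural fact that a compact connected abelian Lie group is a torus (so has torsion-free character group), together with the observation that $\mathbb H/\mathbb H^\circ$ is finite for closed subgroups of a compact Lie group; everything else is formal Pontryagin duality applied under the already-established identification $X^*\cong\hat{\mathbb T}$.
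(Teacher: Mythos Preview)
Your proposal is correct. Note that the paper does not actually give a proof of this proposition: it is stated with a citation to \cite[Chapter 4]{pontryaginduality} and no further argument, so there is nothing to compare against beyond the reference itself. Your argument via the double-annihilator theorem and the dualised short exact sequence $1\to\mathbb H^\circ\to\mathbb H\to\mathbb H/\mathbb H^\circ\to 1$ is exactly the standard Pontryagin-duality proof one would extract from that reference, and all the structural inputs you invoke (closed connected subgroups of $\mathbb T$ are tori, $\mathbb H/\mathbb H^\circ$ is finite) are valid in this setting.
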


For an affine subspace $A\subseteq V$, call $A$ admissible if it is equal to the vanishing set of a subset of the affine roots $\Psi$.
\begin{proposition}
    \label{prop:pseudolevis}
    For every admissible $A\subseteq V$, there is a pseudo-Levi $L$ with $\Phi_L = \Phi_A$.
\end{proposition}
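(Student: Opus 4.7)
The plan is to realise $L$ as $C_G^\circ(s)$ for a single semisimple element $s \in \mathbb T \subseteq T$, obtained as the image of a sufficiently generic point $v \in A$ under the projection $V \twoheadrightarrow \mathbb T = V/X_*$. The starting observation is that under the canonical identification $X^* \cong \hat{\mathbb T}$ and of the compact form $\mathbb T \hookrightarrow T$, for every $v \in V$ and $\chi \in X^*$ one has
\[
    \chi(v + X_*) = 1 \in \C^\times \iff \chi(v) \in \Z,
\]
so the proposition will follow as soon as we produce $v \in A$ with $\{\alpha \in \Phi : \alpha(v) \in \Z\} = \Phi_A$.

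For $\alpha \in \Phi_A$, the restriction $\alpha|_A$ is constant with integer value by the very definition of $X^*_A$, so $\alpha(v) \in \Z$ holds automatically for every $v \in A$. For $\alpha \in \Phi \setminus \Phi_A$, the affine function $\alpha|_A$ must be either a non-integer constant---in which case $\alpha(v) \notin \Z$ for \emph{every} $v \in A$---or genuinely non-constant---in which case $\{v \in A : \alpha(v) \in \Z\}$ is a countable locally finite collection of proper parallel affine hyperplanes of $A$. (The a priori remaining case of an integer constant is excluded, since it would place $\alpha$ in $X^*_A \cap \Phi = \Phi_A$.) Since $\Phi \setminus \Phi_A$ is finite and $A$ is a non-empty real affine subspace, a Baire category argument on $A$ produces a $v$ avoiding every such hyperplane simultaneously.

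Setting $s := v + X_* \in \mathbb T \subseteq T$, the displayed equivalence yields $\{\alpha \in \Phi : \alpha(s) = 1\} = \Phi_A$. By the McNinch--Sommers description cited just above the proposition, $C_G^\circ(s)$ is then reductive and is generated by $T$ together with the root subgroups $\mf X_\alpha$ for $\alpha \in \Phi_A$, and it is a pseudo-Levi by definition since $s \in G$ is semisimple. Therefore $L := C_G^\circ(s)$ satisfies $\Phi_L = \Phi_A$. The only mildly subtle step is the genericity choice of $v$; the remainder is a routine translation between the affine-geometric condition $\chi(v) \in \Z$ and the group-theoretic condition $\chi(s) = 1$ via the compact torus $\mathbb T \hookrightarrow T$.
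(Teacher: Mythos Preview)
Your argument is correct: the genericity step produces $v\in A$ with $\{\alpha\in\Phi:\alpha(v)\in\Z\}=\Phi_A$, and the passage through the compact torus $\mathbb T\hookrightarrow T$ (via $t\mapsto e^{2\pi i t}$) converts this into $\{\alpha:\alpha(s)=1\}=\Phi_A$, after which the cited structure result for $C_G^\circ(s)$ finishes the job. The Baire (or measure-zero) argument is entirely adequate here since only finitely many roots are involved, each contributing at most a countable family of proper hyperplanes in $A$.

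The paper's own proof takes a different, much terser route: it simply invokes \cite[Remark 5.2(b)]{steinberg}, which directly characterises the subsystems of $\Phi$ that occur as $\Phi_s$ for some semisimple $s$, and observes that $\Phi_A$ is of this form. So the paper defers the work to a classical reference, whereas you give a concrete, self-contained construction of the element $s$. Your approach has the advantage of being explicit and of making visible the mechanism (choose a generic point of $A$, exponentiate); the paper's citation has the advantage of brevity and of pointing the reader to the standard source where such closed subsystems are treated systematically.
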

\begin{proof}
    This follows easily from \cite[Remark 5.2 (b)]{steinberg}.
\end{proof}
\nomenclature{$\mathscr A$}{}
Write $\mathscr A$ for the set of admissible affine subspaces of $V$.
Cocharacters pair integrally with the roots $\Phi$ and so ${X_*}$ acts on the collection of admissible affine subspaces by translation.
For $A\in \mathscr A$ write $[A]$ for the orbit of $A$ in $\mathscr A$ under the action of $X_*$.
For $A\in \mathscr A$ write $L_{A}$ for the pseudo-Levi containing $T$ with $\Phi_{L_A} = \Phi_A$.
For $v\in {X_*}$ and $w\in W$ we have $L_{A+v} = L_A$ and $L_{w.A} = \hphantom{ }w.L_A$ (where $w.L_A$ denotes $wL_Aw^{-1}$).
\begin{remark}
    \label{rmk:roots}
    For a closed subgroup $H\le T$, $X^*_{H^\circ} = \R X^*_{H}\cap X^*$.
    Moreover, for $tH^\circ \in H/H^\circ$ we have $X^*_{H}\subseteq X^*_{tH^\circ} \subseteq X^*_{H^\circ}$ and $\Phi_{H}\subseteq \Phi_{tH^\circ} \subseteq \Phi_{H^\circ}$.
    Identical statements hold for closed subgroups ${\mathbb H}\le {\mathbb T}$.
\end{remark}
\begin{lemma}
    \label{lem:lift}
    Let ${\mathbb H} \le {\mathbb T}$ be a closed subgroup, $\pi:V \to {\mathbb T}$ be the projection map and $z+{\mathbb H}^\circ \in {\mathbb H}/{\mathbb H}^\circ$.
    Then
    \begin{enumerate}
        \item there is an affine subspace $A\subseteq V$ such that $\pi^{-1}(z+{\mathbb H}^\circ) = A+{X_*}$, $X^*_A = X^*_{z+{\mathbb H}^\circ}$ and $\dim A = \rk {\mathbb H}^\circ$.
        \item if $B$ is any other affine subspace with $\pi^{-1}(z+{\mathbb H}^\circ) = B+{X_*}$ then $B\in [A]$, $X^*_B = X^*_{z+{\mathbb H}^\circ}$ and $\dim B = \rk {\mathbb H}^\circ$.
    \end{enumerate}
\end{lemma}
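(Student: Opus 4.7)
The plan is to build $A$ by lifting the sub-torus $\mathbb{H}^\circ$ to a vector subspace $U\subseteq V$ of matching dimension, then translating by a chosen lift of $z$. First, since $\mathbb{H}^\circ$ is a closed connected subgroup of the compact torus $\mathbb{T}$, it is a sub-torus, so $\pi^{-1}(\mathbb{H}^\circ)$ is a closed subgroup of $V$ containing $X_*$. I let $U$ be its identity component. As a closed connected subgroup of the real vector space $V$, $U$ is automatically a vector subspace; and since $X_*\subseteq \pi^{-1}(\mathbb{H}^\circ)$, the restriction $\pi|_U\colon U\twoheadrightarrow \mathbb{H}^\circ$ is a covering map, so $\dim U=\rk \mathbb{H}^\circ$ and $\pi^{-1}(\mathbb{H}^\circ)=U+X_*$.

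For part $(1)$, I fix any lift $\tilde z\in V$ of $z$ and set $A:=\tilde z+U$, so that $\pi^{-1}(z+\mathbb{H}^\circ)=\tilde z+\pi^{-1}(\mathbb{H}^\circ)=A+X_*$ and $\dim A=\rk \mathbb{H}^\circ$ are immediate. The identification $X^*_A=X^*_{z+\mathbb{H}^\circ}$ is then a direct unwinding of definitions: $\chi\in X^*_A$ iff there is $n\in\Z$ with $\chi(\tilde z)+\chi(u)=n$ for all $u\in U$, which by linearity forces $\chi|_U=0$ and $\chi(\tilde z)\in\Z$; and $\chi\in X^*_{z+\mathbb{H}^\circ}$ iff $\chi(\tilde z+u)\in\Z$ for every $u\in U$ (using $\chi(X_*)\subseteq\Z$), with the further observation that a linear map $U\to\R$ taking values in $\Z$ must vanish, yielding exactly the same two conditions.

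For part $(2)$, the key step is to show that the direction $U_B$ of any such $B$ equals $U$. The equality $A+X_*=B+X_*$ of cosets forces the translation subgroups $U+X_*=U_B+X_*$ to coincide. Passing to $V/U$, the image of this subgroup is a finitely generated closed subgroup of $V/U\cong\R^k$ (finitely generated because it is a quotient of $X_*$, closed because $U+X_*$ is closed in $V$), hence discrete. Consequently any vector subspace of $V$ contained in $U+X_*$ projects to $0$ in $V/U$ and so lies in $U$; applying this to $U_B$ gives $U_B\subseteq U$, and the symmetric argument gives $U=U_B$. Having established this, pick $\tilde b\in B$ and write $\tilde b=\tilde z+u+x$ with $u\in U$, $x\in X_*$; then $B=\tilde b+U=A+x$, so $B\in[A]$, $\dim B=\rk\mathbb{H}^\circ$, and $X^*_B=X^*_A=X^*_{z+\mathbb{H}^\circ}$ because any $\chi\in X^*$ takes constant value on $B$ differing from that on $A$ by $\chi(x)\in\Z$.

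The main obstacle I anticipate is this discreteness argument establishing $U_B=U$; it rests on closedness of $\mathbb{H}^\circ$ in $\mathbb{T}$ (giving closedness of $U+X_*$ in $V$) together with finite generation of $X_*$. Without these, one could imagine affine subspaces $B$ with irrational directions whose $X_*$-orbits are dense in $\pi^{-1}(z+\mathbb{H}^\circ)$, in which case part $(2)$ would fail.
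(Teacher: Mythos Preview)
Your proof is correct and takes a more coordinate-free route than the paper's. The paper chooses a $\Z$-basis $\chi_1,\dots,\chi_n$ of $X^*$ adapted to the annihilator $N$ of $\mathbb H^\circ$ (using freeness of $X^*/N$), takes the dual basis $\gamma_1,\dots,\gamma_n$ of $X_*$, and sets $W=\sum_{i>k}\R\gamma_i$ so that $\mathbb H^\circ=\pi(W)$ explicitly; for part~(2) it then identifies the connected components of $A+X_*$ as the translates $A+m$ with $m\in\bigoplus_{i\le k}\Z\gamma_i$ and finishes with a dimension count showing $B$ must fill out its component. You instead take $U$ intrinsically as the identity component of $\pi^{-1}(\mathbb H^\circ)$ and, for uniqueness in part~(2), pass to $V/U$ and use that a finitely generated \emph{closed} subgroup of a real vector space is discrete (since $\R$ is not finitely generated as an abstract group), forcing any vector subspace of $U+X_*$ to lie in $U$. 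The paper's explicit basis is convenient for the bookkeeping in the surrounding propositions; your argument is a bit cleaner and isolates precisely the role of closedness of $\mathbb H$ in ruling out affine subspaces $B$ with irrational direction.
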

\begin{proof}
    Let $N$ be annihilator of ${\mathbb H}^\circ$ in $X^*$. 
    By Proposition \ref{prop:bij2}, $X^*/N$ is a free $\Z$-module and so the short exact sequence 
    \begin{equation}
        0\longrightarrow N \longrightarrow X^* \longrightarrow X^*/N \longrightarrow 0
    \end{equation}
    splits.
    We can thus find a basis $\chi_1,\dots,\chi_n$ of $X^*$ such that $\chi_1,\dots,\chi_k$ is a basis for $N$ and $\chi_{k+1},\dots,\chi_n$ project onto a basis for $X^*/N$.
    Let $\gamma_1,\dots,\gamma_n$ be the dual basis in ${X_*}$.
    Then ${\mathbb H}^\circ$ is evidently equal to the image of $W = \sum_{i=k+1}^n\R\gamma_i = \cap_{i=1}^k\ker\chi_i$ under $\pi:V\to {\mathbb T}$.
    Let $w\in V$ be such that $\pi(w) \in v + {\mathbb H}^\circ$.
    Then $A = w+W$ maps onto $z+{\mathbb H}^\circ$ and so $\pi^{-1}(z+{\mathbb H}^\circ) = A+{X_*}$.
    It is clear that $\chi\in X^*$ takes integer values on $A$ iff it does so on $A+{X_*}$ iff it does so on $z+{\mathbb H}^\circ$ and so $X^*_A = X^*_{z+{\mathbb H}^\circ}$.
    Also $\dim A = \dim W = \rk X^*/N = \rk{\mathbb H}^\circ$.
    This is establishes (1).
    To establish (2) note that $A+{X_*}$ is a disjoint union of translates of $A$ by elements of the lattice $\bigoplus_{i=1}^k\Z\gamma_i$ and that $\bigoplus_{i=1}^k\R\gamma_i \cap A$ is a single point.
    Thus the connected components of $A+{X_*}$ are exactly $[A]$.
    If $B$ is an affine subspace such that $\pi^{-1}(z+{\mathbb H}^\circ) = B + {X_*}$ then as $B$ is connected it must be contained in some $C\in [A]$.
    The property that $A+{X_*} = B+{X_*}$ then forces $C = B + \bigoplus_{i=k+1}^n\Z\gamma_i$.
    But this can clearly only happen if $B=C$.
    Finally, it is obvious that if $B\in [A]$ then $\dim B = \dim A$ and $X^*_B = X^*_A$.
\end{proof}
\begin{proposition}
    \label{prop:adm}
    Let $L$ be a pseudo-Levi with center ${Z}$.
    There is a canonical bijection
    \begin{align}
        \label{eq:Zbij}
        {Z}/Z^\circ & \leftrightarrow \set{A\in \mathscr A: \Phi_{Z} \subseteq \Phi_A \subseteq \Phi_{Z^\circ}}/{X_*}
    \end{align}
    with the property that if $[A]$ is the image of $tZ^\circ$ then $\Phi_A = \Phi_{C_{G}^\circ(tZ^\circ)}$.
    In particular, there is a bijection
    \begin{align}
        \set{tZ^\circ\in Z/Z^\circ:C_{G}^\circ(tZ^\circ) = L} &\leftrightarrow \set{A\in \mathscr A: \Phi_A = \Phi_{L}}/{X_*}.
    \end{align}
\end{proposition}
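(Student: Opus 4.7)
The plan is to apply Lemma \ref{lem:lift} to the compact form of $Z$ inside $\mathbb T$. Identify $\mathbb T$ with the maximal compact subgroup $T_c\subseteq T$ so that the closed subgroup $Z\le T$ corresponds to a closed subgroup $\mathbb Z\le \mathbb T$ with $Z/Z^\circ \cong \mathbb Z/\mathbb Z^\circ$ (via Propositions \ref{prop:bij1} and \ref{prop:bij2}, since both $Z$ and $\mathbb Z$ correspond to the same sublattice $X^*_Z\subseteq X^*$, and likewise for $Z^\circ$). For $tZ^\circ\in Z/Z^\circ$, Lemma \ref{lem:lift} applied to $\mathbb H=\mathbb Z$ produces a unique $X_*$-orbit $[A]$ of affine subspaces with $\pi^{-1}(tZ^\circ)=A+X_*$, $\dim A=\rk Z^\circ$, and $X^*_A=X^*_{tZ^\circ}$. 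Under the identification $\mathbb T\cong T_c$, viewpoints (1) and (3) of the definition of $X^*_{tZ^\circ}$ agree, so this is our candidate map $tZ^\circ\mapsto [A]$.

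Next I would verify the compatibility and admissibility. Taking roots: $\Phi_A=\Phi\cap X^*_A=\Phi\cap X^*_{tZ^\circ}=\Phi_{C_G^\circ(tZ^\circ)}$ by the characterisation of pseudo-Levi root systems given by Lemma 14 of \cite{sommersmcninch} (quoted in the paragraph on pseudo-Levis). For admissibility of $A$: writing $A=v+W$ for any lift $v\in V$ of $t$, the linear part $W$ is the annihilator in $V$ of $X^*_{Z^\circ}$. Since $L_0:=C_G^\circ(tZ^\circ)$ is reductive with $Z(L_0)^\circ=Z^\circ$, the $\R$-spans satisfy $\R\Phi_A=\R\Phi_{L_0}=\R X^*_{Z(L_0)^\circ}=\R X^*_{Z^\circ}$, both of $\R$-rank $\dim T-\dim Z^\circ$. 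Hence $W$ is also the $V$-annihilator of $\Phi_A$, and $A$ is cut out by the affine roots $\{\alpha-\alpha(v):\alpha\in \Phi_A\}\subseteq \Psi$ (note $\alpha(v)\in \Z$ for $\alpha\in \Phi_A$ since $X^*_A=X^*_{tZ^\circ}$), so $A\in\mathscr A$.

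It remains to show the map is a bijection onto the stated set. That the image lands in $\{[A]:\Phi_Z\subseteq \Phi_A\subseteq \Phi_{Z^\circ}\}/X_*$ is immediate from $\Phi_Z\subseteq \Phi_{tZ^\circ}\subseteq \Phi_{Z^\circ}$. Well-definedness on $X_*$-orbits and injectivity follow from Lemma \ref{lem:lift}(2), since distinct cosets have disjoint preimages in $V$. For surjectivity, given such an $[A]$, Proposition \ref{prop:pseudolevis} gives a pseudo-Levi $L_A\supseteq T$ with $\Phi_{L_A}=\Phi_A$; the inclusion $\Phi_Z\subseteq \Phi_A$ yields $L\subseteq L_A$, hence $Z(L_A)\subseteq Z(L)=Z$, while $\Phi_A\subseteq \Phi_{Z^\circ}$ gives $Z^\circ\subseteq Z(L_A)$, so $Z(L_A)^\circ=Z^\circ$. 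By Theorem \ref{thm:somninch} write $L_A=C_G^\circ(s)$ with $s\in Z(L_A)\subseteq Z$; one checks $C_G^\circ(sZ^\circ)=L_A$ (the inclusion $\Phi_{sZ^\circ}\subseteq \Phi_{L_A}$ is trivial, and the reverse holds because $Z^\circ\subseteq Z(L_A)$ forces every $\alpha\in\Phi_{L_A}$ to vanish on $Z^\circ$), and the coset $sZ^\circ$ maps to $[A]$ by the paragraph-one construction applied to the lift of $s$. The ``in particular'' statement is simply the restriction of the bijection to the subset where $\Phi_A=\Phi_L$. The main technical point to be careful about is the dictionary between the three meanings of $X^*_S$, especially making sure that the identity component $Z(L_A)^\circ$ of a center matches the identity component $\mathbb Z^\circ$ of its compact form under all these identifications.
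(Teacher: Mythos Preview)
Your overall strategy coincides with the paper's: pass from $Z/Z^\circ$ to the component group of the corresponding closed subgroup $\mathbb H\le\mathbb T$ and then invoke Lemma~\ref{lem:lift}. The paper makes the first step explicit by constructing a canonical isomorphism
\[
f_L:\ Z/Z^\circ\ \xrightarrow{\sim}\ \mathbb H/\mathbb H^\circ
\]
through character/Pontryagin duality (both component groups are canonically dual to $\tor(X^*/\Z\Phi_L)$) and then proving the key compatibility $X^*_{tZ^\circ}=X^*_{f_L(tZ^\circ)}$; your appeal to Propositions~\ref{prop:bij1} and~\ref{prop:bij2} is pointing at the same fact but skips this verification. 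That is a minor looseness.

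The genuine gap is in your surjectivity argument. You produce $s\in Z(L_A)\subseteq Z$ with $C_G^\circ(sZ^\circ)=L_A$, hence $\Phi_{sZ^\circ}=\Phi_A$, and then assert that $sZ^\circ\mapsto[A]$. But the forward map sends $sZ^\circ$ to the unique class $[B]$ with $\pi^{-1}(f_L(sZ^\circ))=B+X_*$, and all you have established is $\Phi_B=\Phi_{sZ^\circ}=\Phi_A$. This does \emph{not} force $[B]=[A]$: the entire content of the ``in particular'' clause is that there are typically several classes $[A]$ sharing the same $\Phi_A$, one for each coset $tZ^\circ$ with $C_G^\circ(tZ^\circ)=L_A$. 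Since your $s$ was chosen without any reference to the specific $A$ (only to $\Phi_A$ via $L_A$), there is no reason it hits $[A]$ rather than another class with the same root set. Incidentally, the citation of Theorem~\ref{thm:somninch} here is misplaced; that $L_A=C_G^\circ(s)$ for some semisimple $s$ is just the definition of a pseudo-Levi.

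The paper's fix, which is also the most direct repair of your argument, is to build the inverse map straight from $A$: project $A$ to $\mathbb T$ and observe that $\pi(A)\subseteq\mathbb H$ (from $\Phi_Z\subseteq\Phi_A$), so $\pi(A)$ lies in a single coset $z+\mathbb H^\circ$. A dimension count (using $\R\Phi_A=\R\Phi_Z$, which follows from $\Phi_Z\subseteq\Phi_A\subseteq\Phi_{Z^\circ}\subseteq\R\Phi_Z$, together with admissibility of $A$) gives $\dim A=\rk\mathbb H^\circ$, so in fact $\pi(A)=z+\mathbb H^\circ$. Then $[A]\mapsto f_L^{-1}(z+\mathbb H^\circ)$ is visibly inverse to the forward map. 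Equivalently, in your language: take $s$ to be the image in $T_c\cong\mathbb T$ of any point of $A$, rather than an abstract $s$ furnished by the pseudo-Levi definition.
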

\begin{proof}
    Fix a pseudo-Levi $L$ and let $Z$ denote its center.
    The annihilator of $Z$ in $X^*$ is $\Z \Phi_{L}$.
    Write $N$ for the annihilator for $Z^\circ$ in $X^*$.
    Recalling the identification between $X^*$ and $\hat{\mathbb T}$, let $\mathbb H$ denote the annihilator of $\Z\Phi_{L}$ in $\mathbb T$.
    Then both the character group of $Z/Z^\circ$ and the Pontryagin dual of ${\mathbb H}/{\mathbb H}^\circ$ naturally identify with $N/\Z\Phi_{L} = \tor(X^*/\Z\Phi_{L})$ - the torsion subgroup of $X^*/\Z\Phi_{L}$.
    But $Z/Z^\circ$ and ${\mathbb H}/{\mathbb H}^\circ$ are both finite Abelian groups and since the Pontryagin dual and character group (which we denote by $X^*$) coincide for finite groups we obtain a canonical isomorphism $f_{L}$ given by the composition
    \begin{equation}
        \label{eq:canon}
        Z/Z^\circ \to X^*(\tor(X^*/\Z\Phi_{L})) \to (\tor(X^*/\Z\Phi_{L}))^\wedge \to {\mathbb H}/{\mathbb H}^\circ.
    \end{equation}
    \begin{claim}
        The isomorphism $f_{L}$ has the property that for all $tZ^\circ\in Z/Z^\circ$, $X^*_{tZ^\circ} = X^*_{f_{L}(tZ^\circ)}$.
    \end{claim}
    \begin{proof}
        Suppose $\chi\in X^*$ vanishes on $tZ^\circ$.
        Then since $\chi$ is multiplicative, it must vanish on $Z^\circ$ too and hence lie in $N$.
        Its image in $N/\Z\Phi_{L} = \tor(X^*/\Z\Phi_{L}) = X^*(Z/Z^\circ)$ - the character group of $Z/Z^\circ$ - must thus lie in $X^*(Z/Z^\circ)_{tZ^\circ}$.
        Conversely any lift of an element $\bar \chi \in X^*(Z/Z^\circ)_{tZ^\circ}$ to $X^*$ clearly lies in $X^*_{tZ^\circ}$.
        Thus $X^*_{tZ^\circ}$ is equal to all possible lifts of elements in $X^*(Z/Z^\circ)_{tZ^\circ}$.
        We may similarly characterise $X^*_{f_{L}(tZ^\circ)}$ and so we obtain the desired result.
    \end{proof}
    Now fix a $tZ^\circ$, let $v+{\mathbb H}^\circ = f_{L}(tZ^\circ)$ and let $\pi:V\to {\mathbb T}$ denote the projection map.
    By Lemma \ref{lem:lift} we may assign to $z+{\mathbb H}^\circ$ a well defined class $[A]$ with the property that for any $B\in [A]$ we have $\pi^{-1}(z+{\mathbb H}^\circ) = B+{X_*}$, $\dim B = \rk {\mathbb H}^\circ$ and $X^*_B = X^*_{z+{\mathbb H}^\circ}$.
    Define $\lambda_{L}(tZ^\circ)$ to be $[A]$
    \begin{claim}
        $[A]$ is an admissible class with $\Phi_{Z}\subseteq \Phi_A \subseteq \Phi_{Z^\circ}$.
    \end{claim}
    \begin{proof}
        The conditions on $\Phi_A$ are evident from the fact that $\Phi_A = \Phi_{z+{\mathbb H}^\circ}$ and remark \ref{rmk:roots}.
        It remains to show that $A$ is admissible.
        We know that $\Phi_{L}\subseteq X^*_{z+{\mathbb H}^\circ} = X^*_A$ and so $A$ lies in the vanishing set of $\set{\alpha-\alpha(A):\alpha\in \Phi_{L}}\subseteq \Psi$.
        It thus suffices to check that $\dim A = \dim V - \dim \R\Phi_{L}$.
        This follows from $\dim A = \rk{\mathbb H}^\circ = \rk X^* - \rk \Z\Phi_{L} = \dim V - \dim \R\Phi_{L}$.
    \end{proof}
    Thus $\lambda_{L}$ gives us a well defined forwards map for equation \ref{eq:Zbij}.
    Moreover $\Phi_A = \Phi_{tZ^\circ} = \Phi_{C_{G}^\circ(tZ^\circ)}$ and so $\lambda_{L}$ has the advertised property.
    It thus remains to show that the map has an inverse.

    Let $[A]$ be an admissible class with $\Phi_{Z}\subseteq \Phi_A \subseteq \Phi_{Z^\circ}$.
    Then $\pi(A)\subseteq {\mathbb H}$ and is connected so $\pi(A) \subseteq z + {\mathbb H}^\circ$ for some $z\in {\mathbb H}$.
    But also
    \begin{equation}
        \Phi_{Z}\subseteq \Phi_A \subseteq \Phi_{Z^\circ} \subseteq X^*_{Z^\circ} = \R X^*_{Z} \cap X^* \subseteq \R X^*_{Z} = \R \Phi_{Z}
    \end{equation}
    and so $\R\Phi_A = \R\Phi_{Z}$.
    Thus, since $A$ is admissible, $\dim A = \dim V - \dim \R\Phi_A = \dim V - \dim \R\Phi_{Z} = \rk{\mathbb H}^\circ$.
    We must therefore have that $\pi(A) = z+{\mathbb H}^\circ$.
    Applying $f_{L}^{-1}$ we obtain a backwards map which is clearly inverse to $\lambda_{L}$.
\end{proof}
\begin{remark}
    \label{rmk:cs}
    Let $L_1\subseteq L_2$ be pseudo-Levis containing $T$ with centers $Z_1,Z_2$ respectively.
    Let $\mathbb H_1,\mathbb H_2$ be the corresponding subgroups of $\mathbb T$.
    We have $Z_2\subseteq Z_1$ which induces a map $Z_2/Z_2^\circ \to Z_1/Z_1^\circ$.
    Similarly we have a map $\mathbb H_2/\mathbb H_2^\circ \to \mathbb H_1/\mathbb H_1^\circ$.
    By the naturality of the construction of $f_{L_1},f_{L_2}$ we have that 
    \begin{equation}
        \begin{tikzcd}
            Z_2/Z_2^\circ \arrow[r,"f_{L_2}"] \arrow[d] & \mathbb H_2/\mathbb H_2^\circ \arrow[d] \\
            Z_1/Z_1^\circ \arrow[r,"f_{L_1}"] & \mathbb H_1/\mathbb H_1^\circ
        \end{tikzcd}
    \end{equation}
    commutes.
\end{remark}

\begin{proposition}
    \label{prop:admissiblesubset}
    \nomenclature{$\lambda$}{}
    There is a canonical $W$-equivariant bijection
    \begin{align}
        \mathscr F_{T} \leftrightarrow \mathscr A/{X_*} 
    \end{align}
    where the forwards map is given by $\lambda:(L,tZ^\circ) \mapsto \lambda_{L}(tZ^\circ)$.
\end{proposition}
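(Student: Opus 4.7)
The plan is to construct an explicit inverse using Propositions \ref{prop:pseudolevis} and \ref{prop:adm}, then derive $W$-equivariance from the naturality of the ingredients. For the forward direction, Proposition \ref{prop:adm} already guarantees that $\lambda_L(tZ^\circ)$ is an admissible class $[A]$ with the distinguishing property $\Phi_A = \Phi_{C_G^\circ(tZ^\circ)} = \Phi_L$, so $\lambda$ indeed lands in $\mathscr A/X_*$.

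For the inverse, given $[A] \in \mathscr A/X_*$, first observe that $\Phi_A$ is invariant under translation by $X_*$ (since cocharacters pair integrally with roots), hence a well-defined invariant of the class. Proposition \ref{prop:pseudolevis} then furnishes a pseudo-Levi $L_A \supseteq T$ with $\Phi_{L_A} = \Phi_A$ (namely the subgroup generated by $T$ and the root subgroups $\mf X_\alpha$ for $\alpha \in \Phi_A$). Applying the ``In particular'' clause of Proposition \ref{prop:adm} to $L_A$, the class $[A]$ corresponds to a unique element $tZ_A^\circ$ satisfying $C_G^\circ(tZ_A^\circ) = L_A$, and I set $\mu([A]) := (L_A, tZ_A^\circ) \in \mathscr F_T$. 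That $\lambda$ and $\mu$ are mutually inverse is then immediate from Proposition \ref{prop:adm}: the identity $\Phi_{\lambda_L(tZ^\circ)} = \Phi_L$ recovers $L$ from its image, and the refined bijection in Proposition \ref{prop:adm} recovers $tZ^\circ$.

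For $W$-equivariance, recall that $W$ acts on $\mathscr F_T$ by simultaneous conjugation and on $\mathscr A/X_*$ via the reflection action on $V$ (which preserves $X_*$). The required identity $\lambda_{wLw^{-1}}(w \cdot tZ^\circ) = w \cdot \lambda_L(tZ^\circ)$ reduces to tracking $W$-equivariance through the construction of $\lambda_L$: the bijections of Propositions \ref{prop:bij1} and \ref{prop:bij2} are $W$-equivariant by the assertions already in the excerpt; the lifting produced by Lemma \ref{lem:lift} is natural with respect to the $W$-actions on $V$ and $\mathbb T$ (which intertwine via $\pi$); and the isomorphism $f_L$ of \eqref{eq:canon} is assembled from canonical pairings that commute with the induced $W$-actions on $\mathrm{tor}(X^*/\Z\Phi_L)$, $Z/Z^\circ$, and $\mathbb H/\mathbb H^\circ$.

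I expect the main (minor) obstacle to be confirming $W$-equivariance through the Pontryagin-dual step inside $f_L$: one must check that the canonical identification of a finite abelian group with its Pontryagin dual (and equivalently its character group) intertwines the two $W$-actions induced on the sides. Once this single naturality check is in place, everything else in the proof is purely formal manipulation with the bijections supplied by Propositions \ref{prop:adm}, \ref{prop:bij1}, and \ref{prop:bij2}.
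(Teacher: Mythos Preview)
Your proposal is correct and follows essentially the same approach as the paper: recover $L$ from $[A]$ via $\Phi_A = \Phi_L$ using Proposition \ref{prop:pseudolevis}, then recover $tZ^\circ$ via the bijection in Proposition \ref{prop:adm}, with $W$-equivariance coming from naturality of $f_L$. The only difference is cosmetic: the paper argues injectivity and surjectivity of $\lambda$ separately rather than constructing an explicit inverse, and for $W$-equivariance it writes out the full commutative cube relating $f_L$ and $f_{wL}$ (confirming exactly the Pontryagin-dual naturality you flagged as the one point needing care).
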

\begin{proof}
    We show first that $\lambda$ is injective.
    Let $(L_1,t_1Z_1^\circ),(L_2,t_2Z_2^\circ) \in \mathscr F_{T}$ and write $[A_i] = \lambda(L_i,t_iZ_i^\circ)$. 
    Suppose we have $[A_1] = [A_2]$.
    Then 
    \begin{equation}
        \Phi_{L_1} = \Phi_{t_1Z_1^\circ} = \Phi_{A_1} = \Phi_{A_2} = \Phi_{t_2Z_2^\circ} = \Phi_{L_2}
    \end{equation}
    and so $L_1 = L_2 =: L$.
    We then get that $t_1Z_1^\circ = t_2Z_2^\circ$ from the fact that $\lambda_{L}$ is a bijection.
    Now let $[A]$ be an admissible class.
    By Proposition \ref{prop:pseudolevis} there exists a pseudo-Levi $L$ with $\Phi_{L} = \Phi_A$.
    Let $Z$ denote the center of $L$.
    Then by Proposition \ref{prop:adm} there exists an $tZ^\circ$ such that $\lambda(L,tZ^\circ) = [A]$ and so $\lambda$ is surjective.
    It remains to show that $\lambda$ is $W$-equivariant.
    Since the projection map $V\to {\mathbb H}$ is $W$ equivariant is suffices to check that for all $w\in W$, the outer square of the following diagram commutes
    \begin{equation}
        \begin{tikzcd}
            Z/Z^\circ \arrow[rrr,"f_{L}"] \arrow{dr} \arrow[ddd,"\hphantom{x}w(-)"] & & & {\mathbb H}/{\mathbb H}^\circ \arrow[ddd,"\hphantom{x}w(-)"] \\
            & X^*(N/\Z\Phi_{L}) \arrow[r,"="] \arrow[d,"\circ w^{-1}"] & (N/\Z\Phi_{L})^\wedge \arrow[ur] \arrow[d,"\circ w^{-1}"] & \\
            & X^*(wN/w\Z\Phi_{L}) \arrow[r,"="] & (wN/w\Z\Phi_{L})^\wedge \arrow[dr] & \\
            \hphantom{x}wZ/wZ^\circ \arrow[rrr,"f_{wL}"] \arrow[ur] & & & \hphantom{x}w{\mathbb H}/w{\mathbb H}^\circ.
        \end{tikzcd}
    \end{equation}
    But it is clear that all the inner squares commute and so the outer square must do too.
\end{proof}

An important consequence of the proposition is the following corollary.
\begin{corollary}
    \label{cor:mfL}
    \nomenclature{$\mf L$}{}
    Let 
    \begin{equation}
        \mf L:\{c \text{ a face of } V\} \rightarrow \mathscr F_T, \quad c\mapsto \lambda^{-1}(\mathcal A(c)+X_*).
    \end{equation}
    Then $\mf L$ is $W$-equivariant surjection and $c_1,c_2$ lie in the same fibre iff $\mathcal A(c_1)+X_*=\mathcal A(c_2)+X_*$.
    Moreover, if $\mf L(c) =(L,tZ_L^\circ)$ then $L$ is a complex reductive group with the same root datum as $\bfL_{\kappa_{x_0}(c)}(\barF_q)$.
\end{corollary}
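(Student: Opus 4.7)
The plan is to deduce each conclusion from Proposition \ref{prop:admissiblesubset} together with the basic incidence geometry of the affine hyperplane arrangement on $V$. First I would check that $\mf L$ is well-defined on faces. By construction, the faces of $V$ are the cells of the arrangement cut out by the affine root hyperplanes $\{a = 0 : a \in \Psi\}$, so for any face $c$ the affine span $\mathcal A(c)$ is the intersection of the hyperplanes $\{a=0\}$ for $a \in \Psi_{\mathcal A(c)}$. Thus $\mathcal A(c)$ is admissible, $\mathcal A(c) + X_* \in \mathscr A/X_*$, and $\lambda^{-1}(\mathcal A(c) + X_*) \in \mathscr F_T$ is defined.

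For surjectivity, fix $(L, tZ_L^\circ) \in \mathscr F_T$ and let $[A] = \lambda(L, tZ_L^\circ)$. Picking any point of $A$ that avoids every affine root hyperplane not already containing $A$, the face $c$ of $V$ through that point satisfies $\mathcal A(c) = A$, giving $\mf L(c) = (L, tZ_L^\circ)$. The $W$-equivariance follows because $W$ permutes the affine root hyperplanes, so $\mathcal A(w \cdot c) = w \cdot \mathcal A(c)$, and $\lambda$ is $W$-equivariant by Proposition \ref{prop:admissiblesubset}. The fibre description is immediate: since $\lambda$ is a bijection, $\mf L(c_1) = \mf L(c_2)$ is equivalent to $\mathcal A(c_1) + X_* = \mathcal A(c_2) + X_*$.

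For the final root-datum claim, set $(L, tZ_L^\circ) = \mf L(c)$. From Proposition \ref{prop:adm} (applied in the proof of Proposition \ref{prop:admissiblesubset}) we have $\Phi_L = \Phi_{\mathcal A(c)}$. On the other hand, an affine root $a = \alpha + n \in \Psi$ vanishes on $\mathcal A(c)$ iff $\alpha \equiv -n$ on $\mathcal A(c)$, i.e.\ iff $\alpha \in X^*_{\mathcal A(c)}$; therefore $\dot \Psi_{\mathcal A(c)} = \Phi_{\mathcal A(c)}$. By the identification recalled in section \ref{par:basicnotation2}, $\Phi_{\kappa_{x_0}(c)}(\bar\bfT_K,\barF_q)$ is identified with $\Psi_{\mathcal A(c)}$ via the linear-part map $a \mapsto \dot a$, hence with $\Phi_{\mathcal A(c)} = \Phi_L$. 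Since the character and cocharacter lattices of $\bar\bfT_K$ and of $T$ are both canonically $X^*$ and $X_*$, the two root data agree.

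The only step requiring any care is the last one: one must carefully chase the canonical identifications of character lattices between $\bfT_K$, its special fibre $\bar\bfT_K$, and the complex torus $T = \bfT_\Z(\C)$, together with the identification of $\Phi_c(\bar\bfT_K,\barF_q)$ with $\Psi_{\mathcal A(c)}$ provided by Bruhat--Tits theory, in order to conclude that the equality $\Phi_L = \Phi_{\mathcal A(c)}$ really gives the same subset of $X^*$ as the root system of $\bfL_{\kappa_{x_0}(c)}$; all other steps are formal consequences of $\lambda$ being a $W$-equivariant bijection.
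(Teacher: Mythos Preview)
Your proposal is correct and spells out exactly the argument the paper leaves implicit: the paper states this as an immediate corollary of Proposition~\ref{prop:admissiblesubset} without providing any proof, and your reasoning---well-definedness via admissibility of $\mathcal A(c)$, surjectivity by choosing a generic point in a given admissible subspace, $W$-equivariance and the fibre description straight from $\lambda$ being a $W$-equivariant bijection, and the root-datum identification via $\Phi_L = \Phi_{\mathcal A(c)} = \dot\Psi_{\mathcal A(c)}$---is precisely the intended unpacking.
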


\nomenclature{$\tau_{y_0}$}{}
Let us investigate how the bijection from Proposition \ref{prop:admissiblesubset} behaves under shift by a hyperspecial point $y_0\in V$ (i.e. a point $y\in V$ such that $\Phi_{y} = \Phi$).
Let $\tau_{y_0}:\mathscr A/X_*\to \mathscr A/X_*$ be the map induced by translating by $y_0$.
Let $A_0\in\mathscr A$ be the vanishing set of $\Psi_{y_0}$.
Then $y_0\in A_0$ and $\tau_y = \tau_{y_0}$ for any $y\in A_0$ (since $\phi(y-y_0) = 0$ for all $y\in A_0$).
Since $\tau_y$ does not depend on the choice of $y\in A_0$ write $\tau_{A_0}$ for $\tau_y$ where $y\in A_0$.
Since $\Phi_{A_0} = \Phi$ we have that $\lambda^{-1}(A_0+X_*) = (G,t_0Z_G^\circ)$ for some $t_0\in Z_G$.
Since $t_0$ is central in $G$, for any $(L,tZ^\circ)\in \mathscr F_T$, we also have $(L,t_0tZ^\circ)\in \mathscr F_T$ (and this procedure is independent of the choice of coset representative of $t_0Z_G^\circ$ since $Z_G^\circ\subseteq Z^\circ$).
\begin{lemma}
    \label{lem:translation}
    Let $(L,tZ^\circ)\in \mathscr F_T$.
    Then 
    $$\lambda(L,t_0tZ^\circ) = \tau_{A_0}(\lambda(L,tZ^\circ)).$$
\end{lemma}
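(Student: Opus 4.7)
The plan is to reduce the statement to the isomorphisms $f_L:Z/Z^\circ\to \mathbb H/\mathbb H^\circ$ constructed in the proof of Proposition \ref{prop:adm}, and to exploit the naturality square of Remark \ref{rmk:cs} for the inclusion $Z_G\subseteq Z$ coming from $L\subseteq G$. The essential content will be that multiplication by $t_0$ in $Z/Z^\circ$ corresponds, under $f_L$, to translation by $\pi(y_0)$ in $\mathbb H/\mathbb H^\circ$, which in turn lifts to translation by $y_0$ on $V$.

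First, recall that $\lambda(L,tZ^\circ)=[A]$ is characterised by $\pi(A)=f_L(tZ^\circ)$, and that $\pi:V\to\mathbb T$ is $V$-equivariant. Writing $f_L(tZ^\circ)=z+\mathbb H^\circ$, we have $\pi(A+y_0)=z+\pi(y_0)+\mathbb H^\circ$, so by the bijective correspondence of Lemma \ref{lem:lift} it suffices to show
\[
f_L(t_0tZ^\circ)\;=\;\pi(y_0)+f_L(tZ^\circ).
\]
Since $f_L$ is a group homomorphism (being the composition of isomorphisms in equation \eqref{eq:canon}), this further reduces to the single identity $f_L(t_0Z^\circ)=\pi(y_0)+\mathbb H^\circ$.

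To establish this, I first dispense with the case $L=G$. By definition of $t_0$, $\lambda(G,t_0Z_G^\circ)=A_0+X_*$, so $\pi(A_0)=f_G(t_0Z_G^\circ)$. Since $y_0$ is hyperspecial, $\alpha(y_0)\in\Z$ for every $\alpha\in\Phi$, which means $y_0$ lies in the vanishing locus $A_0$ of $\Psi_{y_0}$; hence $\pi(y_0)\in\mathbb H_G$ and
\[
f_G(t_0Z_G^\circ)\;=\;\pi(y_0)+\mathbb H_G^\circ\quad\text{in }\mathbb H_G/\mathbb H_G^\circ,
\]
where $\mathbb H_G$ denotes the annihilator of $\Z\Phi$ in $\mathbb T$.

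Finally I propagate this computation along $L\subseteq G$. Since $T\subseteq L$, we have $Z_G\subseteq T\subseteq L$ and hence $Z_G\subseteq Z$ and $Z_G^\circ\subseteq Z^\circ$; in particular $t_0Z^\circ$ is the image of $t_0Z_G^\circ$ under the natural map $Z_G/Z_G^\circ\to Z/Z^\circ$. By Remark \ref{rmk:cs} the square
\[
\begin{tikzcd}
Z_G/Z_G^\circ \arrow[r,"f_G"] \arrow[d] & \mathbb H_G/\mathbb H_G^\circ \arrow[d] \\
Z/Z^\circ \arrow[r,"f_L"] & \mathbb H/\mathbb H^\circ
\end{tikzcd}
\]
commutes; the right vertical arrow is induced by the inclusion $\mathbb H_G\subseteq \mathbb H$ (a smaller root system $\Phi_L\subseteq\Phi$ has a larger annihilator in $\mathbb T$) and therefore sends $\pi(y_0)+\mathbb H_G^\circ$ to $\pi(y_0)+\mathbb H^\circ$. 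Chasing $t_0Z_G^\circ$ around the square yields $f_L(t_0Z^\circ)=\pi(y_0)+\mathbb H^\circ$, which closes the argument. The whole proof is essentially a diagram chase; the only two points requiring any verification are $y_0\in A_0$ (immediate from hyperspeciality) and $Z_G\subseteq Z$ (immediate from $Z_G\subseteq T\subseteq L$), so no substantive obstacle is anticipated.
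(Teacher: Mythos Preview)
Your proof is correct and follows essentially the same approach as the paper: reduce via the group homomorphism $f_L$ to showing $f_L(t_0Z^\circ)=\pi(y_0)+\mathbb H^\circ$, verify this first for $L=G$ using the definition of $t_0$, and then propagate to general $L$ via the naturality square of Remark~\ref{rmk:cs}. Your write-up is in fact more explicit than the paper's (you spell out $y_0\in A_0$ and $Z_G\subseteq Z$), but the structure is identical.
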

\begin{proof}
    Since $f_L$ is a group homomorphism we have that $f_L(t_0tZ^\circ) = f_L(t_0Z^\circ)+f_L(tZ^\circ)$.
    Let $\pi:V\to \mathbb T$ be the projection map.
    By definition of $t_0$ we have that $\pi(y_0) \in f_G(t_0Z_G^\circ)$.
    By Remark \ref{rmk:cs} we have that $\pi(y_0)\in f_L(t_0Z^\circ)$.
    Thus $f_L(t_0tZ^\circ) = \pi(y_0) + f_L(tZ^\circ)$ and so the lift of $f_L(t_0tZ^\circ)$ to $\mathscr A/X_*$ is $\tau_{y_0}(\lambda(L,tZ^\circ))$.
\end{proof}

\nomenclature{$\mf L_{x_0}$}{}
Now fix a hyperspecial point $x_0\in \mathcal A$ and let 
$$\mf L_{x_0}:\{c\subseteq \mathcal A\} \to \mathscr F_T$$
be the composition $\mf L\circ \kappa_{x_0}^{-1}$.
\begin{lemma}
    Let $x_0$, $x_0'$ be two hyperspecial points in $\mathcal A$ and $c\subseteq \mathcal A$.
    Let $\mf L_{x_0}(c) = (L,tZ_{L}^\circ)$ and $\mf L_{x_0'}(c) = (L',t'Z_{L'}^\circ)$.
    Then $L = L'$.
\end{lemma}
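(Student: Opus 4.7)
The plan is to reduce the statement to the assertion that $\Phi_{L} = \Phi_{L'}$, then to exploit the fact that the vector $x_0 - x_0'$ between two hyperspecial points pairs integrally with every root.

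First I would transport the face $c$ back to $V$ under both identifications: set $c_V := \kappa_{x_0}^{-1}(c)$ and $c_V' := \kappa_{x_0'}^{-1}(c)$. Since $\kappa_{x_0}$ and $\kappa_{x_0'}$ are affine isomorphisms sending $0\in V$ to $x_0$ and $x_0'$ respectively, they differ by translation, so $c_V' = c_V + w$ where $w := x_0 - x_0' \in V$. Hence $\mathcal{A}(c_V') = \mathcal{A}(c_V) + w$. By Corollary \ref{cor:mfL}, $L$ is the unique pseudo-Levi containing $T$ with $\Phi_L = \Phi_{\mathcal{A}(c_V)}$, and similarly $L'$ is the unique pseudo-Levi containing $T$ with $\Phi_{L'} = \Phi_{\mathcal{A}(c_V')}$. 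Since any reductive subgroup of $G$ containing $T$ is generated by $T$ together with the root subgroups for the roots in its root system (by the proposition preceding Section \ref{sec:pseudo-levis} characterizing pseudo-Levis), it will suffice to show that $\Phi_{\mathcal{A}(c_V)} = \Phi_{\mathcal{A}(c_V')}$.

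Second, recall that $\Phi_{\mathcal{A}(c_V)}$ consists of those $\alpha \in \Phi$ that are constant on $\mathcal{A}(c_V)$ with integer value (definition (2) of $X_S^*$). A linear function $\alpha$ on $V$ is constant on an affine subspace iff it is constant on any translate, and the constant values on $\mathcal{A}(c_V)$ and $\mathcal{A}(c_V') = \mathcal{A}(c_V) + w$ differ by $\alpha(w)$. So the equality $\Phi_{\mathcal{A}(c_V)} = \Phi_{\mathcal{A}(c_V')}$ reduces to showing $\alpha(w) \in \mathbb{Z}$ for every $\alpha \in \Phi$.

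The only non-formal step — and the one where the hyperspecial hypothesis is used — is to extract this integrality from the hypothesis that $x_0'$ is hyperspecial. Under the identification $\kappa_{x_0}$ (which sends $x_0 \mapsto 0$ and hence identifies the affine roots of $\mathcal{A}$ with $\Psi$), the point $x_0'$ corresponds to $\kappa_{x_0}^{-1}(x_0') = -w \in V$. The hyperspecial condition $\Psi_{x_0'} \supseteq \{\alpha + n : \alpha \in \Phi,\ n \in \mathbb{Z}\text{ with } \alpha(-w) + n = 0\} = \Phi$ (in the sense of \cite[Section 1.10]{tits}) is precisely the statement that $\alpha(-w) \in \mathbb{Z}$ for every $\alpha \in \Phi$, so $\alpha(w) \in \mathbb{Z}$ as required. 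I expect no real obstacle here, as the argument is essentially bookkeeping once one has unwound the definition of a hyperspecial point as a common integer-values locus of all roots in the apartment identified via $\kappa_{x_0}$.
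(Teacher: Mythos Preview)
Your proposal is correct and follows essentially the same approach as the paper's proof: translate $c$ back to $V$ under the two identifications, observe the resulting affine subspaces differ by the vector $w = x_0 - x_0'$, and use hyperspeciality of $x_0'$ (equivalently, of $-w \in V$) to conclude that every $\alpha \in \Phi$ takes an integer value on $w$, forcing $\Phi_{\mathcal A(c_V)} = \Phi_{\mathcal A(c_V')}$ and hence $L = L'$. The only cosmetic difference is that the paper states the integrality $\alpha(v)\in\mathbb Z$ directly rather than routing it through the description of $\Psi_{x_0'}$.
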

\begin{proof}
    Let $v = x_0-x_0'$.
    For any $y\in \mathcal A$ we have
    $$\kappa_{x_0}^{-1}(y)+v = \kappa_{x_0'}^{-1}(y).$$
    Therefore we have that 
    $$\mathcal A(\kappa_{x_0}^{-1}(c)) + v = \mathcal A(\kappa_{x_0'}^{-1}(c)).$$
    But since $x_0,x_0'$ are both hyperspecial we have that $\alpha(v)\in \Z$ for all $\alpha\in \Phi$.
    Therefore 
    $$\Phi_{\mathcal A(\kappa_{x_0}^{-1}(c))} = \Phi_{\mathcal A(\kappa_{x_0'}^{-1}(c))}$$
    and so $L=L'$ as required.
\end{proof}
For $c\subseteq \mathcal A$ write $L_c$ for $\pr_1\circ\mf L_{x_0}$.
By the above lemma $\pr_1\circ\mf L_{x_0}$ is independent of the choice of $x_0$ which is why we omit it from the notation for $L_c$.
\nomenclature{$\Lambda_c^{\barF_q}$}{}
Since $\bfL_c(\barF_q)$ and $L_c$ have the same root data, by Lemma \ref{lem:pom}, there is an isomorphism of partial orders 
$$\Lambda_c^{\barF_q}:\mathcal N_o^{\bfL_c}(\barF_q)\to \mathcal N_o^{L_c}(\C).$$
Moreover for $c\subseteq \mathcal A, \OO\in \mathcal N_o^{\bfL_c}(\barF_q)$ 
\begin{equation}
    \label{eq:sat}
    \Lambda_{x_0}^{\barF_q}\circ j_{c,o}(\OO) = G. \Lambda_c^{\barF_q}(\OO)
\end{equation}
since saturation of nilpotent orbits can be computed purely in terms of the weighted Dynkin diagram.

\paragraph{A Parameterisation of Unramified Nilpotent Orbits}
\begin{theorem}
    \label{thm:debackbij}
    \nomenclature{$\Gamma_{x_0}$}{}
    \nomenclature{$\tilde\Gamma_{x_0}$}{}
    Fix a hyperspecial point $x_0\in \mathcal A$.
    The map 
    $$\Gamma_{x_0}:I_{o,d,\mathcal A}^K\to I_{o,d,T}^\C, \quad (c,\OO)\mapsto (\mf L_{x_0}(c),\Lambda_c^{\barF_q}(\OO))$$
    induces a bijection
    \begin{equation}
        \tilde\Gamma_{x_0}:I_{o,d,\mathcal A}^K/\sim_{\mathcal A} \to I_{o,d,T}^\C/W.
    \end{equation}
\end{theorem}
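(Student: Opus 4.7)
The plan is to verify that $\Gamma_{x_0}$ is well-defined, that it respects the two equivalence relations, and then to check bijectivity of the induced map directly using Proposition \ref{prop:admissiblesubset}.

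\textbf{Well-definedness and descent.} For $(c,\OO)\in I_{o,d,\mathcal A}^K$, Corollary \ref{cor:mfL} gives $\mf L_{x_0}(c)\in \mathscr F_T$ and identifies the root datum of $L_c$ with that of $\bfL_c(\barF_q)$; combined with Lemma \ref{lem:pom}, this forces $\Lambda_c^{\barF_q}(\OO)$ to be distinguished in $\mf l_c$, so $\Gamma_{x_0}$ lands in $I_{o,d,T}^\C$. For descent, suppose $(c_1,\OO_1)\sim_{\mathcal A}(c_2,\OO_2)$ witnessed by $w = v\cdot\dot w\in\tilde W$ with $v\in X_*$ and $\dot w\in W$. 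Pulling back through $\kappa_{x_0}$, the equality $\mathcal A(c_1)=\mathcal A(wc_2)$ becomes $\mathcal A(\kappa_{x_0}^{-1}(c_1)) = v+\dot w.\mathcal A(\kappa_{x_0}^{-1}(c_2))$ in $V$, i.e. the single identity $[\mathcal A(\kappa_{x_0}^{-1}(c_1))] = \dot w.[\mathcal A(\kappa_{x_0}^{-1}(c_2))]$ in $\mathscr A/X_*$; the $W$-equivariance of $\lambda$ (Proposition \ref{prop:admissiblesubset}) then yields $\mf L_{x_0}(c_1) = \dot w.\mf L_{x_0}(c_2)$. The naturality of the Pommerening isomorphism $\Lambda_c^{\barF_q}$ under the maps between reductive quotients induced by $\mathscr N$---which factor through the $W$-action on the complex side because $X_*$-translations induce canonical root-datum preserving isomorphisms between $\bfL_c(\barF_q)$ and $\bfL_{c+v}(\barF_q)$---then gives $\Lambda_{c_1}^{\barF_q}(\OO_1) = \dot w.\Lambda_{c_2}^{\barF_q}(\OO_2)$.

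\textbf{Surjectivity.} Given $(L,tZ^\circ,\OO)\in I_{o,d,T}^\C$, Proposition \ref{prop:admissiblesubset} produces a class $[A]\in\mathscr A/X_*$ with $\lambda^{-1}([A])=(L,tZ^\circ)$. Pick any face $c'\subseteq V$ with $\mathcal A(c')=A$ (such a face exists because the admissible subspace $A$ is the affine span of its top-dimensional cells in the chamber complex on $V$) and set $c=\kappa_{x_0}(c')$. Then $\mf L_{x_0}(c) = (L,tZ^\circ)$; the shared root datum identifies $L_c\cong L$, so $\OO':=(\Lambda_c^{\barF_q})^{-1}(\OO)\in \mathcal N_o^{\bfL_c}(\barF_q)$ is a well-defined distinguished orbit with $\Gamma_{x_0}(c,\OO') = (L,tZ^\circ,\OO)$.

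\textbf{Injectivity.} Suppose $\Gamma_{x_0}(c_1,\OO_1) = \dot w.\Gamma_{x_0}(c_2,\OO_2)$ for some $\dot w\in W$. Applying $\lambda$ produces $v\in X_*$ with $\mathcal A(\kappa_{x_0}^{-1}(c_1)) = v + \dot w.\mathcal A(\kappa_{x_0}^{-1}(c_2))$; setting $\tilde w = v\cdot \dot w\in\tilde W$ gives $\mathcal A(\tilde w c_2) = \mathcal A(c_1)$. The naturality of $\Lambda_c^{\barF_q}$ used above, now run in reverse, transforms $\dot w.\Lambda_{c_2}^{\barF_q}(\OO_2) = \Lambda_{c_1}^{\barF_q}(\OO_1)$ into $\OO_1 = i_{c_1,\tilde w c_2}(\tilde w.\OO_2)$, which is exactly the statement that $(c_1,\OO_1)\sim_{\mathcal A}(c_2,\OO_2)$.

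\textbf{Main obstacle.} The principal hurdle is establishing the naturality of $\Lambda_c^{\barF_q}$ under the $\tilde W$-action---in particular, that translations $v\in X_*$ act trivially on the complex side while inducing the canonical isomorphism $j_{c,c+v}$ on the finite-field side, and that the $W$-part intertwines conjugation on both sides. This is the glue that binds the combinatorial picture of Proposition \ref{prop:admissiblesubset} to the orbit-level data; once it is in place, everything else is routine.
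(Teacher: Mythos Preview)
Your argument is correct and follows essentially the same route as the paper's own proof: both establish surjectivity directly, use the $W$-equivariance of $\lambda$ (via Proposition~\ref{prop:admissiblesubset} / Corollary~\ref{cor:mfL}) to handle the $\mathscr F_T$-component under $\sim_{\mathcal A}$, and then invoke the compatibility of the Pommerening isomorphism $\Lambda_c^{\barF_q}$ with the $\tilde W$-action for the orbit component. The paper records this last compatibility tersely as the chain $\Lambda_{c_1}^{\barF_q}(j_{c_1,wc_2}(w.\OO_2)) = \Lambda_{wc_2}^{\barF_q}(w.\OO_2) = \dot w\,\Lambda_{c_2}^{\barF_q}(\OO_2)$, whereas you spell out more explicitly why the $X_*$-translation part acts trivially on the complex side; this is a helpful elaboration but not a different strategy.
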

\begin{proof}
    The map $I_{o,d,\mathcal A}^K\to I_{o,d,T}^\C$ is clearly a surjection.
    Let 
    $$(c_1,\OO_1),(c_2,\OO_2)\in I_{o,d,\mathcal A}^K$$
    and suppose that $(c_1,\OO_1) \sim_{\mathcal A} (c_2,\OO_2)$.
    Then there exists a $w\in \tilde W$ such that 
    $$\mathcal A(c_1) = \mathcal A(wc_2) \text{ and } \OO_1 = j_{c_1,wc_2}(w.\OO_2).$$
    By corollary \ref{cor:mfL}, we have that $\mf L_{x_0}(c_1) = \dot w.\mf L_{x_0}(c_2)$.
    Moreover
    $$\Lambda_{c_1}^{\barF_q}(\OO_1) = \Lambda_{c_1}^{\barF_q}(j_{c_1,wc_2}(w.\OO_2)) = \Lambda_{wc_2}^{\barF_q}(w.\OO_2) = \dot w\Lambda_{c_2}^{\barF_q}(\OO_2).$$
    Thus 
    $$(\mf L_{x_0}(c_1),\Lambda_c^{\barF_q}(\OO_1)) = \dot w.(\mf L_{x_0}(c_2),\Lambda_c^{\barF_q}(\OO_2))$$
    and so $I_{o,d,\mathcal A}^K \to I_{o,d,T}^\C/W$ descends to a map 
    $$I_{o,d,\mathcal A}^K/\sim_{\mathcal A} \to I_{o,d,T}^\C/W.$$
    To see that this map is injective suppose that $(c_1,\OO_1),(c_2,\OO_2)\in I_{o,d,\mathcal A}^K$ are such that there is a $w_0\in W$ with 
    $$(\mf L_{x_0}(c_1),\Lambda_c^{\barF_q}(\OO_1)) = w_0.(\mf L_{x_0}(c_2),\Lambda_c^{\barF_q}(\OO_2)).$$
    Since $\mf L_{x_0}(c_1) = w_0.\mf L_{x_0}(c_2) = \mf L_{x_0}(w_0c_2)$ we have that 
    $$\mathcal A(c_1) + X_* = \mathcal A(w_0c_2)+X_*$$
    and so there is a $w\in \tilde W$ such that $\dot w = w_0$, and $A(c_1) = A(wc_2)$.
    Moreover
    $$\Lambda_{c_1}^{\barF_q}(\OO_1) = \dot w\Lambda_{c_2}^{\barF_q}(\OO_2) = \Lambda_{wc_2}^{\barF_q}(w.\OO_2) = \Lambda_{c_1}^{\barF_q}(j_{c_1,wc_2}(w.\OO_2))$$
    and so $\OO_1 = j_{c_1,wc_2}(w.\OO_2)$.
    Thus $(c_1,\OO_1)\sim_{\mathcal A}(c_2,\OO_2)$ as required.
\end{proof}

\nomenclature{$\pr_1$}{}
\nomenclature{$\theta_{x_0,\bfT_K}$}{}
Let $\pr_1:\mathcal N_{o,c}\to \mathcal N_o$ be the projection onto the first factor.
Let $\theta_{x_0,\bfT_K}:\mathcal N_o(K)\to \mathcal N_{o,c}$ be the composition
\begin{equation}
    \begin{tikzcd}
        \mathcal N_o(K) \arrow[r,"\sim"] & I_{o,d,\mathcal A}^K/\sim_{\mathcal A} \arrow[r,"\tilde\Gamma_{x_0}"] & I_{o,d,T}^\C/W \arrow[r,"\sim"] & \mathcal N_{o,c}.
    \end{tikzcd}
\end{equation}
where the first isomorphism is the inverse of $\mathcal L:I_{o,d,\mathcal A}^K/\sim_{\mathscr N}\to \mathcal N_o(K)$ (see Section \ref{par:abc}) and the third isomorphism is the composition of the two isomorphisms in Equation \ref{eq:Wequivtoccl}.
\begin{theorem}
    \label{thm:unramifiedparam}
    The map $\theta_{x_0,\bfT_K}:\mathcal N_o(K) \to \mathcal N_{o,c}$ is a bijection and for all $\OO\in \mathcal N_o(K)$ 
    $$\Lambda_{\bfT_K}^{\bar k}(\mathcal N_o(\bar k/K)(\OO)) = \pr_1(\theta_{x_0,\bfT_K}(\OO)).$$
\end{theorem}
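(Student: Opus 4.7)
The plan is to address the two claims separately. For the bijectivity, $\theta_{x_0,\bfT_K}$ is built as a composition of three maps, each already shown to be a bijection, so the statement is essentially bookkeeping. The first factor, the inverse of $\mathcal L : I_{o,d,\mathcal A}^K/\sim_{\mathcal A}\isoto \mathcal N_o(K)$, is bijective by the $K$-analogue of Theorem \ref{thm:nilorbit} together with the identification $I_{o,d,\mathcal A}^K/\sim_{\mathscr N} = I_{o,d,\mathcal A}^K/\sim_{\mathcal A}$ and the reduction of $I_d^K/\sim_K$ to $I_{o,d,\mathcal A}^K/\sim_{\mathscr N}$ discussed in Section \ref{par:abc}. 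The middle factor $\tilde\Gamma_{x_0}$ is bijective by Theorem \ref{thm:debackbij}. The third factor $I_{o,d,T}^\C/W\isoto\mathcal N_{o,c}$ is the composite in Equation \ref{eq:Wequivtoccl}, obtained from Proposition \ref{prop:unip} and Theorem \ref{thm:somninch}.

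For the identity, I would simply unfold the definitions against a chosen representative. Pick $(c,\OO')\in I_{o,d,\mathcal A}^K$ with $\mathcal L_c(\OO')=\OO$. By construction $\tilde\Gamma_{x_0}$ sends the class of $(c,\OO')$ to the class of $(\mf L_{x_0}(c),\Lambda_c^{\barF_q}(\OO'))$ in $I_{o,d,T}^\C/W$. The McNinch--Sommers map of Theorem \ref{thm:somninch} takes a triple $(L,tZ^\circ,x)$ to the pair $(x,tC_G^\circ(x))\in\mathcal N_{o,c}$, whose first coordinate is (the $G$-orbit of) $x$ regarded as a nilpotent element of $\mf g$, i.e.\ the $G$-saturation of the $L$-orbit of $x$. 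Hence
\begin{equation*}
    \pr_1(\theta_{x_0,\bfT_K}(\OO)) \;=\; G.\Lambda_c^{\barF_q}(\OO').
\end{equation*}
Combining this with Equation \ref{eq:sat} and Theorem \ref{thm:lift} yields
\begin{equation*}
    \pr_1(\theta_{x_0,\bfT_K}(\OO)) \;=\; \Lambda_{x_0}^{\barF_q}\circ j_{c,o}(\OO') \;=\; \Lambda_{\bfT_K}^{\bar k}\circ \mathcal N_o(\bar k/K)\circ\mathcal L_c(\OO') \;=\; \Lambda_{\bfT_K}^{\bar k}(\mathcal N_o(\bar k/K)(\OO)),
\end{equation*}
which is the asserted equality. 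Well-definedness on passing to $\mathcal N_o(K)$ is automatic because both $\Lambda_{\bfT_K}^{\bar k}\circ\mathcal N_o(\bar k/K)$ and $\pr_1\circ\theta_{x_0,\bfT_K}$ factor through $\mathcal N_o(K)$ by construction.

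The content of this theorem is wholly contained in the preceding machinery, so there is no serious obstacle. The only point requiring any care is the identification of the first coordinate of the McNinch--Sommers bijection with the $G$-saturation, which is immediate from the formula $\mathrm{MS}(L,tZ^\circ,x)=(x,tC_G^\circ(x))$ once one recalls that in $\mathcal N_{o,c}$ the first coordinate is by definition a $G$-orbit. Once that is observed, the remaining steps are pure substitution using Equation \ref{eq:sat} (which translates $j_{c,o}$ into $G$-saturation of $\Lambda_c^{\barF_q}$) and Theorem \ref{thm:lift} (which translates $\Lambda_{x_0}^{\barF_q}\circ j_{c,o}$ into $\Lambda_{\bfT_K}^{\bar k}\circ\mathcal N_o(\bar k/K)\circ\mathcal L_c$).
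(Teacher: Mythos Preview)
Your proposal is correct and follows essentially the same approach as the paper: the paper also notes that bijectivity is immediate from $\theta_{x_0,\bfT_K}$ being a composite of bijections, and for the second claim it picks $(c,\OO')\in I_{o,d,\mathcal A}^K(\OO)$ and chains together Theorem \ref{thm:lift}, Equation \ref{eq:sat}, and the observation that $\pr_1(\theta_{x_0,\bfT_K}(\OO)) = G.\Lambda_c^{\barF_q}(\OO')$, exactly as you do.
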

\begin{proof}
    Since all the maps involved are bijections, $\theta_{x_0,\bfT_K}$ is also a bijection.
    Let $\OO\in \mathcal N_o(K)$ and $(c,\OO')\in I_{o,d,\mathcal A}^K(\OO)$.
    By theorem \ref{thm:lift}
    $$\Lambda_{\bfT_K}^{\bar k}\circ\mathcal N_o(\bar k/K)(\OO) = \Lambda_{x_0}^{\barF_q}\circ j_{c,o}(\OO').$$
    By equation \ref{eq:sat}
    $$\Lambda_{x_0}^{\barF_q}\circ j_{c,o}(\OO') = G.\Lambda_c^{\barF_q}(\OO').$$
    But 
    $$ G.\Lambda_c^{\barF_q}(\OO') = \pr_1(\theta_{x_0,\bfT_K}(\OO))$$
    as required.
\end{proof}
\nomenclature{$\mathscr H$}{}
\nomenclature{$c(x)$}{}
Let $\mathscr H$ denote the set of hyperspecial faces of $\mathcal B(\bfG,K)$ (these are the faces of $\mathcal B(\bfG,K)$ which contain hyperspecial points).
For a point $x\in \mathcal B(\bfG,K)$ write $c(x)$ for the face containing $x$.
\begin{lemma}
    Suppose $x_0,x_0'$ are hyperspecial points of $\mathcal A$ such that $c(x_0) = c(x_0')$.
    Then $\theta_{x_0,\bfT_K} = \theta_{x_0',\bfT_K}$.
\end{lemma}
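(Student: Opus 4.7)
The plan is to reduce the statement to the equality $\mf L_{x_0}(c)=\mf L_{x_0'}(c)$ for every face $c$ of $\mathcal A$, since the only point at which $x_0$ enters the definition of $\theta_{x_0,\bfT_K}$ (via $\tilde\Gamma_{x_0}$) is through $\mf L_{x_0}$. Once that is established, the two maps $\tilde\Gamma_{x_0},\tilde\Gamma_{x_0'}:(c,\OO)\mapsto(\mf L_{x_0}(c),\Lambda_c^{\barF_q}(\OO))$ coincide on $I_{o,d,\mathcal A}^K$, and hence so do the induced bijections $\theta_{x_0,\bfT_K}$ and $\theta_{x_0',\bfT_K}$.

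Set $v:=x_0'-x_0$ and write $V^G:=\{u\in V:\alpha(u)=0\text{ for all }\alpha\in\Phi\}$. My first step would be to show that $v\in V^G$. Since $x_0$ is hyperspecial, $\alpha(x_0)\in\mathbb Z$ for every $\alpha\in\Phi$, so $a_\alpha:=\alpha-\alpha(x_0)\in\Psi$ and $a_\alpha(x_0)=0$. The affine span of the hyperspecial face $c(x_0)$ is the common zero locus of the $a_\alpha$ for $\alpha\in\Phi$, which equals $x_0+V^G$. From $c(x_0')=c(x_0)$ we obtain $x_0'\in x_0+V^G$, i.e.\ $v\in V^G$.

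The second step is the observation that every admissible affine subspace $A\subseteq V$ is invariant under translation by $v$: for $a=\alpha+n\in\Psi_A$ and $y\in A$ one has $a(y+v)=a(y)+\alpha(v)=0$, since $\alpha\in\dot\Psi_A\subseteq\Phi$ and $v\in V^G$. Hence $A+v\subseteq A$, and $A+v=A$ by applying $-v$. Because $\kappa_{x_0'}^{-1}(c)=\kappa_{x_0}^{-1}(c)-v$, passing to affine hulls gives $\mathcal A(\kappa_{x_0'}^{-1}(c))=\mathcal A(\kappa_{x_0}^{-1}(c))$, and Corollary \ref{cor:mfL} then yields $\mf L_{x_0'}(c)=\mf L_{x_0}(c)$, as required.

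The main obstacle is the first step: extracting from the building-theoretic hypothesis $c(x_0)=c(x_0')$ the clean affine statement $v\in V^G$, which rests on knowing that the affine span of a hyperspecial face through $x_0$ is precisely $x_0+V^G$. After that the argument is a routine unwinding of definitions, exploiting that $\mf L=\lambda^{-1}(\mathcal A(\cdot)+X_*)$ is insensitive to translations by $V^G$. Alternatively one can obtain $\mf L_{x_0}(c)=\mf L_{x_0'}(c)$ more indirectly by applying Lemma \ref{lem:translation} with $y_0=v$ and observing that when $v\in V^G$ the resulting class $t_0Z_G^\circ$ is trivial.
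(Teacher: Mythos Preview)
Your proof is correct and takes essentially the same approach as the paper. The paper's argument is the one-line observation that when $c(x_0)=c(x_0')$ the chamber-complex isomorphisms induced by $\kappa_{x_0}$ and $\kappa_{x_0'}$ coincide (so $\mf L_{x_0}=\mf L\circ\kappa_{x_0}^{-1}=\mf L\circ\kappa_{x_0'}^{-1}=\mf L_{x_0'}$); your argument is exactly the unpacking of this, via $v=x_0'-x_0\in V^G$ and the invariance of admissible affine subspaces under translation by $V^G$.
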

\begin{proof}
    It suffices to show that $\mf L_{x_0} = \mf L_{x_0'}$.
    We have that $\mf L_{x_0} = \mf L\circ \kappa_{x_0}^{-1}$ and $\mf L_{x_0} = \mf L\circ \kappa_{x_0'}^{-1}$.
    But if $c(x_0) = c(x_0')$ then the isomorphism of chamber complexes induced by $\kappa_{x_0}$ and $\kappa_{x_0'}$ are the same.
\end{proof}
In the proof of the above lemma we showed that $\mf L_{x_0}$ only depends on the face of $x_0$.
Thus for $h\in \mathscr H\cap \mathcal A$ we will write $\mf L_h$ for $\mf L_{x_0}$ where $x_0$ is any point in $h$.
Similarly we write $\tilde\Gamma_h$ for $\tilde\Gamma_{x_0}$ where $x_0$ is any point in $h$.
Note that when $\bfG_K$ is semisimple and $x_0$ is hyperspecial, $c(x_0) = x_0$ and so there is no content to the lemma.
\begin{lemma}
    Let $c$ be a face of $\mathcal B(\bfG, K)$, $g\in \bfG(K)$ and suppose $c,gc \subseteq\mathcal A$.
    Then there exists a $n\in \mathscr N$ (the normaliser of $\bfT_K$), such that $gc = nc$.
\end{lemma}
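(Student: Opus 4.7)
The plan is to use the standard Bruhat--Tits axiom that two apartments containing a common facet are conjugate by an element fixing that facet pointwise, and then combine this with the fact that $\mathscr N$ is the stabiliser of the apartment $\mathcal A$ in $\bfG(K)$.

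More precisely, since $c \subseteq \mathcal A$ and $gc \subseteq \mathcal A$, applying $g^{-1}$ to the second inclusion gives $c \subseteq g^{-1}\mathcal A$. Thus both apartments $\mathcal A$ and $g^{-1}\mathcal A$ of $\mathcal B(\bfG,K)$ contain the face $c$. By the standard axiom for the (enlarged) Bruhat--Tits building, there exists $h \in \bfG(K)$ which fixes $c$ pointwise and satisfies $h\mathcal A = g^{-1}\mathcal A$. In particular $hc = c$, and $gh$ stabilises $\mathcal A$.

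I would then set $n := gh$. Since $\bfT_K$ is a maximal $K$-split torus of $\bfG_K$ with associated apartment $\mathcal A = \mathcal A(\bfT_K,K)$, the stabiliser of $\mathcal A$ in $\bfG(K)$ coincides with $\mathscr N$, the normaliser of $\bfT_K(K)$ in $\bfG(K)$ (this is the defining property of $\mathscr N$ used throughout the section). Hence $n \in \mathscr N$. Moreover
\begin{equation}
    nc = ghc = gc,
\end{equation}
as required.

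The only real obstacle is invoking the correct BT-axiom. In the enlarged building the axiom is that for any two facets $c_1, c_2$ lying in apartments $\mathcal A_1, \mathcal A_2$ respectively, there is an element of $\bfG(K)$ carrying $(\mathcal A_1,c_1)$ to $(\mathcal A_2,c_2)$; combined with the observation that the pointwise stabiliser of $c$ in $\bfG(K)$ acts transitively on the apartments containing $c$, this gives the $h$ above. Since $\bfG_K$ is split over $K$ and the apartment structure on $\mathcal A$ comes from $\bfT_K$, no subtleties from non-split descent arise, and the identification of $\mathrm{Stab}_{\bfG(K)}(\mathcal A)$ with $\mathscr N$ is unambiguous.
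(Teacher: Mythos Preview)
Your argument is correct and rests on the same underlying idea as the paper's proof: find an element that sends one apartment to the other while fixing the facet, then compose with $g$ to land in the stabiliser $\mathscr N$ of $\mathcal A$. The difference is only in how much is black-boxed. You invoke directly the fact that the pointwise stabiliser of a facet $c$ acts transitively on the apartments containing $c$, whereas the paper unpacks this by first passing to chambers: it enlarges $c$ to a chamber $c_1\subseteq\mathcal A$, moves $gc_1$ to a chamber $c_2\subseteq\mathcal A$ sharing the face $gc$ via an element of $\bfP_{gc}(\mf O)$, and then uses transitivity of the type-preserving subgroup on (apartment, chamber) pairs to align the apartments. Your route is shorter; the paper's is more self-contained in that it only appeals to strong transitivity on chamber--apartment pairs, which is the most commonly stated form of the axiom.
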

\begin{proof}
    Let $c_1$ be a chamber of $\mathcal A$ with $c$ as a face.
    Then $gc_1$ has $gc$ as a face.
    Let $c_2$ be a chamber of $\mathcal A$ containing $gc$ as a face.
    Then $gc_1$ and $c_2$ both share $gc$ as a face and so there exists a $g_1\in \bfP_{gc}(\mf O)$ such that $g_1gc_1 = c_2$ (and necessarily $g_1gc = gc$).
    Let $g_2 = g_1g$.
    Then $g_2\mathcal A$ and $\mathcal A$ contain $c_2$.
    Since the subgroup of $\bfG(K)$ preserving types (in the sense of \cite[Section 5.5]{garrett}) acts transitively on pairs of apartments and chambers of $\mathcal B(\bfG,K)$ there exists a $g_3$ such that $g_3g_2\mathcal A = \mathcal A$ and $g_3c_2 = c_2$ where $g_3$ fixes $c_2$ point-wise. 
    In particular $g_3gc = gc$.
    Since $g_3g_2 \mathcal A = \mathcal A$ we have that $g_3g_2 \in \mathscr N$.
    Let $n = g_3g_2$.
    Then $nc = g_3g_2c = g_3 g_1 g c = g c$ as required.
\end{proof}
\begin{lemma}
    \label{lem:hyperspecialorbit}
    Suppose $x_0,x_0'$ are hyperspecial points of $\mathcal A$ and there exists a $g\in \bfG(K)$ such that $c(x_0') = gc(x_0)$.
    Then $\theta_{x_0,\bfT_K} = \theta_{x_0',\bfT_K}$.
\end{lemma}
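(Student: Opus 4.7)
The plan is to reduce to the case $x_0' = nx_0$ for some $n \in \mathscr N$ using the two preceding lemmas, and then verify directly that $\theta_{x_0,\bfT_K}$ is insensitive to replacing $x_0$ by $nx_0$.

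First, I would apply the preceding lemma to $c(x_0)$ and $gc(x_0) = c(x_0')$, both of which lie in $\mathcal A$: this produces an $n \in \mathscr N$ with $c(x_0') = nc(x_0) = c(nx_0)$. Since elements of $\mathscr N$ act on $\mathcal B(\bfG,K)$ by type-preserving automorphisms, $nx_0$ is again a hyperspecial point of $\mathcal A$. The lemma two above then gives $\theta_{nx_0,\bfT_K} = \theta_{x_0',\bfT_K}$, so the entire problem reduces to showing $\theta_{x_0,\bfT_K} = \theta_{nx_0,\bfT_K}$.

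Next, I would unpack what actually changes when $x_0$ is replaced by $nx_0$. In the three-step composition defining $\theta_{x_0,\bfT_K}$ only $\tilde\Gamma_{x_0}$ depends on $x_0$, and that map is induced by $(c,\OO) \mapsto (\mf L_{x_0}(c),\Lambda_c^{\barF_q}(\OO))$. The second coordinate is independent of $x_0$ because $L_c = \pr_1\circ\mf L_{x_0}$ is, by the lemma two above. So everything reduces to verifying the equality $\mf L_{x_0}(c) = \mf L_{nx_0}(c)$ for every face $c \subseteq \mathcal A$.

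Finally, I would do the direct computation. Using the splitting $\tilde W = W \ltimes X_*$ relative to the basepoint $x_0$ (so $W$ stabilises $x_0$), write $\pi(n) = t_v w_0$ with $v \in X_*$ and $w_0 \in W$; then $nx_0 = t_v w_0\cdot x_0 = x_0 + v$. Consequently $\kappa_{nx_0}^{-1}$ and $\kappa_{x_0}^{-1}$ differ by translation by $v \in X_*$, so $\kappa_{nx_0}^{-1}(c)$ and $\kappa_{x_0}^{-1}(c)$ have the same image in $\mathscr A/X_*$, and therefore $\mf L_{nx_0}(c) = \lambda^{-1}(\mathcal A(\kappa_{nx_0}^{-1}(c)) + X_*) = \lambda^{-1}(\mathcal A(\kappa_{x_0}^{-1}(c)) + X_*) = \mf L_{x_0}(c)$. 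The only real obstacle is the bookkeeping around $x_0$-dependence: one has to remember that the very splitting $\tilde W = W \ltimes X_*$ used to justify $nx_0 - x_0 \in X_*$ is itself made relative to the basepoint $x_0$, so the computation is inherently circular-looking until one fixes a single basepoint and works inside it throughout.
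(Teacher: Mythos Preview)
Your proposal is correct and follows essentially the same route as the paper: reduce to $g\in\mathscr N$ via the preceding lemma, use the earlier lemma to pass from $x_0'$ to $nx_0$ (equivalently, work at the level of the hyperspecial face), and then observe that $nx_0 = x_0 + v$ for some $v\in X_*$ so that $\mf L_{nx_0} = \mf L_{x_0}$. The paper phrases the last step as ``the orbit of $\mathscr N$ on $x_0$ is $\kappa_{x_0}(\tilde W.0)=\kappa_{x_0}(X_*)$'', which is exactly your computation $\pi(n)=t_v w_0$, $w_0\cdot x_0 = x_0$.
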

\begin{proof}
    By the previous lemma we may assume that $g\in \mathscr N$.
    Let $h = c(x_0),h'=c(x_0')$.
    We need to show that $\tilde\Gamma_{h} = \tilde\Gamma_{h'}$.
    Let $(c,\OO)\in I^K_{o,d,\mathcal A}$.
    It is sufficient to show that $\mf L_h(c) = \mf L_{h'}(c)$.
    But $\mf L_{h'} = \mf L_{gx_0}$ since $c(gx_0) = gc(x_0) = h'$.
    Finally, since $x_0$ is hyperspecial, $gx_0 = x_0+v$ for some $v\in X_*$ (to see this, note that the orbit of $\mathscr N$ on $x_0$ is $\kappa_{x_0}(\tilde W.0) = \kappa_{x_0}(X_*)$) and by Corollary \ref{cor:mfL} we have $\mf L_{x_0+v} = \mf L_{x_0}$.
    Thus $\mf L_{h} = \mf L_{h'}$ as required.
\end{proof}

\nomenclature{$\mathscr O$}{}
\nomenclature{$\theta_{\mathscr O,\bfT_K}$}{}
\nomenclature{$\beta_{\bfT_K,\bfT_K'}$}{}
Let $\mathscr O \in \bfG(K)\backslash \mathscr H$ be a $\bfG(K)$ orbit on $\mathscr H$.
By the previous lemma, the map $\theta_{x_0,\bfT_K}$ is independent of the choice of $x_0 \in h$ for $h\in \mathscr O\cap \mathcal A$.
Thus we write $\theta_{\mathscr O,\bfT_K}$ for $\theta_{x_0,\bfT_K}$ where $x_0\in h$ for some $h\in \mathscr O\cap \mathcal A$.

Now suppose $\bfT_K,\bfT_K'$ are two maximal $K$-split tori of $\bfG_K$.
Let $g\in \bfG(K)$ be such that $\bfT_K' = g\bfT_K g^{-1}$.
This induces an isomorphism or root data 
\begin{equation}
    \label{eq:rd}
    \mathcal R(\bfG_K,\bfT_K)\xrightarrow{\sim}\mathcal R(\bfG_K,\bfT_K'),
\end{equation}
an isomorphism of complex groups
\begin{equation}
    \label{eq:cg}
    (G,T) \xrightarrow{\sim} (G',T'),
\end{equation}
and an isomorphism 
\begin{equation}
    \label{eq:noc}
    \beta_{\bfT_K,\bfT_K'}:\mathcal N_{o,c}\xrightarrow{\sim }\mathcal N_{o,c}'.
\end{equation}
The isomorphisms in Equations \ref{eq:rd}, \ref{eq:cg} differ by conjugation by an element of $W$ for different choices of $g$ with $\bfT_K' = g\bfT_K g^{-1}$ and so the isomorphism in Equation \ref{eq:noc} is independent of choice of $g$.
\begin{theorem}
    \label{thm:naturality}
    Fix an orbit $\mathscr O\in \bfG(K)\backslash \mathscr H$.
    Let $\bfT_K,\bfT_K'$ be two maximal $K$-split tori of $\bfG_K$.
    Then the diagram
    \begin{equation}
        \begin{tikzcd}[row sep = tiny]
            & \mathcal N_{o,c} \arrow[dd,"\beta_{\bfT_K,\bfT_K'}"] \\
            \mathcal N_o(K) \arrow[ru,"\theta_{\mathscr O,\bfT_K}"] \arrow[rd,swap,"\theta_{\mathscr O,\bfT_K'}"] & \\
            & \mathcal N_{o,c}'
        \end{tikzcd}
    \end{equation}
    commutes.
\end{theorem}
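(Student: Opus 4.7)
The plan is to pick a single element $g \in \bfG(K)$ conjugating $\bfT_K$ to $\bfT_K'$, and then to show that $g$ intertwines each layer of the construction of $\theta_{\mathscr O,\bfT_K}$ --- the building-theoretic data, the pseudo-Levi assignment $\mf L_{x_0}$, the Pommerening isomorphism $\Lambda_c^{\barF_q}$, and the McNinch--Sommers bijection $\mathrm{MS}_{o,T}$ --- with the corresponding constructions on the $\bfT_K'$-side. Since $\beta_{\bfT_K,\bfT_K'}$ is by construction the map on $\mathcal N_{o,c}$ induced by any such $g$ (Equation \ref{eq:noc}), and is independent of the choice of $g$, this will give the commuting diagram.

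First I fix $g \in \bfG(K)$ with $\bfT_K' = g\bfT_K g^{-1}$. Conjugation by $g$ carries $\mathcal A = \mathcal A(\bfT_K,K)$ isomorphically onto $\mathcal A' = \mathcal A(\bfT_K',K)$ in a way that intertwines $\tilde W$ with the analogous extended affine Weyl group attached to $\bfT_K'$, the vector space $V$ with its primed counterpart, the root data, and the identifications $(G,T) \cong (G',T')$ from Equation \ref{eq:cg}. Using the transitivity of $\bfG(K)$ on apartments containing a given chamber, I may arrange $\mathscr O$ to meet $\mathcal A$ in some hyperspecial face $h$; then $gh$ is a hyperspecial face of $\mathcal A'$ still lying in $\mathscr O$. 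Fix $x_0 \in h$ and set $x_0' := gx_0 \in gh$. By Lemma \ref{lem:hyperspecialorbit}, $\theta_{\mathscr O,\bfT_K}$ and $\theta_{\mathscr O,\bfT_K'}$ may be computed using $x_0$ and $x_0'$ respectively.

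Take $\OO \in \mathcal N_o(K)$ and pick a representative $(c,\OO_0) \in I^K_{o,d,\mathcal A}$ with $\mathcal L_c(\OO_0) = \OO$; then $(gc,g.\OO_0) \in I^K_{o,d,\mathcal A'}$ is a representative for the same $\OO$ in the primed setup. Two compatibilities now have to be checked: that the conjugation isomorphism $G \cong G'$ carries $\mf L_{x_0}(c)$ to the analogous pseudo-Levi datum at $gc$ on the primed side, and that it carries $\Lambda_c^{\barF_q}(\OO_0)$ to $\Lambda_{gc}^{\barF_q}(g.\OO_0)$. The first reduces, via Corollary \ref{cor:mfL}, to the obvious fact that $g$ sends the admissible class attached to $c$ in $V$ to the admissible class attached to $gc$ in the primed vector space, and this is compatible with the bijection of Proposition \ref{prop:admissiblesubset}. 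The second is the naturality of the Pommerening isomorphism in the torus, flagged in the remark following Lemma \ref{lem:pom}.

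The subtlety I expect to be the main obstacle is independence from the choice of $g$: two valid choices differ by an element $n$ of the normaliser of $\bfT_K$, acting as some $w \in W$ on all the intermediate identifications (apartments, Weyl groups, pseudo-Levi subgroups, and nilpotent orbits of $\mf l$). Since both the McNinch--Sommers parameterisation is defined through the $W$-quotient of $I_{o,d,T}^\C$ (Equation \ref{eq:Wequivtoccl}) and $\beta_{\bfT_K,\bfT_K'}$ is likewise independent of the choice of $g$ (the remark after Equation \ref{eq:noc}), this ambiguity washes out at the level of $\mathcal N_{o,c}$, and the elementwise chase above yields the commutativity of the diagram.
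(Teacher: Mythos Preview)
Your proposal is correct and follows essentially the same approach as the paper's own proof: pick $g$ conjugating $\bfT_K$ to $\bfT_K'$, take $x_0 \in \mathscr O \cap \mathcal A$ and $x_0' = gx_0$, and verify that $\Ad(g)$ intertwines each layer ($\mf L_{x_0}$, $\Lambda_c^{\barF_q}$, $\mathrm{MS}_{o,T}$) with its primed counterpart. The paper spells out the intermediate commutative squares more explicitly (in particular it isolates the equivariance of the canonical isomorphism in Equation~\ref{eq:canon} underlying Proposition~\ref{prop:admissiblesubset}, and the root-system diagram underlying the compatibility of $\Lambda_c^{\barF_q}$), whereas you invoke these as naturality statements, but the argument is the same.
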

\begin{proof}
    Let $g\in \bfG(K)$ be such that $\bfT_K' = g \bfT_K g^{-1}$ and $\mathcal A = \mathcal A(\bfT_K,K),\mathcal A' = \mathcal A(\bfT_K',K)$.
    Then $\mathcal A = g\mathcal A'$.
    Let $h \in \mathscr O \cap \mathcal A$ and pick $x_0\in h$.
    Then $gx_0\in gh \in \mathscr O\cap \mathcal A'$.
    Let 
    $$\mathcal R(\bfG_K,\bfT_K) = (X^*,\Phi,X_*,\Phi^\vee), \quad \mathcal R(\bfG_K,\bfT_K') = (X'^*,\Phi',X'_*,\Phi'^\vee)$$
    and $(\zeta^*,\zeta_*):(X^*,X_*)\to (X'^*,X'_*)$ be the isomorphisms induced by $\Ad(g)$ (these isomorphisms restrict to isomorphisms $\Phi\to \Phi',\Phi^\vee\to \Phi'^\vee$).
    Let $V=X_*\otimes_\Z\R, V'=X_*'\otimes_\Z\R$ and write $\zeta_{*,\R}:V\to V'$ for $\zeta_*\otimes_\Z\R$.
    Write $\eta:(G,T)\to (G',T')$ for the isomorphism of complex algebraic groups induced from $(\zeta^*,\zeta_*)$.
    We claim that 
    \begin{equation}
        \begin{tikzcd}
            \{c\text{ a face of } V\} \arrow[r,"\mf L"] \arrow[d,"\zeta_{*,\R}"] & \mathscr F_T \arrow[d,"\eta"] \\
            \{c\text{ a face of } V'\} \arrow[r,"\mf L'"] & \mathscr F_{T'}.
        \end{tikzcd}
    \end{equation}
    commutes.
    Note that in this diagram, by $\zeta_{*,\R}$ we mean the map on chamber complexes induced by $\zeta_{*,\R}$, and by $\eta$ we mean the map $\mathscr F_T\to \mathscr F_{T'}$ induced by $\eta$.
    We omit the details since this is a slightly tedious, but standard check.
    We remark simply that the isomorphism in Equation \ref{eq:canon} is equivariant with respect to $\eta$ on the left, and $\zeta_{*,\R}$ (descended to the compact torus) on the right and the commutativity of the square follows essentially from this.
    We have also that the first diagram below commutes and so the second one does too.
    \begin{equation}
        \begin{tikzcd}
            V \arrow[d,"\zeta_{*,\R}"] \arrow[r,"\kappa_{x_0}"] &  \mathcal A \arrow[d,"x\mapsto gx"] \\
            V'  \arrow[r,"\kappa_{gx_0}"] & \mathcal A',
        \end{tikzcd}
        \quad
        \begin{tikzcd}
            \mathcal A \arrow[r,"\mf L_{x_0}"] \arrow[d,"x\mapsto gx"] & \mathscr F_T \arrow[d,"\eta"] \\
            \mathcal A' \arrow[r,"\mf L'_{gx_0}"] & \mathscr F_{T'}.
        \end{tikzcd}
    \end{equation}
    Now for $c\subseteq \mathcal A'$, the root system of $\bfL_c(\barF_q)$ is $\bar \Phi_c$, while the root system for $L_c$ is $\Phi_c$.
    Moreover $\Ad(g)$ induces an isomorphism between $\bfL_c(\barF_q)$ and $\bfL_{gc}(\barF_q)$ which identifies $\bar \Phi_c$ with $\bar \Phi'_{gc}$ according to the identification induced by $\zeta^*$.
    Finally $\zeta^*$ restricts to an isomorphism between $\Phi_c$, the root system of $L_c$, and $\Phi'_{gc}$, the root system of $L'_{gc}$.
    In particular the first diagram below commutes and so the second one does too
    \begin{equation}
        \begin{tikzcd}
            \bar \Phi_{c} \arrow[r] \arrow[d] & \bar \Phi'_{gc} \arrow[d] \\
            \Phi_{c} \arrow[r] & \Phi'_{gc},
        \end{tikzcd}
        \quad
        \begin{tikzcd}
            \mathcal N_o^{\bfL_c}(\barF_q) \arrow[r,"\Ad(g)"] \arrow[d,"\Lambda^{\barF_q}_c"] & \mathcal N_o^{\bfL_{gc}}(\barF_q) \arrow[d,"\Lambda^{\barF_q}_{gc}"] \\
            \mathcal N_o^{L_c}(\C) \arrow[r,"\eta"] & \mathcal N_o^{L'_{gc}}(\C).
        \end{tikzcd}
    \end{equation}
    It follows that 
    \begin{equation}
        \label{eq:2nd}
        \begin{tikzcd}
            I_{o,d,\mathcal A}^K \arrow[r,"\Gamma_{x_0}"] \arrow[d,"\Ad(g)"] & I_{o,d,T}^\C \arrow[d,"\eta"] \\
            I_{o,d,\mathcal A'}^K \arrow[r,"\Gamma'_{gx_0}"] & I_{o,d,T'}^\C 
        \end{tikzcd}
    \end{equation}
    also commutes.
    But if $(c',\OO')\in I_{o,d,\mathcal A'}^K$ is equal to $g.(c,\OO)$ for some $(c,\OO\in I_{o,d,\mathcal A}^K)$ then $(c',\OO')\sim_K (c,\OO)$.
    Thus 
    \begin{equation}
        \label{eq:1st}
        \begin{tikzcd}[row sep=tiny]
            & I_{o,d,\mathcal A}^K/\sim_{\mathcal A} \arrow[dd,"\Ad(g)"]\\
            \mathcal N_o(K) \arrow[<-,ur,"\sim"] \arrow[<-,dr,"\sim"] & \\
            & I_{o,d,\mathcal A'}^K/\sim_{\mathcal A'}.
        \end{tikzcd}
    \end{equation}
    commutes.
    Finally 
    \begin{equation}
        \label{eq:3rd}
        \begin{tikzcd}
            I_{o,d,T}^\C \arrow[r] \arrow[d,"\eta"] & \mathcal N_{o,c} \arrow[d,"\beta_{\bfT_K,\bfT_K'}"] \\
            I_{o,d,T'}^\C \arrow[r] & \mathcal N'_{o,c}
        \end{tikzcd}
    \end{equation}
    trivially commutes.
    Stringing together the diagrams in Equations \ref{eq:1st}, \ref{eq:2nd} and \ref{eq:3rd} yields the desired commutative triangle.
\end{proof}

\paragraph{Equivariance of the parameterisation}
\label{par:equiv}
We now finally investigate how the parameterisations corresponding to different $\mathscr O\in \bfG(K)\backslash \mathscr H$ relate.
Fix a $\bfT_K$, let $\mathcal A = \mathcal A(\bfT_K,K)$ and set $\mathscr H_{\mathcal A} = \mathscr H\cap \mathcal A$.
Let us define an action of $Z_G/Z_G^\circ$ on $\bfG(K)\backslash \mathscr H$.
Let $tZ_G^\circ\in Z_G/Z_G^\circ$ and let $A_0+X_* = \lambda(G,tZ_G^\circ)$.
Let $c\in\mathscr H\cap \mathcal A$.
Note that hyperspecial faces are in fact admissible classes i.e. $\mathcal A(c,\mathcal A) = c$ for hyperspecial faces $c$ (since $\Phi_y$ is constant on $\mathcal A(c,\mathcal A)$).
Thus by the discussion preceding Lemma \ref{lem:translation}, for $y\in A_0+X_*$, the translation $\tau_y:\mathscr A/X_*\to \mathscr A/X_*$ restricts to a map $\mathscr H_{\mathcal A}/X_*\to \mathscr H_{\mathcal A}/X_*$ which does not depend on the choice of $y$.
\nomenclature{$\tau_{tZ_G^\circ}$}{}
\nomenclature{$\mathscr H_{\mathcal A}$}{}
Denote the resulting map 
$$\tau_{tZ_G^\circ}:\mathscr H_{\mathcal A}/X_*\to \mathscr H_{\mathcal A}/X_*.$$
We now define the action of $tZ_G^\circ$ on $\bfG(K)\backslash\mathscr H$ as follows.
Let $\mathscr O\in \bfG(K)\backslash \mathscr H$.
Let $c\in \mathscr O\cap \mathcal A$.
We saw in the proof of Lemma \ref{lem:hyperspecialorbit} that $\mathscr O \cap \mathcal A = c+X_*$.
Let $c'+X_* = \tau_{tZ_G^\circ}(c+X_*)$ and $\mathscr O' = \bfG(K).(c')$.
This is well defined because $\bfG(K).c' = c'+X_*$.
Finally we set 
$$(tZ_G^\circ). \mathscr O := \mathscr O'.$$
\begin{proposition}
    \label{prop:staction}
    The map $(tZ_G^\circ):\bfG(K)\backslash \mathscr H \to \bfG(K)\backslash \mathscr H$ defines a simply transitive action of $Z_G/Z_G^\circ$ on $\bfG(K)\backslash \mathscr H$.
\end{proposition}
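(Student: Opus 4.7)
My plan is to break the verification into three parts: well-definedness of the map, the group action axioms, and simple transitivity. The key dictionary I will rely on throughout is the bijection $\bfG(K)\backslash \mathscr H \isoto \mathscr H_{\mathcal A}/X_*$ induced by $\mathscr O\mapsto \mathscr O\cap \mathcal A$, which is implicit in the proof of Lemma \ref{lem:hyperspecialorbit} (using transitivity of $\bfG(K)$ on apartments together with the fact that $\mathscr O\cap\mathcal A$ is a single $X_*$-orbit). Under this bijection, the proposed assignment $(tZ_G^\circ).\mathscr O$ is simply the translation $\tau_{tZ_G^\circ}$ on $\mathscr H_{\mathcal A}/X_*$, which is already well-defined by the discussion preceding Lemma \ref{lem:translation}; so well-definedness costs nothing beyond setting up this dictionary.

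For the group action axioms, I would first note that the identity $Z_G^\circ$ acts trivially because $\lambda(G,Z_G^\circ)$ corresponds to the $X_*$-orbit of the origin of $V$, so $\tau_{Z_G^\circ}$ is translation by an element of $X_*$ and is the identity on $\mathscr H_{\mathcal A}/X_*$. The composition law $(t_1 Z_G^\circ)((t_2 Z_G^\circ).\mathscr O) = ((t_1 t_2) Z_G^\circ).\mathscr O$ is the most delicate point, and where I would invoke Lemma \ref{lem:translation} with $L = G$: setting $A_i+X_* = \lambda(G,t_iZ_G^\circ)$, that lemma gives $\lambda(G,t_1t_2Z_G^\circ) = \tau_{A_1}(A_2+X_*)=A_1+A_2+X_*$, which translates (on $\mathscr H_{\mathcal A}/X_*$) into $\tau_{t_1Z_G^\circ}\circ \tau_{t_2 Z_G^\circ} = \tau_{(t_1t_2)Z_G^\circ}$.

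For simple transitivity, I would specialise Proposition \ref{prop:adm} to $L=G$ to obtain a bijection $Z_G/Z_G^\circ \isoto \set{A\in \mathscr A:\Phi_A = \Phi}/X_*$, sending $tZ_G^\circ\mapsto \lambda(G,tZ_G^\circ)$. The right-hand side is $\mathscr H_{\mathcal A}/X_*$, since a face $c\subseteq \mathcal A$ is hyperspecial if and only if $\Phi_{\mathcal A(c)} = \Phi$, and this is identified with $\bfG(K)\backslash \mathscr H$ by the dictionary above. Choosing the hyperspecial orbit corresponding to $Z_G^\circ$ as base point, Lemma \ref{lem:translation} shows that the action of $tZ_G^\circ$ on $\bfG(K)\backslash\mathscr H$ is intertwined with left multiplication of $Z_G/Z_G^\circ$ on itself, which is simply transitive. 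The main obstacle is really just the bookkeeping: keeping straight the identifications $\bfG(K)\backslash \mathscr H \leftrightarrow \mathscr H_{\mathcal A}/X_* \leftrightarrow Z_G/Z_G^\circ$ so that Lemma \ref{lem:translation} can be applied cleanly.
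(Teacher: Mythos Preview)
Your proposal is correct and follows essentially the same approach as the paper: both reduce to the bijection $\bfG(K)\backslash\mathscr H\cong\mathscr H_{\mathcal A}/X_*$ and derive the action axioms from the fact that $f_G$ (equivalently, Lemma \ref{lem:translation}) is a group homomorphism. The only cosmetic difference is that for simple transitivity the paper argues transitivity and freeness by hand, whereas you invoke Proposition \ref{prop:adm} with $L=G$ to identify the action directly with left multiplication of $Z_G/Z_G^\circ$ on itself; this is a tidy repackaging of the same computation.
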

\begin{proof}
    The fact that this defines an action follows immediately from the fact that the map in Equation \ref{eq:canon} is a group homomorphism.
    To see that the action is simply transitive, first note that the map 
    $$\mathscr H_{\mathcal A}/X_*\to \bfG(K)\backslash \mathscr H, \quad c+X_*\mapsto \bfG(K).c$$
    is a well defined bijection (it is injective since $\bfG(K).c \cap \mathcal A = c+X_*$ and surjective since $\bfG(K)$ acts transitively on apartments).
    Thus it suffices to show that $Z_G/Z_G^\circ$ acts simply transitively on $\mathscr H_{\mathcal A}/X_*$.
    To see that the action is transitive let $c+X_*,c'+X_*\in \mathscr H_{\mathcal A}$.
    Let $x_0\in c+X_*,x_0'\in c'+X_*$ and $y=x_0-x_0'$.
    Let $A_0 = \Psi_{y_0}$ and $(G,tZ_G^\circ) = \lambda^{-1}(A_0+X_*)$. 
    Then $\tau_y = \tau_{tZ_G^\circ}$ and hence $\tau_{tZ_G^\circ}(c+X_*) = c'+X_*$.
    Finally let $c+X_*\in \mathscr H_{\mathcal A},tZ_G^\circ\in Z_G/Z_G^\circ$ and suppose $\tau_{tZ_G^\circ}(c+X_*) = c+X_*$.
    Let $A_0+X_* = \lambda(G,tZ_G^\circ)$ and $y\in A_0+X_*$. 
    By definition $\tau_{tZ_G^\circ} = \tau_{y}$.
    Let $x_0\in c$ and $A_0'$ be the vanishing set of $\Phi$ is $V$.
    Then $c+X_* = \kappa_{x_0}(A_0'+X_*)$ and so since $\tau_y(c+X_*) = c+X_*$ it follows that $y\in A_0' + X_*$. 
    Thus $A_0+X_* = A_0'+X_*$.
    But $\lambda^{-1}(A_0'+X_*) = (G,Z_G^\circ)$ and so $tZ_G^\circ = Z_G^\circ$.
    Thus the action is simply transitive.
\end{proof}

There is also an action of $Z_G/Z_G^\circ$ on $\mathcal N_{o,c}$ given by 
$$tZ_G^\circ:(n,sC_G^\circ(n))/G\mapsto (n,tsC_G^\circ(n))/G.$$
This is well defined since $t\in Z_G$ and $Z_G^\circ \subseteq C_G^\circ(n)$.
We will write this action as $(\OO,C)\mapsto (\OO,tC)$ when we view $\mathcal N_{o,c}$ as pairs $(\OO,C)$.
\begin{proposition}
    \label{prop:equivariance}
    Let $\mathscr O\in \bfG(K)\backslash \mathscr H$ and $tZ_G^\circ\in Z_G/Z_G^\circ$.
    Then 
    $$\theta_{(tZ_G^\circ).\mathscr O,\bfT_K} = (tZ_G^\circ).\theta_{\mathscr O,\bfT_K}.$$
\end{proposition}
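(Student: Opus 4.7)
The plan is to unpack the definition of $\theta_{\mathscr O,\bfT_K}$ on an arbitrary $\OO \in \mathcal N_o(K)$ and track how the associated pseudo-Levi datum changes as the hyperspecial base-point moves from $\mathscr O$ to $(tZ_G^\circ).\mathscr O$. Fix a representative $(c,\OO_c) \in I_{o,d,\mathcal A}^K$ for $\OO$, and pick hyperspecial points $x_0 \in h$ and $x_0' \in h'$ where $h \in \mathscr O \cap \mathcal A$ and $h' \in ((tZ_G^\circ).\mathscr O) \cap \mathcal A$. Setting $y_0 := x_0' - x_0 \in V$, the definition of the action in Proposition \ref{prop:staction} allows us to choose $x_0'$ so that $y_0 \in A_0 + X_*$, where $A_0 + X_* = \lambda(G, tZ_G^\circ)$. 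The orbit $\Lambda_c^{\barF_q}(\OO_c)$ is intrinsic to $(c,\OO_c)$ and does not depend on the base-point, so the equivariance reduces entirely to comparing the pseudo-Levi pairs $\mf L_{x_0}(c)$ and $\mf L_{x_0'}(c)$.

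The core computation uses $\kappa_{x_0'}^{-1}(p) = \kappa_{x_0}^{-1}(p) - y_0$: the admissible class attached to $c$ shifts by $\tau_{-y_0}$ in $\mathscr A / X_*$, so $\lambda(\mf L_{x_0'}(c)) = \tau_{-y_0}(\lambda(\mf L_{x_0}(c)))$. Writing $\mf L_{x_0}(c) = (L,sZ_L^\circ)$ and applying Lemma \ref{lem:translation}, one obtains $\mf L_{x_0'}(c) = (L, u \cdot sZ_L^\circ)$, where $uZ_G^\circ$ is the element of $Z_G/Z_G^\circ$ with $\lambda(G, uZ_G^\circ) = -A_0 + X_*$. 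Since the restriction of $\lambda$ to $\{(G,vZ_G^\circ):v \in Z_G\}$ is a group homomorphism via equation \ref{eq:canon}, this element is identified with $(tZ_G^\circ)^{-1}$ (up to the orientation of the action chosen in Proposition \ref{prop:staction}); the pseudo-Levi $L$ itself is unchanged because $u \in Z_G \subseteq Z_L$ so $C_G^\circ(s) = C_G^\circ(us)$.

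Passing through the bijection $\mathrm{MS}_{o,T}$, the triple $(L, vsZ_L^\circ, \Lambda_c^{\barF_q}(\OO_c))$ is sent to $(n, vsC_G^\circ(n))$ with $n \in \Lambda_c^{\barF_q}(\OO_c)$, and left-multiplication of the central element by $t$ is by definition the action of $tZ_G^\circ$ on $\mathcal N_{o,c}$. Matching this with the displayed formula for $\mf L_{x_0'}(c)$ identifies $\theta_{\mathscr O',\bfT_K}(\OO)$ with $(tZ_G^\circ).\theta_{\mathscr O,\bfT_K}(\OO)$, once the signs are aligned. The main obstacle is precisely this sign-tracking: one must verify that the simply transitive action constructed in Proposition \ref{prop:staction} corresponds under $\lambda$ to the natural group operation on $Z_G/Z_G^\circ$ (and not to its opposite), and that this matches the left-multiplication convention for the $Z_G/Z_G^\circ$-action on $\mathcal N_{o,c}$; this amounts to chasing the canonical isomorphism in equation \ref{eq:canon} through the various identifications, using Remark \ref{rmk:cs} to reduce to the central case $L = G$.
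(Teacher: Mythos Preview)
Your proposal is correct and is exactly the approach the paper takes: the paper's entire proof is the single sentence ``This is an immediate consequence of the definitions and Lemma~\ref{lem:translation},'' and what you have written is precisely the unpacking of that sentence---track how $\mf L_{x_0}(c)$ changes when the hyperspecial base-point shifts, observe that $\Lambda_c^{\barF_q}(\OO_c)$ is unaffected, and invoke Lemma~\ref{lem:translation} to identify the shift in the $Z/Z^\circ$-component. The sign-tracking you flag is a genuine bookkeeping point that the paper also leaves implicit; it does not represent a different method or a gap.
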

\begin{proof}
    This is an immediate consequence of the definitions and Lemma \ref{lem:translation}.
\end{proof}

\paragraph{Compatibility with the Bala-Carter parameterisation}
\begin{proposition}
    \label{prop:trivccl}
    Let $\OO \in \mathcal N_o$ and let $[(J,J')]_W\in \mb{BC}(\Delta)$ be the Bala--Carter parameter for $\OO$ with respect to $\Delta$.
    Then 
    $$\theta_{x_0,\bfT_K}\circ\mb{ABC}_{x_0,\bfT_K}([(J,J')]_{\tilde W}) = (\OO,1).$$
\end{proposition}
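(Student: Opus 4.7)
The plan is to unwind the composition step by step, exploiting the fact that when $J\subseteq \Delta$ the face $c$ parametrised by $(J,J')$ sits in the closure of the hyperspecial point $x_0$. Explicitly, let $(c,\OO_c)\in I_{o,d,c_0}^K$ be the preimage of $(J,J')$: so $c\subseteq \overline{c_0}$ is the face with $\ms{Type}_{x_0,\Delta}(c) = J$ and $\OO_c$ is the distinguished orbit of $\bfl_c(\mathbb F_q)$ with Bala--Carter datum $J'$ relative to the simple system $J$ of $\Phi_c(\bar\bfT_K,\barF_q)$. Since every affine root in $J$ is in fact a linear root, $\mathcal A(\kappa_{x_0}^{-1}(c)) = \bigcap_{\alpha \in J}\ker\alpha \subseteq V$ contains the origin, so $x_0$ lies in the closure of $c$.

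The first main step is to identify $\mf L_{x_0}(c)$. By Proposition \ref{prop:adm}, the admissible class $\mathcal A(\kappa_{x_0}^{-1}(c))+X_*$ corresponds under $\lambda^{-1}$ to a pair $(L,tZ_L^\circ)\in \mathscr F_T$ where $L$ is the standard Levi of $G$ with root system $\Phi\cap\Z J$. Crucially, because the origin lies in $\mathcal A(\kappa_{x_0}^{-1}(c))$, its image under $\pi:V\to \mathbb T$ is the identity of $\mathbb T$, and so it lies in the identity component of the closed subgroup $\mathbb H$ corresponding to $L$; via the isomorphism $f_L$ of Equation \ref{eq:canon}, this forces $t = 1$. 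Hence $\mf L_{x_0}(c) = (L, Z_L^\circ)$. The hyperspecial hypothesis on $x_0$ is used precisely here.

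Next, the isomorphism $\Lambda_c^{\barF_q}:\mathcal N_o^{\bfL_c}(\barF_q)\to \mathcal N_o^{L_c}(\C)$ of Lemma \ref{lem:pom} is induced by the canonical identification of root data between $\bfL_c(\barF_q)$ and $L_c = L$, and so preserves Bala--Carter parameters. Hence $\Lambda_c^{\barF_q}(\OO_c) = L.x$ for $x\in\mf l$ a distinguished nilpotent with Bala--Carter datum $J'$ in $L$. Combining, $\tilde\Gamma_{x_0}(c,\OO_c) = (L, Z_L^\circ, L.x) \in I_{o,d,T}^\C$.

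Finally, I would apply $\mathrm{MS}_{o,T}$. Under the bijection of Proposition \ref{prop:unip}, the $W$-orbit of $(L, Z_L^\circ, L.x)$ corresponds to the $G$-orbit of $(L, Z_L^\circ, x)\in I_{d,T}^\C$, and $\mathrm{MS}$ sends this to $(x, 1\cdot C_G^\circ(x))$. Under the $(\OO',C)$-reformulation of $\mathcal N_{o,c}$, this is the pair $(G.x, 1)$ with $1\in A(G.x)$ the trivial conjugacy class. By the classical Bala--Carter theorem, the $G$-saturation of a distinguished nilpotent orbit of the Levi $L$ with Bala--Carter datum $J'$ is the nilpotent orbit of $G$ with Bala--Carter datum $(J,J')$, namely $\OO$. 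This yields $(\OO,1)$ as required. The verifications are all routine consequences of the definitions; the only subtle point is the step where the hyperspecial condition on $x_0$ pins down the trivial component $t = 1$, ensuring that the output lies in the trivial-conjugacy-class slice $\{(\OO,1)\}\subseteq \mathcal N_{o,c}$.
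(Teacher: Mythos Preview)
Your proof is correct and follows essentially the same route as the paper's: both unwind the definition of $\theta_{x_0,\bfT_K}$ on the pair $(c(J),\OO_{J'})$ and identify the resulting element of $I_{o,d,T}^\C$ as $(L_{c(J)},Z_{c(J)}^\circ,\OO_{J'})$, whose image under $\mathrm{MS}$ is $(\OO,1)$. The only cosmetic difference is that the paper argues the trivial component $tZ^\circ = Z^\circ$ algebraically (a genuine Levi satisfies $L = C_G^\circ(Z_L^\circ)$, so the pair $(L,Z_L^\circ)$ already lies in $\mathscr F_T$), whereas you argue it geometrically via the origin lying in $\mathcal A(\kappa_{x_0}^{-1}(c))$ and hence mapping into $\mathbb H^\circ$; these are two sides of the same coin.
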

\begin{proof}
    For $I\in \mathbf P(\tilde\Delta)$ let $c(I)$ denote the face of $c_0$ with $\ms{Type}$ equal to $I$.
    Let $\OO\in \mathcal N_o$ and let $[(J,J')]_W$ be the Bala--Carter parameter for $\OO$.
    Then $\OO$ intersects non-trivially with $\mf l_{c(J)}$ and the distinguished orbit $\OO_{J'}$ of $\mf l_{c(J)}$ parameterised by $J'$ lies in this intersection.
    Since $L_{c(J)}$ is a Levi it is equal to $C_{G}^\circ(Z_J^\circ)$ where $Z_J$ is the center of $L_{c(J)}$.
    Thus $(L_{c(J)},Z_J^\circ,\OO_{J'}) \in I_{o,d,T}^\C$ and its image in $\mathcal N_{o,c}$ is $(\OO,1)$.
    We also have that $\lambda(L_{c(J)},Z^\circ_J) = [A]$ where $A$ is the vanishing set of $J$, and $(\Lambda_{c(J)}^{\barF_q})^{-1}(\OO_{J'})$ is the distinguished nilpotent orbit of $\bfl_{c(J)}$ corresponding to $J'$.
    It follows that $\theta^{-1}(\OO,1)$ has affine Bala--Carter parameter $[(J,J')]_{\tilde W}$ as required.
\end{proof}
This result implies that the diagram below commutes.
\begin{equation}
    \begin{tikzcd}
        \mb {BC}_{\bfT_K}(\Delta)/\sim_W \arrow[r,"i"] \arrow[d,"\sim"] & \mb {ABC}_{\bfT_K}(\tilde\Delta)/\sim_{\tilde W} \arrow[d,"\sim"] \arrow[r] & \mb {BC}_{\bfT_K}(\Delta)/\sim_W \arrow[d,"\sim"]\\
        \mathcal N_o(\bar k) \arrow[r,"\OO\mapsto \theta_{x_0,\bfT_K}^{-1}(\Lambda_{\bfT_K}^{\bar k}(\OO){,}1)"] \arrow[rr,bend right=15,"\id",swap] & \mathcal N_o(K) \arrow[r,"\mathcal N_o(\bar k/K)"] & \mathcal N_o(\bar k).
    \end{tikzcd}
\end{equation}
Here the map $i$ is the map induced by the inclusion map 
\begin{equation}
    \mb {BC}_{\bfT_K}(\Delta) \to \mb {ABC}_{\bfT_K}(\tilde\Delta).
\end{equation}

We remark that this composition $\theta_{x_0,\bfT_K}\circ \mb{ABC}_{x_0,\bfT_K}$ is independent of $x_0$ as can be easily seen from the construction of the two maps (indeed the composition can be defined entirely by referring only to the vector space $V$ and not the affine space $\mathcal A$).

\paragraph{Example: \texorpdfstring{$G_2$}{G2}}
Consider the case when $\textbf G$ is $K$-split semisimple of type $G_2$ (there is only one isogeny type).
Let $\Delta=\set{\alpha_1,\alpha_2}$ denote the simple roots where $\alpha_2$ is the short root and let $\alpha_0$ be affine simple root in $\tilde\Delta$. 
The extended Dynkin diagram of $G_2$ is $\dynkin[extended,labels*={\alpha_0,\alpha_1,\alpha_2}]G2$.
The set $\mb {ABC}_{\bfT_K}(\tilde\Delta)$ consists of the pairs 
\begin{align}
    (\emptyset,\emptyset), (\set{\alpha_0},\emptyset), (\set{\alpha_1},\emptyset), (\set{\alpha_2},\emptyset), (\set{\alpha_1,\alpha_2},\emptyset), \nonumber \\
    (\set{\alpha_1,\alpha_2},\set{\alpha_2}), (\set{\alpha_0,\alpha_2},\emptyset), (\set{\alpha_0,\alpha_1},\emptyset).
\end{align}
Under the equivalence relation $\sim_{\tilde W}$, all equivalence classes are singletons except $(\set{\alpha_0},\emptyset)\sim_{\tilde W} (\set{\alpha_1},\emptyset)$ so $\mathcal N_o(K)$ has size 7.
We also obtain 7 nilpotent orbits using the parameterisation in terms of $\mathcal N_{o,c}$.
The only nilpotent orbit with non-trivial $G_\C$-equivariant fundamental group is $G_2(a_1)$.
In this case $A(G_2(a_1)) = S_3$ which has 3 conjugacy classes which we denote by representatives $1,(12),(123)$.
Using theorem \ref{thm:unramifiedparam}, the unramified orbits can be parameterised by 
\begin{align}
    (0,1), (A_1,1), (\tilde A_1,1), (G_2(a_1),1), (G_2(a_1),(12)), (G_2(a_1),(123)), (G_2,1)\in \mathcal N_{o,c}.
\end{align}
We thus have two parameterisations 
$$\mb{ABC}_{x_0,\bfT_K}:\mb{ABC}_{\bfT_K}(\tilde\Delta)\to \mathcal N_o(K), \theta_{x_0,\bfT_K}:\mathcal N_o(K)\to \mathcal N_{o,c}.$$
We now demonstrate how they match up i.e. we describe the composition 
$$\theta_{x_0,\bfT_K}\circ \mb{ABC}_{x_0,\bfT_K}:\mb{ABC}_{\bfT_K}(\tilde\Delta)\xrightarrow{\sim}\mathcal N_{o,c}.$$

We already have from Proposition \ref{prop:trivccl} the following matchings
\begin{align}
    (\emptyset,\emptyset) &\leftrightarrow (1,1), \nonumber\\
    (\set{\alpha_1},\emptyset) &\leftrightarrow (A_1,1) \nonumber\\
    (\set{\alpha_2},\emptyset) &\leftrightarrow (\tilde A_1,1) \nonumber\\
    (\set{\alpha_1,\alpha_2},\set{\alpha_1}) &\leftrightarrow (G_2(a_1),1) \nonumber\\
    (\set{\alpha_1,\alpha_2},\emptyset) &\leftrightarrow (G_2,1)
\end{align}
It remains to determine how to match up $(G_2(a_1),(12))$ and $(G_2(a_1),(123))$ together with $(\set{\alpha_0,\alpha_1},\emptyset)$ and $(\set{\alpha_0,\alpha_2},\emptyset)$.
For this we must look at the map $\lambda$.
Figure \ref{fig:apt} is a diagram of the chamber complex on $V$ together with the coroot lattice (in grey), the coroots of the simple roots and minus the highest root (in blue), and a fundamental domain for the topological torus $V/X_*$ (in red).

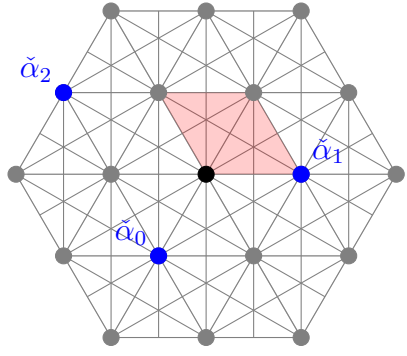
\begin{figure}
    \centering
    \begin{tikzpicture}[baseline=-.5,scale=2.5] 
        \begin{rootSystem}{G}
            \draw[thin,gray] \weight{0}{-1} -- \weight{0}{1};
            \draw[thin,gray] \weight{2}{-2} -- \weight{-2}{2};
            \draw[thin,gray] \weight{3}{-2} -- \weight{-3}{2};
            \draw[thin,gray] \weight{4}{-2} -- \weight{-4}{2};
            \draw[thin,gray] \weight{3}{-1} -- \weight{-3}{1};
            \draw[thin,gray] \weight{2}{0} -- \weight{-2}{0};

            \draw[thin,gray] \weight{-3}{1} -- \weight{0}{1};
            \draw[thin,gray] \weight{3}{-1} -- \weight{0}{-1};
            \draw[thin,gray] \weight{-3}{2} -- \weight{3}{-1};
            \draw[thin,gray] \weight{-3}{1} -- \weight{3}{-2};
            \draw[thin,gray] \weight{-3}{2} -- \weight{0}{-1};
            \draw[thin,gray] \weight{0}{1} -- \weight{3}{-2};

            \draw[thin,gray] \weight{2}{0} -- \weight{-2}{2};
            \draw[thin,gray] \weight{-2}{2} -- \weight{-4}{2};
            \draw[thin,gray] \weight{-4}{2} -- \weight{-2}{0};
            \draw[thin,gray] \weight{-2}{0} -- \weight{2}{-2};
            \draw[thin,gray] \weight{2}{-2} -- \weight{4}{-2};
            \draw[thin,gray] \weight{4}{-2} -- \weight{2}{0};

            \draw[thin,gray] \weight{2.5}{-0.5} -- \weight{-3.5}{1.5};
            \draw[thin,gray] \weight{-2.5}{0.5} -- \weight{3.5}{-1.5};
            \draw[thin,gray] \weight{-1}{1.5} -- \weight{-1}{-0.5};
            \draw[thin,gray] \weight{1}{-1.5} -- \weight{1}{0.5};
            \draw[thin,gray] \weight{-3.5}{2} -- \weight{2.5}{-2};
            \draw[thin,gray] \weight{3.5}{-2} -- \weight{-2.5}{2};

            \draw[thin,gray] \weight{0}{1} -- \weight{-3}{2};
            \draw[thin,gray] \weight{-3}{2} -- \weight{-3}{1};
            \draw[thin,gray] \weight{-3}{1} -- \weight{0}{-1};
            \draw[thin,gray] \weight{0}{-1} -- \weight{3}{-2};
            \draw[thin,gray] \weight{3}{-2} -- \weight{3}{-1};
            \draw[thin,gray] \weight{3}{-1} -- \weight{0}{1};

            \draw[thin,gray] \weight{-2}{2} -- \weight{4}{-2};
            \draw[thin,gray] \weight{2}{0} -- \weight{-4}{2};
            \draw[thin,gray] \weight{-2}{0} -- \weight{-2}{2};
            \draw[thin,gray] \weight{2}{-2} -- \weight{-4}{2};
            \draw[thin,gray] \weight{-2}{0} -- \weight{4}{-2};
            \draw[thin,gray] \weight{2}{0} -- \weight{2}{-2};

            \fill [opacity=0.2,red] \weight{0}{0} -- \weight{1}{0} -- \weight{-1}{1} -- \weight{-2}{1} -- cycle;
            \wt [gray]{1}{0}
            \wt [gray]{0}{1}
            \wt [gray]{-1}{1}
            \wt [gray]{-2}{1}
            \wt [gray]{-3}{1}
            \wt [gray]{-3}{2}

            \wt [gray]{-1}{0}
            \wt [gray]{0}{-1}
            \wt [gray]{1}{-1}
            \wt [gray]{2}{-1}
            \wt [gray]{3}{-1}
            \wt [gray]{3}{-2}

            \wt [gray]{-4}{2}
            \wt [gray]{4}{-2}
            \wt [gray]{-2}{2}
            \wt [gray]{2}{-2}
            \wt [gray]{2}{0}
            \wt [gray]{-2}{0}

            \wt [black]{0}{0}
            \wt [blue]{1}{0}
            \wt [blue]{-3}{1}
            \wt [blue]{1}{-1}
            \node[above right=-0pt] at (hex cs:x=1,y=0) {\(\check\alpha_1\)};
            \node[above left=-0pt] at (hex cs:x=-2,y=1) {\(\check\alpha_2\)};
            \node[above left=-0pt] at (hex cs:x=0,y=-1) {\(\check\alpha_0\)};
        \end{rootSystem}
    \end{tikzpicture}
    \label{fig:apt}
    \caption{The chamber complex for $V$ is displayed alongside the coroot lattice (in grey), the coroots $\check \alpha_0$,$\check \alpha_1$,$\check \alpha_2$ (in blue), and the fundamental domain for the torus $\mathbb T$ (in red).}
\end{figure}
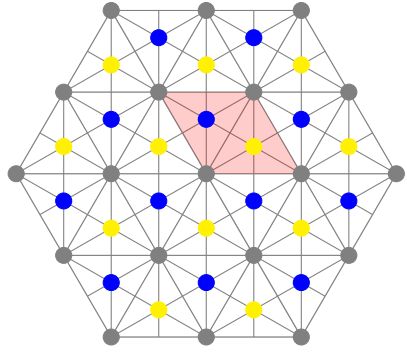
\begin{figure}
    \centering
        \begin{tikzpicture}[scale=2.5] 
            \begin{rootSystem}{G}
                \draw[thin,gray] \weight{0}{1} -- \weight{-3}{2};
                \draw[thin,gray] \weight{-3}{2} -- \weight{-3}{1};
                \draw[thin,gray] \weight{-3}{1} -- \weight{0}{-1};
                \draw[thin,gray] \weight{0}{-1} -- \weight{3}{-2};
                \draw[thin,gray] \weight{3}{-2} -- \weight{3}{-1};
                \draw[thin,gray] \weight{3}{-1} -- \weight{0}{1};

                \draw[thin,gray] \weight{2.5}{-0.5} -- \weight{-3.5}{1.5};
                \draw[thin,gray] \weight{-2.5}{0.5} -- \weight{3.5}{-1.5};
                \draw[thin,gray] \weight{-1}{1.5} -- \weight{-1}{-0.5};
                \draw[thin,gray] \weight{1}{-1.5} -- \weight{1}{0.5};
                \draw[thin,gray] \weight{-3.5}{2} -- \weight{2.5}{-2};
                \draw[thin,gray] \weight{3.5}{-2} -- \weight{-2.5}{2};

                \draw[thin,gray] \weight{2}{0} -- \weight{-2}{2};
                \draw[thin,gray] \weight{-2}{2} -- \weight{-4}{2};
                \draw[thin,gray] \weight{-4}{2} -- \weight{-2}{0};
                \draw[thin,gray] \weight{-2}{0} -- \weight{2}{-2};
                \draw[thin,gray] \weight{2}{-2} -- \weight{4}{-2};
                \draw[thin,gray] \weight{4}{-2} -- \weight{2}{0};

                \draw[thin,gray] \weight{0}{-1} -- \weight{0}{1};
                \draw[thin,gray] \weight{3}{-2} -- \weight{-3}{2};
                \draw[thin,gray] \weight{3}{-1} -- \weight{-3}{1};

                \draw[thin,gray] \weight{-2}{2} -- \weight{4}{-2};
                \draw[thin,gray] \weight{2}{0} -- \weight{-4}{2};
                \draw[thin,gray] \weight{-2}{0} -- \weight{-2}{2};
                \draw[thin,gray] \weight{2}{-2} -- \weight{-4}{2};
                \draw[thin,gray] \weight{-2}{0} -- \weight{4}{-2};
                \draw[thin,gray] \weight{2}{0} -- \weight{2}{-2};

                \draw[thin,gray] \weight{-3}{1} -- \weight{0}{1};
                \draw[thin,gray] \weight{3}{-1} -- \weight{0}{-1};
                \draw[thin,gray] \weight{-3}{2} -- \weight{3}{-1};
                \draw[thin,gray] \weight{-3}{1} -- \weight{3}{-2};
                \draw[thin,gray] \weight{-3}{2} -- \weight{0}{-1};
                \draw[thin,gray] \weight{0}{1} -- \weight{3}{-2};

                \draw[thin,gray] \weight{2}{-2} -- \weight{-2}{2};
                \draw[thin,gray] \weight{4}{-2} -- \weight{-4}{2};
                \draw[thin,gray] \weight{2}{0} -- \weight{-2}{0};

                \fill [opacity=0.2,red] \weight{0}{0} -- \weight{1}{0} -- \weight{-1}{1} -- \weight{-2}{1} -- cycle;
                \foreach \i in {1,2,3}{\wt [yellow]{-\i}{1+1/3}};
                \foreach \i in {1,2,3,4}{\wt [yellow]{2-\i}{0+1/3}};
                \foreach \i in {0,1,2}{\wt [yellow]{\i}{-1+1/3}};
                \foreach \i in {0,1}{\wt [yellow]{2+\i}{-2+1/3}};

                \foreach \i in {0,1}{\wt [blue]{-2-\i}{2-1/3}};
                \foreach \i in {0,1,2}{\wt [blue]{-\i}{1-1/3}};
                \foreach \i in {0,1,2,3}{\wt [blue]{2-\i}{0-1/3}};
                \foreach \i in {1,2,3}{\wt [blue]{\i}{-1-1/3}};

                \foreach \i in {0,1,2}{\wt [gray]{-2-\i}{2}};
                \foreach \i in {0,1,2,3}{\wt [gray]{-\i}{1}};
                \foreach \i in {0,1,2,3,4}{\wt [gray]{2-\i}{0}};
                \foreach \i in {0,1,2,3}{\wt [gray]{\i}{-1}};
                \foreach \i in {0,1,2}{\wt [gray]{2+\i}{-2}};
            \end{rootSystem}
        \end{tikzpicture}
        \caption{The colored disks (grey, blue, yellow) are all the lifts of the vanishing set of $\set{\alpha_0,\alpha_1}$ in $\mathbb T$. Having the same color indicates having the same image in $\mathbb T$.}
        \label{fig:alpha01}
\end{figure}
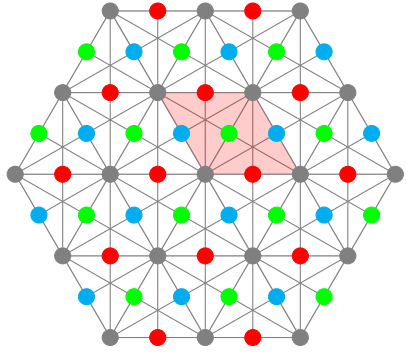
\begin{figure}
    \centering
        \begin{tikzpicture}[scale=2.5] 
            \begin{rootSystem}{G}
                \draw[thin,gray] \weight{0}{1} -- \weight{-3}{2};
                \draw[thin,gray] \weight{-3}{2} -- \weight{-3}{1};
                \draw[thin,gray] \weight{-3}{1} -- \weight{0}{-1};
                \draw[thin,gray] \weight{0}{-1} -- \weight{3}{-2};
                \draw[thin,gray] \weight{3}{-2} -- \weight{3}{-1};
                \draw[thin,gray] \weight{3}{-1} -- \weight{0}{1};

                \draw[thin,gray] \weight{2.5}{-0.5} -- \weight{-3.5}{1.5};
                \draw[thin,gray] \weight{-2.5}{0.5} -- \weight{3.5}{-1.5};
                \draw[thin,gray] \weight{-1}{1.5} -- \weight{-1}{-0.5};
                \draw[thin,gray] \weight{1}{-1.5} -- \weight{1}{0.5};
                \draw[thin,gray] \weight{-3.5}{2} -- \weight{2.5}{-2};
                \draw[thin,gray] \weight{3.5}{-2} -- \weight{-2.5}{2};

                \draw[thin,gray] \weight{2}{0} -- \weight{-2}{2};
                \draw[thin,gray] \weight{-2}{2} -- \weight{-4}{2};
                \draw[thin,gray] \weight{-4}{2} -- \weight{-2}{0};
                \draw[thin,gray] \weight{-2}{0} -- \weight{2}{-2};
                \draw[thin,gray] \weight{2}{-2} -- \weight{4}{-2};
                \draw[thin,gray] \weight{4}{-2} -- \weight{2}{0};

                \draw[thin,gray] \weight{0}{-1} -- \weight{0}{1};
                \draw[thin,gray] \weight{3}{-2} -- \weight{-3}{2};
                \draw[thin,gray] \weight{3}{-1} -- \weight{-3}{1};

                \draw[thin,gray] \weight{-2}{2} -- \weight{4}{-2};
                \draw[thin,gray] \weight{2}{0} -- \weight{-4}{2};
                \draw[thin,gray] \weight{-2}{0} -- \weight{-2}{2};
                \draw[thin,gray] \weight{2}{-2} -- \weight{-4}{2};
                \draw[thin,gray] \weight{-2}{0} -- \weight{4}{-2};
                \draw[thin,gray] \weight{2}{0} -- \weight{2}{-2};

                \draw[thin,gray] \weight{-3}{1} -- \weight{0}{1};
                \draw[thin,gray] \weight{3}{-1} -- \weight{0}{-1};
                \draw[thin,gray] \weight{-3}{2} -- \weight{3}{-1};
                \draw[thin,gray] \weight{-3}{1} -- \weight{3}{-2};
                \draw[thin,gray] \weight{-3}{2} -- \weight{0}{-1};
                \draw[thin,gray] \weight{0}{1} -- \weight{3}{-2};

                \draw[thin,gray] \weight{2}{-2} -- \weight{-2}{2};
                \draw[thin,gray] \weight{4}{-2} -- \weight{-4}{2};
                \draw[thin,gray] \weight{2}{0} -- \weight{-2}{0};

                \fill [opacity=0.2,red] \weight{0}{0} -- \weight{1}{0} -- \weight{-1}{1} -- \weight{-2}{1} -- cycle;

                \foreach \i in {1,2}{\wt [red]{0.5-2-\i}{2}};
                \foreach \i in {1,2,3}{\wt [red]{0.5-\i}{1}};
                \foreach \i in {1,2,3,4}{\wt [red]{0.5+2-\i}{0}};
                \foreach \i in {0,1,2}{\wt [red]{0.5+\i}{-1}};
                \foreach \i in {0,1}{\wt [red]{0.5+2+\i}{-2}};

                \foreach \i in {1,2,3}{\wt [green]{-0.5-\i}{1+0.5}};
                \foreach \i in {1,2,3,4}{\wt [green]{-0.5+2-\i}{0+0.5}};
                \foreach \i in {0,1,2,3}{\wt [green]{-0.5+\i}{-1+0.5}};
                \foreach \i in {0,1,2}{\wt [green]{-0.5+2+\i}{-2+0.5}};

                \foreach \i in {0,1,2}{\wt [cyan]{-1-\i}{1+0.5}};
                \foreach \i in {0,1,2,3}{\wt [cyan]{-1+2-\i}{0+0.5}};
                \foreach \i in {0,1,2,3}{\wt [cyan]{-1+\i}{-1+0.5}};
                \foreach \i in {0,1,2}{\wt [cyan]{-1+2+\i}{-2+0.5}};

                \foreach \i in {0,1,2}{\wt [gray]{-2-\i}{2}};
                \foreach \i in {0,1,2,3}{\wt [gray]{-\i}{1}};
                \foreach \i in {0,1,2,3,4}{\wt [gray]{2-\i}{0}};
                \foreach \i in {0,1,2,3}{\wt [gray]{\i}{-1}};
                \foreach \i in {0,1,2}{\wt [gray]{2+\i}{-2}};
            \end{rootSystem}
        \end{tikzpicture}
        \caption{The colored disks (grey, red, cyan, green) are all the lifts of the vanishing set of $\set{\alpha_0,\alpha_2}$ in $\mathbb T$. Having the same color indicates having the same image in $\mathbb T$.}
        \label{fig:alpha02}
\end{figure}

Figures \ref{fig:alpha01} and \ref{fig:alpha02} respectively show the vanishing sets of $\set{\alpha_0,\alpha_1}$ and $\set{\alpha_0,\alpha_2}$ in $\mathbb T$ (recall the definition from section \ref{par:basicnotation2} (2)) lifted to $V$.
Having the same color indicates having the same image in $\mathbb T$.
Note that the yellow and blue lattices are $W$ conjugate, and the red, cyan and green lattices are also $W$ conjugate.
Thus respectively they give rise to the same element of $\mathcal N_{o,c}$.
It suffices thus to focus on the blue and green lattices respectively.
Clearly the blue lattice gives rise to a conjugacy class of order $3$, while the green one gives rise to a conjugacy class of order $2$.
The remaining matchings are thus
\begin{align}
    (\set{\alpha_0,\alpha_1},\emptyset) &\leftrightarrow (G_2(a_1),(123)), \nonumber\\
    (\set{\alpha_0,\alpha_2},\emptyset) &\leftrightarrow (G_2(a_1),(12)).
\end{align}

\subsection{Equivalence Classes of Unramified Nilpotent Orbits}
\paragraph{A Duality Map for Unramified Nilpotent Orbits}
\label{sec:duality}
\nomenclature{$d_S$}{}
\nomenclature{$d_{S,\mathscr O,\bfT_K}$}{}
\nomenclature{$d_{S,x_0,\bfT_K}$}{}
\nomenclature{$\mathcal N_o^\vee$}{}
\nomenclature{$\Ind_{\bfl_c}^{\mf g^\vee}$}{}
\nomenclature{$d_{\bfl_c}$}{}
\nomenclature{$j_{\dot W_c}^WE$}{}
Fix a torus $\bfT_K$ and corresponding Langlands dual group $\bfG^\vee$ as in Section \ref{par:basicnotation1}.
Write $\mathcal N_o^\vee$ for $\mathcal N_o^{\bfG^\vee}(\C)$ and $\mathcal A$ for $\mathcal A(\bfT_K,K)$.
In \cite[Section 6]{sommersduality}, Sommers defines a duality map
\begin{equation}
    d_S:\mathcal N_{o,c} \to \mathcal N_o^\vee.
\end{equation}
Let $\mathscr O\in \bfG(K)\backslash \mathscr H$ and fix a hyperspecial point $x_0$ in $\mathscr O\cap \mathcal A$ to fix an isomorphism $\mathscr N/\bfT(\mf O^\times)\simeq \tilde W$.
Define
\begin{equation}
    d_{S,\mathscr O,\bfT_K} =  d_S \circ \theta_{\mathscr O,\bfT_K}: \mathcal N_o(K) \to \mathcal N_o^\vee.
\end{equation}
The map $d_S$ is natural under isomorphisms of complex reductive algebraic groups and so $d_{S,\mathscr O,\bfT_K}$ is natural in $\bfT_K$.
We wish to give a recipe for computing $d_{S,\mathscr O,\bfT_K}$ in practice.

Let $c\subseteq \mathcal A$ and let $E$ be a special representation of $\dot W_c$.
Recall from \cite[Theorem 10.7]{lusztig} that the representation $j_{\dot W_c}^WE$ obtained through truncated induction corresponds under the Springer correspondence for $\mf g^\vee(\C)$ to a nilpotent orbit $\OO^\vee\in \mathcal N_o^\vee$ and the trivial local system.
This gives us a map 
$$\Ind_{\bfl_c}^{\mf g^\vee}:\mathcal N_{o,sp}^{\bfL_c}(\barF_q) \to \mathcal N_o^\vee$$
(recall from the end of Section \ref{par:basicnotation1} the definition of $\mathcal N_{o,sp}^{\bfL_c}(\barF_q)$).
Let $d_{\bfl_c}$ denote the Lusztig--Spaltenstein duality 
$$d_{\bfl_c}:\mathcal N_o^{\bfL_c}(\barF_q)\to \mathcal N_{o,sp}^{\bfL_c}(\barF_q).$$
Define
\begin{equation}
    d_{S,x_0,\bfT_K}:I_{o,\mathcal A}^K\to \mathcal N_o^\vee,\quad (c,\OO) \mapsto \Ind_{\bfl_c}^{\mf g^\vee}\circ d_{\bfl_c}(\OO).
\end{equation}
\begin{proposition}
    \label{prop:ds}
    The following diagram commutes
    \begin{equation}
        \begin{tikzcd}[column sep = large]
            I_{o,\mathcal A}^K \arrow[r,"d_{S,x_0,\bfT_K}"] \arrow[d,swap,"\mathcal L"] & \mathcal N_o^\vee \\
            \mathcal N_o(K) \arrow[ur,swap,"d_{S,\mathscr O,\bfT_K}"]
        \end{tikzcd}
    \end{equation}
\end{proposition}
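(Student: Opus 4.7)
The strategy is to reduce to the case where $\OO$ is distinguished in $\bfl_c$ and then invoke Sommers' explicit description of $d_S$. For $(c,\OO)\in I_{o,d,\mathcal A}^K$, unwinding Theorems~\ref{thm:unramifiedparam} and~\ref{thm:debackbij} shows that $\theta_{\mathscr O,\bfT_K}(\mathcal L_c(\OO))\in\mathcal N_{o,c}$ is the McNinch--Sommers image of the triple $((L_c,tZ^\circ),\Lambda_c^{\barF_q}(\OO))\in I_{o,d,T}^{\mathbb C}$, where $\mf L_{x_0}(c) = (L_c,tZ^\circ)$. On this triple, Sommers' recipe for $d_S$ returns the nilpotent orbit of $\mf g^\vee$ whose Springer representation (with trivial local system) equals $j_{W_{L_c}}^W$ applied to the Springer representation of $d_{L_c}(\Lambda_c^{\barF_q}(\OO))$. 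By Corollary~\ref{cor:mfL}, $L_c$ and $\bfL_c(\barF_q)$ share the same root datum, so $W_{L_c}=\dot W_c$; since LS duality and the Springer correspondence depend only on the root datum, this orbit coincides with $\Ind_{\bfl_c}^{\mf g^\vee}(d_{\bfl_c}(\OO))$ as defined immediately before the proposition. This settles the distinguished case.

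For general $(c,\OO)\in I_{o,\mathcal A}^K$, apply classical Bala--Carter inside $\bfl_c$ to write $\OO = \Ind_{\mathbf m}^{\bfl_c}(\OO_d)$ for a Levi $\mathbf m\subseteq \bfl_c$ (containing $\bar\bfT_K$) and $\OO_d$ distinguished in $\mathbf m$. The simple roots of $\mathbf m$ form a subset $J'\subseteq \mathrm{Type}_{x_0,\Delta}(c)\subseteq \tilde\Delta$, so there is a face $c'\subseteq \mathcal A$ with $\mathrm{Type}_{x_0,\Delta}(c') = J'$ and $c\subseteq\overline{c'}$; under the identification of root data, $\OO_d$ transports to a distinguished orbit $\OO_d'$ of $\bfl_{c'}$. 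By Theorem~\ref{thm:affine_bala_carter}, $(c',\OO_d')\in I_{o,d,\mathcal A}^K$ is the distinguished representative of $\mathcal L_c(\OO)$, i.e.\ $\mathcal L_{c'}(\OO_d') = \mathcal L_c(\OO)$. Combining with the distinguished case gives
\begin{equation*}
    d_{S,\mathscr O,\bfT_K}(\mathcal L_c(\OO)) = \Ind_{\bfl_{c'}}^{\mf g^\vee}(d_{\bfl_{c'}}(\OO_d')).
\end{equation*}

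It then remains to check $\Ind_{\bfl_c}^{\mf g^\vee}(d_{\bfl_c}(\OO)) = \Ind_{\bfl_{c'}}^{\mf g^\vee}(d_{\bfl_{c'}}(\OO_d'))$. Via the Springer correspondence for $\mf g^\vee$, this is the identity of irreducible $W$-representations $j_{\dot W_c}^W(\mathrm{Spr}(d_{\bfl_c}(\OO))) = j_{\dot W_{c'}}^W(\mathrm{Spr}(d_{\bfl_{c'}}(\OO_d')))$. Since $c\subseteq\overline{c'}$ we have $\dot W_{c'}\subseteq \dot W_c$, and by transitivity of $j$-induction $j_{\dot W_{c'}}^W = j_{\dot W_c}^W\circ j_{\dot W_{c'}}^{\dot W_c}$, so the problem reduces to $j_{\dot W_{c'}}^{\dot W_c}(\mathrm{Spr}(d_{\bfl_{c'}}(\OO_d'))) = \mathrm{Spr}(d_{\bfl_c}(\OO))$---Spaltenstein's compatibility between LS duality and Richardson induction at the level of Springer representations. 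This identity, together with the matching of the Bala--Carter Levi $\mathbf m\subseteq\bfl_c$ with the face $c'$ via affine Bala--Carter, is the main obstacle: both are classical but require careful tracking of conventions (notably trivial versus sign local systems) through the parametrizations set up in Sections~2 and~3.
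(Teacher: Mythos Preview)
Your approach is essentially the same as the paper's: reduce to the distinguished case via the compatibility of Lusztig--Spaltenstein duality with saturation/induction and the fact that $\mathcal L_c(\OO)=\mathcal L_{c'}(\mathrm{Sat}_{c'}^{c}\OO_d)$, then for distinguished pairs unwind $\theta_{\mathscr O,\bfT_K}$ through $\Gamma_{x_0}$ and $\mathrm{MS}_{o,T}$ and invoke Sommers' description of $d_S$ (the paper cites \cite[Proposition~7]{Sommers2001} for exactly the identity $d_S\circ\mathrm{MS}_{o,T}(L,tZ^\circ,\OO)=\Ind_{\mf l}^{\mf g^\vee}d_{\mf l}(\OO)$ that you need).

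There is one slip to fix. You write ``apply classical Bala--Carter\dots to write $\OO=\Ind_{\mathbf m}^{\bfl_c}(\OO_d)$'', but Bala--Carter produces $\OO$ as the \emph{saturation} of $\OO_d$ from $\mathbf m$, not the Lusztig--Spaltenstein induction; these differ in general (e.g.\ the rigid non-distinguished orbit $\tilde A_1$ in $G_2$ is not induced from any proper Levi). Fortunately your subsequent computation is the correct one for saturation: the identity you isolate, $j_{\dot W_{c'}}^{\dot W_c}\mathrm{Spr}(d_{\bfl_{c'}}\OO_d')=\mathrm{Spr}(d_{\bfl_c}\OO)$, is exactly Spaltenstein's $d_{\bfl_c}\circ\mathrm{Sat}_{c'}^{c}=\Ind_{c'}^{c}\circ d_{\bfl_{c'}}$ read through the Springer correspondence, and this is precisely what is needed when $\OO=\mathrm{Sat}_{c'}^{c}(\OO_d')$. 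The paper records this reduction in one line by combining $d_{\bfl_{c'}}\circ\mathrm{Sat}_c^{c'}=\Ind_c^{c'}\circ d_{\bfl_c}$, transitivity $\Ind_{\bfl_{c'}}^{\mf g^\vee}\circ\Ind_{\bfl_c}^{\bfl_{c'}}=\Ind_{\bfl_c}^{\mf g^\vee}$, and $\mathcal L_c(\OO)=\mathcal L_{c'}(\mathrm{Sat}_c^{c'}\OO)$; your version unpacks the same three facts through the $j$-induction picture. Once you correct ``$\Ind$'' to ``$\mathrm{Sat}$'' in the Bala--Carter step, the argument goes through and matches the paper's.
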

\begin{proof}
    Let $c'\subseteq c$ be faces of $\mathcal A$.
    Then $\bfl_c(\barF_q)$ is the reductive quotient of the parabolic $\bfp_c(\mf O)/\bfu_{c'}(\mf O)$ in $\bfl_{c'}(\barF_q)$.
    For $\OO\in \mathcal N_o^{\bfL_c}(\barF_q)$, write $\Ind_c^{c'}\OO$ for the Lusztig--Spaltenstein induction of $\OO$ from $\bfl_{c}(\barF_q)$ to $\bfl_{c'}(\barF_q)$.
    Let $\ms {Sat}_c^{c'}$ denote the map from $\mathcal N_o^{\bfL_c}(\barF_q)$ to $\mathcal N_o^{\bfL_{c'}}(\barF_q)$ obtained by $\bfL_{c'}(\barF_q)$-saturation (note that although $\bfl_c(\barF_q)$ is naturally only a subquotient of $\bfl_{c'}(\barF_q)$, it is standard to identify it with a Levi factor of $\bfp_c(\mf O)/\bfu_{c'}(\mf O)$ and the saturation map is independent of this choice of Levi factor).
    Recall that 
    $$d_{\bfl_{c'}}\circ\ms{Sat}_c^{c'}\OO = \Ind_c^{c'}\circ d_{\bfl_{c}}\OO.$$
    Then since $\Ind_{\bfl_{c'}}^{\mf g^\vee}\circ\Ind_{\bfl_c}^{\bfl_{c'}} = \Ind_{\bfl_c}^{\mf g^\vee}$ and $\mathcal L_c(\OO) = \mathcal L_{c'}(\ms{Sat}_{c}^{c'}\OO)$ it suffices to show that the diagram commutes when $I_{o,\mathcal A}^K$ is replaced by $I_{o,d,\mathcal A}^{K}$.
    Now consider the following diagram
    \begin{equation}
        \begin{tikzcd}
            & I_{o,d,\mathcal A}^K \arrow[ddl,"\mathcal L"] \arrow[dr,red,"\color{black}\Gamma_{x_0}"] \arrow[dd] \arrow[ddrrr,red,bend left=20,"\color{black}d_{S,x_0,\bfT_K}"] \\
            & & I_{o,d,T}^\C \arrow[dr,red,"\color{black}\mathrm{MS}_{o,T}"] \arrow[d] \\
            \mathcal N_o(K) \arrow[rrr,bend right = 20,swap,"\theta_{\mathscr O,\bfT_K}"] & I_{o,d,\mathcal A}^K/\sim_{\mathcal A} \arrow{l}[swap]{\sim} \arrow{r}{\sim}[swap]{\tilde \Gamma_{x_0}} & I_{o,d,T}^\C/W \arrow[r,"\sim"] & \mathcal N_{o,c} \arrow[r,red,"\color{black}d_S"] & \mathcal N_o^\vee.
        \end{tikzcd}
    \end{equation}
    We wish to show that the outermost square commutes. 
    But we already know all the inner squares and triangles commute except for the square outlined by the red arrows.
    The maps $\Ind_{\bfl_c}^{\mf g^\vee}$ and $d_{\bfl_c}$ can also be defined on $\mathcal N_{o,sp}^{L}(\C)\to \mathcal N_o^\vee$ and $\mathcal N_o^{L}(\C)\to \mathcal N_{o,sp}^{L}(\C)$ where $L$ is a pseudo-Levi of $G$, using using the Springer correspondence over $\C$ instead of $\barF_q$.
    Define 
    $$d_{S,\bfT_K}^\C:I_{o,d,T}^\C\to \mathcal N_{o,c}, \quad (L,tZ_L^\circ,\OO) \mapsto \Ind_{\mf l}^{\mf g^\vee}\circ d_{\mf l}(\OO)$$
    where $\mf l$ is the Lie algebra of $L$.
    Since for good characteristic the Springer correspondence does not depend on the base field, it is clear that $d_{S,x_0,\bfT_K} = d_{S,\bfT_K}^\C \circ \Gamma_{x_0}$.
    It remains to show that $d_{S,\bfT_K}^\C = d_S\circ \mathrm{MS}_{o,T}$.
    This follows from \cite[Proposition 7]{Sommers2001}.
\end{proof}
One remarkable property of $d_S$ is that it does not depend on the isogeny of $G$ (in fact isogeny of semisimple quotient of $G$).
\nomenclature{$\mathcal N_{o,\bar c}$}{}
\nomenclature{$\mf Q$}{}
Let
\begin{equation}
    \mathcal N_{o,\bar c} = \set{(\OO,C):\OO\in \mathcal N_o(\C),C \in \mathcal C(\bar A(\OO))}
\end{equation}
where $\bar A(\OO)$ is Lusztig's canonical quotient of $A(\OO)$ as defined in \cite[Section 5]{sommersduality} and write $\mf Q:\mathcal N_{o,c}\to \mathcal N_{o,\bar c}$ for the map induced by $C\mapsto \bar C$ (where $\bar C$ denotes the image of $C$ in $\bar A(\OO)$).
Then \cite[Theorem 1]{achar} shows that $d_S$ factors through $\mf Q$.
We will call the resulting map from $\mathcal N_{o,\bar c}\to \mathcal N_o^\vee$ also $d_S$.
Let $Z_G/Z_G^\circ$ act trivially on $\mathcal N_{o,\bar c}$.
The argument in \cite[Section 5]{Sommers2001} shows that the map $\mf Q:\mathcal N_{o,c}\to \mathcal N_{o,\bar c}$ is $Z_G/Z_G^\circ$-equivariant.
\begin{proposition}
    \label{prop:indpd}
    Let $\mathscr O,\mathscr O'\in \bfG(K)\backslash \mathscr H$.
    Then $d_{S,\mathscr O,\bfT_K} = d_{S,\mathscr O',\bfT_K}$.
\end{proposition}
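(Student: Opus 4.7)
The plan is to reduce the statement to the fact that the Sommers duality map $d_S$ factors through the canonical quotient $\mf Q$, on which the centre acts trivially. The setup has been arranged so that this reduction is essentially automatic once the right equivariance statements are assembled.

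First, I would use Proposition \ref{prop:staction} to pick the unique element $tZ_G^\circ \in Z_G/Z_G^\circ$ with $\mathscr O' = (tZ_G^\circ).\mathscr O$. Then, applying Proposition \ref{prop:equivariance}, for every $\OO_K\in \mathcal N_o(K)$ we have
\begin{equation}
    \theta_{\mathscr O',\bfT_K}(\OO_K) = (tZ_G^\circ).\theta_{\mathscr O,\bfT_K}(\OO_K),
\end{equation}
where the action on the right is $(\OO,C)\mapsto (\OO,tC)$ on $\mathcal N_{o,c}$. So proving $d_{S,\mathscr O,\bfT_K} = d_{S,\mathscr O',\bfT_K}$ reduces to showing that $d_S:\mathcal N_{o,c}\to \mathcal N_o^\vee$ is invariant under the $Z_G/Z_G^\circ$-action on $\mathcal N_{o,c}$.

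For the invariance statement, I would invoke the two facts already recorded immediately before the Proposition: by \cite[Theorem 1]{achar} the map $d_S$ factors as $d_S = d_S \circ \mf Q$ (abusing notation for the induced map on $\mathcal N_{o,\bar c}$), and by \cite[Section 5]{Sommers2001} the projection $\mf Q:\mathcal N_{o,c}\to \mathcal N_{o,\bar c}$ is $Z_G/Z_G^\circ$-equivariant with respect to the trivial action on the target. Composing, for any $(\OO,C)\in \mathcal N_{o,c}$ and $tZ_G^\circ\in Z_G/Z_G^\circ$,
\begin{equation}
    d_S(\OO,tC) = d_S\circ\mf Q(\OO,tC) = d_S((tZ_G^\circ).\mf Q(\OO,C)) = d_S\circ\mf Q(\OO,C) = d_S(\OO,C),
\end{equation}
which gives the required $Z_G/Z_G^\circ$-invariance of $d_S$. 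Combining with the equivariance of $\theta_{-,\bfT_K}$ yields $d_{S,\mathscr O,\bfT_K}=d_{S,\mathscr O',\bfT_K}$.

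There is no real obstacle here: all of the substantive content has been packaged into the cited results of Sommers and Achar, and Propositions \ref{prop:staction} and \ref{prop:equivariance} precisely translate the change of hyperspecial orbit into the $Z_G/Z_G^\circ$-action on $\mathcal N_{o,c}$. The only thing to be slightly careful about is keeping the two different uses of ``$d_S$'' (on $\mathcal N_{o,c}$ and on $\mathcal N_{o,\bar c}$) straight, and noting that the simple transitivity from Proposition \ref{prop:staction} is what lets us reduce from an arbitrary pair $(\mathscr O,\mathscr O')$ to a single group element $tZ_G^\circ$.
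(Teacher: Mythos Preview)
Your proof is correct and follows essentially the same approach as the paper: both use Proposition \ref{prop:staction} to produce the element $tZ_G^\circ$, Proposition \ref{prop:equivariance} for the equivariance of $\theta_{-,\bfT_K}$, and then the factorisation $d_S = d_S\circ\mf Q$ together with the $Z_G/Z_G^\circ$-equivariance of $\mf Q$ (with trivial action on $\mathcal N_{o,\bar c}$) to conclude. The paper's proof is just a terser presentation of the same chain of equalities.
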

\begin{proof}
    By Proposition \ref{prop:staction} there exists a (unique) $tZ_G^\circ$ such that $(tZ_G^\circ).\mathscr O = \mathscr O'$.
    Thus
    \begin{align*}
        d_{S,\mathscr O',\bfT_K} &= d_S \circ \mf Q \circ \theta_{(tZ_G^\circ).\mathscr O,\bfT_K} = d_S \circ \mf Q((tZ_G^\circ).\theta_{\mathscr O,\bfT_K}) \\
        &= d_S\circ \mf Q\circ \theta_{\mathscr O,\bfT_K} = d_{S,\mathscr O,\bfT_K}
    \end{align*}
    where the third equality uses the $Z_G/Z_G^\circ$-equivariance of $\mf Q$.
\end{proof}
\nomenclature{$d_{S,\bfT_K}$}{}
\nomenclature{$\tilde\theta_{\bfT_K}$}{}
Thus we can define $d_{S,\bfT_K}$ to be the map $d_{S,\mathscr O,\bfT_K}$ where $\mathscr O$ is any element of $\bfG(K)\backslash \mathscr H$ and this is well defined.
We will also define $\tilde \theta_{\bfT_K} = \mf Q\circ \theta_{\mathscr O,\bfT_K}$ where $\mathscr O\in \bfG(K)\backslash \mathscr H$.
This is also independent of $\mathscr O$ by the $Z_G/Z_G^\circ$-equivariance of $\mf Q$.

\paragraph{An equivalence relation on $\mathcal N_o(K)$}
\nomenclature{$\le_A$}{}
Recall that in \cite{achar}, Achar defined a pre-order $\le_A$ on $\mathcal N_{o,c}$ by $(\OO_1,C_1)\le_A (\OO_2,C_2)$ if $\OO_1\le \OO_2$ and $d_S(\OO_1,C_1)\ge d_S(\OO_2,C_2)$. 
We can transport it over $\mathcal N_o(K)$ via a $\theta_{\mathscr O,\bfT_K}$ for some $\mathscr O\in \bfG(K)\backslash \mathscr H$ to obtain a pre-order on $\mathcal N_o(K)$.
By Theorem \ref{thm:unramifiedparam}, this pre-order can be expressed as: $\OO_1,\OO_2\in \mathcal N_o(K)$
$$\OO_1\le_A \OO_2 \iff \mathcal N_o(\bar k/K)(\OO_1) \le \mathcal N_o(\bar k/K)(\OO_2) \text{ and } d_{S,\bfT_K}(\OO_1)\ge d_{S,\bfT_K}(\OO_2).$$
By Proposition \ref{prop:indpd} this pre-order is independent of the choice of $\mathscr O$.
By naturality of $d_{S,\mathscr O,\bfT_K}$ in $\bfT_K$, this pre-order also does not depend on the choice of $\bfT_K$.
Now define $\OO_1\sim_A\OO_2$ if $\OO_1\le_A \OO_2$ and $\OO_2\le_A \OO_1$.
\begin{theorem}
    \label{thm:thetabar}
    \nomenclature{$\bar\theta_{\bfT_K}$}{}
    The fibres of the map $\tilde\theta_{\bfT_K}$ are exactly the $\sim_A$-equivalence classes on $\mathcal N_o(K)$.
    In particular $\tilde\theta_{\bfT_K}$ descends to a natural (in $\bfT_K$) bijection 
    $$\bar\theta_{\bfT_K}:\mathcal N_o(K)/\sim_A\to \mathcal N_{o,\bar c}.$$
\end{theorem}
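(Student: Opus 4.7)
The plan is to transport the problem to one about $\mathcal N_{o,c}$ using the bijection $\theta_{\mathscr O,\bfT_K}$ from Theorem \ref{thm:unramifiedparam}. Fix any $\mathscr O \in \bfG(K)\backslash \mathscr H$ and, for $\OO \in \mathcal N_o(K)$, write $\theta_{\mathscr O,\bfT_K}(\OO) = (\mathcal O, C)$. By Theorem \ref{thm:unramifiedparam} the first coordinate $\mathcal O$ records $\Lambda_{\bfT_K}^{\bar k}(\mathcal N_o(\bar k/K)(\OO))$, and by the definition of $d_{S,\bfT_K}$ together with Proposition \ref{prop:indpd} we have $d_{S,\bfT_K}(\OO) = d_S(\mathcal O, C)$. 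Setting $(\mathcal O_i,C_i) = \theta_{\mathscr O,\bfT_K}(\OO_i)$, the definition of $\sim_A$ becomes
\[
\OO_1 \sim_A \OO_2 \iff \mathcal O_1 = \mathcal O_2 \text{ and } d_S(\mathcal O_1, C_1) = d_S(\mathcal O_2, C_2),
\]
whereas
\[
\tilde\theta_{\bfT_K}(\OO_1) = \tilde\theta_{\bfT_K}(\OO_2) \iff \mathcal O_1 = \mathcal O_2 \text{ and } \bar C_1 = \bar C_2 \text{ in } \bar A(\mathcal O_1).
\]
So the theorem reduces to showing that for fixed $\mathcal O \in \mathcal N_o$ and $C_1, C_2 \in \mathcal C(A(\mathcal O))$, we have $\bar C_1 = \bar C_2$ if and only if $d_S(\mathcal O, C_1) = d_S(\mathcal O, C_2)$.

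The forward implication is immediate from \cite[Theorem 1]{achar}, which is exactly the statement that $d_S$ factors through $\mf Q$. The reverse implication is the substantive input, and the main obstacle: it asserts that the induced map $\bar d_S : \mathcal N_{o,\bar c} \to \mathcal N_o^\vee$ is injective on each fibre over $\mathcal O$. This is a defining feature of Lusztig's canonical quotient $\bar A(\OO)$, which is built precisely so that distinct conjugacy classes in $\bar A(\OO)$ are separated by $d_S$; I will extract the required injectivity from the construction of $\bar A(\OO)$ in \cite[Section 5]{Sommers2001}, where the conjugacy classes of $\bar A(\OO)$ parameterise the fibre of $\bar d_S$ above $\mathcal O$.

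With both implications in hand, the fibres of $\tilde\theta_{\bfT_K}$ coincide with the $\sim_A$-classes on $\mathcal N_o(K)$, so $\tilde\theta_{\bfT_K}$ descends to a well-defined injection $\bar\theta_{\bfT_K} : \mathcal N_o(K)/\sim_A \to \mathcal N_{o,\bar c}$. Surjectivity is automatic because $\theta_{\mathscr O,\bfT_K}$ is a bijection and $\mf Q$ is surjective by construction. Independence from $\mathscr O$ has already been established for $\tilde\theta_{\bfT_K}$ via the $Z_G/Z_G^\circ$-equivariance of $\mf Q$ discussed just before the theorem, and this property descends to $\bar\theta_{\bfT_K}$. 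Naturality in $\bfT_K$ follows from Theorem \ref{thm:naturality} together with the naturality of the quotient $A(\OO) \to \bar A(\OO)$ under isomorphisms of reductive groups, which ensures that $\mf Q$ intertwines $\beta_{\bfT_K,\bfT_K'}$ with its analogue on $\mathcal N_{o,\bar c}$.
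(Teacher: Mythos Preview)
Your proposal is correct and follows essentially the same route as the paper. The paper's proof is a one-liner citing \cite[Theorem 1]{achar} for the statement that the fibres of $\mf Q$ on $\mathcal N_{o,c}$ are exactly the $\sim_A$-equivalence classes there; since $\tilde\theta_{\bfT_K}=\mf Q\circ\theta_{\mathscr O,\bfT_K}$ and $\le_A$ on $\mathcal N_o(K)$ is by definition the transport of Achar's pre-order via the bijection $\theta_{\mathscr O,\bfT_K}$, this immediately gives the result. You unpack the same argument explicitly and split the citation, invoking \cite[Theorem 1]{achar} for the forward direction and the construction of $\bar A(\OO)$ in Sommers for the reverse, which is perfectly fine since Achar's theorem packages both directions.
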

\begin{proof}
    The fibres of $\mf Q$ are precisely the equivalence classes of $\le_A$ on $\mathcal N_{o,c}$ by \cite[Theorem 1]{achar}.
    Naturality in $\bfT_K$ is inherited from naturality of $\theta_{\mathscr O,\bfT_K}$ and $\mf Q$.
\end{proof}
By construction $\le_A$ descends to a partial order on $\mathcal N_o(K)$.
\begin{lemma}
    Let $\OO_1,\OO_2\in \mathcal N_o(K)$.
    If $\OO_1\sim_A\OO_2$ then 
    $$\mathcal N_o(\bar k/K)(\OO_1) = \mathcal N_o(\bar k/K)(\OO_2).$$
\end{lemma}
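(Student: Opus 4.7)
The plan is to unwind the definition of $\sim_A$ directly and invoke antisymmetry of the closure order on $\mathcal N_o(\bar k)$. By definition, $\OO_1 \sim_A \OO_2$ means $\OO_1 \le_A \OO_2$ and $\OO_2 \le_A \OO_1$. Reading off the first clause in the definition of $\le_A$ (the condition involving $\mathcal N_o(\bar k/K)$), these two inequalities yield
\begin{equation*}
    \mathcal N_o(\bar k/K)(\OO_1) \le \mathcal N_o(\bar k/K)(\OO_2)
    \quad\text{and}\quad
    \mathcal N_o(\bar k/K)(\OO_2) \le \mathcal N_o(\bar k/K)(\OO_1).
\end{equation*}

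The closure ordering on $\mathcal N_o(\bar k)$ is a genuine partial order: this is standard over an algebraically closed field (and was implicitly used already in section \ref{par:basicnotation1}, where $\mathcal N_o(F)$ is declared to be a partially ordered set via Zariski closure whenever $F$ is algebraically closed). Antisymmetry then gives $\mathcal N_o(\bar k/K)(\OO_1) = \mathcal N_o(\bar k/K)(\OO_2)$, as required. There is no real obstacle here; the content of the statement is entirely built into the definition of $\le_A$.
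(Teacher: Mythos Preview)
Your proof is correct and is precisely the argument the paper has in mind; the paper's own proof consists of the single word ``Obvious.'' You have simply made explicit the unwinding of the definition of $\sim_A$ together with antisymmetry of the closure order on $\mathcal N_o(\bar k)$.
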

\begin{proof}
    Obvious.
\end{proof}
The saturation map $\mathcal N_o(\bar k/K)$ is constant on $\sim_A$ classes.
\begin{lemma}
    \label{lem:orderrev}
    Let $(c,\OO_1'),(c,\OO_2')\in I_{o,\mathcal A}^K$ and let $\OO_i = \mathcal L_c(\OO_i')$ for $i=1,2$.
    If $\OO_1'\le \OO_2'$ then $d_{S,\bfT_K}(\OO_2)\le d_{S,\bfT_K}(\OO_1)$.
    In particular $\OO_1\le_A\OO_2$.
\end{lemma}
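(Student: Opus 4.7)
The plan is to reduce the statement to two classical facts about Lusztig--Spaltenstein duality via Proposition \ref{prop:ds}. Concretely, by that proposition we have
\begin{equation}
    d_{S,\bfT_K}(\OO_i) = d_{S,\mathscr O,\bfT_K}(\mathcal L_c(\OO_i')) = \Ind_{\bfl_c}^{\mf g^\vee}\circ d_{\bfl_c}(\OO_i'), \qquad i=1,2,
\end{equation}
so it suffices to show that the composition $\Ind_{\bfl_c}^{\mf g^\vee}\circ d_{\bfl_c}$ is order-reversing on $\mathcal N_o^{\bfL_c}(\barF_q)$.

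First I would invoke the classical fact that Lusztig--Spaltenstein duality $d_{\bfl_c}:\mathcal N_o^{\bfL_c}(\barF_q)\to \mathcal N_{o,sp}^{\bfL_c}(\barF_q)$ is order-reversing, so that $\OO_1'\le \OO_2'$ implies $d_{\bfl_c}(\OO_1')\ge d_{\bfl_c}(\OO_2')$. It then remains to check that $\Ind_{\bfl_c}^{\mf g^\vee}:\mathcal N_{o,sp}^{\bfL_c}(\barF_q)\to\mathcal N_o^\vee$ is order-preserving. Here the key input is that $\Ind_{\bfl_c}^{\mf g^\vee}$ coincides, via the Springer correspondence, with $j$-induction $j_{\dot W_c}^W$ applied to special representations, and this is the same map that arises in Lusztig--Spaltenstein theory for pseudo-Levis in $\mf g^\vee$: if $\OO\le \OO'$ are special orbits in $\bfl_c$ then the corresponding special orbits of $\mf g^\vee$ are comparable in the same direction. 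This can either be cited from Sommers' duality paper (where $d_S$ is constructed and its monotonicity on fibres established) or proved directly by arguing that both $d_{\bfl_c}$ and $d$ for $\mf g^\vee$ commute with Lusztig--Spaltenstein induction in the appropriate sense and that Lusztig--Spaltenstein induction is order-preserving on nilpotent orbits.

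Combining the two steps, $\OO_1'\le\OO_2'$ yields $d_{S,\bfT_K}(\OO_1) = \Ind_{\bfl_c}^{\mf g^\vee}(d_{\bfl_c}(\OO_1')) \ge \Ind_{\bfl_c}^{\mf g^\vee}(d_{\bfl_c}(\OO_2')) = d_{S,\bfT_K}(\OO_2)$, which is the first claim. For the ``in particular'' part, by Corollary \ref{cor:alginc} the hypothesis $\OO_1'\le \OO_2'$ (with the same face $c$) implies $\mathcal N_o(\bar k/K)(\OO_1)\le \mathcal N_o(\bar k/K)(\OO_2)$. Together with the inequality $d_{S,\bfT_K}(\OO_1)\ge d_{S,\bfT_K}(\OO_2)$ just established, the definition of $\le_A$ gives $\OO_1\le_A\OO_2$.

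The main obstacle is verifying that $\Ind_{\bfl_c}^{\mf g^\vee}$ preserves the closure order on special orbits; the rest is essentially bookkeeping against Proposition \ref{prop:ds} and Corollary \ref{cor:alginc}. Once this monotonicity is in place, the rest of the argument is immediate.
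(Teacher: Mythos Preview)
Your approach is essentially identical to the paper's: both reduce via Proposition~\ref{prop:ds} to the order-reversing property of $d_{\bfl_c}$ composed with the order-preserving property of $\Ind_{\bfl_c}^{\mf g^\vee}$. The only difference is that what you flag as the ``main obstacle'' (monotonicity of $\Ind_{\bfl_c}^{\mf g^\vee}$) the paper dispatches in one line by citing \cite[Theorem 12.5]{spaltenstein}, so there is no need for the indirect routes through Sommers' paper or through commuting dualities that you sketch.
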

\begin{proof}
    If $\OO_1'\le \OO_2'$ then $d_{\bfl_c}(\OO_2')\le d_{\bfl_c}(\OO_1')$.
    But $\Ind_{\bfl_c}^{\mf g^\vee}$ is order preserving by \cite[Theorem 12.5]{spaltenstein} and so $d_{S,\bfT_K}(\OO_2)\le d_{S,\bfT_K}(\OO_1)$.
\end{proof}

\paragraph{The Canonical Unramified Wavefront Set}
\nomenclature{$^K\WF(X)$}{}
Let $(\pi,X)$ be a depth-$0$ representation of $\bfG(k)$.
Define the \emph{canonical unramified wavefront set} of $(\pi,X)$ to be
$$^K\WF(X):=\max_{c\subseteq \mathcal B(\bfG,k)}[\mathcal L_c(\hphantom{ }^{\mf F}\WF(X^{\ms U_c}))] \quad (\subseteq \mathcal N_o(K)/\sim_A).$$
The canonical unramified wavefront set has many of the nice properties that $^K\tilde\WF(X)$ has.
\begin{lemma}
    \label{lem:charfreewf}
    Let $(\pi,X)$ be a depth-$0$ representation of $\bfG(k)$.
    Then 
    $$^K\WF(X) = \max [\hphantom{ }^K\Xi(X)].$$
\end{lemma}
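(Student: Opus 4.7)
The plan is to leverage the proof strategy of Theorem \ref{lem:liftwf}, upgraded by Lemma \ref{lem:orderrev} which tells us that lifted closure relations descend to $\le_A$-relations. By definition $\hphantom{ }^K\WF(X) = \max_{c\subseteq \mathcal B(\bfG,k)}[\hphantom{ }^K\WF_c(X)]$, and in the proof of Theorem \ref{lem:liftwf} we already saw $\hphantom{ }^K\WF_c(X)\subseteq \hphantom{ }^K\Xi(X)$ for every $c$, so $[\hphantom{ }^K\WF_c(X)]\subseteq [\hphantom{ }^K\Xi(X)]$. It therefore suffices to show two things: that every element of $\hphantom{ }^K\WF(X)$ is maximal in $[\hphantom{ }^K\Xi(X)]$, and that every maximal element of $[\hphantom{ }^K\Xi(X)]$ lies in $\hphantom{ }^K\WF(X)$.

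The central technical step I would isolate is the following claim: for every $\OO\in \hphantom{ }^K\Xi(X)$ there exist a face $c\subseteq \mathcal B(\bfG,k)$ and an orbit $\OO_2\in \hphantom{ }^K\WF_c(X)$ such that $\OO\le \OO_2$ is a \emph{lifted} closure relation in $\mathcal N_o(K)$. This is essentially the second half of the proof of Theorem \ref{lem:liftwf}: write $\OO=\mathcal N_o(K/k)(\OO_1)$ with $\OO_1\in \Xi(X)$, pick $(c,\OO_1')\in I_o^k(\OO_1)$ with $\Theta_X(f_{c,\OO_1'})\ne 0$, select an irreducible constituent $W$ of $\check X^{\bfU_c(\mf o)}$ with $\langle\Gamma_{\OO_1'},W\rangle\ne 0$, and choose a Kawanaka wavefront-set nilpotent $\OO_2'\in \hphantom{ }^{\barF_q}\WF(\check X^{\bfU_c(\mf o)})$ dominating $\mathcal N_o(\barF_q/\mathbb F_q)(\OO_1')$ (such a $\OO_2'$ exists by the definition of the Kawanaka wavefront set). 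Setting $\OO_2:=\mathcal L_c(\OO_2')$, the identity $\mathcal N_o(K/k)\circ\mathcal L_c=\mathcal L_c\circ \mathcal N_o(\barF_q/\mathbb F_q)$ together with Proposition \ref{prop:lifted_rel} gives
\[\OO=\mathcal L_c(\mathcal N_o(\barF_q/\mathbb F_q)(\OO_1'))\le \mathcal L_c(\OO_2')=\OO_2,\]
and this inequality is a lifted closure relation since both sides arise from lifting $\bfL_c(\barF_q)$-orbits attached to the same face $c$. Applying Lemma \ref{lem:orderrev} we conclude $[\OO]\le_A[\OO_2]$, with $[\OO_2]\in [\hphantom{ }^K\WF_c(X)]$.

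The two inclusions then follow formally. If $[\OO]$ is maximal in $[\hphantom{ }^K\Xi(X)]$, fix any representative $\OO\in \hphantom{ }^K\Xi(X)$ and apply the key step to produce $\OO_2\in \hphantom{ }^K\WF_c(X)\subseteq \hphantom{ }^K\Xi(X)$ with $[\OO]\le_A[\OO_2]$; maximality forces $[\OO]=[\OO_2]\in \hphantom{ }^K\WF(X)$. Conversely, suppose $[\OO]\in \hphantom{ }^K\WF(X)$ and let $[\OO'']\in [\hphantom{ }^K\Xi(X)]$ satisfy $[\OO]\le_A[\OO'']$; applying the key step to $\OO''$ yields some $c$ and $\OO_2\in \hphantom{ }^K\WF_c(X)$ with $[\OO'']\le_A[\OO_2]$, so $[\OO]\le_A[\OO_2]$, and the maximality of $[\OO]$ among the $[\hphantom{ }^K\WF_c(X)]$'s forces $[\OO]=[\OO_2]=[\OO'']$. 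Hence $[\OO]\in \max[\hphantom{ }^K\Xi(X)]$. The only delicate point in the whole argument is confirming that the chain $\OO\le \OO_2$ produced from the proof of Theorem \ref{lem:liftwf} is genuinely \emph{lifted} (so Lemma \ref{lem:orderrev} applies); once one tracks that both sides are $\mathcal L_c$ of comparable $\bfL_c(\barF_q)$-orbits, no further work is needed.
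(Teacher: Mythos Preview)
Your proposal is correct and follows essentially the same approach as the paper: both mirror the proof of Theorem \ref{lem:liftwf} and use Lemma \ref{lem:orderrev} as the new ingredient to upgrade lifted closure relations to $\le_A$-relations. The only difference is that the paper, after obtaining $\OO\sim_A\OO_3$, invokes Corollary \ref{cor:alginc} to force $\OO_2'=\OO_3'$ and hence $\OO\in{}^K\WF_c(X)$ as an \emph{orbit}, whereas you work entirely at the level of $\sim_A$-classes and conclude only $[\OO]\in[{}^K\WF_c(X)]$ --- which is all the statement actually requires, so your argument is in fact slightly more economical.
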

\begin{proof}
    The proof is exactly the same as Lemma \ref{lem:liftwf} except we need to be careful for the second part which we now provide the details for.
    Let $\OO$ be a maximal element of $^K\Xi(\pi)$ with respect to $\le_A$.
    Write $\OO = \mathcal N_o(K/k)(\OO_1)$ where $\OO_1\in \Xi(X)$.
    Let $(c,\OO_1')\in I_o^k(\OO_1)$.
    Let 
    $$\OO_2' = \mathcal N_o^{\bfL_c}(\barF_q/\mathbb F_q)(\OO_1').$$
    Let $\OO_3'$ be a wavefront set nilpotent for $V^{\bfU_c(\mf o)}$ such that $\OO_2' \le \OO_3'$.
    Let $\OO_3 = \mathcal L_c(\OO_3')$.
    $\OO_3'$ is an element of $^K\Xi(\pi)$.
    By Lemma \ref{lem:orderrev}, $\OO_2' \le \OO_3'$ implies that $\OO \le_A \OO_3$.
    By maximality of $\OO$ in $^K\Xi(\pi)$ we get that 
    $$\OO\sim_A\OO_3.$$
    In particular
    $$\mathcal N_o(\bar k/K)(\OO) = \mathcal N_o(\bar k/K)(\OO_3).$$
    Thus by corollary \ref{cor:alginc} we have that $\OO_2'= \OO_3'$.
    It follows that $\OO \in \hphantom{ }^K\WF_c(\pi)$.
\end{proof}
\begin{theorem}
    \label{thm:algwf}
    Let $(\pi,X)$ be a depth $0$ representation of $\bfG(k)$.
    Then
    $$^{\bar k}\WF(X) = \max \mathcal N(\bar k /K)(\hphantom{ }^K\WF(X)).$$
\end{theorem}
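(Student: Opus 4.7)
The plan is to show both sides of the identity equal $\max\,\mathcal N_o(\bar k/K)({}^K\Xi(X))$ by two applications of a single formal fact, together with the observation that $\mathcal N_o(\bar k/K)$ is compatible with $\sim_A$.

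First I would unpack the left-hand side. Combining Corollary \ref{cor:maxl} with Proposition \ref{prop:wfs} gives
$$^{\bar k}\WF(X) = \max\{\mathcal N_o(\bar k/K)(\OO):\OO\in \max\, {}^K\Xi(X)\},$$
where the inner maximum is taken with respect to the closure ordering on $\mathcal N_o(K)$. Because $\mathcal N_o(\bar k/K):\mathcal N_o(K)\to \mathcal N_o(\bar k)$ is non-decreasing (as used in the proof of Proposition \ref{prop:wfs}), the elementary fact that $\max f(S)=\max f(\max S)$ for any order-preserving $f$ between posets and any finite $S$ on which the maximum is taken allows me to drop the inner maximum and rewrite
$$^{\bar k}\WF(X) = \max\,\mathcal N_o(\bar k/K)({}^K\Xi(X)).$$

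Next I would treat the right-hand side analogously. By Lemma \ref{lem:charfreewf}, ${}^K\WF(X) = \max[{}^K\Xi(X)]$, where the maximum is taken with respect to $\le_A$ on $\mathcal N_o(K)/\sim_A$. The lemma immediately following Theorem \ref{thm:thetabar} shows that $\mathcal N_o(\bar k/K)$ is constant on $\sim_A$-classes, so it descends to a well-defined map $\overline{\mathcal N_o(\bar k/K)}:\mathcal N_o(K)/\sim_A\to \mathcal N_o(\bar k)$; directly from the definition of $\le_A$ this descended map is order-preserving with respect to $\le_A$. Applying the same elementary fact to $\overline{\mathcal N_o(\bar k/K)}$ and $S=[{}^K\Xi(X)]$ yields
$$\max\,\mathcal N_o(\bar k/K)({}^K\WF(X)) = \max\,\overline{\mathcal N_o(\bar k/K)}([{}^K\Xi(X)]) = \max\,\mathcal N_o(\bar k/K)({}^K\Xi(X)),$$
where the last equality uses that $\mathcal N_o(\bar k/K)$ is constant on $\sim_A$-classes. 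Comparing the two expressions gives the theorem.

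There is no substantive obstacle: once Lemma \ref{lem:charfreewf} is in hand, the argument is purely formal, depending only on (i) the fact that $\mathcal N_o(\bar k/K)$ is monotone both with respect to the closure ordering on $\mathcal N_o(K)$ and with respect to $\le_A$ on the quotient, and (ii) the trivial observation that applying an order-preserving map before or after passing to maximal elements produces the same maximal set. The one thing worth recording carefully in the write-up is (ii): if $f:(P,\le_P)\to (Q,\le_Q)$ is order-preserving and every element of a finite $S\subseteq P$ lies below some element of $\max S$, then every element of $f(S)$ lies below an element of $f(\max S)\subseteq f(S)$, which forces $\max f(S)=\max f(\max S)$.
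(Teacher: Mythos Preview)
Your proposal is correct and follows essentially the same approach as the paper: both arguments reduce each side to $\max\,\mathcal N_o(\bar k/K)({}^K\Xi(X))$ via Proposition~\ref{prop:wfs}, Corollary~\ref{cor:maxl}, and Lemma~\ref{lem:charfreewf}, using that $\mathcal N_o(\bar k/K)$ is order-preserving with respect to $\le_A$. Your write-up is simply more explicit about the formal poset fact $\max f(S)=\max f(\max S)$, which the paper leaves implicit.
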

\begin{proof}
    By Proposition \ref{prop:wfs} and corollary \ref{cor:maxl}, $^{\bar k}\WF(X)$ consists of the maximal elements of $\mathcal N_o(\bar k/K)(\hphantom{ }^K\Xi(X))$.
    Since $\mathcal O_1\le_A \mathcal O_2$ implies that $^{\bar k}\mathcal O_1\le \hphantom{ }^{\bar k}\mathcal O_2$, the results follows by Lemma \ref{lem:charfreewf}.
\end{proof}

\paragraph{Achar's Duality Map}
\label{par:acharduality}
\nomenclature{$\mathcal N_{o,\bar c}^\vee$}{}
\nomenclature{$d_A$}{}
\nomenclature{$d$}{}
\nomenclature{$d_{LS}$}{}
Let $\mathcal N_{o,c}^\vee,\mathcal N_{o,\bar c}^\vee$ be the analogous objects of $\mathcal N_{o,c}$ and $\mathcal N_{o,\bar c}$ for $G^\vee$.
In \cite{achar}, Achar introduces duality maps
\begin{equation}
    \begin{tikzcd}
        \mathcal N_{o,\bar c} \arrow[r,shift left,"d_A"] & \arrow[l,shift left,"d_A"] \mathcal N^\vee_{o,\bar c}.
    \end{tikzcd}
\end{equation}
which extend $d_{S}$ in the sense that the following diagram commutes
\begin{equation}
    \begin{tikzcd}
        \mathcal N_{o,c} \arrow[r,"d_S"] \arrow[d,"\mf Q"] & \mathcal N_o^\vee \\
        \mathcal N_{o,\bar c} \arrow[r,"d_A"] & \mathcal N_{o,\bar c}^\vee. \arrow[u,"\pr_1^\vee"]
    \end{tikzcd}
\end{equation}
Here $\pr_1^\vee:\mathcal N_{o,\bar c}\to\mathcal N_o^\vee$ denotes the projection onto the first factor.

Let $d:\mathcal N_o^\vee\to \mathcal N_o$ denote the Barbasch--Vogan duality \cite{bv} and $d_{LS}:\mathcal N_o\to \mathcal N_o$ denote the Lusztig--Spaltenstein duality \cite{spaltenstein}.

\begin{theorem}
    \label{thm:canoninv}
    \begin{enumerate}
        \item Let $\OO \in \mathcal N_o(K)$. Then $\Lambda_{\bfT_K}^{\bar k}\circ \mathcal N_o(\bar k/K)(\OO) \le d\circ d_{S,\bfT_K}(\OO)$.
        \item Let $\OO^\vee\in \mathcal N_o^\vee$. Then
        \begin{equation}
            \set{(\OO,C)\in \mathcal N_{o,c}:d_S(\OO,C) = \OO^\vee, \OO = d(\OO^\vee)} = \mf Q^{-1}(d_A(\OO^\vee,1)).
        \end{equation} 
        In particular this set is non-empty.
        \item Let $\OO\in \mathcal N_o(K)$ and $\OO^\vee\in \mathcal N_o^\vee$.
        If $d_{S,\bfT_K}(\OO)\ge \OO^\vee$ then $\bar\theta_{\bfT_K}([\OO])\le_A d_A(\OO^\vee,1)$.
    \end{enumerate}
\end{theorem}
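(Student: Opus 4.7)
The plan is to prove the three parts in order, with Part 3 following formally from Parts 1 and 2. For Part 1, I would translate the statement to the combinatorial side via Theorem \ref{thm:unramifiedparam}: writing $\theta_{\mathscr O, \bfT_K}(\OO) = (\OO_{\bar k}, C)$ with $\OO_{\bar k} = \Lambda_{\bfT_K}^{\bar k}(\mathcal N_o(\bar k/K)(\OO))$, one has $d_{S, \bfT_K}(\OO) = d_S(\OO_{\bar k}, C)$ by definition, so the desired inequality reduces to $\OO_{\bar k} \le d(d_S(\OO_{\bar k}, C))$. This is a well-established property of Sommers' duality, typically phrased as ``$d \circ d_S \ge \pr_1$ on $\mathcal N_{o, c}$'', with equality precisely when the orbit is special and the conjugacy class is trivial \cite{sommersduality,Sommers2001}.

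For Part 2, the plan is to exploit the factorisation $d_S = \pr_1^\vee \circ d_A \circ \mf Q$ encoded in the commutative square preceding the theorem statement. For the forward containment $\mf Q^{-1}(d_A(\OO^\vee, 1)) \subseteq \{\,\cdots\,\}$, any $(\OO, C)$ in the preimage satisfies $\mf Q(\OO, C) = (d(\OO^\vee), \bar C')$ (so $\OO = d(\OO^\vee)$), and $d_S(\OO, C) = \pr_1^\vee(d_A(d_A(\OO^\vee, 1))) = \OO^\vee$ by Achar's involution property for $d_A$ \cite{achar}. The reverse containment uses the characterisation of $d_A(\OO^\vee, 1)$ as the unique element of $\mathcal N_{o, \bar c}$ with first coordinate equal to $d(\OO^\vee)$ that maps to $\OO^\vee$ under $d_S \colon \mathcal N_{o, \bar c} \to \mathcal N_o^\vee$, again a consequence of Achar's main theorem; non-emptiness is then automatic since $\mf Q$ is surjective.

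For Part 3, the plan is to combine Parts 1 and 2 formally. Writing $\bar\theta_{\bfT_K}([\OO]) = (\OO_{\bar k}, \bar C)$ and $d_A(\OO^\vee, 1) = (d(\OO^\vee), \bar C')$, to verify $\le_A$ in $\mathcal N_{o, \bar c}$ I need both $\OO_{\bar k} \le d(\OO^\vee)$ and $d_S(\OO_{\bar k}, \bar C) \ge d_S(d(\OO^\vee), \bar C')$. The first combines Part 1 with the order-reversing property of $d$ applied to the hypothesis $d_{S, \bfT_K}(\OO) \ge \OO^\vee$; the second is immediate from $d_S(\OO_{\bar k}, \bar C) = d_{S, \bfT_K}(\OO) \ge \OO^\vee$ together with Part 2, which gives $d_S(d_A(\OO^\vee, 1)) = \OO^\vee$. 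The main obstacle will be Part 2: while the forward containment is routine given Achar's involution, the reverse requires a careful invocation of Achar's uniqueness theorem characterising $d_A$ as the specific refinement of $d_S$ that sends $(\OO^\vee, 1)$ to the pair with maximal first coordinate in the appropriate fibre, and care must be taken to check the direction in which $d_A^2 = \id$ applies, as the two directions of Achar's duality are genuinely different maps and the involution property holds only on the images.
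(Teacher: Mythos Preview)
Your proposal is correct and follows essentially the same approach as the paper. The paper proves Part~1 by the same chain of inequalities you outline (citing \cite[Proposition~2.3]{achar} for $d_S(\OO',C)\le d_S(\OO',1)=d(\OO')$, then applying $d$ and using $\OO'\le d\circ d(\OO')$), dispatches Part~2 by direct citation to \cite[Remark~14]{sommersduality} and \cite[Proposition~2.8]{achar}, and derives Part~3 exactly as you describe by combining Part~1 with the order-reversing property of $d$ and the identity $d_S(d_A(\OO^\vee,1))=\OO^\vee$ from Part~2.
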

\begin{proof}
    In \cite[Proposition 2.3]{achar} Achar proves that $d_S(\OO',C)\le d_S(\OO',1)$ for all $(\OO',C)\in \mathcal N_{o,c}$.
    But $d_S(\OO',1) = d(\OO')$.
    Thus $d\circ d(\OO')\le d\circ d_S(\OO',C)$.
    Since $\OO'\le d_{LS}\circ d_{LS}(\OO') = d\circ d(\OO')$ the first part follows.

    The second part is an immediate consequence of \cite[Remark 14]{sommersduality} and \cite[Proposition 2.8]{achar}.

    For the third part note that by part 1 of this theorem we have that 
    $$\Lambda_{\bfT_K}^{\bar k}\circ\mathcal N_o(\bar k/K)(\OO)\le d(\OO^\vee).$$
    By assumption we also have 
    $$d_S(\OO)\ge d_S(d_A(\OO^\vee,1)) = \OO^\vee.$$
    Since $p_1(d_A(\OO^\vee,1)) = d(\OO^\vee)$, it follows by definition that $\bar \theta([\OO])\le_A d_A(\OO^\vee,1)$.
\end{proof}

When using the duality maps to compute wavefront sets we will often want to consider the following modifications.
Define 
$$d_{A,\bfT_K}^K:\mathcal N_{o,\bar c}^\vee\to \mathcal N_{o}(K)/\sim_A, \quad d_A^K = (\bar\theta_{\bfT_K})^{-1} \circ d_A$$
and
$$d_{\bfT_K}^{\bar k}:\mathcal N_{o}^\vee\to \mathcal N_{o}(\bar k), \quad d^{\bar k} =  (\Lambda_{\bfT_K}^{\bar k})^{-1} \circ d.$$

\section{The Wavefront Set for the Principal Block}
\paragraph{Basic Notation}
\nomenclature{$\mathrm{Rep}(\bfG(k))$}{}
\nomenclature{$\mathcal B^\vee,\mathcal B^\vee_x$}{}
\nomenclature{$H^i(\mathcal B^\vee_S)^\rho$}{}
\nomenclature{$\mathrm{Irr}(A(S))_0$}{}
Let $\mathrm{Rep}(\bfG(k))$ denote the category of smooth representations of $\bfG(k)$.

If $H$ is a complex reductive group and $x$ is an element of $H$ or its Lie algebra $\mf h$, we write $H(x)$ for the centraliser of $x$ in $H$, and $A_H(x)$ for the group of connected components of $H(x)$. If $S$ is a subset of $H$ or $\mf h$ (or indeed, of $H \cup \mf h$), we can similarly define $H(S)$ and $A_H(S)$. We will sometimes write $A(x)$, $A(S)$ when the group $H$ is implicit. 

Write $\mathcal B^\vee$ for the flag variety of $G^\vee$, i.e. the variety of Borel subgroups $B^{\vee} \subset G^{\vee}$. Note that $\mathcal{B}^{\vee}$ has a natural left $G^{\vee}$-action. 
For $g\in G^\vee$, write
$$\mathcal B^\vee_g = \{B^\vee\in \mathcal B^\vee \mid g\in B^\vee \}.$$
(this coincides with the subvariety of Borels fixed by $g$). Similarly, for $x\in \mathfrak g^\vee$, write
$$\mathcal B^\vee_x = \{B^\vee\in \mathcal B^\vee \mid x\in \mathfrak b^\vee \}.$$
If $S$ is a subset of $G^{\vee}$ or $\mf g^{\vee}$ (or indeed of $G^{\vee} \cup \mf g^{\vee}$), write
$$\mathcal B^\vee_S = \bigcap_{x\in S} \mathcal{B}^{\vee}_x.$$
The singular cohomology group $H^i(\mathcal B^\vee_S,\C) = H^i(\mathcal{B}^{\vee}_S)$ carries an action of $A(S)=A_{G^\vee}(S)$. For an irreducible representation $\rho\in\mathrm{Irr}(A(S)))$, let 
$$H^i(\mathcal B^\vee_S)^\rho := \Hom_{A(S)}(\rho,H^i(\mathcal{B}^{\vee}_S)).$$ 
We will often consider the subset
\begin{equation}\label{eq:defofIrr0}
    \mathrm{Irr}(A(S))_0 := \{\rho \in \mathrm{Irr}(A(S)) \mid H^{\mathrm{top}}(\mathcal{B}_S^{\vee})^{\rho} \neq 0\}.
\end{equation}

\subsection{Local Wavefront Sets for the Principal Block}
\paragraph{Representations with Unipotent Cuspidal Support}
\label{s:unip-cusp}
\nomenclature{$\Pi^{Lus}(\bfG(k))$}{}
\begin{definition} Let $X$ be an irreducible smooth $\mathbf{G}({k})$-representation. We say that $X$ has \emph{unipotent cuspidal support} if there is a parahoric subgroup $\bfP_c \subset \bfG$ such that $X^{\mathbf U_c(\mf o)}$ contains an irreducible Deligne--Lusztig cuspidal unipotent representation of ${\mathbf L}_c(\mathbb F_q)$. Write $\Pi^{{Lus}}(\bfG( k))$ for the set of all such representations.
\end{definition}

\nomenclature{$\mathsf L(\bfG(k))$}{}
\nomenclature{$\mf g_q^\vee$}{}
\nomenclature{$s_0$}{}
Recall that an irreducible $\mathbf{G}({k})$-representation $V$ is \emph{Iwahori spherical} if $V^{\mathbf{I}(\mathfrak {o})} \neq 0$ for an Iwahori subgroup $\bfI(\mf o)$ of $\bfG(k)$. We note that all such representations have unipotent cuspidal support, corresponding to the case $\mathbf P_c=\mathbf I$ and the trivial representation of $\mathbf T(\mathbb F_q)$.

We will now recall the classification of irreducible representations of unipotent cuspidal support. Write $\ms L(\bfG( k))$ for the set of $G^\vee$-orbits (under conjugation) of triples $(s,n,\rho)$ such that
\begin{itemize}
    \item $s\in G^\vee$ is semisimple,
    \item $n\in \mathfrak g^\vee$ such that $\operatorname{Ad}(s) n=q n$,
    \item $\rho\in \mathrm{Irr}(A_{G^{\vee}}(s,n))$ such that $\rho|_{Z(G^\vee)}$ acts as the identity.
\end{itemize}
Without loss of generality, we may assume that $s\in T^\vee$. Note that $n\in\mathfrak g^\vee$ is necessarily nilpotent. The group $G^\vee(s)$ acts with finitely many orbits on the $q$-eigenspace of $\Ad(s)$
$$\mathfrak g_q^\vee=\{x\in\mathfrak g^\vee\mid \operatorname{Ad}(s) x=qx\}$$
In particular, there is a unique open $G^\vee(s)$-orbit in $\mathfrak g_q^\vee$.

Fix an $\mathfrak{sl}(2)$-triple $\{n^-,h,n\} \subset \mf g^{\vee}$ with $h\in \mathfrak t^\vee_{\mathbb R}$ and set
\begin{equation}
    \label{eq:s0}
    s_0:=sq^{-\frac{h}{2}}.
\end{equation}
Then $\operatorname{Ad}(s_0)n=n$.
Recall the definition of $T^\vee_c$ from section \ref{par:basicnotation1}.

\begin{theorem}[{Local Langlands correspondence, \cite[Theorem 7.12]{kl}\cite[Theorem 5.21]{Lu-unip1}}]\label{thm:Langlands} Suppose that $\bfG$ is adjoint and $ k$-split. There is a natural bijection
$$\ms L(\bfG( k))\xrightarrow{\sim} \Pi^{{Lus}}(\bfG( k)),\qquad (s,n,\rho)\mapsto X(s,n,\rho),$$
such that
\begin{enumerate}
    \item $X(s,n,\rho)$ is tempered if and only if $s_0\in T_c^\vee$ and $\overline {G^\vee(s)n}=\mathfrak g_q^\vee$,
    \item $X(s,n,\rho)$ is square integrable (modulo the center) if and only if it is tempered and $A_{G^{\vee}}(s,n)$ contains no nontrivial torus.
\item $X(s,n,\rho)^{\mathbf I(\mf o)}\neq 0$ if and only if $\rho\in \mathrm{Irr}(A(s,n))_0$, i.e., $\rho$ is such that 
$$H^{\mathrm{top}}(\mathcal B^\vee_{s,n})^\rho\neq 0.$$
\end{enumerate}
\end{theorem}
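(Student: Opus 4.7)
The plan is to reduce the classification of unipotent representations to a module-theoretic statement about (generalized) affine Hecke algebras via the theory of types, and then invoke the geometric classification of simple modules due to Kazhdan--Lusztig.

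First, I would treat the Iwahori-spherical case. The Borel--Casselman equivalence identifies $\mathrm{Rep}_{\bfI}(\bfG(k))$ with the category of finite-dimensional modules over the Iwahori--Hecke algebra $\mathcal H_{\bfI}$, and since $\bfG$ is split adjoint, $\mathcal H_{\bfI}$ is the extended affine Hecke algebra attached to $\mathcal R^\vee$ specialized at $q$. The Kazhdan--Lusztig proof of the Deligne--Langlands conjecture \cite{kl} then gives a bijection between simple $\mathcal H_{\bfI}$-modules and $G^\vee$-conjugacy classes of triples $(s,n,\rho)$ as in the statement, where $X(s,n,\rho)$ is realized as the unique simple quotient of a standard module constructed using equivariant K-theory of the Steinberg variety of $G^\vee$. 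For general unipotent cuspidal support, I would appeal to Lusztig's extension \cite{Lu-unip1}: given a pair $(c,\sigma)$ with $\sigma$ a cuspidal unipotent Deligne--Lusztig representation of $\bfL_c(\mathbb F_q)$, the compact induction of $\sigma$ from $\bfP_c(\mf o)$ is a type whose endomorphism algebra is (essentially) an extended affine Hecke algebra, attached this time to the endoscopic datum on $G^\vee$ prescribed by $(c,\sigma)$. Running the same geometric classification machinery on each such Hecke algebra and summing over the corresponding Bernstein components yields the full bijection $\ms L(\bfG(k)) \xrightarrow{\sim} \Pi^{Lus}(\bfG(k))$.

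Given the bijection, properties (1)--(3) are verified using the geometric structure of the construction. For (1), the Langlands classification of tempered representations says $X(s,n,\rho)$ is tempered iff the standard module coincides with its simple quotient, which via Atiyah--Bott--Lefschetz localization is exactly the geometric condition $\overline{G^\vee(s).n} = \mf g_q^\vee$; the condition $s_0 \in T_c^\vee$ translates to the Weil-group part of the associated $L$-parameter being unitary. Property (2) then follows at once from (1) combined with the standard characterization of discrete series as those tempered $L$-parameters whose centralizer modulo $Z(G^\vee)$ is finite, i.e.\ $A_{G^\vee}(s,n)$ contains no non-trivial torus. The most delicate point is (3): under the KL equivalence, having a non-zero $\bfI(\mf o)$-fixed vector corresponds, via Frobenius reciprocity, to $\rho$ appearing in the action of $A(s,n)$ on the top cohomology $H^{\mathrm{top}}(\mathcal B^\vee_{s,n})$, because standard modules admit a basis indexed by irreducible components of the Springer fiber and the simple quotient is the quotient by the kernel of the symmetric form on these components.

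The main obstacle is aligning the various parameterisations across Bernstein components with unipotent cuspidal support: the geometric data $(s,n,\rho)$ naturally lives on a (possibly disconnected) endoscopic group of $G^\vee$ determined by $(c,\sigma)$, and one must check that, after identifying all such data inside the ambient triples for $G^\vee$, the resulting union is exactly the indexing set $\ms L(\bfG(k))$ and that no triple is counted twice. This is where Lusztig's generalized Springer correspondence and the rigidity of the KL construction enter in an essential way, and where the hypothesis that $\bfG$ is split adjoint is used to avoid central and outer-automorphism subtleties.
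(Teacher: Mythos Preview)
The paper does not give its own proof of this theorem: it is stated as a known result with references to \cite[Theorem 7.12]{kl} and \cite[Theorem 5.21]{Lu-unip1}, and the surrounding text only adds a remark about standard modules pointing to \cite[Theorems 7.12, 8.2, 8.3]{kl} and \cite[\S10]{Lu-gradedII}. So there is nothing to compare against in the paper beyond the citations themselves.

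Your outline is broadly faithful to the strategy of those references: reduction to affine Hecke algebra modules via Borel--Casselman (and, for non-Iwahori unipotent blocks, Lusztig's types), followed by the Kazhdan--Lusztig geometric classification on the dual side, is exactly how the bijection is produced. The explanations you give for (2) and (3) are accurate. Your account of (1) is a little loose: the equality of the standard module with its simple quotient is a \emph{consequence} of temperedness (via \cite[Theorem 8.2]{kl}) rather than its characterization, and the actual criterion for temperedness in \cite{kl} is the Casselman-type growth condition on $\mathcal X$-weights, which translates into the two geometric conditions on $s_0$ and on the density of $G^\vee(s).n$ in $\mathfrak g_q^\vee$. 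But as a sketch of a result the paper is only quoting, this is fine.
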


For each parameter $(s,n,\rho) \in \ms L(\mathbf{G}({k}))$, there is an associated \emph{standard representation} $Y(s,n,\rho) \in \mathrm{Rep}(\mathbf{G}({k}))$.
For Iwahori spherical representations, the relevant results are \cite[Theorems 7.12, 8.2, 8.3]{kl}. For the more general setting of representations with unipotent cuspidal support, see \cite[\S10]{Lu-gradedII}. 
The standard modules are of finite length and have the property that $X(s,n,\rho)$ is the unique simple quotient of $Y(s,n,\rho)$.

\paragraph{The Borel--Casselman Equivalence}
\nomenclature{$\mathcal H_{\bfI}$}{}
\nomenclature{$\mathrm{Rep}_\bfI(\bfG(k))$}{}
\nomenclature{$m_{\bfI}$}{}
\nomenclature{$R_\bfI(\bfG(k))$}{}
\nomenclature{$R(\mathcal H_\bfI)$}{}
In this section we assume that $\mb G$ is $k$-split.
We fix a maximal $K$-split torus $\bfT_K$, a root basis $\Delta$ for $\Phi$, and a hyperspecial point $x_0\in \mathcal A(\bfT_K,K)$.
Recall the definition of $c_0$ from the end of Section \ref{par:basicnotation2} and let $\bfI$ be the Iwahori subgroup of $\bfG$ corresponding to $c_0$.
Recall the \emph{Iwahori--Hecke algebra} associated to $\mathbf{G}(k)$ and $\bfI$,
$$\mathcal H_{\mathbf{I}}=\{f\in C^\infty_c(\bfG( k))\mid f(i_1gi_2)=f(g),\ i_1,i_2\in \mathbf I(\mf o)\}.$$
Multiplication in $\mathcal H_{\mathbf{I}}$ is given by convolution with respect to a fixed Haar measure of $\bfG( k)$. Let $\mathrm{Rep}_{\mathbf{I}}(\bfG( k))$ denote the Iwahori category, i.e. the full subcategory of $\mathrm{Rep}(\bfG( k))$ consisting of representations $X$ such that $X$ is generated by $X^{\mathbf I(\mf o)}$. The simple objects in this category are the (irreducible) Iwahori spherical representations. 
By the Borel--Casselman Theorem \cite[Corollary 4.11]{Bo}, there is an exact equivalence of categories
\begin{equation}\label{eq:mI}
    m_{\mathbf{I}}: \mathrm{Rep}_{\mathbf{I}}(\bfG(k))\to \mathrm{Mod}(\mathcal H_{\mathbf{I}}), \qquad m_{\mathbf{I}}(X) = X^{\mathbf I(\mf o)}.
\end{equation}
This equivalence induces a group isomorphism
\begin{equation}\label{eq:mIhom}
m_{\mathbf{I}}: R_{\mathbf{I}}(\mathbf{G}(k)) \xrightarrow{\sim}  R(\mathcal{H}_{\mathbf{I}}),
\end{equation}
where $R_{\mathbf{I}}(\mathbf{G}(k))$ (resp. $R(\mathcal{H}_{\mathbf{I}})$) is the Grothendieck group of $\mathrm{Rep}_{\mathbf{I}}(\mathbf{G}(k))$ (resp. $\mathrm{Mod}(\mathcal{H}_{\mathbf{I}})$). 

\paragraph{Deformations of Modules of the Iwahori-Hecke Algebra}
\nomenclature{$\mathcal H_c$}{}
\nomenclature{$\mf J_c$}{}
\nomenclature{$X|_{W_c}$}{}
Suppose $\mathbf{P}_c$ is a parahoric subgroup containing $\mathbf{I}$ with pro-unipotent radical $\mathbf{U}_c$ and reductive quotient $\mathbf{L}_c$.
The finite Hecke algebra $\mathcal H_{c}$ of $\bfL_c(\mathbb F_q)$ embeds as a subalgebra of $\mathcal H_{\bfI}$.
For $X\in \mathrm{Rep}_{\bfI}(\bfG(\sfk))$ admissible, the Moy--Prasad theory of unrefined minimal $K$-types \cite{moyprasad} implies that the finite dimensional $\bfL_c(\mathbb F_q)$-representation $X^{\bfU_{c}(\mf o)}$ is a sum of principal series unipotent representations and so corresponds to an $\mathcal H_{c}$-module with underlying vector space 
$$(X^{\bfU_c(\mf o)})^{\bfI(\mf o)/\bfU_c(\mf o)} = X^{\bfI(\mf o)}.$$
The $\mathcal H_c$-module structure obtained in this manner coincides naturally with that of
$$\Res_{\mathcal H_c}^{\mathcal H_\bfI}m_\bfI(X).$$
Let 
$$\mathfrak J_c:\mathcal H_c\to \C[W_c]$$
be the isomorphism introduced by Lusztig in \cite[Theorem 3.1]{lusztigdeformation}.
Given any $\mathcal H_c$-module $M$ we can use the isomorphism $\mathfrak J_c$ to obtain a $W_c$-representation which we denote by $M_{q\to1}$.
Define
\begin{equation}
    \label{eq:Wcqto1}
    X|_{W_c}:=(\Res_{\mathcal H_c}^{\mathcal H_\bfI}m_\bfI(X))_{q\to 1}.
\end{equation}

We will need to recall some structural facts about the Iwahori--Hecke algebra. Let $\CX:=X_*(\mathbf T,\bar{\mathsf k})=X^*(\mathbf{T}^\vee,\bar{\sfk})$ and consider the (extended) affine Weyl group $\widetilde{W} := W \ltimes \CX$. Let 
$$S := \{s_{\alpha} \mid \alpha \in \Delta\} \subset W$$
denote the set of simple reflections in $W$. For each $x \in \CX$, write $t_x \in \widetilde{W}$ for the corresponding translation. If $W$ is irreducible, let $\alpha_h$ be the highest root and set $s_0=s_{\alpha_h} t_{-\alpha_h^\vee}$, $S^a=S\cup\{s_0\}$. If $W$ is a product, define $S^a$ by adjoining to $S$ the reflections $s_0$, one for each irreducible factor of $W$. Consider the length function $\ell: \widetilde{W} \to \Z_{\geq 0}$ extending the usual length function on the affine Weyl group $W^a=W\ltimes \mathbb Z \Phi^\vee$
\nomenclature{$l$}{}
\[\ell(w t_x)=\sum_{\substack{\alpha\in \Phi^+\\w(\alpha)\in \Phi^-}} |\langle x,\alpha\rangle+1|+\sum_{\substack{\alpha\in \Phi^+\\w(\alpha)\in \Phi^+}} |\langle x,\alpha\rangle|.
\]
For each $w \in \widetilde{W}$, choose a representative $\bar w$ in the normaliser $N_{\bfG(\mathsf k)}(\mathbf I(\mf o))$. Recall the affine Bruhat decomposition
\[\bfG(\mathsf k)=\bigsqcup_{w\in \widetilde W} \mathbf I(\mf o) \bar w \mathbf I(\mf o), 
\]
For each $w \in \widetilde{W}$, write $T_w \in \mathcal{H}_{\mathbf{I}}$ for the characteristic function of $\mathbf I(\mf o) \bar w \mathbf I(\mf o) \subset \mathbf{G}(\sfk)$. Then $\{T_w\mid w\in \widetilde W\}$ forms a $\mathbb C$-basis for $\mathcal H_{\mathbf{I}}.$

The relations on the basis elements $\{T_w \mid w \in \widetilde{W}\}$ were computed in \cite[Section 3]{iwahorimatsumoto}:
\begin{equation}\label{eq:relations}
    \begin{aligned}
    &T_w\cdot T_{w'}=T_{ww'}, \qquad \text{if }\ell(ww')=\ell(w)+\ell(w'),\\
    &T_s^2=(q-1) T_s+q,\qquad s\in S^a.
    \end{aligned}
\end{equation}

\nomenclature{$R$}{}
\nomenclature{$\mathcal H_{\bfI,v}$}{}
Let $R$ be the ring $\C[v,v^{-1}]$ and for $a\in \C^*$ let $\C_a$ be the $R$-module $R/(v-a)$.
Let $\mathcal H_{\bfI,v}$ denote the Hecke algebra with base ring $R$ instead of $\C$ and where $q$ is replaced with $v^2$ in the relations (\ref{eq:relations}).
By specialising $v$ to $\sqrt{q}$, $1$, we obtain
\begin{equation}
    \mathcal H_{\bfI,v}\otimes_R\C_{\sqrt q} \cong \mathcal H_{\bfI}, \qquad \mathcal H_{\bfI,v}\otimes_R\C_1 \cong \C[ \widetilde W].
\end{equation}
Suppose $Y=Y(s,n,\rho)$ is a standard Iwahori spherical representation, see Section \ref{s:unip-cusp}, and let $M=m_\bfI(Y)$. 
By \cite[Section 5.12]{kl} there is a $\mathcal H_{\bfI,v}$ module $M_v$, free over $R$, such that 
$$M_v\otimes_R\C_{\sqrt q} \cong M$$
as $\mathcal H_\bfI$-modules.
We can thus construct the $\widetilde W$-representation
$$Y_{q\to 1}:= M_v\otimes_R \C_{1}.$$
\nomenclature{$R(\widetilde{W})$}{}
\nomenclature{$(\bullet)_{q\to 1}$}{}
Let $R(\widetilde{W})$ be the Grothendieck group of $\mathrm{Rep}(\widetilde W)$. Since the standard modules form a $\Z$-basis for $R_\bfI(\bfG(\sfk))$, the Grothendieck group of $\mathrm{Rep}_\bfI(\bfG(\sfk))$, there is a unique homomorphism
\begin{equation}\label{eq:qto1hom}(\bullet)_{q\to 1}: R_\bfI(\bfG(\sfk)) \to R(\widetilde W)\end{equation}
extending $Y \mapsto Y_{q \to 1}$. Moreover, since 
$$\Res_{W_c}^{\widetilde W} Y_{q\to 1} = Y|_{W_c}$$
for the Iwahori spherical standard modules we have that
\begin{equation}
    \label{eq:heckerestriction}
    \Res_{W_c}^{\widetilde W}X_{q\to 1} = X|_{W_c}
\end{equation}
for all $X\in R_\bfI(\bfG(\sfk))$.

\paragraph{Reduction to Weyl Groups}
Recall the definition of $\OO^s$ in Section \ref{sec:kawanaka}.
\begin{theorem}
    \label{thm:locwf}
    Suppose $X\in \mathrm{Rep}_\bfI(\bfG(k))$.
    Let $c\subset c_0$.
    Then 
    \begin{equation}
        ^{\barF_q}\WF(X^{\bfU_c(\mf o)})= \max\set{\OO^s(E):[\Res_{W_c}^{\tilde W}(X^{\bfI(\mf o)}):E]>0}
    \end{equation}
    and $^K\WF_c(X)$ is the lift of these orbits.
\end{theorem}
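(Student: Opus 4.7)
The plan is to reduce the computation in two stages: from the finite reductive quotient $\bfL_c(\mathbb F_q)$ to its Weyl group $W_c$ by means of Lusztig's isomorphism $\mathfrak J_c: \mathcal H_c \xrightarrow{\sim} \C[W_c]$, and then from $W_c$ up to $\widetilde W$ by means of the Kazhdan--Lusztig deformation of the Iwahori--Hecke algebra.

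First I would observe that since $X$ is admissible and lies in the Iwahori block, the Moy--Prasad theory of unrefined minimal $K$-types forces the finite-dimensional $\bfL_c(\mathbb F_q)$-representation $X^{\bfU_c(\mf o)}$ to be a sum of principal series unipotent representations. Because the Kawanaka wavefront set of such a sum is the set of maximal orbits among those of its irreducible constituents, the explicit formula (\ref{eq:kawanakawf}) for the Kawanaka wavefront set of principal series unipotent representations yields
\[
{}^{\barF_q}\WF(X^{\bfU_c(\mf o)}) = \max\{\OO^s(V_{q\to 1}) : V \text{ an irreducible constituent of } X^{\bfU_c(\mf o)}\}.
\]

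Next I would translate the right-hand side into $\widetilde W$-representation theory. Via $\mathfrak J_c$, the irreducible principal series unipotent constituents $V$ of $X^{\bfU_c(\mf o)}$ correspond, with multiplicity, to the irreducible $W_c$-constituents $V_{q\to 1}$ of the $W_c$-module $X|_{W_c}$: indeed, the underlying $\mathcal H_c$-module of the finite-group representation $X^{\bfU_c(\mf o)}$ is by construction $\Res^{\mathcal H_\bfI}_{\mathcal H_c} m_\bfI(X)$, and by definition (\ref{eq:Wcqto1}) its transport across $\mathfrak J_c$ is precisely $X|_{W_c}$. Finally (\ref{eq:heckerestriction}) identifies $X|_{W_c}$ with $\Res_{W_c}^{\widetilde W}(X_{q\to 1})$, which together establishes the first displayed equation of the theorem. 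The second claim, that $^K\WF_c(X)$ is the lift of these orbits under $\mathcal L_c$, is then immediate from the explicit formula (\ref{eq:locwfexp}).

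The main obstacle I foresee is the bookkeeping around the two distinct $q\to 1$ specialisations and checking that they are consistent: the finite-level one on $\mathcal H_c$-modules defined via $\mathfrak J_c$, and the affine-level one on $\mathcal H_\bfI$-modules built from the Kazhdan--Lusztig $R$-free deformation $M_v$. What is needed is that $M_v$, viewed as an $\mathcal H_{c,v}$-module and specialised at $v=1$, agrees with the finite-level specialisation of the $\mathcal H_c$-module $X^{\bfU_c(\mf o)}$. This is essentially what (\ref{eq:heckerestriction}) affirms for standard modules and promotes to the whole Grothendieck group, but carefully extracting the compatibility at the level of irreducible constituents (rather than just Grothendieck classes) is the crucial bridge holding the argument together.
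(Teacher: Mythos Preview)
Your proposal is correct and follows precisely the paper's approach; indeed, the paper's own proof is the single sentence ``This is just putting together Equation~\eqref{eq:heckerestriction} and the results in Section~\ref{sec:kawanaka},'' and you have spelled out exactly those two ingredients. Your worry in the final paragraph is not a genuine obstacle: the quantities $[\Res_{W_c}^{\widetilde W}(X_{q\to 1}):E]$ are multiplicities of irreducibles and hence are invariants of the Grothendieck class, so the Grothendieck-group identity~\eqref{eq:heckerestriction} is already sufficient.
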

\begin{proof}
    This is just putting together Equation \ref{eq:heckerestriction} and the results in Section \ref{sec:kawanaka}.
\end{proof}

\subsection{The Wavefront Set of Spherical Arthur Representations}
Suppose $\bfG$ is adjoint and split over $k$.
Fix a maximal $K$-split torus $\bfT_K$, a root basis $\Delta$, and a hyperspecial point $x_0\in \mathcal A(\bfT_K,K)$.
Let $c_0$ be the chamber defined as in the end of Section \ref{par:basicnotation2} and $\bfI$ be the corresponding Iwahori-subgroup.
Recall from the end of Section \ref{par:acharduality} the definitions of $d_{A,\bfT_K}^K$ and $d_{\bfT_K}^{\bar k}$.
Let $(\pi,X)$ be the spherical Arthur representation with parameter $\OO^\vee$.
With respect to Lusztig's parameterisation of representations with cuspidal unipotent support, this is the representation with parameter $[(q^{\frac12 h^\vee},0,triv)]$, where $h^\vee$ is a neutral element for an $\lsl_2$-triple for $\OO^\vee$. 
In this section we will prove the following theorem.
\begin{theorem}
    \label{thm:arthurwf}
    Let $(\pi,X)$ be the spherical Arthur representation with parameter $\OO^\vee\in \mathcal N_o^\vee$.
    Then
    \begin{equation}
        ^{K}\WF(X) = d_{A,\bfT_K}^K(\OO^\vee,1),\quad \hphantom{ }^{\bar k}\WF(X) = d_{\bfT_K}^{\bar k}(\OO^\vee).
    \end{equation}
\end{theorem}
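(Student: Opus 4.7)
The plan is to first prove the main equality ${}^K\WF(X) = d_{A,\bfT_K}^K(\OO^\vee,1)$, and then deduce the geometric statement from it using the available machinery: once $^K\WF(X)$ is identified with $d_A(\OO^\vee,1) \in \mathcal N_{o,\bar c}$ under $\bar\theta_{\bfT_K}$, Theorem \ref{thm:algwf} gives $^{\bar k}\WF(X) = \max\mathcal N_o(\bar k/K)({}^K\WF(X))$, and by Theorem \ref{thm:unramifiedparam} the projection $\mathcal N_o(\bar k/K)$ corresponds on the combinatorial side to $\pr_1^\vee$, which applied to $d_A(\OO^\vee,1)$ yields $d(\OO^\vee)$ by Theorem \ref{thm:canoninv}(2). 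This recovers $d_{\bfT_K}^{\bar k}(\OO^\vee)$ after pulling back along $\Lambda_{\bfT_K}^{\bar k}$.

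For the upper bound $^K\WF(X) \leq_A d_{A,\bfT_K}^K(\OO^\vee,1)$, I apply Theorem \ref{thm:canoninv}(3): it suffices to show that every $\OO \in {}^K\tilde\WF(X)$ satisfies $d_{S,\bfT_K}(\OO) \geq \OO^\vee$. By Theorem \ref{thm:locwf}, every such $\OO$ arises as $\mathcal L_c(\OO^s(E))$ for some face $c\subseteq \overline{c_0}$ and some irreducible $W_c$-constituent $E$ of $\Res^{\tilde W}_{W_c}(X^{\bfI(\mf o)}_{q\to 1})$. By Proposition \ref{prop:ds},
$$d_{S,\bfT_K}(\mathcal L_c(\OO^s(E))) = \Ind_{\bfl_c}^{\mf g^\vee}(d_{\bfl_c}(\OO^s(E))).$$
The central task is therefore to identify the $\tilde W$-module $X^{\bfI(\mf o)}_{q\to 1}$ via the $v$-deformation of Kazhdan–Lusztig theory (\cite[Section 5.12]{kl}) applied to the parameter $(q^{h^\vee/2},0,\mathrm{triv})$, and then to verify the inequality $\Ind_{\bfl_c}^{\mf g^\vee}(d_{\bfl_c}(\OO^s(E))) \geq \OO^\vee$ for every irreducible $W_c$-constituent $E$. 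I expect the constituents $E$ to live in Lusztig families controlled by the Springer representation of $\OO^\vee$, so that truncated induction and Spaltenstein duality behave compatibly to produce the required bound.

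For the lower bound, I must exhibit a face $c$ and an irreducible $W_c$-constituent $E$ of $X|_{W_c}$ such that $[\mathcal L_c(\OO^s(E))] = d_{A,\bfT_K}^K(\OO^\vee,1)$ in $\mathcal N_o(K)/\sim_A$. I will use Achar's construction of $d_A(\OO^\vee,1)$ as a pair $(\OO,\bar C)$ with $\OO = d(\OO^\vee)$, located via a distinguished nilpotent in a pseudo-Levi subalgebra of $\mf g$; this pseudo-Levi determines a face $c$ through the bijection $\mf L_{x_0}$ of Corollary \ref{cor:mfL}, and the associated affine Bala–Carter datum (Theorem \ref{thm:affine_bala_carter}) picks out the required distinguished orbit. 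Checking that the corresponding $W_c$-representation $E$ actually appears in $X|_{W_c}$ with $\OO^s(E)$ of the right form amounts to a Springer-theoretic non-vanishing statement for $H^{\mathrm{top}}$ of the appropriate fixed-point variety of $q^{h^\vee/2}$.

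The main obstacle will be the explicit analysis of $X^{\bfI(\mf o)}_{q\to 1}$ as a $\tilde W$-module and the identification of its $W_c$-restrictions with Weyl group representations whose $\OO^s$ values lie in the prescribed Lusztig family. Bridging the KL-Lusztig $q\to 1$ specialisation (which governs the constituents of $X|_{W_c}$) with Achar's combinatorial description of $d_A$ (phrased via pseudo-Levis and Lusztig's canonical quotient) is the principal technical hurdle; Theorem \ref{thm:canoninv} is tailored to mediate exactly this comparison, so the proof amounts to verifying that both the upper and lower bounds it permits are achieved by spherical Arthur representations.
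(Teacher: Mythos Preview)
Your overall architecture matches the paper: bound ${}^K\WF(X)$ from above by showing $d_{S,\bfT_K}(\OO)\ge\OO^\vee$ for every local wavefront contribution, then invoke Theorem~\ref{thm:canoninv}(3); bound from below by exhibiting a face and a $W_c$-constituent that realises $d_A(\OO^\vee,1)$; and deduce the geometric statement via Theorem~\ref{thm:algwf}. However there is a genuine gap in the core step, and your sketch for the bounds is not yet a proof.

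The gap is in your computation of $X^{\bfI(\mf o)}_{q\to 1}$. You propose to apply the $v$-deformation of \cite{kl} directly to the parameter $(q^{h^\vee/2},0,\mathrm{triv})$, but that deformation is defined on \emph{standard} modules $Y(s,n,\rho)$, and the spherical Arthur representation $X=X(q^{h^\vee/2},0,\mathrm{triv})$ is only the unique simple quotient of $Y(q^{h^\vee/2},0,\mathrm{triv})$; the two are not equal (the standard module here is a full unramified principal series). To get at $X_{q\to 1}$ you would need the Kazhdan--Lusztig expansion of $X$ in terms of standard modules, which is not tractable directly. The paper circumvents this by passing to the Aubert--Zelevinsky dual: $\mathsf{AZ}(X)$ has parameter $(q^{h^\vee/2},\ms n,\mathrm{triv})$ with $\ms n\in\OO^\vee$, and since $s_0=1$ this representation is tempered, hence equals its own standard module by \cite[Theorem 8.2]{kl}. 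Then by \cite[Corollary 8.1]{reeder} one obtains $(\mathsf{AZ}(X)^{\bfI(\mf o)})_{q\to 1}=\sgn\otimes H^*(\mathcal B^\vee_{\ms n})^{\mathrm{triv}}$, whence $(X^{\bfI(\mf o)})_{q\to 1}=H^*(\mathcal B^\vee_{\ms n})^{\mathrm{triv}}$ with $\tilde W$ acting through $W$. This identification is the key input you are missing.

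Once this is known, the paper's upper and lower bounds are proved via perverse sheaves, not via the Lusztig-family heuristics you sketch. For each $J$ and $E$ one forms the perverse sheaf $K_{J,E}=\bigoplus_{E'}[E':\Ind_{W_J}^{W^\vee}E]\,\IC(\overline{\OO^\vee_{E'}},\mathcal E_{E'})$ from the decomposition of the Springer sheaf; by \cite[Proposition 4.3]{achar_aubert} its support lies in $\overline{d_{S,\bfT_K}(c(J),\OO^s(E))}$, so $[E:\Res_{W_J}H^*(\mathcal B^\vee_{\ms n})]>0$ forces $\OO^\vee\le d_{S,\bfT_K}(c(J),\OO^s(E))$, giving the upper bound. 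For the lower bound one takes $(c(J),\OO')\in I_{o,d,c_0}^K(\tilde\OO)$ with $\tilde\OO\in\tilde\theta_{\bfT_K}^{-1}(d_A(\OO^\vee,1))$, sets $E$ to be the special $W_J$-representation for $d_{\bfl_{c(J)}}(\OO')$, and computes the stalk of $K_{J,E}$ at $\ms n$: since $j_{W_J}^{W^\vee}E$ has Springer support exactly $\OO^\vee$ with trivial local system and occurs with multiplicity one in $\Ind_{W_J}^{W^\vee}E$, one gets $[E:\Res_{W_J}H^*(\mathcal B^\vee_{\ms n})^{\mathrm{triv}}]=1$. Your ``Springer-theoretic non-vanishing'' is correct in spirit but needs this stalk computation to be made precise.
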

\begin{remark}
    By the discussion preceeding Proposition 2.3.5 in \cite{cmo}, the above theorem in fact holds without the restriction that $\bfG$ is adjoint.
\end{remark}
Let $\ms n\in \OO^\vee$.
Our strategy is to apply Theorem \ref{thm:locwf}.
The conditions apply since $X$ is Iwahori spherical (since it is spherical).
The first step is thus to get a grasp on the $\tilde W$ structure of $(X^{\bfI(\mf o)})_{q\to 1}$.
With respect to Lusztig's parameterisation of representations with cuspidal unipotent support, $\ms{AZ}(X)$ corresponds to $[(q^{\frac12 h^\vee},\ms n,triv)]$, where $h^\vee$ is a neutral element for $\OO^\vee$. 
In particular $s_0 = 1$ (as defined in Equation \ref{eq:s0}) and so by Theorem \ref{thm:Langlands} (1), $\ms{AZ}(X)$ is tempered.
Let $Y'$ be the standard module of $\ms{AZ}(X)$ (so that $\ms{AZ}(X)$ is the unique simple quotient of $Y'$).
Then since $\ms{AZ}(X)$ is tempered, by \cite[Theorem 8.2]{kl} we have that $Y' = \ms{AZ}(X)$.
In particular, by \cite[Corollary 8.1]{reeder} we have that 
$$(\ms {AZ}(X)^{\bfI(\mf o)})_{q\to 1}  = (Y')_{q\to 1} = \sgn \otimes H^*(\mathcal B^\vee_{\ms n})^{triv}$$
where $X_*$ acts trivially on the total cohomology space.
It follows that 
$$(X^{\bfI(\mf o)})_{q\to 1} = H^*(\mathcal B^\vee_{\ms n})^{triv}$$
and that the action of $\tilde W$ factors through $\cdot: \tilde W\to W$.
It thus suffices to understand $H^*(\mathcal B^\vee_{\ms n})^{triv}$ as a $W$-module and to do this we will use the theory of perverse sheaves.

We adopt the conventions of \cite{shoji}, except that we will work with perverse sheaves over $\C$ rather than $l$-adic sheaves, and we will ignore shifts since we are exclusively interested in the total cohomology.
For $J\in \mathbf P(\tilde \Delta)$, we will write $c(J)$ for the face of $c_0$ with type equal to $J$, and we write $W_J$ in place of $W_{c(J)}$ for notational convenience.

Let $\pi:\tilde {\mf g^\vee}\to \mf g^\vee$ be the Grothendieck resolution and $\underline \C$ be the constant sheaf on $\tilde {\mf g^\vee}$.
Let $N^\vee$ be the nilpotent cone on of $\mf g^\vee$.
Then $\pi_*\underline \C = \IC(\mf g^\vee,\mathcal L)$ where $\mathcal L$ is the local system on the regular semisimple elements $\mf g^\vee_{reg}$ obtained by pushing forward the constant sheaf along $\pi^{-1}(\mf g^\vee_{reg})\to \mf g^\vee_{reg}$.
This map is an etale covering with Galois group $W^\vee$ and $\mathcal L$ is a $\mb G^\vee$-equivariant sheaf of $W^\vee$ representations.
We can thus decompose it as $\mathcal L = \bigoplus_{E\in \mathrm{Irr}(W^\vee)}E\otimes \mathcal L_E$.
Thus $\pi_*\C = \bigoplus_{E\in \mathrm{Irr}(W^\vee)}E\otimes \IC(\mf g^\vee,\mathcal L_E)$ and we have that $\IC(\mf g^\vee,\mathcal L_E)\mid_{N^\vee} = \IC(\overline {\OO^\vee_E},\mathcal E_E)$ where the map $E\mapsto (\OO^\vee_E,\mathcal E_E)$ is the Springer correspondence \cite{shoji}. 
Thus $\pi_*\underline \C\mid_{N^\vee} = \bigoplus_{E\in \mathrm{Irr}(W^\vee)}\IC(\overline{\OO^\vee_E},\mathcal E_E)$.
\begin{lemma}
    Let $J\in \mathbf P(\tilde\Delta)$ and $E\in \mathrm{Irr}(W_J)$.
    View $W_J$ as a subgroup of $W^\vee$ via 
    \begin{equation}
        \begin{tikzcd}[column sep=small]W_J\arrow[r,"\cdot"] & W \arrow[r,"\sim"] & W^\vee\end{tikzcd}.
    \end{equation}
    The first map is the projection map $\cdot:\tilde W\to W$.
    Define $K_{J,E}$ to be the perverse sheaf
    \nomenclature{$K_{J,E}$}{}
    \begin{equation}
        K_{J,E} := \bigoplus_{E'\in \mathrm{Irr}(W^\vee)}[E':\Ind_{W_J}^{W^\vee}E]\IC(\overline{\OO^\vee_{E'}},\mathcal E_{E'}).
    \end{equation}
    Then the support of $K_{J,E}$ lies in the closure of $d_{S,\bfT_K}(c(J),\OO^s(E))$.
\end{lemma}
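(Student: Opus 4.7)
The plan is to reduce the support containment to a pointwise closure bound on individual Springer orbits, identify the target orbit through the Springer correspondence, and then apply known maximality properties of truncated induction.

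\emph{Reduction.} Since $K_{J,E}$ is a direct sum of simple perverse sheaves $\IC(\overline{\OO_{E'}^\vee},\mathcal{E}_{E'})$, each with support exactly $\overline{\OO_{E'}^\vee}$, one has
\[
\supp(K_{J,E})\;=\;\bigcup_{E'\in\mathrm{Irr}(W^\vee),\ [E':\Ind_{W_J}^{W^\vee}E]>0}\overline{\OO_{E'}^\vee}.
\]
So it suffices to show $\OO_{E'}^\vee\le d_{S,\bfT_K}(c(J),\OO^s(E))$ for every such $E'$.

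\emph{Identification of the target.} By Proposition \ref{prop:ds}, $d_{S,\bfT_K}(c(J),\OO^s(E))=\Ind_{\bfl_{c(J)}}^{\mf g^\vee}(d_{\bfl_{c(J)}}(\OO^s(E)))$; by construction of $\Ind_{\bfl_{c(J)}}^{\mf g^\vee}$, this orbit corresponds via the Springer correspondence (with trivial local system) to $j_{W_J}^{W^\vee}(F^\sharp)$, where $F^\sharp\in\mathrm{Irr}(W_J)$ is the Springer representation of $d_{\bfl_{c(J)}}(\OO^s(E))$. Unwinding the definition of $\OO^s$, and using the standard facts that $(-)\otimes\sgn$ permutes Lusztig families and that Lusztig--Spaltenstein duality on special orbits realizes this involution on the Springer side, one finds $F^\sharp$ is the special representation in the Lusztig family of $E$ in $W_J$.

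\emph{Closure bound.} It remains to show that every constituent $E'$ of $\Ind_{W_J}^{W^\vee}E$ satisfies $\OO_{E'}^\vee\le\OO_{j_{W_J}^{W^\vee}(F^\sharp)}^\vee$. I would combine (i) the maximality property of truncated induction---$j_{W_J}^{W^\vee}(F^\sharp)$ is the unique constituent of $\Ind_{W_J}^{W^\vee}(F^\sharp)$ of minimal $b$-invariant, with Springer orbit equal to the pseudo-parabolic induced orbit $\Ind_{\mf l_J^\vee}^{\mf g^\vee}(\OO_{F^\sharp}^\vee)$---with (ii) Achar's refinement of Sommers' duality \cite{achar}, which controls the Springer orbits of constituents of $\Ind_{W_J}^{W^\vee}E$ for arbitrary $E$ in the Lusztig family of $F^\sharp$. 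The main obstacle I expect is (ii): one must prove a closure bound, not merely a dimension bound, in the setting where $W_J$ arises from a pseudo-Levi rather than a Levi of $G^\vee$, so that the geometric parabolic-Springer argument is not directly available and the argument must proceed combinatorially via Lusztig families and Achar's duality.
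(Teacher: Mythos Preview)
Your reduction and identification steps are exactly what the paper does. The gap is precisely where you flag it: step (ii), the closure bound $\OO_{E'}^\vee\le\Ind_{\bfl_{c(J)}}^{\mf g^\vee}\OO_{F^\sharp}^\vee$ for every constituent $E'$ of $\Ind_{W_J}^{W^\vee}E$, in the pseudo-Levi setting. You are right that truncated induction by itself only gives a dimension (or $b$-invariant) bound, not a closure bound, and that the usual geometric parabolic-Springer argument does not apply since $W_J$ need not be a parabolic subgroup of $W^\vee$.

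The paper does not attempt to reconstruct this from Achar's duality. Instead it invokes \cite[Proposition 4.3]{achar_aubert} as a black box: that result states directly that if $[E':\Ind_{W_J}^{W^\vee}E]>0$ then $\OO_{E'}^\vee$ lies in the closure of $\Ind_{\bfl_{c(J)}}^{\mf g^\vee}\OO''$, where $\OO''$ is the Springer orbit of the special representation in the Lusztig family of $E$ in $W_J$. Since $\OO''=d_{\bfl_{c(J)}}(\OO^s(E))$, this is exactly $d_{S,x_0,\bfT_K}(c(J),\OO^s(E))=d_{S,\bfT_K}(c(J),\OO^s(E))$ by Proposition~\ref{prop:ds}, and the proof is complete. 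So the missing idea in your proposal is not a new argument but a citation: the hard combinatorial-geometric work you anticipate is already packaged in Achar--Aubert.
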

\begin{proof}
    By \cite[Proposition 4.3]{achar_aubert}, $[\Ind_{W_J}^{W^\vee}E:E']>0$ implies that $\OO^\vee_{E'} \le \Ind_{\bfl_{c(J)}}^{\mf g^\vee} \OO''$ where $\OO''$ is the Springer support of the unique special representation in the same family as $E$.
    But $\OO'' = d_{LS}(\OO'(E))$ and so in fact 
    $$\OO^\vee_{E'} \le d_{S,x_0,\bfT_K}(c(J),\OO^s(E)) = d_{S,\bfT_K}(c(J),\OO^s(E))$$
    where the last equality is Proposition \ref{prop:ds}.
    Therefore $\supp (K_{J,E})$ is contained in the closure of $d_{S,\bfT_K}(c(J),\OO^s(E))$ as required.
\end{proof}
\begin{theorem}
    \label{thm:wfspringer}
    Let $\ms n\in N^\vee$ and $\OO_{\ms n}^\vee = G^\vee.\ms n$.
    \begin{enumerate}
        \item For all $J\in \mathbf P(\tilde\Delta)$ and $E\in \mathrm{Irr}(W_J)$ such that $E$ is an irreducible constituent of $\Res_{W_J}^{\tilde W}H^*(\mathcal B^\vee_{\ms n})^{triv}$, we have that $\OO_{\ms n}^\vee \le d_{S,\bfT_K}(c(J),\OO^s(E))$.
        \item Let $\tilde \OO\in \tilde\theta_{\bfT_K}^{-1}(d_A(\OO_{\ms n}^\vee,1))$ and $(c(J),\OO')\in I_{o,d,c_0}^K(\tilde \OO)$.
        Let $E$ be the special representation of $W_J$ corresponding to $d_{\bfl_{c(J)}}(\OO')$.
        Then 
        \begin{equation}
            [E:\Res_{W_J}^{\tilde W}H^*(\mathcal B^\vee_{\ms n})^{triv}] = 1.
        \end{equation}
    \end{enumerate}
\end{theorem}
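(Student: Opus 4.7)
I will deduce both parts from the Lusztig--Borho--MacPherson decomposition of the Springer fibre cohomology combined with the support estimate on $K_{J,E}$ established in the preceding lemma. By the decomposition theorem applied to the Grothendieck--Springer resolution, there is an isomorphism of $W^\vee \times A(\ms n)$-modules
\[H^*(\mathcal B^\vee_{\ms n}) \;=\; \bigoplus_{E' \in \mathrm{Irr}(W^\vee)} E' \otimes \mathcal H^*_{\ms n}\IC(\overline{\OO^\vee_{E'}},\mathcal E_{E'}),\]
which, after taking $A(\ms n)$-invariants, reads $H^*(\mathcal B^\vee_{\ms n})^{triv} = \bigoplus_{E'} E' \otimes M_{E'}$, where $M_{E'} := \Hom_{A(\ms n)}(\C, \mathcal H^*_{\ms n}\IC(\overline{\OO^\vee_{E'}},\mathcal E_{E'}))$. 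Two properties of the $M_{E'}$ are crucial: (i) $M_{E'} \neq 0$ forces $\OO^\vee_{\ms n} \le \OO^\vee_{E'}$; and (ii) when $\OO^\vee_{E'} = \OO^\vee_{\ms n}$, $M_{E'}$ is nonzero iff $\mathcal E_{E'}$ is the constant local system, i.e.\ $E' = E'_0$, the Springer representation attached to $(\OO^\vee_{\ms n},\mathrm{triv})$, in which case $\dim M_{E'_0} = 1$.

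\textbf{Part (1).} Since $\tilde W$ acts on $H^*(\mathcal B^\vee_{\ms n})^{triv}$ through $\tilde W \twoheadrightarrow W \cong W^\vee$, Frobenius reciprocity yields
\[[E : \Res^{\tilde W}_{W_J} H^*(\mathcal B^\vee_{\ms n})^{triv}] \;=\; \sum_{E'} [E' : \Ind_{W_J}^{W^\vee} E] \cdot \dim M_{E'}.\]
If the left-hand side is nonzero, some $E'$ contributes a nonzero summand. By (i), $\OO^\vee_{\ms n} \le \OO^\vee_{E'}$. The other factor being nonzero means $E'$ appears as a Springer summand of $K_{J,E}$, so the preceding lemma gives $\OO^\vee_{E'} \le d_{S,\bfT_K}(c(J), \OO^s(E))$. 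Chaining these inequalities yields the claim.

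\textbf{Part (2).} The hypotheses combined with Theorem \ref{thm:canoninv}(2) and Proposition \ref{prop:ds} give $d_{S,\bfT_K}(\tilde\OO) = \OO^\vee_{\ms n}$. I will then check $d_{S,\bfT_K}(c(J),\OO^s(E)) = \OO^\vee_{\ms n}$ by unwinding $\OO^s(E)$ through Lusztig's involution: since $E$ is special with Springer image $(d_{\bfl_{c(J)}}(\OO'),\mathrm{triv})$, the representation $E \otimes \sgn$ has Springer image $(d_{LS}(d_{\bfl_{c(J)}}(\OO')),\mathrm{triv})$ and is again special, so $\OO^s(E) = d_{\bfl_{c(J)}}^2(\OO')$; then $d_{\bfl_{c(J)}}(\OO^s(E)) = d_{\bfl_{c(J)}}^3(\OO') = d_{\bfl_{c(J)}}(\OO')$ by involutivity of $d_{LS}$ on special orbits. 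Applying the formula from part (1), any $E'$ contributing a nonzero summand forces $\OO^\vee_{\ms n} \le \OO^\vee_{E'} \le \OO^\vee_{\ms n}$, hence $\OO^\vee_{E'} = \OO^\vee_{\ms n}$, and then (ii) pins $E' = E'_0$ with $\dim M_{E'_0} = 1$. Thus the multiplicity reduces to $[E'_0 : \Ind_{W_J}^{W^\vee} E]$. Finally, Lusztig's theorem on truncated induction (cf.\ \cite[Theorem 10.7]{lusztig}) identifies $j_{W_J}^{W^\vee}(E)$ with the Springer representation of $(\Ind_{\bfl_{c(J)}}^{\mf g^\vee} d_{\bfl_{c(J)}}(\OO'), \mathrm{triv}) = (\OO^\vee_{\ms n}, \mathrm{triv}) = E'_0$, and $j$-induction of an irreducible enters the ordinary induction with multiplicity one.

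\textbf{Main obstacle.} The most delicate point is the bookkeeping in part (2) around the Lusztig involution $E \mapsto E \otimes \sgn$ and the resulting identification $d_{S,\bfT_K}(c(J),\OO^s(E)) = \OO^\vee_{\ms n}$; once this is in hand, the rest is a clean combination of Frobenius reciprocity with the decomposition-theorem expression for $H^*(\mathcal B^\vee_{\ms n})^{triv}$.
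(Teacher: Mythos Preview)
Your proposal is correct and follows essentially the same approach as the paper: both use the Borho--MacPherson/decomposition-theorem expression for $H^*(\mathcal B^\vee_{\ms n})$, Frobenius reciprocity to express the $W_J$-multiplicity as a stalk of $K_{J,E}$, the support lemma on $K_{J,E}$, and the multiplicity-one property of $j$-induction. The only cosmetic differences are that in part~(1) the paper drops the $A(\ms n)$-isotypic component and bounds by the full $H^*(\mathcal B^\vee_{\ms n})$ before invoking the support of $K_{J,E}$, whereas you keep track of the trivial isotypic throughout via the $M_{E'}$; and in part~(2) the paper observes directly that $\OO'$ distinguished $\Rightarrow$ special $\Rightarrow$ $\OO^s(E)=\OO'$, while you route through $d_{\bfl_{c(J)}}^2(\OO')$ and $d_{\bfl_{c(J)}}^3(\OO')$ to reach the same conclusion.
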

\begin{proof}
    \begin{enumerate}
        \item Let $J\in \mathbf P(\tilde\Delta)$ and $E\in \mathrm{Irr}(W_J)$.
        We have that
        \begin{align}
            [E:\Res_{W_J}^{\tilde W}H^*(\mathcal B^\vee_{\ms n})] &= [E:\Res_{W_J}^{W^\vee}\bigoplus_{E'\in \mathrm{Irr}(W^\vee)}E'\otimes H^*_{\ms n}\IC(\overline{\OO^\vee_{E'}},\mathcal E_{E'})] \\
            &= \sum_{E'\in \mathrm{Irr}(W)}[E':\Ind_{W_J}^{W^\vee}E]\dim H^*_{\ms n}\IC(\overline{\OO^\vee_{E'}},\mathcal E_{E'}) \\
            &= \dim H^*_{\ms n}K_{J,E}.
        \end{align}
        Thus if $[E:\Res_{W_J}^{\tilde W}H^*(\mathcal B^\vee_{\ms n})]>0$ then 
        $$\ms n\in \supp K_{J,E} \subseteq \overline{d_{S,\bfT_K}(c(J),\OO^s(E))}.$$
        But $[E:\Res_{W_J}^{\tilde W}H^*(\mathcal B^\vee_{\ms n})^{triv}]>0$ implies that $[E:\Res_{W_J}^{\tilde W}H^*(\mathcal B^\vee_{\ms n})]>0$ and so indeed $\OO_{\ms n}^\vee\le d_{S,\bfT_K}(c(J),\OO^s(E))$.

        \item Let $\tilde \OO,c,\OO',E$ be as in the statement of the theorem.
        Since $\OO'$ is distinguished, it is special and so $\OO'(E) = \OO'$ and $\tilde \OO = \tilde \OO(c(J),\OO^s(E))$. 
        Let $E' = j_{W_J}^{W^\vee}E$.
        Then $[\Ind_{W_J}^{W^\vee}E:E'] = 1$, $\OO^\vee_{E'} = d_{S,x_0,\bfT_K}(c(J),\OO^s(E)) = \OO_{\ms n}^\vee$, and $\mathcal E_{E'}$ is the trivial local system.
        Since $[E'':\Ind_{W_J}^{W^\vee}E]>0$ implies that $\OO^\vee_{E''} \le d_{S,x_0,\bfT_K}(c(J),\OO^s(E)) = \OO_{\ms n}^\vee$ \cite[Proposition 4.3]{achar_aubert}, we have that 
        \begin{equation}
            H^*(K_{J,E})\mid_{\OO_{\ms n}^\vee} = \bigoplus_{E''\in \mathrm{Irr}(W^\vee):\OO^\vee_{E''} = \OO_{\ms n}^\vee}[E'':\Ind_{W_J}^{W^\vee}E]\mathcal E_{E''}.
        \end{equation}
        By a similar calculation as above we have that $[E:\Res_{W_J}^{\tilde W}H^*(\mathcal B^\vee_{\ms n})^{triv}] = [triv:H^*(K_{J,E})\mid_{\OO^\vee}]$.
        Thus $[E:\Res_{W_J}^{\tilde W}H^*(\mathcal B^\vee_{\ms n})^{triv}] = [E':\Ind_{W_J}^{W^\vee}E] = 1$ as required.
    \end{enumerate}
\end{proof}

We can now give a proof of Theorem \ref{thm:arthurwf}.
\begin{proof}
    Let $J\in \mathbf P(\tilde\Delta)$.
    By Theorem \ref{thm:locwf} we know that
    \begin{equation}
        ^K\WF_{c(J)}(X)=\set{\mathcal L(c,\OO^s(E)):E\in \mathrm{Irr}(W_J),[E:\Res_{W_J}^{\tilde W}V^{\ms I}]>0}.
    \end{equation}
    By part 1 of Theorem \ref{thm:wfspringer}, for any $\tilde \OO\in \hphantom{ }^K\WF_{c(J)}(X)$ we have $d_{S,\bfT_K}(\tilde \OO)\ge \OO^\vee$.
    By Theorem \ref{thm:canoninv}, it follows that $\tilde\theta_{\bfT_K}(\tilde \OO)\le_A d_A(\OO^\vee,1)$.
    But by part 2 of Theorem \ref{thm:wfspringer} we have that for any $\tilde \OO\in \tilde\theta_{\bfT_K}^{-1}(d_A(\OO^\vee,1))$ and $(c(J),\OO')\in I_{c_0}^{o,d}(\tilde \OO)$ we have that $\tilde \OO\in \hphantom{ }^K\WF_{c(J)}(X)$.
    It follows that 
    $$^K\WF(X) = \bar\theta_{\bfT_K}^{-1}(d_A(\OO^\vee,1)) = d_A^K(\OO^\vee,1).$$
    Since $\mathcal N_o(\bar k/K)(\tilde \OO) = d^{\bar k}(\OO^\vee)$ for any $\tilde \OO\in \tilde\theta_{\bfT_K}^{-1}(d_A(\OO^\vee,1))$ it follows that 
    $$^{\bar k}\WF(X) = d^{\bar k}(\OO^\vee).$$
\end{proof}

\bibliographystyle{alpha}
\bibliography{ms}

\printnomenclature
\end{document}